\documentclass[12pt, a4paper]{amsart}

\usepackage[utf8]{inputenc}
\usepackage[hmargin=30mm, vmargin=25mm, includefoot, twoside]{geometry}
\usepackage[bookmarksopen=true, urlcolor=magenta,urlbordercolor={1 0 0}]{hyperref}
\hypersetup{colorlinks=true,linkcolor=black,anchorcolor=blue,citecolor=magenta,filecolor=blue,urlcolor=cyan,bookmarksnumbered=true}

\usepackage[alphabetic]{amsrefs}

\usepackage{amsfonts,amssymb,verbatim}
\usepackage{latexsym}
\usepackage{mathrsfs}
\usepackage{stmaryrd}
\usepackage{xspace}
\usepackage{enumerate, paralist}
\usepackage{graphicx}
\usepackage[all,cmtip]{xy}
\usepackage[all]{xypic}

\usepackage{dutchcal}

\usepackage[capitalize]{cleveref}

\usepackage[usenames,dvipsnames]{xcolor}

\usepackage{txfonts, pxfonts}

\newtheorem{cor}{Corollary}[section]

\newtheorem{te}[cor]{Theorem}
\newtheorem{p}[cor]{Proposition}
\newtheorem{qu}[cor]{Question}
\newtheorem{lemma}[cor]{Lemma}

\theoremstyle{definition}
\newtheorem{de}[cor]{Definition}
\newtheorem{ob}[cor]{Observation}
\theoremstyle{remark}

\newtheorem{ex}[cor]{Example}

\newtheorem{nt}[cor]{Notation}

\newcommand{\cz}{\mathbb{C}}
\newcommand{\nz}{\mathbb{N}}
\newcommand{\zz}{\mathbb{Z}}

\newcommand{\rz}{\mathbb{R}}

\newcommand{\acal}{\mathcal{A}}
\newcommand{\ff}{\mathbb{F}}

\newcommand{\gr}{\mathcal{G}}

\newcommand{\tc}{\mathcal{T}}

\newcommand{\mm}{\mathcal{M}}

\newcommand{\unit}{\mathcal{U}}

\newcommand{\s}{Sym}

\newcommand{\vp}{\varphi}
\newcommand{\ve}{\varepsilon}

\newcommand{\la}{\langle}
\newcommand{\ra}{\rangle}
\newcommand{\flip}{F}
\newcommand{\Tr}{\mathrm{Tr}}
\newcommand{\ip}[2]{\langle #1,#2\rangle}
\newcommand{\bigip}[2]{\left\langle #1,#2\right\rangle}

\newcommand{\id}{\mathrm{Id}}

\newcommand{\ol}[1]{\overline{#1}}

{\begin{verse}%
		\small }%
	{\end{verse}}

\def\tilde{\widetilde}

\begin{document}
	\title{Word Trace Formulas over Compact Groups and Connected Cumulants}
	
\author{Liviu P\u aunescu}
\address{Institute of Mathematics of the Romanian Academy, 21 Calea Grivitei Street, 010702 Bucharest, Romania}
\email{liviu.paunescu@imar.ro}	

\author{Florin R\u adulescu}
\address{Department of Mathematics, University of Tor Vergata, Via della Ricerca Scientifica 1, 00133, Roma, Italy\\Institute of Mathematics of the Romanian Academy, 21 Calea Grivitei Street, 010702 Bucharest, Romania}
\email{radulesc@mat.uniroma2.it}
	
\begin{abstract}
We compute the expected value of the product of traces for a finite set of words within any compact subgroup of $\unit(N)$. Furthermore, we establish a sufficient condition under which these values determine that the product of the conjugacy classes of the words contains the identity.
\end{abstract}

	\maketitle

\section{Introduction}

Let $W=(w_1,w_2,\ldots,w_k)$ be a $k$-tuple of words on two letters. Define the function $I_W:\nz^{2k+1}\to\cz$ by:
\[I_W(N,i_1,\ldots,i_k,j_1,\ldots,j_k)=\int_{\unit(N)^2}\Pi_{c=1}^kTr(w_c(u,v))^{i_c}Tr(w_c^{-1}(u,v))^{j_c}dudv.\]

The integration is done with respect to the Haar measure on $\unit(N)$. We investigate how much information is contained in the function $I_W$. First of all, this function contains information on hyperlinearity.

\begin{te}
Let $\Gamma=\ff_2/\langle\!\langle w_1,\ldots,w_{k-1}\rangle\!\rangle$. Then $I_W$ can decide if there exists a morphism $\theta:\Gamma\to \unit(R^\omega)$ such that $\theta(w_k)\neq Id$.
\end{te}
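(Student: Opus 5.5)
The plan is to turn the existence of $\theta$ into a statement about finite-dimensional unitary matrices, and then read that statement off the joint distribution of the traces, which the moments $I_W$ determine. Throughout, write $tr=\frac1N Tr$ and $\|x\|_2=tr(x^*x)^{1/2}$. For a unitary $x$ we have $\|x-\id\|_2^2=2-2\,\mathrm{Re}\,tr(x)$, so every closeness condition on $w_c(u,v)$ is a condition on $\mathrm{Re}\,Tr(w_c(u,v))$ alone.

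\emph{Step 1 (reformulation).} I will show that a morphism $\theta:\Gamma\to\unit(R^\omega)$ with $\theta(w_k)\neq\id$ exists if and only if the following holds: there is $\varepsilon>0$ such that for every $\delta>0$ there are $N$ and $u,v\in\unit(N)$ with $\|w_c(u,v)-\id\|_2<\delta$ for all $c<k$ and $\|w_k(u,v)-\id\|_2>\varepsilon$.

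For the forward direction, write $\theta(a)=(a_n)_\omega$ and $\theta(b)=(b_n)_\omega$ with $a_n,b_n\in\unit(R)$. Since $R$ is hyperfinite, each unitary of $R$ is $\|\cdot\|_2$-approximated by unitaries of some $M_{2^m}\subset R$, and words are $\|\cdot\|_2$-continuous in their letters. Taking $n$ in a suitable $\omega$-large set and then approximating gives the matrices, with $\varepsilon$ any number below $\|\theta(w_k)-\id\|_2$.

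For the converse, choose $\delta=1/n$ and the corresponding $u_n,v_n\in\unit(N_n)$. Use unital trace-preserving embeddings $M_{N_n}\hookrightarrow R$, which exist since $R\cong M_{N_n}\otimes R$. Set $\theta(a)=(u_n)_\omega$ and $\theta(b)=(v_n)_\omega$. This kills $w_1,\ldots,w_{k-1}$, so it factors through $\Gamma$, and it satisfies $\|\theta(w_k)-\id\|_2\ge\varepsilon$.

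\emph{Step 2 (moments determine the distribution).} Fix $N$ and let $\mu_N$ be the push-forward of Haar measure on $\unit(N)^2$ under $(u,v)\mapsto(Tr\,w_1(u,v),\ldots,Tr\,w_k(u,v))\in\cz^k$. Since $Tr(w^{-1})=\overline{Tr(w)}$ for unitaries, the numbers $I_W(N,i,j)$ are exactly the mixed moments $\int z^{i}\bar z^{j}\,d\mu_N$. The measure $\mu_N$ is supported in the compact polydisc $\{|z_c|\le N\}$. By Stone--Weierstrass, polynomials in $z,\bar z$ are dense in continuous functions there, so $I_W(N,\cdot)$ determines $\mu_N$ completely.

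\emph{Step 3 (positivity detects existence).} Consider the open set
\[O_{N,\delta,\varepsilon}=\{z\in\cz^k:\ \mathrm{Re}\,z_c>N(1-\delta^2/2)\ (c<k),\ \mathrm{Re}\,z_k<N(1-\varepsilon^2/2)\}.\]
Its preimage in $\unit(N)^2$ is open. Haar measure has full support, so $\mu_N(O_{N,\delta,\varepsilon})>0$ if and only if there exist $u,v$ satisfying the conditions of Step 1. Combining the three steps, the condition ``$\exists\varepsilon\ \forall\delta\ \exists N:\ \mu_N(O_{N,\delta,\varepsilon})>0$'' is determined by $I_W$ and is equivalent to the existence of $\theta$.

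\emph{Main obstacle.} I expect the delicate step to be the forward direction of Step 1. One must pass from an element of the ultrapower to a single finite-dimensional pair without losing the strict inequality for $w_k$. This is handled by choosing $\varepsilon$ strictly below $\|\theta(w_k)-\id\|_2$ and using the uniform $\|\cdot\|_2$-continuity of a fixed finite set of words, which bounds the accumulated error by (word length)$\times$(approximation error). Step 2 is routine once one notices the support is compact, so there is no moment-problem indeterminacy. The word ``decide'' should be read as ``the answer is a function of $I_W$''. The paper's intended sense may be more effective, but the argument above gives at least this.
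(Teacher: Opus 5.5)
Your proposal is correct and follows essentially the paper's route: reduce the existence of $\theta$ to finite-dimensional approximate solutions $(u,v)\in\mathcal{U}(N)^2$, and detect those through Haar integrals of functions of the traces, using that Haar measure has full support (the paper via $\max f=\lim_n(\int f^n)^{1/n}$, you via the push-forward measure $\mu_N$). Your Step 1 makes explicit the $R^\omega$-to-matrices reduction that the paper leaves implicit, and your Step 2 (compact support plus Stone--Weierstrass) supplies the justification the paper only asserts when it says its $f$ can be computed from $I_W$, since that $f$ involves a maximum and is not a polynomial in the traces.
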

\begin{proof}
We want to find $(u,v)\in\unit(N)^2$ such that $tr(w_i(u,v))$ is close to $1$ for $i=1,\ldots,k-1$ and $tr(w_k(u,v))$ is close to $0$.

For $(X,\mu)$ a finite space with measure and $f:X\to\rz_+$ we have:
\[max\{f(x):x\in X\}=\lim_{n\to\infty}\sqrt[n]{\int_Xf^n(y)d\mu(y)}.\]

Apply this statement for $X=\unit(N)^2$ endowed with the Haar measure and \[f(u,v)=max\{0,\sum_{i=1}^{k-1}Re(tr(w_i(u,v)))-Re(tr(w_k(u,v)))\}.\] 
The function $f$ can be computed from $I_W$, as $max\{a,b\}=(a+b+\sqrt{(a+b)^2})/2$ and $(1-tr(w_i))(1-tr(w_i)^*)=2-2Re(tr(w_i))$.
\end{proof}

If every group were hyperlinear, then the existance of such a $\theta$ is equivalent to $w_k\notin \langle\!\langle w_1,\ldots,w_{k-1}\rangle\!\rangle$. As such we find the following question natural.

\begin{qu}\label{q:normal group}
Can the function $I_W$ decide if $w_k\in \langle\!\langle w_1,\ldots,w_{k-1}\rangle\!\rangle$?
\end{qu}

Related to this:

\begin{qu}
Assume there exists $n\in\nz$ such that $|1-Tr(w_k(u,v))| \leqslant n\cdot\sum_{c=1}^{k-1}|1-Tr(w_c(u,v)|$, for each $N$, each $u,v\in\unit(N)$. Does it follow that $w_k\in \langle\!\langle w_1,\ldots,w_{k-1}\rangle\!\rangle$?
\end{qu}

\subsection{Main results and organisation of the paper}

In this paper we provide a general formula for the expected value of $\Pi_{c=1}^kTr(w_c(u,v))$, when $u,v\in G$, where $G$ is any compact subgroup of $\unit(N)$.

\begin{te}(Theorem \ref{te:main result})
	Let $G$ be a compact group, $\mu$ be its Haar measure. Then:
	\[\int_{G^2}\Pi_{c=1}^kTr(w_c(g,h))d\mu(g)d\mu(h)=Tr_{B(H^{\otimes r})}\big( (\Omega_{m_u}\otimes\Omega_{m_v})P_\delta P_\gamma\big).\]
\end{te}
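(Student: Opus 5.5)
We interpret the notation as follows. $H=\cz^N$ is the space on which $G\subseteq\unit(N)$ acts. Write $r=\sum_c |w_c|$ for the total length of the words. Let $m_u$ (resp. $m_v$) be the number of occurrences of $u^{\pm1}$ (resp. $v^{\pm1}$), counted with sign pattern. $\Omega_{m}$ denotes the projection $\int_G g^{\otimes m_+}\otimes \bar g^{\otimes m_-}\,d\mu(g)$ onto the $G$-invariant vectors of the corresponding mixed tensor power. The operators $P_\delta,P_\gamma$ are the permutation operators on $H^{\otimes r}$ encoding, respectively, how the letters are placed into the cyclic words and the reordering that groups all $u$-letters first and all $v$-letters after.

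\emph{Step 1: linearize each trace.} The plan is to use the elementary identity $Tr(A_1A_2\cdots A_\ell)=Tr_{H^{\otimes \ell}}\big((A_1\otimes\cdots\otimes A_\ell)\,C_\ell\big)$, where $C_\ell$ is the cyclic shift permutation. Combined with $Tr(X)Tr(Y)=Tr(X\otimes Y)$, this gives
\[\Pi_{c=1}^kTr(w_c(g,h))=Tr_{B(H^{\otimes r})}\big((L_1\otimes\cdots\otimes L_r)\,P_\gamma\big),\]
where each letter $L_i\in\{g,h,g^{-1},h^{-1}\}$ and $P_\gamma$ is the permutation whose cycles are the words. For inverse letters we replace $g^{-1}=g^*$ by $\bar g$ acting on $\bar H\cong H$ via the transpose. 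Equivalently, we absorb the transposes into partial transposes of the permutation operator. This is the standard trick that turns a trace of a product into a trace against a permutation.

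\emph{Step 2: separate variables.} Next, conjugate by the permutation $P_\delta$ that sorts the tensor factors so that all $g$-type factors come first and all $h$-type factors after. This gives $L_1\otimes\cdots\otimes L_r=P_\delta^{-1}(g^{\otimes}\otimes h^{\otimes})P_\delta$, possibly with conjugates. By cyclicity of the trace we can move $P_\delta^{-1}$ around so that the integrand becomes $Tr\big((X(g)\otimes Y(h))P_\delta P_\gamma\big)$, after renaming the permutations as in the statement. Here $X(g)=g^{\otimes a}\otimes\bar g^{\otimes b}$ and $Y(h)$ is defined similarly.

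\emph{Step 3: integrate.} The trace is linear and the integrand is continuous, so Fubini lets us pass the integrals inside. We have $\int_G X(g)\,d\mu(g)=\Omega_{m_u}$, and this is the orthogonal projection onto the fixed vectors of the representation $X$, by invariance of Haar measure. The same holds for $h$. Independence of $g$ and $h$ gives $\int\!\!\int X(g)\otimes Y(h)=\Omega_{m_u}\otimes\Omega_{m_v}$, which yields the formula.

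The main obstacle is bookkeeping in Step 1 for inverse letters. We need the transpose/partial-transpose convention to match the paper's definitions of $P_\delta,P_\gamma$. I will first try to avoid transposes entirely by working on $H\oplus\bar H$ factors, using $\bar g$ with the identification $Tr(A\bar B^{T}\cdots)$. If the paper's $P$'s are genuine permutations only on $H^{\otimes r}$, I will instead treat $g^{-1}=g^*$ through $Tr(g^*Y)=\overline{Tr(Y^*g)}$ and the flip. Either way, the identity reduces to checking it on matrix units, which is routine.
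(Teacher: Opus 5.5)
There is a genuine gap, and it sits exactly at the inverse-letter step you postponed as ``bookkeeping''. In the statement, $\Omega_m$ is not the Haar average of a representation. It is the Casimir element of the commutant $(\rho_m(G))'$, namely $\sum_i c_i\otimes u_i^*$ for a basis $(c_i)$ and its trace-dual basis $(u_i)$. It is characterised by $Tr((x\otimes y)\Omega_m)=Tr(xy)$ for $x,y\in(\rho_m(G))'$, and it is not a projection. For $G=\unit(N)$ and $m=1$ the commutant is $\cz I$, so $\Omega_1=\frac1N I\otimes I$. Your reading already fails for the single word $uu^{-1}$. Here $r=2$ and $\gamma=\delta=(1\,2)$, so $P_\delta P_\gamma=I$, and the left side is $Tr(I)=N$. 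The Casimir element gives $Tr(\frac1N I\otimes I)=N$. The rank-one projection onto the invariant vector $\sum_i e_i\otimes\bar e_i$ has trace $1$, whichever permutation of the two factors you pair it with.

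The underlying problem is that $P_\gamma$ is a genuine permutation operator on $H^{\otimes r}$, so the inverse letters must enter as $g^*$ acting on $H$. If you keep $g^*$, your Steps 1--2 are essentially the paper's starting corollary; the sorting into $u,u^*,v,v^*$ blocks is already built into $\gamma=\lambda^{-1}\sigma\lambda$. The operator to integrate is then $g^{\otimes m}\otimes(g^*)^{\otimes m}$, which is not a representation of $G$, and its Haar average is not a projection. Replacing $g^*$ by $\bar g$ is a partial transpose, and it turns $P_\gamma$ into an index contraction that is no longer a permutation. So you cannot keep both $\bar g$ and $P_\gamma$, and the ``either way, check on matrix units'' step does not go through.

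The missing idea is the identity $\int_G\rho(g)\otimes\rho(g)^*\,d\mu(g)=\Omega_\rho F$. It is obtained in three steps:
\begin{enumerate}
\item Average $Ad(\rho(g))$ to get the orthogonal projection $E_\rho$ of $B(H)\cong H\otimes\bar H$ onto $\rho(G)'$. This is where your ``projection onto invariants'' actually lives.
\item Expand $E_\rho=\sum_i c_i\langle\cdot,u_i\rangle_{B(H)}$ using the dual basis.
\item Check that the partial transpose $(\mathrm{Id}\otimes\mathcal{T})\Psi^{-1}$ sends $S\langle\cdot,T\rangle_{B(H)}$ to $(S\otimes T^*)F$.
\end{enumerate}
Applied to $\rho_m$, this gives $\int_G g^{\otimes m}\otimes(g^*)^{\otimes m}\,d\mu(g)=\Omega_m P_{\delta_m}$. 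In particular, the factor $P_\delta$ in the statement is the flip produced by this partial transpose, not a sorting permutation as in your Step 2. Without this computation your argument does not reach the stated formula.
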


Here $\Omega_m$ is the Casimir element, see Definition \ref{de:Casimir element} and $\gamma\in\s(r)$ is a permutation associated to the family of words $W$, constructed in the next subsection.

In the second part of the article we use connected cummulands to give a partial answer to Question \ref{q:normal group}.

\begin{te}(Theorem \ref{te:leading coefficient})
	If $c_0^W\neq 0$, then the family of words $W$ is planar.
\end{te}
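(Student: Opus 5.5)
The plan is to read $c_0^W$ off the Weingarten expansion of the connected cumulant over $G=\unit(N)$, and to show that every term contributing to it is a genus-zero gluing of the words, which is exactly a planar diagram for $W$. I am assuming the definitions I expect from the surrounding text: $c_0^W$ is the coefficient of the extremal power of $N$, which I take to be $N^{2-k}$, in the connected cumulant $\kappa_W(N)$ of $Tr(w_1),\ldots,Tr(w_k)$ over $\unit(N)^2$; and \emph{planar} means there is a spherical van Kampen-type diagram with $k$ boundary components reading $w_1,\ldots,w_k$. The exponent $N^{2-k}$ is a guess at the normalization and should be adjusted to whatever the paper uses.

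\textbf{Expansion as a sum over surfaces.} Apply Theorem \ref{te:main result} with $G=\unit(N)$. There $\Omega_{m}$ is given by the Weingarten formula, so $P_\delta$ and $\Omega_{m_u}\otimes\Omega_{m_v}$ expand over pairs of matchings. One matching $\sigma_u$ pairs the occurrences of $u$ with those of $u^{-1}$, and another $\sigma_v$ does the same for $v$; each is weighted by Weingarten coefficients $Wg(\cdot,N)$. After taking the trace against $P_\gamma$, each term becomes $N^{\#\text{cycles}}$ times a product of Weingarten values.
\begin{itemize}
\item Read topologically, each term corresponds to a surface. Take $k$ polygons (faces) whose sides are labelled by the letters of $w_1,\ldots,w_k$, and glue sides according to $\sigma_u,\sigma_v$. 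The Weingarten permutations contribute extra vertices and faces in the usual Collins--Śniady way.
\item The exponent of $N$ is then the Euler characteristic of the glued surface, minus nonnegative corrections coming from the $N^{-|\cdot|}$ decay of $Wg$.
\item Passing from moments to the connected cumulant by Möbius inversion over set partitions of $\{1,\ldots,k\}$ keeps exactly the terms whose gluing is connected. This is the standard fact that cumulants equal connected diagram sums, checked by the usual exponential-formula argument.
\end{itemize}

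\textbf{Bounding the exponent.} A connected orientable surface with $k$ boundary words has Euler characteristic $2-2g-k\le 2-k$. Hence every term of $\kappa_W(N)$ is $O(N^{2-k})$, and the coefficient of $N^{2-k}$ collects only terms that satisfy both of the following:
\begin{itemize}
\item the Weingarten factors are evaluated at their leading (Möbius/Catalan) order, meaning the permutations lie on geodesics in the Cayley graph;
\item the glued surface has genus $0$.
\end{itemize}
So if $c_0^W\neq0$, at least one such term exists. Because a nonzero sum requires a nonzero summand, no cancellation analysis is needed. The only point to verify is that terms of the right order are present at all, which follows directly since $c_0^W$ is literally that sum.

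\textbf{From a genus-zero term to planarity.} A contributing term gives a gluing of the $k$ word-polygons, possibly with extra faces coming from the geodesic Weingarten data, into a sphere with $k$ holes. In that gluing every $u$-edge is matched with a $u^{-1}$-edge and every $v$-edge with a $v^{-1}$-edge. Collapsing the extra faces, which are labelled by products that reduce trivially, yields a planar diagram for $W$. I expect this step to be the main obstacle: one must make sure that the Weingarten-induced faces do not break planarity or the labelling consistency. My first attempt would be to use the geodesic condition to show that these faces are non-crossing, so that they can be contracted inside the sphere.
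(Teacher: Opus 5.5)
You have the right overall shape (leading terms are genus zero, and genus zero gives planarity), but there is a genuine gap. It sits in the steps you treat as routine, not only in the one you flag.

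\textbf{Connectivity.} Your claim that Möbius inversion over set partitions keeps exactly the connected terms fails if the terms are the Weingarten pairs $(p,q)$. The exponential-formula argument needs weights that are multiplicative over components, and Weingarten numbers are not: $W^N_{\id}$ in $\s(2)$ is $1/(N^2-1)$, not $(1/N)^2$. The defect is of relative order $N^{-2}$, and on disconnected terms of order $N^{2|\pi|-k}$ it lands exactly on $N^{2-k}$.

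Concretely, take $W=(u,u^{-1},u,u^{-1})$, so $m_u=2$ and $\gamma=\id$. For $N\ge 2$ one has $\kappa_\gamma^N=\mathbb{E}|\Tr u|^4-2(\mathbb{E}|\Tr u|^2)^2=0$, so $c_0=0$. Yet the only pairs with $\langle\gamma,p\oplus q^{-1}\rangle$ transitive are the two with $p\neq q$, each contributing $N\,W^N_{(12)}=-1/(N^2-1)$. So your ``connected sum'' has leading coefficient $-2$.

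The paper handles this in two moves. It expands each product of Weingarten numbers as a Neumann series over generalised paths of bijections (Proposition \ref{p:productofweingarden}), whose weights do factor. It then measures connectivity through the group $G_T$ generated by $\gamma$ and \emph{all} bijections on the path, which yields Theorem \ref{te:formula cummuland}.

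In the example, the compensating $+2$ comes from the paths $(p,q,p)$. These are sharp, since $\rho(T)=1_S$ and $\chi(T)=-2=2-k$, but they are not geodesic, since $L(T)=2>0=|p^{-1}p|$. So your assertion that only geodesic, Möbius-order Weingarten data reaches $N^{2-k}$ is false. The non-crossing argument you planned to base on geodesicity therefore has nothing to stand on.

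\textbf{Exponent bound and the last step.} Once connectivity is measured along the whole path, a connected term can have endpoints whose gluing of the word polygons is disconnected. In the example, the gluing by $p$ gives two spheres. So ``a connected surface has $\chi\le 2-k$'' applies to no surface you have actually built. What is needed is that each merge of blocks performed by the path costs $2$ in the exponent, i.e.\ $\chi(T)\le 2|\rho(T)|-k$. The paper proves this via Ree's theorem for $T_{p,q}$ together with Lemmas \ref{l:technical lemma} and \ref{l:from lenght 2}.

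The idea you are missing for the final step is to discard the Weingarten data rather than collapse it. For a sharp $T\in DPh(p,q)$, the chain $2-k=\chi(T)\le V_\gamma(p,p)-(m_u+m_v)-2(|\rho(T_{p,p})|-1)\le 2-k$ is forced to be an equality. Hence the single matching $p_u\oplus p_u^{-1}\oplus p_v\oplus p_v^{-1}$ is a sharp involution for $\gamma_B$ on every block $B$ of $\rho(T_{p,p})$ (Theorem \ref{te:sharp split}). Geometrically this is a disjoint union of spheres, not one sphere with $k$ holes.

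Each block is then planar by the purely combinatorial induction of Proposition \ref{p:sharp to planar}. One merges two $\gamma$-cycles along a transposition of $\delta$; in the one-cycle case, non-crossing chords force an adjacent cancelling pair $xx^*$. Finally, the relations for the blocks multiply to a relation for $W$.
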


The paper is organised as follows. In Section 2 we establish some preliminary algebraic computations. Section 3 is dedicated to the Casimir element, that appears in the final general formula of $I_W$. In Sections 4 and 5 we discuss the unitary case, in the stable and non-stable case respectively. Section 6 contains an argument that the function $I_W$ might actually contain little information on the family of words $W$. In Section 7 we discuss paths between bijections, to be used in the cummuland formula provided in Section 8. Section 9 proves the Euler estimate for a path and in Section 10 we prove the main result about planarity.

\subsection*{Acknowledgements}

Results belong to the authors. ChatGpt was used in the early stages of writing of this paper to assists with proofs, especially with the translation of toplogical statements in sections 8-10. The final form of the paper was completely transformed and rewritten.

\subsection{Balanced words in two variables}\label{balanced words}

Let $w_1,\dots,w_k$ be words in the alphabet $\{u,u^*,v,v^*\}$. We assume each word family is \emph{balanced} in the sense that the total number of occurrences of $u$ equals that of $u^*$, and similarly for $v$ and $v^*$.

Let
\[ m_u=\#(u)=\#(u^*),\qquad m_v=\#(v)=\#(v^*).\]
Thus the total number of letters is
\[ r=2m_u+2m_v.\]

Enumerate the letters occurring in the family $(w_1,\dots,w_k)$ from left to right. Let $f:\{1,\ldots,r\}\to\{u,u^*,v,v^*\}$ be the spelling of their product such that:
\begin{align*}
	f^{-1}(u)=&\{i_1,\ldots,i_{m_u}\},\mbox{ with } i_1<i_2<\ldots<i_{m_u};\\
	f^{-1}(u^*)=&\{j_1,\ldots,j_{m_u}\},\mbox{ with } j_1<j_2<\ldots<j_{m_u};\\
	f^{-1}(v)=&\{r_1,\ldots,r_{m_v}\},\mbox{ with } r_1<r_2<\ldots<r_{m_v};\\
	f^{-1}(v^*)=&\{s_1,\ldots,s_{m_v}\},\mbox{ with } s_1<s_2<\ldots<s_{m_v}.
\end{align*}

Construct $\sigma\in Sym(r)$ having the cycle decomposition corresponding to the lengths of the $w_1,\ldots,w_k$ words:
\[\sigma=\big(1,2,\ldots,lg(w_1)\big)\big(lg(w_1)+1,\ldots,lg(w_1)+lg(w_2)\big)\ldots\big(lg(w_1)+\ldots+lg(w_{k-1})+1,\ldots,r\big).\]
For example, if $m_u=2$, $m_v=3$, $w_1=uvu^{-1}v^{-1}$ and $w_2=uv^2u^{-1}v^{-2}$ then $\sigma=(1,2,3,4)(5,6,7,8,9,10)$.

We want a permutation that encodes the words $w_1,\ldots,w_k$. Construct $\lambda\in Sym(r)$ by:
\[\lambda(t)=i_t;\lambda(m_u+t)=j_t;\lambda(2m_u+t)=r_t;\lambda(2m_u+m_v+t)=s_t.\]

Define $\gamma=\lambda^{-1}\sigma\lambda$. Cycles in $\gamma$ corresponds to words $w_1,\ldots,w_k$. An $\{1,\ldots,m_u\}$ entry corresponds to letter $u$, a $\{m_u+1,\ldots,2m_u\}$ entry corresponds to letter $u^*$ and so on. For the above example, $w_1=uvu^{-1}v^{-1}$ and $w_2=uv^2u^{-1}v^{-2}$, we have $\gamma=(1, 5, 3, 8)(2,6,7,4,9,10)$. 

Let $G$ be a compact subgroup of $\unit(N)$ and denote by $\mu$ its Haar measure. Define:
\[I_\gamma^G=\int_{G^2}\Pi_{c=1}^kTr(w_c(u,v))d\mu(u)d\mu(v).\]

In other words $I_\gamma^G$ is the expected value of $\Pi_{c=1}^kTr(w_c(u,v))$ over $G^2$. If $G=\unit(N)$ we write $I_\gamma^N$ instead of $I_\gamma^{\unit(N)}$. We compute these moments and their connected cumulants. Theorem \ref{te:leading coefficient} shows that a nonzero leading connected coefficient forces a product-of-conjugates relation. This gives a necessary relation condition, without deciding the general problem.

\begin{ob}
If the famility of words is not balanced, it is easy to see that $I_\gamma^N=0$, by the properties of the Haar measure. However, for an albitrary compact subgroup $G$, unbalanced moments need not vanish. This happens for $G=\s(N)$, for eample.
\end{ob}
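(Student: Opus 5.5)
The plan is to exploit the invariance of the Haar measure on $\unit(N)$ under multiplication by central elements, namely the scalar unitaries $z\cdot\id$ with $|z|=1$. For each word $w_c$ let $a_c$ be the number of occurrences of $u$ minus the number of occurrences of $u^*$ in $w_c$, and let $b_c$ be defined likewise for $v$. Put $a=\sum_c a_c$ and $b=\sum_c b_c$. The family is unbalanced exactly when $a\neq 0$ or $b\neq 0$.

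First, suppose $a\neq 0$. Fix $z\in\cz$ with $|z|=1$. Since $z\cdot\id$ commutes with everything, we have $w_c(zu,v)=z^{a_c}w_c(u,v)$: each $u$ contributes a factor $z$ and each $u^*$ a factor $\bar z=z^{-1}$. Hence
\[\Pi_{c=1}^kTr(w_c(zu,v))=z^{a}\,\Pi_{c=1}^kTr(w_c(u,v)).\]
Left-invariance of the Haar measure under $u\mapsto zu$ then gives $I_\gamma^N=z^aI_\gamma^N$ for all $z$ on the unit circle. Choosing $z$ with $z^a\neq 1$, for instance $z=e^{\pi i/a}$, forces $I_\gamma^N=0$. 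The case $b\neq 0$ is identical, with the substitution $v\mapsto zv$. There is no real obstacle here; the only point to check is that the $z$-factors commute past the other letters, which holds because $z\cdot\id$ is central.

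For the second claim, the argument above breaks down for a general compact $G$ precisely because $G$ need not contain enough central scalars. I would exhibit an explicit example in $G=\s(N)$, realised as permutation matrices inside $\unit(N)$. Take the single unbalanced word $w_1=u$. Then $Tr(w_1(u,v))$ is the number of fixed points of the permutation $u$. Its expectation under the uniform measure on $\s(N)$ equals $1$, by linearity: each point is fixed with probability $1/N$. So this unbalanced moment equals $1\neq 0$.

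This example is consistent with the first part: for $N\geqslant 2$, the only scalar matrix in $\s(N)$ is $\id$, so the rotation trick is unavailable.
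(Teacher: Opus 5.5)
Your proposal is correct and follows the argument the paper intends. The paper only asserts the vanishing ``by the properties of the Haar measure'', which is exactly the invariance under $u\mapsto zu$ (with $|z|=1$) that you use. Your example $w_1=u$ on $\s(N)$, whose expected trace (the mean number of fixed points) equals $1$, supplies the concrete instance the paper leaves unstated.
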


\section{Computations in $B(H^{\otimes r})$}
	
Throughout the paper, $H$ is a finite dimensional Hilbert space over $\cz$ of dimension $N\in\nz^*$. The \emph{inner product}, denoted by $\ip{\cdot}{\cdot}_H$ is linear in the first variable and conjugate-linear in the second.

Choose $\{e_i\}_{i=1}^N$ an ortonormal basis in $H$. \emph{The trace} of a bounded operator $T\in B(H)$ is defined as:
\[Tr_{B(H)}(T)=\sum_{i=1}^N\bigip{T(e_i)}{e_i}_H.\]

If $S$ is a finite group (usually the symmetric group), and $g\in S$, we also denote by $g\in L(S)$, the corresponding unitary in the group algebra. We denote by $\tau$ the group algebraic trace: $\tau(\sum_{g\in S}\lambda_gg)=\lambda_{Id}$.

\subsection{Permutation encoding of trace monomials}\label{sec:words}

The first step in computing the expected value of $\Pi_{c=1}^kTr(w_c(u,v))$ is to obtain this product as a trace of an operator on a bigger Hilbert space.

\subsection{Permutation trace identity}

For $\gamma\in \s(r)$, define $P_\gamma\in B(H^{\otimes r})$ by:
\[P_\gamma(\xi_1\otimes \cdots \otimes \xi_r)=\xi_{\gamma(1)}\otimes \cdots \otimes \xi_{\gamma(r)}.\]

\begin{p}[\cite{Pro21}, Proposition 1.5]\label{prop:trace-perm}
Let $A_1,\dots,A_r\in B(H)$. Let $\gamma\in S_r$ with cycle decomposition
	\[\gamma=(i_1^{(1)}\,i_2^{(1)}\,\dots\,i_{\ell_1}^{(1)})\cdots(i_1^{(s)}\,i_2^{(s)}\,\dots\,i_{\ell_s}^{(s)}).\]
	Then
	\[\Tr_{B(H^{\otimes r})}\bigl(P_\gamma(A_1\otimes \cdots \otimes A_r)\bigr)=\prod_{c=1}^s \Tr_{B(H)}\bigl(A_{i_1^{(c)}}A_{i_2^{(c)}}\cdots A_{i_{\ell_c}^{(c)}}\bigr).\]
\end{p}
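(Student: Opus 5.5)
The plan is to expand both sides in the orthonormal basis $\{e_i\}_{i=1}^N$ and compare them term by term. The trace on $B(H^{\otimes r})$ is computed in the product basis $e_{a_1}\otimes\cdots\otimes e_{a_r}$, where $a=(a_1,\dots,a_r)\in\{1,\dots,N\}^r$. Since the trace is linear in each $A_t$, it is enough to keep track of matrix coefficients $(A)_{pq}=\ip{Ae_q}{e_p}_H$.

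First I compute the diagonal coefficient. By definition,
\[P_\gamma(A_1\otimes\cdots\otimes A_r)(e_{a_1}\otimes\cdots\otimes e_{a_r})=A_{\gamma(1)}e_{a_{\gamma(1)}}\otimes\cdots\otimes A_{\gamma(r)}e_{a_{\gamma(r)}}.\]
Pairing with $e_{a_1}\otimes\cdots\otimes e_{a_r}$ factorizes the inner product over tensor positions. This gives
\[\Tr_{B(H^{\otimes r})}\bigl(P_\gamma(A_1\otimes\cdots\otimes A_r)\bigr)=\sum_{a}\prod_{t=1}^r (A_{\gamma(t)})_{a_t\,a_{\gamma(t)}}.\]

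Next I regroup the product over $t$ according to the cycles of $\gamma$. For a cycle $(i_1\,i_2\,\dots\,i_\ell)$, meaning $\gamma(i_j)=i_{j+1}$ with indices mod $\ell$, the corresponding factors are
\[(A_{i_2})_{a_{i_1}a_{i_2}}(A_{i_3})_{a_{i_2}a_{i_3}}\cdots(A_{i_1})_{a_{i_\ell}a_{i_1}}.\]
The cycles partition $\{1,\dots,r\}$, so the sum over $a$ splits into a product of independent sums, one for each cycle. For a single cycle, summing over $a_{i_1},\dots,a_{i_\ell}$ is exactly the matrix-product formula for $\Tr_{B(H)}(A_{i_2}A_{i_3}\cdots A_{i_\ell}A_{i_1})$. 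By cyclicity of the trace this equals $\Tr_{B(H)}(A_{i_1}\cdots A_{i_\ell})$, and taking the product over the $s$ cycles gives the claim.

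The only delicate point is the bookkeeping of conventions. I must check that $P_\gamma$ moves the vector in slot $\gamma(t)$ into slot $t$, and not the other way round. I must also check that the cycle notation $(i_1\,\dots\,i_\ell)$ means $\gamma(i_j)=i_{j+1}$. With the opposite conventions one would obtain the product in reversed order, $\Tr(A_{i_\ell}\cdots A_{i_1})$, which is not equal to the stated product in general. So the main care goes into verifying the displayed diagonal coefficient from the definition of $P_\gamma$. After that, the rest is a routine index contraction.
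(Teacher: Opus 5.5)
Your proposal is correct and follows the same route as the paper: expand the trace in the product basis, factor the diagonal coefficient over tensor slots, and regroup the product by cycles of $\gamma$ into single-matrix traces. Your explicit use of cyclicity (equivalently, a cyclic relabeling of the summation indices) to pass from $\Tr(A_{i_2}\cdots A_{i_\ell}A_{i_1})$ to $\Tr(A_{i_1}\cdots A_{i_\ell})$ is a step the paper leaves implicit. The paper simply matches its two index expansions, which are not literally equal term by term: one carries $A_{\gamma(p)}$ where the other carries $A_{i_p}$.
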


\begin{proof}
	Fix an orthonormal basis $(e_1,\dots,e_N)$ of $H$. Then:
\begin{align*}
	\Tr_{B(H^{\otimes r})}\bigl(P_\gamma(A_1\otimes \cdots \otimes A_r)\bigr)=&\sum_{a_1,\dots,a_r}\Bigl\langle P_\gamma(A_1\otimes \cdots\otimes A_r)(e_{a_1}\otimes \cdots \otimes e_{a_r}),e_{a_1}\otimes \cdots \otimes e_{a_r}\Bigr\rangle\\
	=&\sum_{a_1,\dots,a_r}\Bigl\langle P_\gamma(A_1e_{a_1}\otimes \cdots\otimes A_re_{a_r}),e_{a_1}\otimes \cdots \otimes e_{a_r}\Bigr\rangle\\
	=&\sum_{a_1,\dots,a_r}\Bigl\langle A_{\gamma(1)}e_{a_{\gamma(1)}}\otimes \cdots\otimes A_{\gamma(r)}e_{a_{\gamma(r)}},e_{a_1}\otimes \cdots \otimes e_{a_r}\Bigr\rangle\\
	=&\sum_{a_1,\dots,a_r}\Pi_{p=1}^r \ip{A_{\gamma(p)}e_{a_{\gamma(p)}}}{e_{a_p}}
\end{align*}
For the right hand side:
\begin{align*}
\Tr_{B(H)}\bigl(A_{i_1^{(c)}}A_{i_2^{(c)}}\cdots A_{i_{\ell_c}^{(c)}}\bigr)=&\sum_{a_{i_1^{(c)}}a_{i_2^{(c)}}\cdots a_{i_{\ell_c}^{(c)}}}A_{i_1^{(c)}}(a_{i_1^{(c)}},a_{i_2^{(c)}})\cdot A_{i_2^{(c)}}(a_{i_2^{(c)}},a_{i_3^{(c)}})\cdot\ldots\cdot A_{i_{\ell_c}^{(c)}}(a_{i_{\ell_c}^{(c)}},a_{i_1^{(c)}})\\
=&\sum_{a_{i_1^{(c)}}a_{i_2^{(c)}}\cdots a_{i_{\ell_c}^{(c)}}}A_{i_1^{(c)}}(a_{i_1^{(c)}},a_{\gamma(i_1^{(c)})})\cdot A_{i_2^{(c)}}(a_{i_2^{(c)}},a_{\gamma(i_2^{(c)})})\cdot\ldots\cdot A_{i_{\ell_c}^{(c)}}(a_{i_{\ell_c}^{(c)}},a_{\gamma(i_{\ell_c}^{(c)})})\\
=&\sum_{a_{i_1^{(c)}}a_{i_2^{(c)}}\cdots a_{i_{\ell_c}^{(c)}}}\Pi_{p=1}^{i_{\ell_c}^{(c)}}A_{i_p^{(c)}}(a_{i_p^{(c)}},a_{\gamma(i_p^{(c)})})=\sum_{a_{i_1^{(c)}}a_{i_2^{(c)}}\cdots a_{i_{\ell_c}^{(c)}}}\Pi_{p=1}^{i_{\ell_c}^{(c)}}\ip{A_{i_p^{(c)}}e_{a_{\gamma(i_p^{(c)})}}}{e_{a_{i_p^{(c)}}}}
\end{align*}	
As $\{i_1^{(1)},i_2^{(1)},\dots,i_{\ell_1}^{(1)},\ldots\ldots, i_1^{(s)},i_2^{(s)},\dots,i_{\ell_s}^{(s)}\}=\{1,\ldots,r\}$, the formula now follows.
\end{proof}

 We are now ready for the main theorem of this section. Recall that $w_1,\ldots,w_k$ is a colection of balanced words, with total number of letters $r=2m_u+2m_v$, and $\gamma\in\s(r)$ is the associated permutation. 

\begin{te}
For any $u,v\in\unit(N)$, we have: 
	\[\Pi_{c=1}^kTr_{B(H)}(w_c(u,v))=Tr_{B(H^{\otimes r})}\big(P_\gamma(u^{\otimes m_u}\otimes(u^*)^{\otimes m_u}\otimes v^{\otimes m_v}\otimes(v^*)^{\otimes m_v})\big).\]
\end{te}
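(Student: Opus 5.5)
The plan is to apply Proposition \ref{prop:trace-perm} directly, with $A_1,\dots,A_r$ the tensor factors of $u^{\otimes m_u}\otimes(u^*)^{\otimes m_u}\otimes v^{\otimes m_v}\otimes(v^*)^{\otimes m_v}$. Thus $A_t=u$ for $1\le t\le m_u$, $A_t=u^*$ for $m_u<t\le 2m_u$, $A_t=v$ for $2m_u<t\le 2m_u+m_v$, and $A_t=v^*$ for the remaining indices. The key observation, read off from the definition of $\lambda$, is that
\[A_t=f(\lambda(t))\qquad\text{for all } t\in\{1,\ldots,r\}.\]
For instance, for $t\le m_u$ we have $\lambda(t)=i_t\in f^{-1}(u)$; the other three blocks are checked the same way.

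By Proposition \ref{prop:trace-perm}, the right-hand side equals the product, over the cycles $(t_1\,t_2\,\dots\,t_\ell)$ of $\gamma$, of $\Tr_{B(H)}(A_{t_1}\cdots A_{t_\ell})$. Since $\gamma=\lambda^{-1}\sigma\lambda$, the map $t\mapsto\lambda(t)$ carries the cycles of $\gamma$ bijectively onto the cycles of $\sigma$. More precisely, $(t_1\,\dots\,t_\ell)$ is a cycle of $\gamma$ if and only if $(\lambda(t_1)\,\dots\,\lambda(t_\ell))$ is a cycle of $\sigma$. Indeed, $\gamma(t)=t'$ iff $\sigma(\lambda(t))=\lambda(t')$.

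The cycles of $\sigma$ are the consecutive blocks $(p+1,\dots,p+lg(w_c))$ with $p=lg(w_1)+\cdots+lg(w_{c-1})$, one for each word. Choose the starting point of each $\gamma$-cycle to be the preimage under $\lambda$ of the first position $p+1$ of the corresponding block; the trace is cyclic, so the choice of starting point does not matter anyway. With this choice, $\lambda(t_q)=p+q$, and hence
\[A_{t_1}\cdots A_{t_\ell}=f(p+1)f(p+2)\cdots f(p+lg(w_c))=w_c(u,v),\]
because $f$ spells the letters of the concatenation $w_1w_2\cdots w_k$ from left to right. Taking the product over $c=1,\dots,k$ gives $\prod_{c=1}^k\Tr_{B(H)}(w_c(u,v))$, which is the claim.

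There is no real obstacle here. The only point needing care is the direction of the conjugation, $\lambda^{-1}\sigma\lambda$ versus $\lambda\sigma\lambda^{-1}$, together with the convention that $P_\gamma$ puts $\xi_{\gamma(p)}$ in slot $p$. Both should be checked against the example in the paper, $\gamma=(1,5,3,8)(2,6,7,4,9,10)$ for $w_1=uvu^{-1}v^{-1}$. Reading $A_1A_5A_3A_8=u\,v\,u^*\,v^*$ confirms the orientation, so the letters within each cycle appear in the correct order rather than reversed.
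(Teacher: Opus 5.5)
Your proof is correct and takes the same route as the paper, whose proof only cites Proposition~\ref{prop:trace-perm} together with the construction of $\gamma=\lambda^{-1}\sigma\lambda$. You supply the details the paper leaves implicit: the identification $A_t=f(\lambda(t))$, the fact that $\lambda$ carries the cycles of $\gamma$ onto the consecutive blocks of $\sigma$, and a correct orientation check against the example $\gamma=(1,5,3,8)(2,6,7,4,9,10)$.
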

\begin{proof}
	This is a consequence of Proposition \ref{prop:trace-perm}, and of the construction of $\gamma\in\s(r)$.
\end{proof}

\begin{cor}\label{initial cor}
Let $G$ be a compact subgroup of $\unit(H)\subset B(H)$ and $\mu$ be its Haar measure. Then:
\[\int_{G^2}\Pi_{c=1}^kTr_{B(H)}(w_c(u,v))d\mu(u)d\mu(v)=Tr_{B(H^{\otimes r})}\big(P_\gamma(\int_Gu^{\otimes m_u}\otimes(u^*)^{\otimes m_u}d\mu(u)\otimes\int_G v^{\otimes m_v}\otimes(v^*)^{\otimes m_v}d\mu(v))\big).\]
\end{cor}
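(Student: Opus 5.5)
The plan is to integrate the pointwise identity of the preceding Theorem over $G^2$ and then move the integral inside the trace. The Theorem is stated for $u,v\in\unit(N)$, so it holds in particular for $u,v\in G\subset\unit(H)$. Integrating both sides against the product Haar measure $\mu\times\mu$ turns the left-hand side into exactly the left-hand side of Corollary \ref{initial cor}. On the right-hand side we obtain
\[\int_{G^2}Tr_{B(H^{\otimes r})}\big(P_\gamma(u^{\otimes m_u}\otimes(u^*)^{\otimes m_u}\otimes v^{\otimes m_v}\otimes(v^*)^{\otimes m_v})\big)d\mu(u)d\mu(v).\]

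The key step is to exchange the integral with the linear map $T\mapsto Tr_{B(H^{\otimes r})}(P_\gamma T)$. Since $B(H^{\otimes r})$ is finite dimensional, this map is a linear functional, hence continuous. The integrand $(u,v)\mapsto u^{\otimes m_u}\otimes(u^*)^{\otimes m_u}\otimes v^{\otimes m_v}\otimes(v^*)^{\otimes m_v}$ is continuous and bounded in norm by $1$ on the compact set $G^2$. Its $B(H^{\otimes r})$-valued integral can therefore be defined entrywise in a matrix basis, and any linear functional commutes with it. This lets us write the right-hand side as $Tr_{B(H^{\otimes r})}\big(P_\gamma\int_{G^2}(\cdots)\,d\mu(u)d\mu(v)\big)$.

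It remains to split the double integral into the tensor product of two single integrals. Identify $H^{\otimes r}=H^{\otimes 2m_u}\otimes H^{\otimes 2m_v}$, so that the integrand is $A(u)\otimes B(v)$ with $A(u)=u^{\otimes m_u}\otimes(u^*)^{\otimes m_u}$ and $B(v)=v^{\otimes m_v}\otimes(v^*)^{\otimes m_v}$. In a product basis, each matrix entry of $A(u)\otimes B(v)$ is a product $a_{pq}(u)b_{st}(v)$ of continuous functions. Fubini on the probability space $G\times G$ then gives $\int\!\!\int a_{pq}(u)b_{st}(v)=\big(\int a_{pq}\big)\big(\int b_{st}\big)$, which shows $\int_{G^2}A(u)\otimes B(v)=\int_GA\,d\mu\otimes\int_GB\,d\mu$.

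No step is a real obstacle. The only care needed is in justifying the vector-valued integral and its compatibility with linear maps and tensor products; the entrywise reduction in a fixed orthonormal basis handles both.
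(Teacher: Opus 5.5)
Your proof is correct and follows the same route as the paper, which states this corollary without a separate proof as an immediate consequence of the preceding pointwise Theorem. The argument is to integrate over $G^2$, pass the integral through the linear functional $T\mapsto Tr_{B(H^{\otimes r})}(P_\gamma T)$, and factor the integral of $A(u)\otimes B(v)$ over the product Haar measure; your entrywise justification of the operator-valued integral and of Fubini makes explicit what the paper leaves implicit.
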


This Corollary reduces the computation of the expected value of $\Pi_{c=1}^kTr(w_c(u,v))$ to the computation of the operator $\int_Gu^{\otimes m_u}\otimes(u^*)^{\otimes m_u}$. Note that this operator does not depend on the words $w_1,\ldots, w_k$. That information is stored in the permutation $\gamma$.

\subsection{Integral of Ad}

The first step determing the value of $\int_Gu\otimes u^*d\mu(u)$ is to compute $\int_GAd(u)d\mu(u)$. We later see how to get from $Ad(u)$ to $u\otimes u^*$. We now consider $G$ a compact group, $\mu$ its Haar measure as before, and $\rho:G\to B(H)$ a continuous representation. Let $A_\rho=\rho(G)^\prime=\{T\in B(H):T\rho(g)=\rho(g)T,\ \forall g\in G\}$ and $E_\rho\in B(B(H))$ be the projection on $A_\rho$. The next Theorem is folklore.

\begin{te}\label{int of ad}
	We have: $\int_G Ad(\rho(g))d\mu(g)=E_\rho$.
\end{te}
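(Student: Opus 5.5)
Let $\Phi=\int_G Ad(\rho(g))\,d\mu(g)\in B(B(H))$, where $Ad(\rho(g))(T)=\rho(g)T\rho(g)^*$. Since $\rho$ is continuous and $G$ compact, $g\mapsto Ad(\rho(g))$ is a continuous map into the finite dimensional space $B(B(H))$, so the integral is well defined and commutes with linear functionals. I would regard $B(H)$ as a Hilbert space with the Hilbert--Schmidt inner product $\ip{S}{T}=Tr_{B(H)}(T^*S)$, and interpret ``the projection on $A_\rho$'' as the orthogonal projection for this inner product. The plan is to show three things: (i) $\Phi$ is the identity on $A_\rho$, (ii) the range of $\Phi$ is contained in $A_\rho$, and (iii) $\Phi$ is self-adjoint. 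Together, (i) and (ii) give $\Phi^2=\Phi$ with range exactly $A_\rho$, and (iii) then makes $\Phi$ the orthogonal projection $E_\rho$.

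For (i), if $T\in A_\rho$ then $\rho(g)T\rho(g)^*=T\rho(g)\rho(g)^*=T$ for every $g$, so $\Phi(T)=\int_G T\,d\mu=T$, since $\mu$ is a probability measure. For (ii), I would use left invariance of the Haar measure: for $h\in G$,
\[\rho(h)\Phi(T)\rho(h)^*=\int_G\rho(hg)T\rho(hg)^*\,d\mu(g)=\Phi(T).\]
Hence $\rho(h)\Phi(T)=\Phi(T)\rho(h)$, which means $\Phi(T)\in A_\rho$.

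For (iii), each $Ad(\rho(g))$ is unitary on $B(H)$ with respect to the Hilbert--Schmidt inner product, and its adjoint is $Ad(\rho(g)^*)=Ad(\rho(g^{-1}))$, because $\rho(g)$ is unitary. Here I would assume that $\rho$ is unitary, which is the setting of the paper (subgroups of $\unit(N)$); for a general representation one can first pass to an invariant inner product. Then
\[\Phi^*=\int_G Ad(\rho(g^{-1}))\,d\mu(g)=\Phi,\]
using that the Haar measure of a compact group is inversion invariant.

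The only delicate point is (iii): it is where the word ``projection'' must be taken to mean the orthogonal projection, and where both unitarity of $\rho$ and unimodularity of $G$ are needed. Steps (i) and (ii) are routine.
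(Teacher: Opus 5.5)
Your proposal is correct and is essentially the paper's proof: range in $A_\rho$ by left invariance, identity on $A_\rho$, self-adjointness, hence an idempotent with range $A_\rho$, i.e. $E_\rho$. You also make explicit the unitarity of $\rho$ and the inversion invariance of the Haar measure, both of which the paper's self-adjointness computation uses without comment.

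The one thing to drop is your aside about non-unitary representations. For the original inner product the statement genuinely fails without unitarity: for instance $\int_G\rho(g)\rho(g)^*\,d\mu(g)\neq \mathrm{Id}_H$ in general, even though $\mathrm{Id}_H\in A_\rho$. Passing to an invariant inner product changes both $\rho(g)^*$ and which projection is orthogonal, so unitarity is a real hypothesis, as in the paper's setting.
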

\begin{proof}
	Let $F=\int_G Ad(\rho(g))d\mu(g)$, so $F(T)=\int_G \rho(g)T\rho(g)^*d\mu(g)$, for $T\in B(H)$. We prove four facts:
	
	\emph{1. $F(T)\in A_\rho$ for all $T\in B(H)$.} Fix $h\in G$. Then, using the invariance of the Haar measure:
	\[\rho(h)F(T)\rho(h)^*=\int_G\rho(hg)T\rho((hg))^*d\mu(g)=\int_G \rho(g)T\rho(g)^*d\mu(g)=F(T).\]
	
	\emph{2. $F(T)=T$ for all $T\in A_\rho$.} As $\rho(g)T\rho(g)^*=T$ for all $g\in G$ we get:
	\[F(T)=\int_GTd\mu(g)=T.\]
	
	\emph{3. $F$ is self adjoint.} Let $S,T\in B(H)$. Then:
	\[\ip{F(S)}{T}_{B(H)}=\int_GTr_H(T^*\rho(g)S\rho(g)^*)=\int_GTr_H\big((\rho(g)^*T\rho(g))^*S\big)=\ip{S}{F(T)}.\]
	
	\emph{4. $F^2=F$.} By 1. $F(T)\in A_\rho$, and by 2. $F(F(T))=F(T)$. So $F^2=F$.
	
	Properties 3. and  4. show that $F\in B(B(H))$ is a projection, and properties 1. and 2. show that the range of $F$ is $A_\rho$. As such $F=E_\rho$.
\end{proof}

\subsection{Operators on $B(H)$}

We want to compare $u\otimes u^*$, an element of $B(H\otimes H)$, to $Ad(u)$, an element in $B(B(H))$. We recall that $B(H)\simeq H\otimes\ol H$, an isomorphism we now describe.

\begin{de}
	The \emph{conjugate Hilbert space} $\ol H$ is the vector space consisting of formal symbols $\ol \xi$, $\xi\in H$, with addition
	\[\ol \xi+\ol \eta:=\ol{(\xi+\eta)},\]
	scalar multiplication
	\[	\lambda \cdot \ol \xi:=\ol{\ol\lambda\,\xi},\]
	and inner product
	\[\ip{\ol\xi}{\ol\eta}_{\ol H}:=\ip{\eta}{\xi}_H.\]
\end{de}

\begin{nt}
For $\xi,\eta\in H$, we denote by $\xi\ip{\cdot}{\eta}_H\in B(H)$ the rank one operator:
\[z\mapsto \langle z,\eta\rangle_H\cdot\xi.\]
\end{nt}

The following Proposition is folclore.

\begin{p}
	There is a canonical Hilbert space isomorphism:
	\[\Theta:H\otimes \ol H\longrightarrow B(H)\]
	characterized by
	\[\Theta(\xi\otimes \ol\eta)=\xi\langle\,\cdot\,,\eta\rangle_H.\]
\end{p}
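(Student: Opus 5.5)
We will define $\Theta$ on elementary tensors by the stated formula, show it extends to a well-defined linear map, and then show it is a surjective isometry. Here $B(H)$ carries the Hilbert--Schmidt inner product $\ip{S}{T}_{B(H)}=Tr_{B(H)}(T^*S)$, the one used in the proof of Theorem \ref{int of ad}.

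\emph{Well-definedness and linearity.} The map $H\times\ol H\to B(H)$, $(\xi,\ol\eta)\mapsto \xi\ip{\cdot}{\eta}_H$, is bilinear. Linearity in $\xi$ is clear. For the second variable, the scalar multiplication on $\ol H$ gives $\lambda\ol\eta=\ol{\ol\lambda\eta}$, which is sent to $\xi\ip{\cdot}{\ol\lambda\eta}_H=\lambda\,\xi\ip{\cdot}{\eta}_H$, since the inner product is conjugate-linear in its second variable. Additivity is immediate. By the universal property of the algebraic tensor product, which coincides with the Hilbert tensor product because $\dim H=N<\infty$, we obtain a linear map $\Theta$ satisfying the stated formula. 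Because elementary tensors span $H\otimes\ol H$, this formula characterizes $\Theta$ uniquely.

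\emph{Isometry.} It suffices to check that $\Theta$ preserves inner products on elementary tensors, since both sides are sesquilinear. We compute
\[
\ip{\Theta(\xi\otimes\ol\eta)}{\Theta(\xi'\otimes\ol{\eta'})}_{B(H)}=Tr\big((\xi'\ip{\cdot}{\eta'})^*\,\xi\ip{\cdot}{\eta}\big).
\]
The adjoint of $\xi'\ip{\cdot}{\eta'}$ is $\eta'\ip{\cdot}{\xi'}$, so the operator inside the trace is $\ip{\xi}{\xi'}\,\eta'\ip{\cdot}{\eta}$. Its trace is $\ip{\xi}{\xi'}\ip{\eta'}{\eta}$. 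On the other side,
\[
\ip{\xi\otimes\ol\eta}{\xi'\otimes\ol{\eta'}}=\ip{\xi}{\xi'}_H\,\ip{\ol\eta}{\ol{\eta'}}_{\ol H}=\ip{\xi}{\xi'}\ip{\eta'}{\eta},
\]
using the definition of the inner product on $\ol H$. The two expressions agree.

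\emph{Bijectivity.} An isometry is injective. Both spaces have dimension $N^2$: if $\{e_i\}$ is an orthonormal basis of $H$, then $\{\ol{e_j}\}$ is an orthonormal basis of $\ol H$. Hence $\Theta$ is bijective, and therefore a unitary isomorphism. Concretely, $\Theta$ sends the basis vector $e_i\otimes\ol{e_j}$ to the matrix unit $e_{ij}$. The construction is canonical in the sense that no choice of basis was used in defining $\Theta$.

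The only delicate point is the bookkeeping of complex conjugations: the scalar action on $\ol H$, the conjugate-linearity of the inner product in its second variable, and the order of factors in the Hilbert--Schmidt inner product. Once these conventions are fixed, the rest of the argument is routine.
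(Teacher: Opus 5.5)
Your proof is correct and complete. Bilinearity of $(\xi,\ol\eta)\mapsto\xi\ip{\cdot}{\eta}_H$ (using the conjugated scalar action on $\ol H$), preservation of the inner product $\ip{T}{S}_{B(H)}=Tr_{B(H)}(S^*T)$ on elementary tensors, and the dimension count $N^2$ are exactly what is needed.

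The paper gives no proof of this statement; it calls it folklore. Your argument therefore supplies the standard verification the paper takes for granted. Your isometry computation is precisely what justifies the first equality in the paper's proof of Proposition \ref{Calcul1}. Your identification $\Theta(e_i\otimes\ol e_j)=e_i\ip{\cdot}{e_j}_H$ is the one behind $\Theta(\chi)=\id_H$.
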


\begin{ob}
In this isomorphism $B(H)$ is considered a Hilbert space with the inner product $\ip{T}{S}_{B(H)}=Tr_{B(H)}(S^*T)$.
\end{ob}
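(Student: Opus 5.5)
The plan is to build $\Theta$ from the universal property of the algebraic tensor product, then check that it preserves inner products on elementary tensors, and finally get surjectivity from an explicit basis. Everything is finite dimensional, so no completion or closure issues arise.

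\emph{Step 1: well-definedness and linearity.} Consider the map $H\times\ol H\to B(H)$, $(\xi,\ol\eta)\mapsto \xi\ip{\cdot}{\eta}_H$. It is clearly linear in $\xi$. For the second variable I check that it is linear with respect to the twisted vector space structure of $\ol H$. Additivity is immediate. For scalars, $\lambda\cdot\ol\eta=\ol{\ol\lambda\eta}$ is sent to
\[\xi\ip{\cdot}{\ol\lambda\eta}_H=\lambda\,\xi\ip{\cdot}{\eta}_H,\]
because the inner product is conjugate-linear in the second variable. So the map is bilinear, and it factors through a unique linear map $\Theta:H\otimes\ol H\to B(H)$ with the stated values on elementary tensors. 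This is what "characterized by" means.

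\emph{Step 2: $\Theta$ preserves inner products.} By sesquilinearity it suffices to compare inner products of images of elementary tensors. The adjoint of $\xi'\ip{\cdot}{\eta'}_H$ is $\eta'\ip{\cdot}{\xi'}_H$. Hence $(\xi'\ip{\cdot}{\eta'})^*\,\xi\ip{\cdot}{\eta}$ is the rank one operator
\[z\mapsto\ip{z}{\eta}_H\ip{\xi}{\xi'}_H\,\eta'.\]
Its trace, computed in the basis $\{e_i\}$, is $\ip{\xi}{\xi'}_H\ip{\eta'}{\eta}_H$. By the definition of the inner product on $\ol H$, this equals $\ip{\xi}{\xi'}_H\ip{\ol\eta}{\ol{\eta'}}_{\ol H}$, which is exactly $\ip{\xi\otimes\ol\eta}{\xi'\otimes\ol{\eta'}}_{H\otimes\ol H}$. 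So
\[\ip{\Theta(x)}{\Theta(y)}_{B(H)}=\ip{x}{y}_{H\otimes\ol H}\]
for all $x,y$. In particular $\Theta$ is injective.

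\emph{Step 3: surjectivity and canonicity.} The vectors $e_i\otimes\ol{e_j}$ form an orthonormal basis of $H\otimes\ol H$. Their images $e_i\ip{\cdot}{e_j}$ are the matrix units, which span $B(H)$; comparing dimensions ($N^2$ on both sides) gives the same conclusion. Hence $\Theta$ is a unitary isomorphism. No choice of basis entered the definition of $\Theta$, so it is canonical.

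The only step needing care is the bookkeeping of conjugations: getting the scalar action on $\ol H$ right in Step 1, and identifying $\ip{\eta'}{\eta}_H$ with $\ip{\ol\eta}{\ol{\eta'}}_{\ol H}$ in Step 2.
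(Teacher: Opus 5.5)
Your argument is correct. Step 1 handles the twisted scalar action on $\ol H$ properly. The trace computation in Step 2 gives $\ip{\xi}{\xi'}_H\ip{\eta'}{\eta}_H=\ip{\xi\otimes\ol\eta}{\xi'\otimes\ol{\eta'}}_{H\otimes\ol H}$, so $\Theta$ is unitary for the inner product $Tr_{B(H)}(S^*T)$.

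The paper gives no proof here. The existence of $\Theta$ is quoted as folklore, and this Observation only records which inner product on $B(H)$ is meant. Your verification therefore supplies exactly what the paper leaves implicit. It is also the fact the paper later relies on in Proposition~\ref{Calcul1}, where $\ip{a\ip{\cdot}{b}_H}{T}_{B(H)}$ is evaluated as an inner product in $H\otimes\ol H$.

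A shorter route, in the basis-by-basis style the paper uses elsewhere, is this. $\Theta$ sends the orthonormal basis $e_i\otimes\ol{e_j}$ of $H\otimes\ol H$ to the matrix units $e_i\ip{\cdot}{e_j}_H$. These are orthonormal for $Tr_{B(H)}(S^*T)$, and a linear map taking an orthonormal basis to an orthonormal basis is unitary. This gives your Steps 2 and 3 at once. The basis enters only the verification, not the definition of $\Theta$, so canonicity is unaffected.
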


\begin{p}
We have $\big(\xi\langle\,\cdot\,,\eta\rangle_H\big)^*=\eta\langle\,\cdot\,\xi\rangle_H$.
\end{p}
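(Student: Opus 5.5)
The statement is an identity between two bounded operators on $H$. I will prove it by evaluating both sides on an arbitrary vector $z\in H$ and showing the results agree. Since equality of operators is determined pointwise, this suffices.

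\emph{Left-hand side.} By the definition of the rank one operator, $\xi\ip{\cdot}{\eta}_H$ sends $z$ to $\ip{z}{\eta}_H\xi$. Its adjoint $T^*=\big(\xi\ip{\cdot}{\eta}_H\big)^*$ is the unique operator satisfying
\[\ip{T^*z}{y}_H=\ip{z}{Ty}_H\quad\text{for all } y,z\in H.\]

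\emph{Computing the pairing.} Fix $y,z\in H$. Using that the inner product is linear in the first variable and conjugate-linear in the second, I compute
\[\ip{z}{Ty}_H=\ip{z}{\ip{y}{\eta}_H\xi}_H=\ol{\ip{y}{\eta}_H}\,\ip{z}{\xi}_H=\ip{\eta}{y}_H\ip{z}{\xi}_H.\]

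\emph{Right-hand side.} The candidate operator $\eta\ip{\cdot}{\xi}_H$ sends $z$ to $\ip{z}{\xi}_H\eta$. Pairing with $y$ and using linearity in the first slot gives
\[\ip{\ip{z}{\xi}_H\eta}{y}_H=\ip{z}{\xi}_H\ip{\eta}{y}_H.\]

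This is the same scalar as in the previous computation. Since $y$ is arbitrary and $\ip{\cdot}{\cdot}_H$ is nondegenerate, $T^*z=\ip{z}{\xi}_H\eta$ for every $z$. Hence $T^*=\eta\ip{\cdot}{\xi}_H$.

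There is no real obstacle here. The only point requiring care is the conjugation convention: the scalar $\ip{y}{\eta}_H$ must be conjugated when it is pulled out of the second slot, and this conjugation is what turns it into $\ip{\eta}{y}_H$.
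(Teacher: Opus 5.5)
Your proof is correct: the pairing $\langle T^*z,y\rangle_H=\langle z,Ty\rangle_H$, together with the conjugation $\overline{\langle y,\eta\rangle_H}=\langle \eta,y\rangle_H$, gives exactly $T^*z=\langle z,\xi\rangle_H\,\eta$. The paper states this proposition without proof (the comma in $\eta\langle\cdot,\xi\rangle_H$ is dropped there by a typo, and you read it correctly), so your pointwise verification is the routine check it leaves to the reader.
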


\begin{p}\label{Calcul1}
For $a,b\in H$ and $T\in B(H)$ we have $\left\la a\la\cdot,b\ra_H,T\right\ra_{B(H)}=\la a,Tb\ra_H$.
\end{p}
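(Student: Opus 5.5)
The identity $\left\la a\la\cdot,b\ra_H,T\right\ra_{B(H)}=\la a,Tb\ra_H$ is linear in $a$ on both sides. As a function of $b$ it is conjugate-linear on the left, because $b\mapsto a\la\cdot,b\ra_H$ is conjugate-linear and the inner product on $B(H)$ is linear in its first slot; it is also conjugate-linear in $b$ on the right. In $T$ both sides are conjugate-linear. So no reduction is strictly needed, and I will simply compute directly from the definition $\ip{X}{Y}_{B(H)}=Tr_{B(H)}(Y^*X)$ recorded in the Observation above.

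\textbf{Step 1.} Write the left-hand side as $Tr_{B(H)}\big(T^*\,a\la\cdot,b\ra_H\big)$. The operator $T^*\,a\la\cdot,b\ra_H$ sends $z\mapsto \la z,b\ra_H\,T^*a$, so it is the rank one operator $(T^*a)\la\cdot,b\ra_H$.

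\textbf{Step 2.} Compute the trace of a rank one operator $\xi\la\cdot,\eta\ra_H$ in the orthonormal basis $\{e_i\}$:
\[Tr_{B(H)}\big(\xi\la\cdot,\eta\ra_H\big)=\sum_{i=1}^N\la e_i,\eta\ra_H\la \xi,e_i\ra_H=\Big\la \xi,\sum_{i=1}^N\la \eta,e_i\ra_H e_i\Big\ra_H=\la\xi,\eta\ra_H,\]
using Parseval's expansion $\eta=\sum_i\la\eta,e_i\ra_H e_i$ together with the conjugate-linearity of the inner product in its second variable.

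\textbf{Step 3.} Combine the two steps: the left-hand side equals $\la T^*a,b\ra_H=\la a,Tb\ra_H$ by the definition of the adjoint.

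There is no real obstacle here. The only point requiring care is the bookkeeping of which slot is conjugate-linear in Step 2, so that the answer is $\la\xi,\eta\ra_H$ and not its complex conjugate.
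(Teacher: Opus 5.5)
Your proof is correct. It differs from the paper's in mechanism. The paper reads the inner product on $B(H)$ through the identification $\Theta:H\otimes\ol H\to B(H)$. For a matrix unit $T=e_i\la\cdot,e_j\ra_H$ it computes $\la a\otimes\ol b,e_i\otimes\ol e_j\ra_{H\otimes\ol H}=\la a,e_i\ra_H\la e_j,b\ra_H=\la T^*a,b\ra_H$, and then extends to all $T$ by conjugate-linearity.

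You instead work directly from $\la X,Y\ra_{B(H)}=Tr_{B(H)}(Y^*X)$. You absorb $T^*$ into the rank one operator and use $Tr_{B(H)}(\xi\la\cdot,\eta\ra_H)=\la\xi,\eta\ra_H$, which handles every $T$ at once with no basis reduction.

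The paper's route is shorter inside its framework, but it relies on $\Theta$ being isometric for the trace inner product, which the paper only asserts as folklore. Your computation does not need that fact; in fact, taking $T=\xi'\la\cdot,\eta'\ra_H$ in your argument proves exactly that isometry on simple tensors.

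Your opening remarks about linearity in $a$, $b$ and $T$ are accurate but, as you say, not used.
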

\begin{proof}
Fix $\{e_i\}_{i=1}^N$ a basis in $H$. If $T=e_i\la\cdot,e_j\ra_H$, then:
\[\la a\la\cdot,b\ra_H,T\ra_{B(H)}=\la a\otimes\ol b,e_i\otimes\ol e_j\ra_{H\otimes\ol H}=\la a,e_i\ra_H\cdot\la e_j,b\ra_H=\left\la\la a,e_i\ra_He_j,b\right\ra_H=\la T^*a,b\ra_H=\la a,Tb\ra_H.\]
The result follows by liniarity.
\end{proof}

\begin{de} 
	Define $\Psi:B(H\otimes \ol H)\to B(B(H))$, by the rule $\Psi(X)=\Theta\circ X\circ \Theta^{-1}$. That is, the following diagram is commutative:
	\[\begin{array}{ccc}
		H\otimes \ol H & \xrightarrow{\ X\ } & H\otimes \ol H \\
		\Theta\downarrow \phantom{\Theta} & & \phantom{\Theta}\downarrow \Theta \\
		B(H) & \xrightarrow{\ \Psi(X)\ } & B(H)
	\end{array}\]
\end{de}

\begin{de}
For $A\in B(H)$ define $L_A,R_A\in B(B(H))$ by $L_A(T)=AT$ and $R_A(T)=TA$.
\end{de}

\begin{p}\label{psi formula}
For $A,B\in B(H)$ we have $\Psi(A\otimes\ol B)=L_AR_{B^*}$.
\end{p}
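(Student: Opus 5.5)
Both sides of the identity are linear in $X\in B(H\otimes\ol H)$ via $A\otimes\ol B$ (conjugate-linear in $B$ on the left, and also on the right since $B\mapsto R_{B^*}$ is conjugate-linear). Likewise $T\mapsto \Theta(\xi\otimes\ol\eta)$ is linear in $\xi$ and conjugate-linear in $\eta$. So it suffices to check the equality of operators on $B(H)$ on the spanning set of rank one operators $\Theta(\xi\otimes\ol\eta)=\xi\ip{\cdot}{\eta}_H$.

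First I fix the convention for $\ol B$: the operator on $\ol H$ given by $\ol B(\ol\eta)=\ol{B\eta}$, which is linear on $\ol H$ by the definition of scalar multiplication there. By the definition of $\Psi$,
\[\Psi(A\otimes\ol B)\bigl(\xi\ip{\cdot}{\eta}_H\bigr)=\Theta\bigl((A\otimes\ol B)(\xi\otimes\ol\eta)\bigr)=\Theta(A\xi\otimes\ol{B\eta})=A\xi\ip{\cdot}{B\eta}_H.\]

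For the other side, $L_AR_{B^*}(\xi\ip{\cdot}{\eta}_H)=A\,\xi\ip{\cdot}{\eta}_H\,B^*$. Evaluating at $z\in H$ gives $\ip{B^*z}{\eta}_H A\xi=\ip{z}{B\eta}_H A\xi$. This is exactly $A\xi\ip{\cdot}{B\eta}_H$ applied to $z$, so the two operators agree on rank one operators. Since these span $B(H)$ and both sides are linear maps on $B(H)$, they are equal.

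Extending from elementary tensors $A\otimes\ol B$ to general $X$ is not needed, because the statement only concerns elementary tensors. The only potential obstacle is bookkeeping of the conjugate-linear conventions, namely that $\ol B$ is the conjugate operator and not the transpose. With the convention $\ol B\,\ol\eta=\ol{B\eta}$ the computation above is straightforward.
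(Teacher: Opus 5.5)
Your proof is correct and follows the paper's argument: reduce to rank one operators $\xi\ip{\cdot}{\eta}_H$, compute $\Psi(A\otimes\ol B)$ on them through $\Theta$ as $A\xi\ip{\cdot}{B\eta}_H$, and check that $A\,\xi\ip{\cdot}{\eta}_H\,B^*$ is the same operator using $\ip{B^*z}{\eta}_H=\ip{z}{B\eta}_H$. Your opening remark about (conjugate-)linearity in $B$ is unnecessary. All that is needed is the point you make at the end: both sides are linear maps on $B(H)$, and rank one operators span $B(H)$.
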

\begin{proof}
It sufices to verifiy the idenity on rank 1 operators. For $\xi,\eta\in  H$ we have:
\[\Psi(A\otimes\ol B)(\xi\ip{\cdot}{\eta}_H)=\Theta\circ(A\otimes\ol B)(\xi\otimes\ol\eta)=\Theta(A\xi\otimes\ol{B\eta})=A\xi\ip{\cdot}{B\eta}_H.\]
On the other hand, for $a\in H$:
\[\big(L_AR_{B^*}(\xi\ip{\cdot}{\eta}_H)\big)(a)=(A(\xi\ip{\cdot}{\eta}_H)B^*)(a)=\big(A(\xi)\ip{\cdot}{\eta}_H\big)B^*(a)=A(\xi)\ip{B^*(a)}{\eta}_H)=(A\xi\ip{\cdot}{B\eta}_H)(a).\]
As such $\Psi(A\otimes\ol B)(\xi\ip{\cdot}{\eta})=L_AR_{B^*}(\xi\ip{\cdot}{\eta})$.
\end{proof}	

As a corollary, $Ad(u)\in B(B(H))$ is sent, via $\Psi^{-1}$, to $u\otimes\ol u\in B(H\otimes\ol H)$. Before passing from $u\otimes\ol u$ to $u\otimes u^*$, we study the behaviour of $\Psi^{-1}$ on rank 1 operators, which is done in Theorem \ref{psi on rank 1}.

\subsection{Preimage of rank 1 operators in $B(B(H))$.} The first step is to find the element $\chi\in H\otimes\ol H$, such that $\Theta(\chi)=Id_H\in B(H)$. Recall that $\{e_i\}_{i=1}^N$ is a basis in $H$.

\begin{de}
	Define $\chi=\sum_{i=1}^Ne_i\otimes\ol e_i\in H\otimes\ol H$.
\end{de}	

\begin{p}
	We have: $\Theta(\chi)=\id_H$.
\end{p}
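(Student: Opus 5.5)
The plan is to compute $\Theta(\chi)$ by linearity and then check it on the basis. By definition of $\chi$ and the characterization of $\Theta$ on elementary tensors,
\[\Theta(\chi)=\sum_{i=1}^N\Theta(e_i\otimes\ol e_i)=\sum_{i=1}^N e_i\ip{\cdot}{e_i}_H .\]
So it suffices to show that the operator $T=\sum_{i=1}^N e_i\ip{\cdot}{e_i}_H$ is the identity on $H$.

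For this, I will evaluate $T$ on an arbitrary vector $z\in H$. This gives $T(z)=\sum_{i=1}^N\ip{z}{e_i}_H e_i$. Since $\{e_i\}_{i=1}^N$ is an orthonormal basis of $H$, this sum is the Fourier (Parseval) expansion of $z$, so $T(z)=z$. Equivalently, one can check on basis vectors: $T(e_j)=\sum_i\ip{e_j}{e_i}_He_i=e_j$, and then extend by linearity.

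There is no real obstacle. The only point to watch is that $\Theta$ is additive on sums of elementary tensors. This holds because $\Theta$ is a linear isomorphism, and the conjugate-linear structure of $\ol H$ plays no role here, since no scalars appear in $\chi$.

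As an optional remark, I will note that $\chi$ does not depend on the choice of orthonormal basis. Indeed, $\Theta$ is an isomorphism and $\Theta(\chi)=\id_H$ is basis-free, so $\chi=\Theta^{-1}(\id_H)$ is determined independently of the basis. This independence will be useful later, when $\chi$ is used to describe the preimages of rank one operators under $\Psi$.
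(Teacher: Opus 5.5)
Your proposal is correct and follows the paper's own argument: expand $\Theta(\chi)=\sum_{i=1}^N e_i\ip{\cdot}{e_i}_H$ by linearity and recognise this as $\id_H$ via the orthonormal-basis expansion. You spell out the last step a bit more than the paper does. Your side remark that $\chi=\Theta^{-1}(\id_H)$ is basis-independent is also correct.
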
	
\begin{proof}
	Trivial, as $\Theta(\chi)=\sum_{i=1}^Ne_i\langle\cdot,e_i\rangle=\id_H$.
\end{proof}

\begin{p}
For $a,b\in H$, we have $\langle a\otimes\ol b,\chi\rangle_{H\otimes\ol H}=\langle a,b\rangle_H$.
\end{p}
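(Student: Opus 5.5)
The plan is a direct computation: expand $\chi$ in the orthonormal basis, use the definition of the inner product on $H\otimes\ol H$, and recognize the result as the expansion of $\langle a,b\rangle_H$. By definition $\chi=\sum_{i=1}^N e_i\otimes\ol e_i$. Since the inner product on $H\otimes\ol H$ is conjugate-linear in the second variable, and $\chi$ enters as a finite sum, we get
\[\langle a\otimes\ol b,\chi\rangle_{H\otimes\ol H}=\sum_{i=1}^N\langle a\otimes\ol b,e_i\otimes\ol e_i\rangle_{H\otimes\ol H}=\sum_{i=1}^N\langle a,e_i\rangle_H\,\langle\ol b,\ol e_i\rangle_{\ol H}.\]

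Next, by the definition of the inner product on the conjugate space, $\langle\ol b,\ol e_i\rangle_{\ol H}=\langle e_i,b\rangle_H$. The sum therefore becomes
\[\sum_{i=1}^N\langle a,e_i\rangle_H\langle e_i,b\rangle_H=\Bigl\langle \sum_{i=1}^N\langle a,e_i\rangle_H e_i,\,b\Bigr\rangle_H=\langle a,b\rangle_H,\]
where the middle equality uses linearity in the first variable, and the last one is the expansion $a=\sum_i\langle a,e_i\rangle_H e_i$ in the orthonormal basis.

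As a cross-check, the same result follows from Proposition \ref{Calcul1} with $T=\id_H$. Indeed, $\Theta$ is a unitary with $\Theta(\chi)=\id_H$ and $\Theta(a\otimes\ol b)=a\langle\cdot,b\rangle_H$, so $\langle a\otimes\ol b,\chi\rangle=\langle a\langle\cdot,b\rangle_H,\id_H\rangle_{B(H)}=\langle a,\id_H b\rangle_H$.

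There is no real obstacle here. The only point needing care is bookkeeping with the conjugations: the order of arguments in $\langle\ol\xi,\ol\eta\rangle_{\ol H}=\langle\eta,\xi\rangle_H$, and which variable is linear, so that the sum reassembles as $\langle a,b\rangle_H$ rather than its complex conjugate.
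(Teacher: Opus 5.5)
Your proof is correct and is essentially the paper's argument. The paper expands $\chi=\sum_i e_i\otimes\ol e_i$ and computes the inner product only for basis vectors, obtaining $\delta_a^b$, then extends by (sesqui)linearity; you do the same computation directly for general $a,b$ and reassemble the sum via $a=\sum_i\langle a,e_i\rangle_H e_i$, which makes the extension step unnecessary (your cross-check through Proposition \ref{Calcul1} with $T=\id_H$ is also valid).
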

\begin{proof}
Let $e_a,e_b$ be elements in the chosen basis. Then:
\[\la e_a\otimes\ol e_b,\chi\ra_{H\otimes\ol H}=\sum_j\la e_a\otimes\ol e_b,e_j\otimes\ol e_j\ra_{H\otimes\ol H}=\sum_j\delta_a^j\delta_b^j=\delta_a^b=\la e_a,e_b\ra_H.\]
The result follows by linear extension.
\end{proof}

We are now ready to compute $\Psi^{-1}(S\la\cdot,T\ra_{B(H)})$, which is the goal of this subsection. As a notation, for $T\in B(H)$, denote by $\ol T\in B(\ol H)$, the operator: $\ol\xi\mapsto \ol{T\xi}$.

\begin{te}\label{psi on rank 1}
We have: $\Psi\big((S\otimes\ol\id)(\chi\la\cdot,\chi\ra_{H\otimes\ol H})(\id\otimes\ol T)\big)=S\la\cdot,T\ra_{B(H)}$.
\end{te}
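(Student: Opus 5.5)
Both sides of the identity are linear maps on $B(H)$, so the plan is to compare them on an arbitrary $X\in B(H)$. Write $X=\Theta(\zeta)$ with $\zeta\in H\otimes\ol H$; since both sides are linear in $X$, it suffices to take $\zeta=a\otimes\ol b$, so that $X=a\la\cdot,b\ra_H$.

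\emph{Left-hand side.} By the definition of $\Psi$, we must compute $\Theta\circ(S\otimes\ol\id)(\chi\la\cdot,\chi\ra)(\id\otimes\ol T)(a\otimes\ol b)$. The steps are as follows.
\begin{enumerate}
\item Apply $\id\otimes\ol T$ to get $a\otimes\ol{Tb}$.
\item Apply the rank one operator $\chi\la\cdot,\chi\ra_{H\otimes\ol H}$. By the Proposition computing $\la a\otimes\ol b,\chi\ra=\la a,b\ra_H$, this gives $\la a,Tb\ra_H\,\chi$.
\item Apply $S\otimes\ol\id$ to get $\la a,Tb\ra_H\sum_i Se_i\otimes\ol e_i$.
\item Apply $\Theta$ to get $\la a,Tb\ra_H\sum_i Se_i\la\cdot,e_i\ra_H=\la a,Tb\ra_H\,S$.
\end{enumerate}
For the last equality, observe that $\Theta\circ(S\otimes\ol\id)=L_S\circ\Theta$ by Proposition \ref{psi formula}, and $\Theta(\chi)=\id_H$.

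\emph{Right-hand side.} Applying $S\la\cdot,T\ra_{B(H)}$ to $X=a\la\cdot,b\ra_H$ gives $\la a\la\cdot,b\ra_H,T\ra_{B(H)}\,S$. By Proposition \ref{Calcul1} this equals $\la a,Tb\ra_H\,S$, which matches the left-hand side. Linearity then finishes the argument.

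The main obstacle is conjugate-linearity bookkeeping. The operator $\ol T$ acts by $\ol\xi\mapsto\ol{T\xi}$, and the inner product on $\ol H$ is reversed. I must check carefully that the pairing in step 2 produces exactly $\la a,Tb\ra_H$ rather than $\la Tb,a\ra_H$ or a version involving $T^*$. This amounts to applying the $\chi$-pairing Proposition to the vector $a\otimes\ol{Tb}$ and verifying that it reads $\la a,Tb\ra_H$. Proposition \ref{Calcul1} should then yield the identical scalar on the other side, with the same ordering.
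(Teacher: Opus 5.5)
Your proof is correct and follows the paper's argument exactly: you evaluate both sides on $a\la\cdot,b\ra_H=\Theta(a\otimes\ol b)$, push $a\otimes\ol{Tb}$ through $\chi\la\cdot,\chi\ra_{H\otimes\ol H}$ and then $S\otimes\ol\id$ and $\Theta$ to get $\la a,Tb\ra_H\,S$, and match this with the right-hand side via Proposition \ref{Calcul1}. The conjugate-linearity worry you flag is settled at once by the $\chi$-pairing Proposition applied with $Tb$ in place of $b$, which gives exactly $\la a,Tb\ra_H$.
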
	
\begin{proof}
Let $X=(S\otimes\ol\id)(\chi\la\cdot,\chi\ra_{H\otimes\ol H})(\id\otimes\ol T)\in B(H\otimes\ol H)$. For $a,b\in H$:

\begin{align*}
X(a\otimes\ol b)=&(S\otimes\ol\id)(\chi\la\cdot,\chi\ra_{H\otimes\ol H})(a\otimes\ol{Tb})=(S\otimes\ol\id)(\la a,Tb\ra_H\cdot\chi) \\=&\la a,Tb\ra_H(S\otimes\ol\id)\sum_ie_i\otimes\ol e_i=\la a,Tb\ra_H\sum_iS(e_i)\otimes\ol e_i.
\end{align*}
If we apply $\Theta$, we get:
\[\Theta\big(\la a,Tb\ra_H\sum_iS(e_i)\otimes\ol e_i\big)=\la a,Tb\ra_H\sum_iS(e_i)\la\cdot,e_i\ra_H=\la a,Tb\ra_HS(\cdot).\]
On the other hand, by Proposition \ref{Calcul1}, $S\la\cdot,T\ra_{B(H)}(a\la\cdot,b\ra_H)=\la a,Tb\ra_HS(\cdot)$. As such, for each $a,b\in H$:
\[\Theta\circ X\circ\Theta^{-1}(a\la\cdot,b\ra_H)=S\la\cdot,T\ra_{B(H)}(a\la\cdot,b\ra_H).\]
It follows that $\Psi(X)=S\la\cdot,T\ra_{B(H)}$.
\end{proof}

\subsection{The transpose} This is the map that sends $\ol U$ to $U^*$. This subsection will also explain why the \emph{flip operator} appears in the final formula. 

\begin{de}
Define $\tc:B(\ol H)\to B(H)$ by $\tc(\ol U)=U^*$. Note that $\tc(\ol U\cdot\ol V)=V^*U^*$.
\end{de}	

So $(Id\otimes\tc)\Psi^{-1}(Ad(u))=u\otimes u^*$. This is why $(Id\otimes\tc)\Psi^{-1}:B(B(H))\to B(H\otimes H)$ is a key operator for us, that we now investigate.

\begin{ex}\label{ex:transpose}
	For $\xi,\eta\in H$, $\tc(\ol\xi\langle\cdot,\ol\eta\rangle_{\ol H})=\eta\langle\cdot,\xi\rangle_H$.
\end{ex}

\begin{p}\label{Delta}
	We have $\chi\ip{\cdot}{\chi}_{H\otimes\ol H}=\sum_{i,j}e_i\langle\cdot,e_j\rangle_H\otimes\ol e_i\langle\cdot,\ol e_j\rangle_{\ol H}$.
\end{p}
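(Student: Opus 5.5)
The identity compares two operators on $H\otimes\ol H$, so it is enough to check that both sides act identically on the spanning vectors $e_a\otimes\ol e_b$, $1\leqslant a,b\leqslant N$, using the basis $\{e_i\}$ already fixed in the definition of $\chi$. We first identify the right hand side, which is a sum of elementary tensors $A\otimes B$ with $A\in B(H)$ and $B\in B(\ol H)$. On elementary tensors it acts by $(A\otimes B)(\xi\otimes\ol\eta)=A\xi\otimes B\ol\eta$.

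\emph{Left hand side.} By definition of the rank one operator,
\[\chi\ip{e_a\otimes\ol e_b}{\chi}_{H\otimes\ol H}=\ip{e_a\otimes\ol e_b}{\chi}_{H\otimes\ol H}\,\chi.\]
By the preceding proposition (the one computing $\la a\otimes\ol b,\chi\ra$), the scalar is $\ip{e_a}{e_b}_H=\delta_a^b$. Hence the left hand side sends $e_a\otimes\ol e_b$ to
\[\delta_a^b\sum_i e_i\otimes\ol e_i.\]

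\emph{Right hand side.} Apply the term indexed by $(i,j)$ to $e_a\otimes\ol e_b$. The first factor gives $\ip{e_a}{e_j}_H\,e_i=\delta_a^j e_i$. The second factor gives $\ip{\ol e_b}{\ol e_j}_{\ol H}\,\ol e_i$, and by the definition of the inner product on $\ol H$,
\[\ip{\ol e_b}{\ol e_j}_{\ol H}=\ip{e_j}{e_b}_H=\delta_j^b.\]
So the $(i,j)$ term produces $\delta_a^j\delta_j^b\, e_i\otimes\ol e_i$. Summing over $j$ leaves $\delta_a^b$, and summing over $i$ gives
\[\delta_a^b\sum_i e_i\otimes\ol e_i,\]
which agrees with the left hand side.

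Both operators are linear, and the vectors $e_a\otimes\ol e_b$ span $H\otimes\ol H$, so the two operators coincide. The only step needing care is the conjugate-linear bookkeeping in $\ol H$. One must confirm that $\{\ol e_i\}$ is an orthonormal basis of $\ol H$, which follows from $\ip{\ol\xi}{\ol\eta}_{\ol H}=\ip{\eta}{\xi}_H$. One must also use the order-reversing formula for the inner product on $\ol H$ correctly; here all coefficients are real, so the reversal causes no sign or conjugation issue.
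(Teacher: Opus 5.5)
Your proof is correct and follows the paper's argument: both evaluate the two operators on the spanning vectors $e_a\otimes\ol e_b$, use $\ip{\ol e_b}{\ol e_j}_{\ol H}=\ip{e_j}{e_b}_H$ to reduce the right-hand side to $\delta_a^b\sum_i e_i\otimes\ol e_i$, and match this with $\ip{e_a\otimes\ol e_b}{\chi}\,\chi$. The one small difference is that you cite the preceding proposition for $\ip{e_a\otimes\ol e_b}{\chi}=\delta_a^b$, whereas the paper simply writes $\la e_a,e_b\ra\chi$ at the last step.
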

\begin{proof}
For $e_a,e_b$ in the chosen basis we have:
\begin{align*}
\big(\sum_{i,j}e_i\langle\cdot,e_j\rangle_H\otimes\ol e_i\langle\cdot,\ol e_j\rangle_{\ol H}\big)(e_a\otimes\ol e_b)=&\sum_{i,j}e_i\langle e_a,e_j\rangle_H\otimes\ol e_i\langle\ol e_b,\ol e_j\rangle_{\ol H}=\sum_{i,j}\delta_a^je_i\otimes\delta_b^j\ol e_i=\delta_a^b\sum_ie_i\otimes\ol e_i\\
=&\la e_a,e_b\ra\chi=\big(\chi\ip{\cdot}{\chi}_{H\otimes\ol H}\big)(e_a\otimes\ol e_b).
\end{align*}
\end{proof}

\begin{de}
Let $\flip\in B(H\otimes H)$ be the \emph{flip operator}: $\flip(x\otimes y)=y\otimes x$.
\end{de}

\begin{ob}\label{ob:flip operator}
The flip operator $F$ is equal to $P_{(12)}$, where $(12)\in\s(2)$ is the transposition.
\end{ob}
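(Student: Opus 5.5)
The statement to prove is Observation \ref{ob:flip operator}: the flip operator $\flip$ coincides with $P_{(12)}$, where $(12)\in\s(2)$ is the transposition. The plan is to unwind the definition of $P_\gamma$ in the case $r=2$ and compare it with $\flip$ on elementary tensors.

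By definition, for $\gamma\in\s(2)$ we have
\[P_\gamma(\xi_1\otimes\xi_2)=\xi_{\gamma(1)}\otimes\xi_{\gamma(2)}.\]
Take $\gamma=(12)$, so that $\gamma(1)=2$ and $\gamma(2)=1$. Then
\[P_{(12)}(x\otimes y)=y\otimes x=\flip(x\otimes y)\qquad\text{for all } x,y\in H.\]

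Both $P_{(12)}$ and $\flip$ are linear operators on $H\otimes H$. Elementary tensors $x\otimes y$ span $H\otimes H$; concretely, the vectors $e_i\otimes e_j$ already form a basis. Two linear maps that agree on a spanning set are equal, so $\flip=P_{(12)}$.

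There is no real obstacle. The only point to keep in mind is the convention in the definition of $P_\gamma$: the $p$-th slot receives $\xi_{\gamma(p)}$, not $\xi_{\gamma^{-1}(p)}$. This choice is irrelevant here because the transposition is its own inverse.

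The observation is used later in this role. Proposition \ref{Delta} expresses $\chi\ip{\cdot}{\chi}$ as $\sum_{i,j}e_i\la\cdot,e_j\ra\otimes\ol e_i\la\cdot,\ol e_j\ra$. By Example \ref{ex:transpose}, applying $\id\otimes\tc$ to this sum gives $\sum_{i,j}e_i\la\cdot,e_j\ra\otimes e_j\la\cdot,e_i\ra$, and this operator is exactly the flip $\flip$. That is why $\flip=P_{(12)}$ enters the final formula.
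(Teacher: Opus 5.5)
Your proposal is correct and matches the paper, which states this as an immediate observation without proof. The point is to evaluate the definition of $P_\gamma$ on elementary tensors, $P_{(12)}(x\otimes y)=y\otimes x=\flip(x\otimes y)$, and then extend by linearity, exactly as you do. You are also right that the $\gamma$ versus $\gamma^{-1}$ convention is harmless here, which matters because the paper switches to the $\xi^{-1}$ convention for $P_\xi$ in Section 4.
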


\begin{p}
	We have $\id\otimes\tc(\chi\ip{\cdot}{\chi}_{H\otimes\ol H})=\flip$.
\end{p}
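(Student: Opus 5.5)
We prove the identity by expanding $\chi\ip{\cdot}{\chi}_{H\otimes\ol H}$ in the basis via Proposition \ref{Delta}, applying $\id\otimes\tc$ term by term, and then checking the resulting operator on elementary tensors. Here $\id\otimes\tc$ is understood as the linear map $B(H)\otimes B(\ol H)\to B(H)\otimes B(H)=B(H\otimes H)$, determined on elementary tensors by $A\otimes \ol U\mapsto A\otimes \tc(\ol U)$. Since $\tc$ is linear on $B(\ol H)$, this is well defined.

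By Proposition \ref{Delta},
\[\chi\ip{\cdot}{\chi}_{H\otimes\ol H}=\sum_{i,j}e_i\langle\cdot,e_j\rangle_H\otimes\ol e_i\langle\cdot,\ol e_j\rangle_{\ol H}.\]
Applying $\id\otimes\tc$ and using Example \ref{ex:transpose} with $\xi=e_i$, $\eta=e_j$, each second factor $\ol e_i\langle\cdot,\ol e_j\rangle_{\ol H}$ becomes $e_j\langle\cdot,e_i\rangle_H$. Hence
\[(\id\otimes\tc)\big(\chi\ip{\cdot}{\chi}_{H\otimes\ol H}\big)=\sum_{i,j}e_i\langle\cdot,e_j\rangle_H\otimes e_j\langle\cdot,e_i\rangle_H.\]

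It remains to identify this operator with $\flip$. Evaluate it on a basis vector $e_a\otimes e_b$. The term indexed by $(i,j)$ gives $\langle e_a,e_j\rangle\langle e_b,e_i\rangle\, e_i\otimes e_j=\delta_a^j\delta_b^i\, e_i\otimes e_j$. Only $i=b$, $j=a$ survives, so the sum equals $e_b\otimes e_a=\flip(e_a\otimes e_b)$. By linearity the two operators coincide on $H\otimes H$.

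There is no real obstacle here. The only delicate point is applying Example \ref{ex:transpose} with the indices in the correct order: the transpose swaps the roles of $\xi$ and $\eta$, and it is exactly this swap that produces the flip rather than the rank-one projection onto $\chi$.
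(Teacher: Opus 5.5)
Your proof is correct and is essentially the paper's own argument: expand $\chi\ip{\cdot}{\chi}_{H\otimes\ol H}$ via Proposition \ref{Delta}, apply Example \ref{ex:transpose} termwise to get $\sum_{i,j}e_i\langle\cdot,e_j\rangle_H\otimes e_j\langle\cdot,e_i\rangle_H$, and evaluate on $e_a\otimes e_b$ to obtain $e_b\otimes e_a$. Your remark that $\tc$ is linear, so that $\id\otimes\tc$ is well defined, is correct and makes explicit a point the paper leaves implicit.
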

\begin{proof}
By \ref{ex:transpose} and \ref{Delta} , $\id\otimes\tc(\chi\ip{\cdot}{\chi}_{H\otimes\ol H})=\sum_{i,j}e_i\langle\cdot,e_j\rangle_{H}\otimes e_j\langle\cdot, e_i\rangle_{H}$. Moreover:
\[\big(\sum_{i,j}e_i\langle\cdot,e_j\rangle_{H}\otimes e_j\langle\cdot, e_i\rangle_{H}\big)(e_a\otimes e_b)=\sum_{i,j}e_i\delta_a^j\otimes e_j\delta_b^i=e_b\otimes e_a.\]
This proves the conclusion.	
\end{proof}

We are now ready for the main result of this section.

\begin{te}\label{psi^-1}
	We have $(\id\otimes\tc)\big(\Psi^{-1}(S\langle\,\cdot\,,T\rangle_{B(H)})\big)=(S\otimes T^*)\flip$.
\end{te}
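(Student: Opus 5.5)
The plan is to combine Theorem \ref{psi on rank 1} with the preceding Proposition, which computes $(\id\otimes\tc)(\chi\ip{\cdot}{\chi}_{H\otimes\ol H})=\flip$. By Theorem \ref{psi on rank 1},
\[\Psi^{-1}\big(S\la\cdot,T\ra_{B(H)}\big)=(S\otimes\ol\id)(\chi\la\cdot,\chi\ra_{H\otimes\ol H})(\id\otimes\ol T).\]
So the statement reduces to computing $(\id\otimes\tc)$ of this product.

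The key observation is that $\id\otimes\tc$ behaves like a partial transpose. It is linear and acts as $\tc$ on the second tensor leg, and $\tc$ is an anti-homomorphism, since $\tc(\ol U\ol V)=V^*U^*$. Consequently $\id\otimes\tc$ is \emph{not} multiplicative, and we cannot simply distribute it over the triple product. We therefore expand in elementary tensors, using Proposition \ref{Delta}:
\[\chi\la\cdot,\chi\ra=\sum_{i,j}E_{ij}\otimes\ol{E_{ij}},\qquad E_{ij}=e_i\la\cdot,e_j\ra_H,\]
where $\ol{E_{ij}}=\ol e_i\la\cdot,\ol e_j\ra_{\ol H}$ is the conjugate operator. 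Multiplying out gives
\[X=\sum_{i,j}SE_{ij}\otimes \ol{E_{ij}}\,\ol T=\sum_{i,j}SE_{ij}\otimes\ol{E_{ij}T}.\]
Applying $\id\otimes\tc$ term by term then yields
\[\sum_{i,j}SE_{ij}\otimes (E_{ij}T)^*=\sum_{i,j}SE_{ij}\otimes T^*E_{ji}=(S\otimes T^*)\sum_{i,j}E_{ij}\otimes E_{ji}.\]
Finally, $\sum_{i,j}E_{ij}\otimes E_{ji}=\flip$ by the computation in the proof of the preceding Proposition. This gives $(S\otimes T^*)\flip$.

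The only step needing care is the handling of conjugates and adjoints. One must check that $\ol{E_{ij}}\,\ol T=\ol{E_{ij}T}$, which holds because $\ol A\,\ol B=\ol{AB}$ directly from the definition $\ol A\ol\xi=\ol{A\xi}$. One must also check that $\tc$ of $\ol{E_{ij}T}$ is $(E_{ij}T)^*=T^*E_{ji}$, which is consistent with Example \ref{ex:transpose}. Once this bookkeeping is settled, the proof is complete. As a sanity check, taking $S=T=\id$ should recover the preceding Proposition, and it does.
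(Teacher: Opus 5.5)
Your proof is correct and is essentially the paper's argument. You invoke Theorem \ref{psi on rank 1}, use that $\tc$ reverses products in the second tensor leg so that $T^*$ comes out on the left, and finish with $(\id\otimes\tc)(\chi\la\cdot,\chi\ra_{H\otimes\ol H})=\flip$; your expansion in the elementary tensors $E_{ij}\otimes\ol{E_{ij}}$ simply makes explicit the partial anti-multiplicativity step that the paper compresses into a single line.
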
	
\begin{proof}
By \ref{psi on rank 1}, $\Psi^{-1}(S\langle\,\cdot\,,T\rangle_{B(H)})=(S\otimes\ol\id)\chi\ip{\cdot}{\chi}_{H\otimes\ol H}(\id\otimes\ol T)$. As $\tc(\ol U\cdot \ol V)=\tc(\ol V)\tc(\ol U)$, we get:
\[(\id\otimes\tc)\big(\Psi^{-1}(S\langle\,\cdot\,,T\rangle_{B(H)})\big)=(S\otimes\id)(\id\otimes T^*)\big((\id\otimes\tc)\chi\ip{\cdot}{\chi}_{H\otimes\ol H}\big)=(S\otimes T^*)\flip.\]
\end{proof}

\subsection{Revisiting the integral}

We can do a first computation of $\int_Gu\otimes u^*d\mu(u)$. Recall that $\rho:G\to B(H)$ is a representation, and $E_\rho\in B(B(H))$ is the projection on $A_\rho=(\rho(G))^\prime$.

\begin{te}\label{integral gog^*}
	We have $\int_G\rho(g)\otimes\rho(g)^*d\mu(g)=(\id\otimes\tc)(\Psi^{-1}(E_\rho))$.
\end{te}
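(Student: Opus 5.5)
The plan is to combine Theorem \ref{int of ad} with the linearity of the map $(\id\otimes\tc)\circ\Psi^{-1}$. By Theorem \ref{int of ad} we have $E_\rho=\int_G Ad(\rho(g))\,d\mu(g)$, an integral in the finite dimensional space $B(B(H))$. Since $\Psi^{-1}$ is a linear isomorphism $B(B(H))\to B(H\otimes\ol H)$ and $\id\otimes\tc$ is linear, their composition commutes with the Bochner (here just Riemann, coordinatewise) integral of the continuous function $g\mapsto Ad(\rho(g))$. Thus
\[(\id\otimes\tc)(\Psi^{-1}(E_\rho))=\int_G(\id\otimes\tc)\big(\Psi^{-1}(Ad(\rho(g)))\big)\,d\mu(g).\]

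It remains to identify the integrand pointwise as $\rho(g)\otimes\rho(g)^*$. Write $U=\rho(g)$. Then $Ad(U)(T)=UTU^*=L_UR_{U^*}(T)$. Proposition \ref{psi formula} with $A=B=U$ gives $\Psi(U\otimes\ol U)=L_UR_{U^*}=Ad(U)$, hence $\Psi^{-1}(Ad(U))=U\otimes\ol U$. By the definition of $\tc$, $(\id\otimes\tc)(U\otimes\ol U)=U\otimes U^*$. Integrating yields the claim.

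The one point needing care is that $\id\otimes\tc$ is applied to an element of $B(H\otimes\ol H)\cong B(H)\otimes B(\ol H)$. The map $\tc$ is defined by $\ol U\mapsto U^*$. I need to check that it is a well-defined linear map, since the conjugations cancel: $\ol{\lambda U}=\ol\lambda\,\ol U$ is sent to $\ol\lambda U^*=(\lambda U)^*$. Then $\id\otimes\tc$ extends linearly to elementary tensors. This is implicit in the paper's use of $\id\otimes\tc$ in Theorem \ref{psi^-1}, so I will assume it. With this, the proof is a direct two-line computation, and I expect no genuine obstacle.
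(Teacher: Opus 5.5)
Your proof is correct and is essentially the paper's argument: use Proposition \ref{psi formula} to get $\Psi^{-1}(Ad(\rho(g)))=\rho(g)\otimes\ol{\rho(g)}$, apply $\id\otimes\tc$ to obtain $\rho(g)\otimes\rho(g)^*$, and then integrate, using Theorem \ref{int of ad} and linearity. Your extra check that $\tc$ is linear, because the two conjugations cancel, is a detail the paper leaves implicit.
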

\begin{proof}
	By Proposition \ref{psi formula}, $\Psi(A\otimes\ol A)=L_AR_{A^*}=Ad(A)$. As such:
	\[A\otimes A^*=(\id\otimes\tc)(A\otimes \ol A)=(\id\otimes\tc)\Psi^{-1}(Ad(A)).\]
	Integrating over $G$, and by Theorem \ref{int of ad}:
	\[\int_G\rho(g)\otimes\rho(g)^*d\mu(g)=(\id\otimes\tc)\big(\Psi^{-1}(\int_GAd(\rho(g))d\mu(g))\big)=(\id\otimes\tc)(\Psi^{-1}(E_\rho)).\]
\end{proof}

By Theorem \ref{psi^-1}, we know how to compute $(\id\otimes\tc)\Psi^{-1}$ on rank one operators. As such the goal now is to write $E_\rho$ as a sum of rank one operators. This is done in the next section, via the Casimir element.

\section{Casimir Element}

Let $(A,Tr)$ be a finite dimensional von Neumann algebra endowed with a faitful finite trace. In this section we construct a canonical element $\Omega_A$ in $A\otimes A$ such that for any $x,y\in A$:
\[Tr_{A\otimes A}\big((x\otimes y)\Omega_A\big)=Tr_A(xy).\]

\begin{ex}
Let $A=M_n(\cz)$, $\{e_{ij}\}_{i,j=1}^n$ be its matrix units. Consider the usual trace: $Tr_A(e_{11})=1$. Then $\Omega_A=\sum_{i,j}e_{ij}\otimes e_{ji}$. Indeed:
\[Tr_{A\otimes A}\big((e_{ab}\otimes e_{cd})\Omega_A\big)=Tr_{A\otimes A}\big((e_{ab}\otimes e_{cd})(e_{bd}\otimes e_{db})\big)=Tr_{A\otimes A}(e_{ad}\otimes e_{cb})=\delta_a^d\delta_c^b=Tr_A(e_{ab}e_{cd}).\]
\end{ex}

The general case is not more complicated, as any finite dimensinal von Neumann algebra is a direct sum of matrix algebras. We present the general construction using dual basis.

\begin{de}
Let $(c_i)_i$ be a base of $A$. The \emph{dual base} of $(c_i)_i$ is a base $(u_i)_i$ of $A$ such that $Tr_A(u_j^*c_i)=\delta_i^j$.
\end{de}

\begin{p}
Let $(c_i)_{i=1}^m$ be a base of $A$. Let $G=(g_{ij})_{ij}\in M_m(\cz)$ defined by $g_{ij}=Tr_A(c_j^*c_i)$. Then $G$ is invertible. If $W=G^{-1}$, $W=(w_{ij})_{ij}$ and $u_i=\sum_jw_{ij}c_j$, then $(u_i)_i$ is the dual base of $(c_i)_i$.
\end{p}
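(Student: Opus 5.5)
The statement to prove is a Gram-matrix argument: first show $G$ is invertible, then verify that $u_i=\sum_j w_{ij}c_j$ satisfies $Tr_A(u_j^*c_i)=\delta_i^j$. The only input is that $\ip{x}{y}_A:=Tr_A(y^*x)$ is an inner product on $A$. It is linear in $x$, conjugate-linear in $y$, and Hermitian because $Tr_A(x^*y)=\ol{Tr_A(y^*x)}$; this uses that the trace is positive, so $Tr_A(z^*)=\ol{Tr_A(z)}$. It is positive definite by faithfulness: $Tr_A(x^*x)=0$ implies $x=0$. With this notation, $g_{ij}=\ip{c_i}{c_j}_A$.

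\emph{Invertibility of $G$.} Take $\lambda\in\cz^m$ with $G\lambda=0$, i.e.\ $\sum_j g_{ij}\lambda_j=0$ for all $i$. Put $x=\sum_i\ol{\lambda_i}c_i$. Then
\[\ip{x}{x}_A=\sum_{i,j}\ol{\lambda_i}\lambda_j\ip{c_i}{c_j}_A=\sum_i\ol{\lambda_i}\sum_j g_{ij}\lambda_j=0,\]
so $x=0$. Linear independence of the $c_i$ gives $\lambda=0$, hence $G$ is invertible.

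\emph{Duality.} Let $W=G^{-1}$ and $u_j=\sum_l w_{jl}c_l$. By conjugate-linearity in the second slot,
\[Tr_A(u_j^*c_i)=\ip{c_i}{u_j}_A=\sum_l\ol{w_{jl}}\,\ip{c_i}{c_l}_A=\sum_l g_{il}\ol{w_{jl}}=(G\,\ol W^{\,t})_{ij}.\]
So we need $G\ol W^{\,t}=I$, i.e.\ $\ol W^{\,t}=W$. Since $\ol{g_{ij}}=g_{ji}$, the matrix $G$ is Hermitian, and therefore so is $W=G^{-1}$. Thus $\ol W^{\,t}=W^*=W$ and $(GW)_{ij}=\delta_i^j$.

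\emph{Basis.} The matrix $W$ is invertible and expresses the $u_i$ in terms of the basis $(c_i)$, so $(u_i)$ is also a basis. This also follows directly from the duality relation, which forces the $u_i$ to be linearly independent.

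The only delicate point is bookkeeping: the index convention $g_{ij}=Tr_A(c_j^*c_i)$ together with conjugate-linearity makes $\ol W$ appear. This is resolved by noting that $G$, and hence $W$, is Hermitian.
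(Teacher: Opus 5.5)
Your proof is correct and follows essentially the same route as the paper: a kernel argument showing the Gram matrix $G$ is invertible by faithfulness of $Tr_A$, then a direct computation of $Tr_A(u_j^*c_i)$ using the Hermitian symmetry $\overline{g_{ik}}=g_{ki}$ together with $W=G^{-1}$. The differences are cosmetic. You kill the right kernel via positive definiteness, whereas the paper kills the left kernel via the basis $(c_j^*)_j$. You use that $W$ is Hermitian, whereas the paper conjugates $WG=I$ entrywise. You also check explicitly that $(u_i)_i$ is a basis, which the paper leaves implicit.
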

\begin{proof}
Assume that there exists $(\lambda_i)_i\subset\cz$, such that $\sum_i\lambda_ig_{ij}=0$ for all $j=1,\ldots,m$. Then $\sum_i\lambda_iTr_A(c_j^*c_i)=Tr_A(c_j^*\sum_i\lambda_ic_i)=0$. As $(c_j^*)_j$ is also a basis in $A$, it follows that $\sum_i\lambda_ic_i=0$. As such $\lambda_i=0$ for all $i$. For the second part \[Tr(u_j^*c_i)=\sum_{k=1}^mTr(\overline{w_{jk}}c_k^*c_i)=\sum_{k=1}^m \overline{w_{jk}Tr(c_i^*c_k)} =\sum_{k=1}^m\overline{w_{jk}g_{ki}}=\overline{\delta_i^j}=\delta_i^j.\]
\end{proof}

\begin{de}\label{de:Casimir element}
	We define the \emph{Casimir element} of $A$ by the formula: 
	\[\Omega_A=\sum_{i=1}^mc_i\otimes u_i^*\in A\otimes A.\]
\end{de}

\begin{p}\label{trace of Frobenius}
	For any $x,y\in A$, $Tr_{A\otimes A}\big((x\otimes y)\Omega_A\big)=Tr_A(xy)$.
\end{p}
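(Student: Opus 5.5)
Both sides of the identity are bilinear in $(x,y)$, so it suffices to check it on $x=c_k$ and on $y$ ranging over a basis. I will take the basis $(u_l^*)_l$, which is again a basis of $A$ because $(u_l)_l$ is a basis and $*$ is a conjugate-linear bijection.

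The first step is to note that the trace on $A\otimes A$ factorises on elementary tensors: $Tr_{A\otimes A}(a\otimes b)=Tr_A(a)Tr_A(b)$. Expanding $\Omega_A$ then gives
\[Tr_{A\otimes A}\big((x\otimes y)\Omega_A\big)=\sum_{i=1}^m Tr_A(xc_i)\,Tr_A(yu_i^*).\]
Put $x=c_k$ and $y=u_l^*$, and use the trace property $Tr_A(u_l^*u_i^*)=Tr_A(u_i^*u_l^*)$. It remains to show
\[\sum_i Tr_A(c_kc_i)\,Tr_A(u_i^*u_l^*)=Tr_A(c_ku_l^*)=Tr_A(u_l^*c_k)=\delta_k^l.\]

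The main step is a reconstruction formula coming from duality. For any $z\in A$ we have $z=\sum_i Tr_A(u_i^*z)\,c_i$. This holds because $(c_i)_i$ is a basis: write $z=\sum_j\alpha_jc_j$ and pair with $u_i^*$ using $Tr_A(u_i^*c_j)=\delta_j^i$.

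Apply this to $z=u_l^*$. Multiplying $u_l^*=\sum_i Tr_A(u_i^*u_l^*)c_i$ on the left by $c_k$ and taking traces gives
\[\sum_i Tr_A(u_i^*u_l^*)\,Tr_A(c_kc_i)=Tr_A(c_ku_l^*)=\delta_k^l,\]
which is exactly what was needed. The identity then follows for all $x,y$ by linearity.

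The only point requiring care is the bookkeeping of adjoints and conjugations. I work with the complex-bilinear form $(a,b)\mapsto Tr_A(ab)$ rather than the inner product, so no conjugates appear. The duality relation $Tr_A(u_j^*c_i)=\delta_i^j$ is already stated in exactly this bilinear form.
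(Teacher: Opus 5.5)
Your proof is correct. It has the same outline as the paper's proof: expand $\Omega_A$, factor the trace on $A\otimes A$, and test on basis elements. The difference is that you pair the bases the other way round.

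The paper takes $x=u_a^*$ and $y=c_b$. Then both factors $Tr(u_a^*c_i)$ and $Tr(c_bu_i^*)=Tr(u_i^*c_b)$ are Kronecker deltas by the definition of the dual base, and the sum collapses to $\delta_a^b=Tr(u_a^*c_b)$ with nothing else needed. Your choice $x=c_k$, $y=u_l^*$ leaves $Tr(c_kc_i)$ unsimplified. That is why you need the expansion $z=\sum_iTr(u_i^*z)c_i$, which the paper proves separately later, when it identifies the projection $E_A$.

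Once you have that expansion, the reduction to basis elements is unnecessary. For arbitrary $x,y$,
\[\sum_iTr(xc_i)\,Tr(yu_i^*)=Tr\Big(x\sum_iTr(u_i^*y)\,c_i\Big)=Tr(xy),\]
so your key step already gives a one-line proof for all $x,y$ at once. The paper's version is more self-contained, since it uses only the defining relation of the dual base. Your version, streamlined this way, shows that the identity simply says $(c_i)$ and $(u_i^*)$ are dual bases for the bilinear form $(a,b)\mapsto Tr(ab)$.
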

\begin{proof}
	By linearity, it is enough to prove this equation for $x=(u_a)^*$ and $y=c_b$.	Then:
	\[Tr\big((u_a^*\otimes c_b)\Omega_A\big)=\sum_{i=1}^mTr\big(u_a^*c_i\otimes c_bu_i^*\big)= \sum_{i=1}^mTr(u_a^*c_i) Tr(u_i^*c_b)= \sum_{i=1}^m\delta_a^i\delta_i^b= \delta_a^b=Tr(u_a^*c_b).\]
\end{proof}

\begin{cor}
	The Casimir element $\Omega_A$ is independent of the set $\{c_i\}_{i=1}^m$.
\end{cor}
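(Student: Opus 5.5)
The plan is to show that $\Omega_A$ is determined uniquely by the identity of Proposition \ref{trace of Frobenius}, using only nondegeneracy of the trace pairing on $A\otimes A$. The construction of $\Omega_A$ refers to the choice of basis $(c_i)_i$ only through its dual basis $(u_i)_i$. The conclusion of Proposition \ref{trace of Frobenius} does not mention the basis at all. So it suffices to prove that any element $\Omega\in A\otimes A$ satisfying $Tr_{A\otimes A}\big((x\otimes y)\Omega\big)=Tr_A(xy)$ for all $x,y\in A$ is unique.

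Concretely, let $(c_i)_i$ and $(c'_i)_i$ be two bases, with dual bases $(u_i)_i$ and $(u'_i)_i$. Let $\Omega$ and $\Omega'$ be the resulting elements, and set $D=\Omega-\Omega'\in A\otimes A$. By Proposition \ref{trace of Frobenius}, applied once for each basis, we get $Tr_{A\otimes A}\big((x\otimes y)D\big)=0$ for all $x,y\in A$. Since the elementary tensors $x\otimes y$ span $A\otimes A$, linearity gives $Tr_{A\otimes A}(ZD)=0$ for every $Z\in A\otimes A$.

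Next I will use that $Tr_{A\otimes A}=Tr_A\otimes Tr_A$ is a faithful trace on the finite dimensional von Neumann algebra $A\otimes A$. Faithfulness passes to the tensor product: write $A$ as a direct sum of matrix algebras with positive weights on the traces, and $A\otimes A$ is again such a direct sum with products of the weights. Choosing $Z=D^*$ then gives $Tr_{A\otimes A}(D^*D)=0$, so $D=0$, and hence $\Omega=\Omega'$.

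The only step needing care is faithfulness of the tensor trace, and this is routine given the direct sum of matrix algebras description recalled before the example. Alternatively, one can avoid faithfulness and argue through nondegeneracy of the bilinear pairing $(Z,D)\mapsto Tr(ZD)$, which follows from invertibility of the Gram matrix, as in the proposition on dual bases, applied to the product basis $(c_i\otimes c_j)$.
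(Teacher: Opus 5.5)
Your proposal is correct and follows the paper's argument. The paper likewise deduces independence from the basis-free identity of Proposition \ref{trace of Frobenius} together with faithfulness of $Tr_{A\otimes A}$ (inherited from $Tr_A$), so that $Tr_{A\otimes A}(xa)=Tr_{A\otimes A}(xb)$ for all $x$ forces $a=b$; you simply make explicit the steps the paper leaves implicit, namely extending from elementary tensors by linearity, testing against $Z=D^*$, and checking faithfulness of the tensor trace via the direct-sum decomposition.
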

\begin{proof}
As $Tr_A$ is faithful, so is $Tr_{A\otimes A}$. Then $Tr_{A\otimes A}(xa)=Tr_{A\otimes A}(xb)$ for any $x$ implies $a=b$.
\end{proof}

\begin{cor}
The Casimir element is selfadjoint. If $A\subset B(H)$ then the Casimir element commutes with the flip operator on $H\otimes H$.
\end{cor}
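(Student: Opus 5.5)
Both claims follow from the uniqueness of $\Omega_A$, which is the previous Corollary. It says $\Omega_A$ is the unique element $\Omega$ of $A\otimes A$ with $Tr_{A\otimes A}\big((x\otimes y)\Omega\big)=Tr_A(xy)$ for all $x,y\in A$. So for each property I will exhibit a second element satisfying the same trace identity, and conclude that it equals $\Omega_A$.

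\emph{Selfadjointness.} I will show that $\Omega_A^*$ satisfies the defining identity. For $x,y\in A$, using $Tr(z^*)=\overline{Tr(z)}$ and traciality of $Tr_{A\otimes A}$:
\[Tr_{A\otimes A}\big((x\otimes y)\Omega_A^*\big)=\overline{Tr_{A\otimes A}\big(\Omega_A(x^*\otimes y^*)\big)}=\overline{Tr_{A\otimes A}\big((x^*\otimes y^*)\Omega_A\big)}.\]
By Proposition \ref{trace of Frobenius} the right-hand side is $\overline{Tr_A(x^*y^*)}=Tr_A(yx)=Tr_A(xy)$. Uniqueness then gives $\Omega_A^*=\Omega_A$. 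This step is routine.

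\emph{Commutation with the flip.} Let $A\subset B(H)$. For $a,b\in B(H)$ we have $\flip(a\otimes b)\flip=b\otimes a$, so conjugation by $\flip$ preserves $A\otimes A\subset B(H\otimes H)$. It acts there as the tensor flip $\sigma(a\otimes b)=b\otimes a$, and $\sigma$ preserves $Tr_{A\otimes A}=Tr_A\otimes Tr_A$. Set $\Omega'=\flip\Omega_A\flip=\sigma(\Omega_A)$. Then
\[Tr\big((x\otimes y)\Omega'\big)=Tr\big(\sigma((y\otimes x)\Omega_A)\big)=Tr\big((y\otimes x)\Omega_A\big)=Tr_A(yx)=Tr_A(xy).\]
Hence $\Omega'=\Omega_A$ by uniqueness. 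Since $\flip^2=\id$, this is exactly $\flip\Omega_A=\Omega_A\flip$.

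The only point needing care is the identification $\flip(a\otimes b)\flip=b\otimes a$ and the fact that $A\otimes A$ is a subalgebra of $B(H\otimes H)$, so that conjugation by $\flip$ makes sense inside $A\otimes A$. The first is checked on elementary tensors $\xi\otimes\eta$; the second holds by construction. One small subtlety: the trace on $A$ is an abstract faithful trace and need not be the restriction of $Tr_{B(H)}$. This causes no problem, because the argument only uses the abstract trace on $A\otimes A$ and never the trace of $B(H\otimes H)$.
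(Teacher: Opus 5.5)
Your proof is correct and rests on the same principle as the paper's. The paper writes $\Omega_A^*=\sum_i c_i^*\otimes u_i$ and $F\Omega_A F=\sum_i u_i^*\otimes c_i$, recognizes these as the Casimir elements built from the bases $\{c_i^*\}$ and $\{u_i^*\}$ (whose dual bases are $\{u_i^*\}$ and $\{c_i^*\}$), and then invokes basis-independence. You instead verify the characterizing trace identity directly and appeal to the faithfulness-based uniqueness that appears in the proof of that basis-independence corollary (the corollary itself states only basis-independence), which spares you the dual-basis check the paper leaves implicit.
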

\begin{proof}
We have $\Omega_A^*=\sum_{i=1}^mc_i^*\otimes u_i$, which is the Casimir element associated to the base $\{c_i^*\}_i$. For the second part, notice that $\flip\Omega_A\flip=\sum_{i=1}^m u_i^*\otimes c_i$, which is the Casimir element associated to the base $\{u_i^*\}_i$.
\end{proof}

\begin{cor}
Let $A=\oplus_{\lambda\in Y}M_{d_\lambda}(\cz)$ be a finite dimensional von Neumann algebra, and let $\{e_{i,j}^\lambda\}_{i,j=1,\ldots,d_\lambda}\subset A$ be the matricial units of $M_{d_\lambda}(\cz)$. Then the Casimir element of $A$ is:
\[\Omega_A=\sum_{\lambda\in Y}\frac1{Tr_A(e_{11}^\lambda)}\sum_{i,j=1}^{d_\lambda}e_{ij}^\lambda\otimes e_{ji}^\lambda.\]
\end{cor}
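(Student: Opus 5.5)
The plan is to exhibit an explicit basis of $A$, compute its dual basis with respect to $Tr_A$, and substitute both into Definition \ref{de:Casimir element}. Since the Casimir element does not depend on the chosen basis (the preceding corollary), any convenient basis will do. We take $\{c_i\}$ to be the set of all matrix units $e^\lambda_{ij}$, with $\lambda\in Y$ and $1\leqslant i,j\leqslant d_\lambda$.

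First we pin down the trace. Any trace on $A=\oplus_\lambda M_{d_\lambda}(\cz)$ restricts on each block to a multiple of the standard trace. Setting $t_\lambda=Tr_A(e^\lambda_{11})$, we get
\[Tr_A(e^\lambda_{ij})=\delta_i^j\,t_\lambda.\]
Here $t_\lambda$ does not depend on the index $i$, because $e^\lambda_{ii}=e^\lambda_{i1}e^\lambda_{11}e^\lambda_{1i}$ and the trace property gives $Tr_A(e^\lambda_{ii})=Tr_A(e^\lambda_{11})$. Faithfulness of $Tr_A$ gives $t_\lambda>0$.

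Next we compute the Gram data. We have $(e^\mu_{kl})^*e^\lambda_{ij}=e^\mu_{lk}e^\lambda_{ij}$, which vanishes unless $\mu=\lambda$ and $k=i$, in which case it equals $e^\lambda_{lj}$. Its trace is therefore $\delta_\mu^\lambda\delta_k^i\delta_l^j\,t_\lambda$. So the Gram matrix is diagonal with entries $t_\lambda$, and the dual basis is $u^\lambda_{ij}=t_\lambda^{-1}e^\lambda_{ij}$. This can be read off from the formula $u_i=\sum_j w_{ij}c_j$ with $W=G^{-1}$, or checked directly from $Tr_A\big((u^\mu_{kl})^*e^\lambda_{ij}\big)=\delta$, using that $t_\lambda$ is real. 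Then $(u^\lambda_{ij})^*=t_\lambda^{-1}e^\lambda_{ji}$, and Definition \ref{de:Casimir element} gives
\[\Omega_A=\sum_\lambda\frac{1}{t_\lambda}\sum_{i,j}e^\lambda_{ij}\otimes e^\lambda_{ji},\]
which is the claimed formula.

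As an alternative check, one can verify Proposition \ref{trace of Frobenius} directly on pairs of matrix units, as in the $M_n(\cz)$ example: cross terms between different blocks vanish, and the single surviving term within a block contributes $t_\lambda^2\cdot t_\lambda^{-1}=t_\lambda=Tr_A(xy)$. Uniqueness then follows from the independence corollary.

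The only step needing any care is identifying the trace on each block as $t_\lambda$ times the standard trace, including that it is the same on every diagonal unit. The rest is bookkeeping.
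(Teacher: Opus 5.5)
Your proposal is correct and follows essentially the same route as the paper: take the matrix units $e^\lambda_{ij}$ as the basis, observe that the dual basis is $Tr_A(e^\lambda_{11})^{-1}e^\lambda_{ij}$, and substitute into Definition \ref{de:Casimir element}. Your added verification that $Tr_A$ is $t_\lambda$ times the standard trace on each block supplies the detail the paper's one-line proof leaves implicit.
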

\begin{proof}
The formula comes from the fact that $\{e_{i,j}^\lambda\}_{i,j=1,\ldots,d_\lambda}^{\lambda\in Y}$ is a basis in $A$, and $\{1/{Tr_A(e_{11}^\lambda)}\cdot e_{i,j}^\lambda\}_{i,j=1,\ldots,d_\lambda}^{\lambda\in Y}$ is its dual basis.
\end{proof}

\begin{cor}
The central support of $\Omega_A$ is $\sum_{\lambda\in Y}p_\lambda\otimes p_\lambda$, where $\{p_\lambda\}_\lambda$ are the minimal central projections of $A$.
\end{cor}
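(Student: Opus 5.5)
The plan is to read off the central support from the explicit formula for $\Omega_A$ in the previous Corollary,
\[\Omega_A=\sum_{\lambda\in Y}\frac1{Tr_A(e_{11}^\lambda)}\,\Omega_\lambda,\qquad \Omega_\lambda=\sum_{i,j=1}^{d_\lambda}e_{ij}^\lambda\otimes e_{ji}^\lambda .\]
Here the central support is taken in $A\otimes A$: the smallest central projection $z$ with $z\Omega_A=\Omega_A$. The center of $A\otimes A=\bigoplus_{\lambda,\mu}M_{d_\lambda}(\cz)\otimes M_{d_\mu}(\cz)$ is spanned by the minimal central projections $p_\lambda\otimes p_\mu$, $\lambda,\mu\in Y$. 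So the central support is $\sum$ of those $p_\lambda\otimes p_\mu$ for which $(p_\lambda\otimes p_\mu)\Omega_A\neq 0$.

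The key steps, in order:

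\begin{enumerate}
\item Compute $(p_\lambda\otimes p_\mu)\Omega_A$. Since $p_\lambda e^{\nu}_{ij}=\delta_\lambda^\nu e^\nu_{ij}$, we get $(p_\lambda\otimes p_\mu)\Omega_\nu=\delta_\lambda^\nu\delta_\mu^\nu\,\Omega_\nu$. Hence $(p_\lambda\otimes p_\mu)\Omega_A=0$ whenever $\lambda\neq\mu$, and $(p_\lambda\otimes p_\lambda)\Omega_A=Tr_A(e^\lambda_{11})^{-1}\Omega_\lambda$.
\item Show $\Omega_\lambda\neq 0$. The elementary tensors $e_{ij}^\lambda\otimes e_{ji}^\lambda$ are linearly independent in $A\otimes A$, so their sum is nonzero. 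Also $Tr_A(e^\lambda_{11})>0$ by faithfulness, so the coefficient is finite and nonzero.
\item Conclude. The minimal central projections dominating a nonzero part of $\Omega_A$ are exactly the $p_\lambda\otimes p_\lambda$. Therefore the central support is $\sum_{\lambda}p_\lambda\otimes p_\lambda$.
\end{enumerate}

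An alternative check of step 2 uses Proposition \ref{trace of Frobenius}. Taking $x=y=p_\lambda$ gives $Tr((p_\lambda\otimes p_\lambda)\Omega_A)=Tr_A(p_\lambda)\neq0$, which directly shows $(p_\lambda\otimes p_\lambda)\Omega_A\neq 0$. Taking $x=p_\lambda$, $y=p_\mu$ with $\lambda\neq\mu$ gives only the weaker vanishing of a trace, so step 1 still needs the explicit formula.

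There is no real obstacle here. The only point needing care is recalling that the center of $A\otimes A$ is $Z(A)\otimes Z(A)$, so its minimal projections are exactly the $p_\lambda\otimes p_\mu$; the rest is bookkeeping.
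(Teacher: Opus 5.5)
Your proof is correct and is essentially the paper's argument: the paper gives no separate proof and places this corollary directly after the matrix-unit formula $\Omega_A=\sum_{\lambda}Tr_A(e_{11}^\lambda)^{-1}\sum_{i,j}e_{ij}^\lambda\otimes e_{ji}^\lambda$, from which the central support is read off just as you do. One correction to your side remark: the trace route also proves step 1 without the explicit formula, because by Proposition \ref{trace of Frobenius} and centrality of $p_\lambda$ you get $Tr_{A\otimes A}\big((x\otimes y)(p_\lambda\otimes p_\mu)\Omega_A\big)=Tr_A(xp_\lambda yp_\mu)=Tr_A(xy\,p_\lambda p_\mu)=0$ for all $x,y$ when $\lambda\neq\mu$, and faithfulness then gives $(p_\lambda\otimes p_\mu)\Omega_A=0$.
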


\begin{ex}
Let $S$ be a finite group, and consider its group algebra $\big(L(S),\tau\big)$. Then $\{g\}_{g\in S}$ is a basis in $L(S)$, and the matrix $G=(\tau(s_j^{-1}s_i))_{ij}$ is the identity. As such, the dual base is still $\{g\}_{g\in S}$. It follows that the Casimir element of $L(S)$ is $\sum_{g\in S}g\otimes g^{-1}$. We introduce a special notation for this case, for a later use.
\end{ex}

\begin{nt}
Define $\ve_m\in L\big(\s(2m)\big)$, by $\ve_m=\sum_{\sigma\in\s(m)}\sigma\otimes \sigma^{-1}$.
\end{nt}

\begin{p}\label{p:Casimir two traces}
Let $(A,Tr_A)$, $(B,Tr_B)$ be two algebras and $\alpha:A\to B$ be a surjective unital $*$-morphism. Then there exists $R_\alpha\in A$ a central element such that $\Omega_B=\alpha^{\otimes 2}(R_\alpha\otimes Id_A\cdot\Omega_A)$.
\end{p}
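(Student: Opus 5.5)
The plan is to reduce everything to the explicit block formula for the Casimir element (the Corollary giving $\Omega_A=\sum_{\lambda}\frac{1}{Tr_A(e^\lambda_{11})}\sum_{i,j}e^\lambda_{ij}\otimes e^\lambda_{ji}$). Since $\Omega_B$ does not depend on the choice of basis, it suffices to find one system of matrix units of $B$ that is the image under $\alpha$ of a system of matrix units of $A$. Once this is done, the comparison is a block-by-block rescaling.

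\emph{Step 1 (structure of $\alpha$).} Write $A=\oplus_{\lambda\in Y}M_{d_\lambda}(\cz)$ with minimal central projections $p_\lambda$ and matrix units $e^\lambda_{ij}$. The kernel of $\alpha$ is a two-sided ideal of $A$. In a finite dimensional von Neumann algebra every ideal is a sum of simple summands, so $\ker\alpha=\oplus_{\lambda\in Y\setminus Y'}M_{d_\lambda}(\cz)$ for some $Y'\subset Y$. On each $M_{d_\lambda}(\cz)$ with $\lambda\in Y'$ the map $\alpha$ is injective, since this algebra is simple. The elements $\alpha(p_\lambda)$, $\lambda\in Y'$, are central in $B$ because $\alpha$ is surjective. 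They are also pairwise orthogonal projections summing to $1_B$. Each corner $\alpha(p_\lambda)B=\alpha(M_{d_\lambda}(\cz))\cong M_{d_\lambda}(\cz)$ is simple, so the $\alpha(p_\lambda)$ are exactly the minimal central projections of $B$. Hence $f^\lambda_{ij}:=\alpha(e^\lambda_{ij})$, for $\lambda\in Y'$, is a complete system of matrix units of $B$, and $\alpha(e^\lambda_{ij})=0$ for $\lambda\notin Y'$. I expect this step to be the only one requiring care: the point is that $\alpha$ is a bijection between the surviving blocks of $A$ and the blocks of $B$, and it carries matrix units to matrix units.

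\emph{Step 2 (apply the block formula).} By the Corollary, computed in the basis $\{f^\lambda_{ij}\}$,
\[\Omega_B=\sum_{\lambda\in Y'}\frac{1}{Tr_B(f^\lambda_{11})}\sum_{i,j}f^\lambda_{ij}\otimes f^\lambda_{ji}.\]
Define the central element
\[R_\alpha=\sum_{\lambda\in Y'}\frac{Tr_A(e^\lambda_{11})}{Tr_B(f^\lambda_{11})}\,p_\lambda\in Z(A).\]
The coefficients on the blocks $\lambda\notin Y'$ may be taken to be $0$, or anything else, since those blocks are killed by $\alpha$. The denominators are nonzero because $Tr_B$ is faithful.

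\emph{Step 3 (conclude).} We have $(p_\lambda\otimes 1)(e^\mu_{ij}\otimes e^\mu_{ji})=\delta_{\lambda\mu}\,e^\mu_{ij}\otimes e^\mu_{ji}$. Therefore
\[(R_\alpha\otimes 1)\Omega_A=\sum_{\lambda\in Y'}\frac{1}{Tr_B(f^\lambda_{11})}\sum_{i,j}e^\lambda_{ij}\otimes e^\lambda_{ji}.\]
Applying $\alpha^{\otimes 2}$ and using Step 1 gives exactly $\Omega_B$.

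As a sanity check independent of the choice of basis, one can verify the identity through the characterization in Proposition \ref{trace of Frobenius}. Pairing both sides with $x\otimes y$ for $x,y$ matrix units of $B$ reduces to the same block scalars, which confirms the normalization of $R_\alpha$.
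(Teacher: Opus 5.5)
Your proof is correct and follows essentially the same route as the paper. You write both Casimir elements via the block formula, identify the minimal central projections of $B$ as the images $\alpha(p_\lambda)$ of the surviving blocks, and rescale by the same central element $R_\alpha=\sum_{\lambda}\frac{Tr_A(e^\lambda_{11})}{Tr_B(\alpha(e^\lambda_{11}))}p_\lambda$. Your Step 1 supplies the justification (the kernel is a sum of blocks and $\alpha$ is injective on the remaining ones) that the paper compresses into ``due to the surjectivity of $\alpha$.''
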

\begin{proof}
Let $\{p_\lambda\}_{\lambda\in Y}$ be the collection of minimal central projections in $A$. Then $A=\oplus_{\lambda\in Y}M_{d_\lambda}(\cz)$, where $d_\lambda\in\nz^*$. Let $\{e_{i,j}^\lambda\}_{i,j=1,\ldots,d_\lambda}\subset A$ be the matricial units of $M_{d_\lambda}(\cz)$. Then:
\[\Omega_A=\sum_{\lambda\in Y}\frac1{Tr_A(e_{11}^\lambda)}\sum_{i,j=1}^{d_\lambda}e_{ij}^\lambda\otimes e_{ji}^\lambda.\]
Let $Y_\alpha=\{\lambda:\alpha(p_\lambda)\neq 0\}$. Due to the surjectivity of $\alpha$, $\{\alpha(p_\lambda):\lambda\in Y_\alpha\}$ is the set of minimal central projections in $B$, and $B=\oplus_{\lambda\in Y_\alpha}M_{d_\lambda}(\cz)$. Then:
\[\Omega_B=\sum_{\lambda\in Y_\alpha}\frac1{Tr_B(\alpha(e_{11}^\lambda))}\sum_{i,j=1}^{d_\lambda}\alpha(e_{ij}^\lambda)\otimes\alpha(e_{ji}^\lambda).\]
Define the central element:
\[R_\alpha=\sum_{\lambda\in Y_\alpha}\frac{Tr_A(e_{11}^\lambda)}{Tr_B(\alpha(e_{11}^\lambda))}p_\lambda\in A.\]
It is easy to check that $\alpha^{\otimes 2}(R_\alpha\otimes Id_A\cdot\Omega_A)=\Omega_B$.
\end{proof}

\subsection{Integral of $g\otimes g^*$} Now we show how $E_\rho$, the projection on $A_\rho$, is sent to the Casimir element by the operator $(\id\otimes\tc)\Psi^{-1}$. Let $A\subset B(H)$ be a unital $*$-subalgebra. We consider $A$ endowed with the trace inhereted from $B(H)$. Let $(c_i)_i$ be a base of $A$, and $(u_i)_i$ be its dual base.

\begin{p}
	For $y\in A$, we have $y=\sum_{i=1}^mTr_{B(H)}\big(u_i^*y\big)\cdot c_i$.
\end{p}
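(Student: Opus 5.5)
Since $(c_i)_{i=1}^m$ is a basis of $A$, the element $y\in A$ can be written uniquely as $y=\sum_{j=1}^m\lambda_jc_j$ with $\lambda_j\in\cz$. The plan is to identify the coefficients $\lambda_j$ by pairing $y$ against the dual base, using the defining relation $Tr_A(u_i^*c_j)=\delta_j^i$. The trace on $A$ is the one inherited from $B(H)$, so this relation reads $Tr_{B(H)}(u_i^*c_j)=\delta_j^i$.

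Fix $i$, multiply the expansion of $y$ on the left by $u_i^*$, and take $Tr_{B(H)}$. Linearity of the trace gives
\[Tr_{B(H)}(u_i^*y)=\sum_{j=1}^m\lambda_j\,Tr_{B(H)}(u_i^*c_j)=\sum_{j=1}^m\lambda_j\delta_j^i=\lambda_i.\]
Substituting $\lambda_i=Tr_{B(H)}(u_i^*y)$ back into $y=\sum_i\lambda_ic_i$ yields the claimed formula.

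There is no real obstacle. The one point to check is that the dual base exists and satisfies exactly the relation $Tr(u_j^*c_i)=\delta_i^j$ in the order used here. This is the content of the earlier Proposition constructing $(u_i)_i$ via the inverse Gram matrix, where faithfulness of the trace ensures invertibility. Since $A\subset B(H)$ carries the restricted trace, which is faithful, that Proposition applies directly.
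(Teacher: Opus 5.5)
Your proof is correct and is essentially the paper's argument. The paper checks the identity on basis elements $y=c_a$ and extends by linearity; you expand $y=\sum_j\lambda_jc_j$ and read off $\lambda_i=Tr_{B(H)}(u_i^*y)$ from $Tr(u_i^*c_j)=\delta_j^i$, which is the same computation.
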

\begin{proof}
This follows from the definition of the dual base. By linearity, we check it for $y=c_a$. Then:
	\[\sum_{i=1}^mTr\big(u_i^*c_a\big)\cdot c_i=\sum_{i=1}^m\delta_i^ac_i=c_a.\]
\end{proof}

\begin{p}
	The operator $E_A:B(H)\to A$ defined by $E_A(x)=\sum_{i=1}^mTr_H(u_i^*x)c_i$ is the ortohonal projection onto $A$.
\end{p}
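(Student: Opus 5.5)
To show that $E_A$ is the orthogonal projection onto $A$, I will verify three properties with respect to the Hilbert--Schmidt inner product $\ip{x}{y}_{B(H)}=Tr_H(y^*x)$: that $E_A$ maps $B(H)$ into $A$, that it restricts to the identity on $A$, and that $x-E_A(x)$ is orthogonal to $A$ for every $x\in B(H)$. Together these say that $E_A$ is idempotent with range $A$ and kernel $A^\perp$, which characterizes the orthogonal projection onto $A$.

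The first two properties come directly from results already in hand. Since each $c_i\in A$ and $E_A(x)$ is a linear combination of the $c_i$, we get $E_A(x)\in A$. For $y\in A$, the preceding proposition gives $y=\sum_{i=1}^mTr_{B(H)}(u_i^*y)\,c_i$, and the right-hand side is exactly $E_A(y)$. Hence $E_A|_A=\id_A$, and consequently $E_A^2=E_A$.

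The orthogonality step is the only one needing a small computation. It suffices to test against the basis $(u_j)_j$ of $A$, that is, to show
\[\ip{x-E_A(x)}{u_j}_{B(H)}=Tr_H\big(u_j^*(x-E_A(x))\big)=0\quad\text{for all } j.\]
Expanding, we have
\[Tr_H\big(u_j^*E_A(x)\big)=\sum_iTr_H(u_i^*x)\,Tr_H(u_j^*c_i)=\sum_iTr_H(u_i^*x)\,\delta_i^j=Tr_H(u_j^*x),\]
where the duality relation $Tr(u_j^*c_i)=\delta_i^j$ is applied with the trace inherited from $B(H)$. This is the point requiring care: the dual base must be taken with respect to $Tr_{B(H)}$ restricted to $A$, which is how $A$ was endowed at the start of this subsection, so the relation does hold here. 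Because $(u_j)_j$ spans $A$, it follows that $x-E_A(x)\perp A$.

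Alternatively, one could check self-adjointness directly, $\ip{E_A(x)}{y}=\ip{x}{E_A(y)}$. That route would use both the expansion $y=\sum_jTr(u_j^*y)c_j$ and the analogous expansion $y=\sum_jTr(c_j^*y)u_j$, which follows by taking adjoints of the duality relation. The orthogonality argument above is shorter, so it is the one I would use.
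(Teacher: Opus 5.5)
Your proof is correct, but it takes a different route from the paper's. Both arguments start the same way: $E_A(x)\in A$, and $E_A$ restricts to the identity on $A$ by the preceding proposition.

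\textbf{The paper's route.} The paper then proves self-adjointness. It substitutes $u_i=\sum_j w_{ij}c_j$ to write $E_A=\sum_{i,j}\overline{w_{ij}}\,c_i\langle\cdot,c_j\rangle_{B(H)}$. It then takes adjoints of the rank-one operators, $(c_i\langle\cdot,c_j\rangle)^*=c_j\langle\cdot,c_i\rangle$, and concludes with the symmetry $\overline{w_{ij}}=w_{ji}$. This last step silently uses that $W=G^{-1}$ is Hermitian, because the Gram matrix $G$ is Hermitian.

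\textbf{Your route.} You instead show $x-E_A(x)\perp A$ by pairing against the dual basis. This needs only the defining relation $Tr_H(u_j^*c_i)=\delta_i^j$ and the fact that $(u_j)_j$ spans $A$. Range in $A$ together with this orthogonality already pins $E_A(x)$ down as the orthogonal projection of $x$, so idempotence is a bonus.

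\textbf{Comparison.} Your argument never touches the Gram matrix or its inverse, which makes it shorter and avoids the unstated Hermitian step. The paper's computation gives an explicit expression for $E_A$ in the original basis $(c_i)$, with self-adjointness visible as a matrix symmetry. That expression is not really needed later: the form $E_A=\sum_i c_i\langle\cdot,u_i\rangle_{B(H)}$ used in Theorem \ref{integral of uou^*} is immediate from the definition either way.
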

\begin{proof}
By construction $E_A(x)\in A$. Then, by previous Propostion, $E_A(x)=x$ if $x\in A$. It remains to show that $E_A$ is self-adjoint.

By definition $E_A(x)=\sum_{i=1}^mTr_H(u_i^*x)c_i=\sum_{i,j=1}^m \ol w_{ij}Tr_H(c_j^*x)c_i$. So $E_A=\sum_{i,j=1}^m\ol w_{ij}c_i\ip{\cdot}{c_j}_H$. Then: 
\[(E_A)^*=\sum_{i,j=1}^m w_{ij}\big(c_i\ip{\cdot}{c_j}_H\big)^*=\sum_{i,j=1}^m \ol w_{ji}c_j\ip{\cdot}{c_i}_H=E_A.\]
\end{proof}

\begin{te}\label{integral of uou^*}
Let $\rho:G\to B(H)$ a representation and let $\Omega_\rho$ be the Casimir element of $A_\rho$. Then:
\[\int_G\rho(g)\otimes\rho^*(g)d\mu(g)=\Omega_\rho\flip.\]
\end{te}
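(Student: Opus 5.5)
The plan is to combine Theorem \ref{integral gog^*} with the explicit description of $E_\rho$ as a sum of rank one operators, and then apply Theorem \ref{psi^-1} term by term. Theorem \ref{integral gog^*} gives
\[\int_G\rho(g)\otimes\rho(g)^*d\mu(g)=(\id\otimes\tc)(\Psi^{-1}(E_\rho)).\]
By Theorem \ref{int of ad}, $E_\rho$ is the orthogonal projection of $B(H)$ onto $A_\rho$. Since $A_\rho$ is a commutant, it is a unital $*$-subalgebra of $B(H)$, so the previous Proposition applies with $A=A_\rho$.

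Fix a base $(c_i)_{i=1}^m$ of $A_\rho$ with dual base $(u_i)_i$, taken with respect to $Tr_{B(H)}$. By the previous Proposition, $E_\rho(x)=\sum_i Tr_H(u_i^*x)c_i$. Using the inner product $\ip{T}{S}_{B(H)}=Tr_{B(H)}(S^*T)$, we have $Tr_H(u_i^*x)=\ip{x}{u_i}_{B(H)}$. Hence, as an operator on the Hilbert space $B(H)$,
\[E_\rho=\sum_{i=1}^m c_i\ip{\cdot}{u_i}_{B(H)},\]
which is a finite sum of rank one operators of exactly the form treated in Theorem \ref{psi^-1}.

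The map $(\id\otimes\tc)\Psi^{-1}$ is linear, since $\Psi^{-1}$ is linear and $\tc$ is linear on $B(\ol H)$. So we may apply Theorem \ref{psi^-1} with $S=c_i$ and $T=u_i$ to each summand:
\[(\id\otimes\tc)\Psi^{-1}(E_\rho)=\sum_{i=1}^m (c_i\otimes u_i^*)\flip=\Big(\sum_{i=1}^m c_i\otimes u_i^*\Big)\flip=\Omega_\rho\flip,\]
by Definition \ref{de:Casimir element}. This would complete the proof.

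The only delicate step is the linearity of $\tc$. As defined, $\tc(\ol U)=U^*$, and the bar map $U\mapsto\ol U$ is conjugate-linear, so one must check that $\tc$ is genuinely linear on $B(\ol H)$. It is: $\tc(\lambda\ol U)=\tc(\ol{\ol\lambda U})=\lambda U^*$, where we use that $\lambda\ol U$ acts as $\ol{\ol\lambda U}$ on $\ol H$. Beyond that, one should note that the trace on $A_\rho$ used in the Casimir element is the one inherited from $B(H)$, which is what the dual-base expansion of $E_\rho$ requires.
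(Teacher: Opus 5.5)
Your proposal is correct and follows the paper's proof essentially verbatim: combine Theorem \ref{integral gog^*} with the dual-base expansion $E_\rho=\sum_i c_i\ip{\cdot}{u_i}_{B(H)}$ and apply Theorem \ref{psi^-1} term by term to get $\Omega_\rho\flip$. Your extra check that $\tc$ is linear, despite $U\mapsto\ol U$ being conjugate-linear, is correct and simply makes explicit a point the paper leaves implicit.
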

\begin{proof}
By Theorem \ref{integral gog^*}, $\int_G\rho(g)\otimes\rho(g)^*d\mu(g)=(\id\otimes\tc)(\Psi^{-1}(E_\rho))$. By the previous Propostion $E_\rho=\sum_{i=1}^mc_i\ip{\cdot}{u_i}_{B(H)}$. As such, by Propostion \ref{psi^-1}:
\[(\id\otimes\tc)(\Psi^{-1}(E_\rho))=\sum_{i=1}^m\big(c_i\otimes u_i^*\big)\flip=\Omega_\rho\flip.\] 
\end{proof}

\subsection{The general formula}

For $\rho:G\to B(H)$ and $m\in\nz^*$, construct $\rho_m:G\to B(H^{\otimes m})$, $\rho_m(g)=\rho(g)^{\otimes m}$. Denote by $\Omega_m$ the Casimir element of $(\rho_m(G))^\prime$.

Let $w_1,w_2\ldots, w_k$ be a familiy of balanced words. Consider $m_u,m_v,r\in\nz$ and $\gamma\in\s(r)$ constructed in Section \ref{balanced words}. Define $\delta_m\in\s(2m)$ by $\delta_m(t)=t\pm m$, such that $P_{\delta_m}=\flip^{m}$, flip operator on $H^{\otimes m}\otimes H^{\otimes m}$, see Observation \ref{ob:flip operator}. Denote by $\delta\in\s(r)$ the permutation $\delta_{m_u}\oplus\delta_{m_v}$, so $P_\delta=\flip^{m_u}\otimes\flip^{m_v}$.

\begin{te}\label{te:main result}
Let $G$ be a compact group, $\mu$ be its Haar measure and $\rho:G\to B(H)$ a representation. Then:
\[\int_{G^2}\Pi_{c=1}^kTr_{B(H)}(w_c(\rho(g),\rho(h)))d\mu(g)d\mu(h)=Tr_{B(H^{\otimes r})}\big( (\Omega_{m_u}\otimes\Omega_{m_v})P_\delta P_\gamma\big).\]
\end{te}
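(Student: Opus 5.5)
The plan is to combine Corollary \ref{initial cor} with Theorem \ref{integral of uou^*}, applied to the tensor-power representations $\rho_{m_u}$ and $\rho_{m_v}$, and then move the permutation operators around.

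\textbf{Step 1 (reduction to two single-variable integrals).} Applying Corollary \ref{initial cor} with $u=\rho(g)$ and $v=\rho(h)$, and using that $\mu\times\mu$ is a product measure, the left-hand side becomes
\[Tr_{B(H^{\otimes r})}\Big(P_\gamma\big(X_{m_u}\otimes X_{m_v}\big)\Big),\qquad X_m=\int_G\rho(g)^{\otimes m}\otimes(\rho(g)^*)^{\otimes m}d\mu(g)\in B(H^{\otimes m}\otimes H^{\otimes m}).\]

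\textbf{Step 2 (computing $X_m$).} Observe that $(\rho(g)^*)^{\otimes m}=(\rho(g)^{\otimes m})^*=\rho_m(g)^*$. Hence $X_m=\int_G\rho_m(g)\otimes\rho_m(g)^*d\mu(g)$, with $\rho_m$ viewed as a representation on the Hilbert space $H^{\otimes m}$. Theorem \ref{integral of uou^*}, applied to $\rho_m$, gives $X_m=\Omega_m\flip_m$. Here $\flip_m$ is the flip on $H^{\otimes m}\otimes H^{\otimes m}$ exchanging the two blocks of $m$ factors, which by the definition of $\delta_m$ equals $P_{\delta_m}$. The one point to check is that the identification ``$\flip$ on $K\otimes K$ with $K=H^{\otimes m}$'' is exactly the block permutation $t\mapsto t\pm m$. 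This is immediate from the definition of $P_\gamma$, since permuting blocks of tensor factors is itself a factor permutation.

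\textbf{Step 3 (assembling the trace).} We get $X_{m_u}\otimes X_{m_v}=(\Omega_{m_u}\otimes\Omega_{m_v})(P_{\delta_{m_u}}\otimes P_{\delta_{m_v}})=(\Omega_{m_u}\otimes\Omega_{m_v})P_\delta$. The ordering of tensor factors $u^{\otimes m_u}\otimes(u^*)^{\otimes m_u}\otimes v^{\otimes m_v}\otimes(v^*)^{\otimes m_v}$ matches the block structure of $\delta=\delta_{m_u}\oplus\delta_{m_v}$. The trace is therefore $Tr\big(P_\gamma(\Omega_{m_u}\otimes\Omega_{m_v})P_\delta\big)$. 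By cyclicity of the trace this equals $Tr\big((\Omega_{m_u}\otimes\Omega_{m_v})P_\delta P_\gamma\big)$, which is the claimed formula.

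\textbf{Main obstacle.} I do not expect a substantive obstacle, only bookkeeping. The step needing care is Step 2: making sure that Theorem \ref{integral of uou^*} applies verbatim to $\rho_m$ on $H^{\otimes m}$, with $\Omega_m$ the Casimir element of the commutant $(\rho_m(G))'$ equipped with the trace of $B(H^{\otimes m})$. One must also check that no stray reordering of tensor legs arises, so that $P_\delta$ appears and not some conjugate of it.
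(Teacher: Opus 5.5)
Your proposal is correct and is the paper's own argument: the paper proves this in one line by combining Corollary \ref{initial cor} with Theorem \ref{integral of uou^*} applied to $\rho_m$. Your Steps 1--3 just make the bookkeeping explicit, namely identifying the block flip on $H^{\otimes m}\otimes H^{\otimes m}$ with $P_{\delta_m}$ and then using cyclicity of the trace to move $P_\gamma$ to the right.
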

\begin{proof}
This is a consequence of Corollary \ref{initial cor} and Theorem \ref{integral of uou^*}. 
\end{proof}

\section{The unitary case}\label{s:unitary case}

In the next two sections we discuss the unitary case. This section is dedicated to the \emph{stable case}, i.e. when $N\geqslant k$. Before computing the Casimir element in this setting, we present an independent computation of the expected value of $\Pi_cTr_{B(H)}(w_c(u,v))$. The arguments are more combinatoric and directly based on Weingarten calculus. Results in this subsection were proved by Rădulescu \cite{Rad06}. It provides the algebraic starting point for the corresponding matching formula used in Magee–Puder  \cite[Theorem 3.7]{MP15}\cite[Theorem 2.8]{MP19}. We include the index calculation to relate it explicitly to the Casimir and group-algebra formulas developed here.

\subsection{A direct formula} In this subsection, we work mainly inside $L(\s(m))$, the group algebra of $\s(m)$. Permutations are denoted by $p,q$, and as before, we use the same notation for elements in the group $\s(m)$, or in its group algebra.

\begin{de}
	Denote by $\Phi_m^N\in L(\s(m))$, the \emph{Formanek element}:
	\[\Phi_m^N=\sum_{p\in\s(m)}N^{\# cyc(p)}p.\] 
\end{de}

The Formanek element is invertible if and only if $N\geqslant m$, see \cite{For87,Col03,CS06,Pro21}. The Weingarten numbers are the coefficients of its inverse. For smaller positive integral $N$, we use the inverse on the represented support, see Section 5.

\begin{de}
The \emph{Weingarten numbers} $W_p^N\in\rz$ are the coeficients of the inverse:
\[(\Phi_m^N)^{-1}=\sum_{p\in\s(m)}W_p^Np.\] 
\end{de}

\begin{ob}
By construction $\sum_{p\in\s(m)}W_p^NN^{\# cyc(p)}=1$ and $\sum_{p\in\s(m)}W_p^NN^{\# cyc(pq)}=0$ for any $q\neq Id$.	
\end{ob}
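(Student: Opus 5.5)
The plan is to read both identities off the group-algebra equation $\Phi_m^N\cdot(\Phi_m^N)^{-1}=\mathrm{Id}$ in $L(\s(m))$, and then compare coefficients. Here $N\geqslant m$, so that the Formanek element is invertible and the Weingarten numbers are defined.

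First I expand the product using the definitions:
\[\Phi_m^N\cdot(\Phi_m^N)^{-1}=\Big(\sum_{s\in\s(m)}N^{\# cyc(s)}s\Big)\Big(\sum_{p\in\s(m)}W_p^Np\Big)=\sum_{s,p\in\s(m)}N^{\# cyc(s)}W_p^N\,sp.\]
Next I collect the terms according to the group element $t=sp$. Writing $s=tp^{-1}$, the coefficient of $t$ is
\[\sum_{p}W_p^NN^{\# cyc(tp^{-1})}.\]
Since this product equals the identity, the coefficient is $1$ when $t=\mathrm{Id}$ and $0$ otherwise.

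To put this in the stated form I use the fact that the number of cycles is a class function, so $\# cyc(tp^{-1})=\# cyc(p^{-1}t)$. I also want the argument to read $pq$, so I substitute $q=t^{-1}$ and work with the product in the other order, $(\Phi_m^N)^{-1}\Phi_m^N=\mathrm{Id}$; in a finite-dimensional algebra a one-sided inverse is two-sided, so this is legitimate. Expanding the reversed product gives the coefficient of $t$ as
\[\sum_pW_p^NN^{\# cyc(p^{-1}t)}.\]
This still has the inverse in the wrong place. To remove it I use that $\Phi_m^N$ is invariant under the anti-automorphism $g\mapsto g^{-1}$, because $\# cyc(p^{-1})=\# cyc(p)$. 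Its inverse is therefore invariant as well, which gives $W_{p^{-1}}^N=W_p^N$. Reindexing $p\mapsto p^{-1}$ and setting $q=t$, the coefficient of $q$ becomes $\sum_pW_p^NN^{\# cyc(pq)}$.

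Taking $q=\mathrm{Id}$ gives the first identity, $\sum_pW_p^NN^{\# cyc(p)}=1$, and taking $q\neq\mathrm{Id}$ gives the second, $\sum_pW_p^NN^{\# cyc(pq)}=0$. The only step needing care is this last bookkeeping of inverses, and it is settled by the invariance of the cycle count under inversion and conjugation.
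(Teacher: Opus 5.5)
Your argument is correct and is exactly what the paper's ``by construction'' refers to: for $N\geqslant m$, compare coefficients in $\Phi_m^N(\Phi_m^N)^{-1}=\mathrm{Id}$ in $L(\s(m))$. The detour through the reversed product and $W_{p^{-1}}^N=W_p^N$ is valid but unnecessary, and your intermediate ``$q=t^{-1}$'' is inconsistent with the final ``$q=t$''. Already in your first expansion, $\# cyc(tp^{-1})=\# cyc(pt^{-1})$ because the cycle count is invariant under inversion, so setting $q=t^{-1}$ gives $\sum_pW_p^NN^{\# cyc(pq)}=\delta_{q,\mathrm{Id}}$ directly.
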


\begin{de}
	For $a_1,\ldots,a_m,b_1,\ldots,b_m\in\nz$ define
	\[P(a_1,\ldots,a_m;b_1,\ldots,b_m)=\{p\in\s(m):a_{p(i)}=b_i,\mbox{ for } i=1,\ldots,m\}.\]
\end{de}

If $u\in\unit(N)$ is a matrix, and $a,b\in\{1,\ldots,N\}$, we denote by $u_{ab}$ the $(a,b)$-entry of $u$. The following is the unitary Weingarten entry-integration formula. See Samuel \cite{Sam80}, Collins \cite{Col03}, and Collins--\'Sniady
\cite{CS06}; historical background, including Weingarten's asymptotic work \cite{Wei78} and Xu's work \cite{Xu97}, is reviewed in \cite{CMN22}.

\begin{te}
	Let $a_1,\ldots,a_m,b_1,\ldots,b_m,c_1,\ldots,c_m,d_1,\ldots,d_m$ be indices in the set $\{1,\ldots,N\}$. Then:
	\[\int_{\unit(N)}u_{a_1b_1}\ldots u_{a_mb_m}u^*_{c_1d_1}\ldots u^*_{c_md_m}d\mu(u)=\sum_{\substack{p\in P(a_1,\ldots,a_m;d_1,\ldots, d_m);\\ q\in P(b_1,\ldots,b_m;c_1,\ldots, c_m)}}W^N_{pq^{-1}}.\]
\end{te}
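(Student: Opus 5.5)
The plan is to derive the formula from Theorem~\ref{integral of uou^*} applied to $\rho_m(u)=u^{\otimes m}$ on $H^{\otimes m}$, with $G=\unit(N)$, in the stable range $N\geqslant m$, where the Weingarten numbers are defined as coefficients of $(\Phi_m^N)^{-1}$. First, rewrite the integrand as a matrix coefficient. We have $u_{a_1b_1}\cdots u_{a_mb_m}=\la u^{\otimes m}e_{b},e_{a}\ra$, writing $e_b=e_{b_1}\otimes\cdots\otimes e_{b_m}$, and similarly $u^*_{c_1d_1}\cdots u^*_{c_md_m}=\la (u^*)^{\otimes m}e_d,e_c\ra$. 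So the integral equals
\[\Big\la \Big(\int_{\unit(N)}u^{\otimes m}\otimes(u^*)^{\otimes m}d\mu(u)\Big)(e_b\otimes e_d),\,e_a\otimes e_c\Big\ra=\la \Omega_m\flip^{m}(e_b\otimes e_d),e_a\otimes e_c\ra,\]
where $\Omega_m$ is the Casimir element of $(u^{\otimes m})'$. Since $\flip^m(e_b\otimes e_d)=e_d\otimes e_b$, it remains to compute the entries of $\Omega_m$.

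Second, I will identify $\Omega_m$ explicitly. By Schur--Weyl duality the commutant $(\unit(N)^{\otimes m})'$ is spanned by the operators $P_p$, $p\in\s(m)$; this is taken as known. The Gram matrix of the family $c_p=P_p$ is $g_{pq}=Tr(P_q^*P_p)=Tr(P_{q^{-1}p})=N^{\#cyc(q^{-1}p)}$, using Proposition~\ref{prop:trace-perm} with all $A_i=\id$ and the fact that $p\mapsto P_p$ is a representation (up to the order convention, which I will fix once). Thus $(g_{pq})$ is the matrix of multiplication by $\Phi_m^N$ in $L(\s(m))$. Since $\Phi_m^N$ is invertible for $N\geqslant m$, the $P_p$ form a basis, and the inverse matrix has entries $w_{pq}=W^N_{q^{-1}p}$ (or the analogous expression, depending on convention). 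The dual basis is then $u_p=\sum_q w_{pq}P_q$, so by Definition~\ref{de:Casimir element}
\[\Omega_m=\sum_{p,q\in\s(m)}\overline{W^N_{q^{-1}p}}\,P_p\otimes P_q^*=\sum_{p,q}W^N_{q^{-1}p}\,P_p\otimes P_{q^{-1}}.\]
Here the Weingarten numbers are real, because $\Phi_m^N$ has real coefficients and is self-adjoint.

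Third, evaluate the matrix entry. We have
\[\la (P_p\otimes P_{q^{-1}})(e_d\otimes e_b),e_a\otimes e_c\ra=\la P_pe_d,e_a\ra\,\la P_{q^{-1}}e_b,e_c\ra.\]
From the definition of $P_p$, $P_pe_d=e_{d_{p(1)}}\otimes\cdots\otimes e_{d_{p(m)}}$, so the first factor is the indicator that $d_{p(i)}=a_i$ for all $i$, that is, a membership condition of $p$ (or $p^{-1}$) in $P(a;d)$. Likewise the second factor is the indicator that $q$ (suitably inverted) lies in $P(b;c)$. Summing over $p,q$ then gives $\sum W^N_{pq^{-1}}$ over the stated index sets, after reindexing $p\mapsto p^{-1}$, $q\mapsto q^{-1}$ where needed. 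Here one uses that $W^N_\pi$ depends only on the conjugacy class of $\pi$ and that $W^N_\pi=W^N_{\pi^{-1}}$, since $\Phi_m^N$ is central and invariant under inversion.

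The main obstacle is purely bookkeeping. I must keep consistent conventions for whether $p\mapsto P_p$ is a homomorphism or an anti-homomorphism, for the Gram matrix versus its transpose in the dual-basis proposition, and for the direction in which $P(a;d)$ is defined. I would settle these by checking the case $m=1$, where the integral must be $\delta_{a_1d_1}\delta_{b_1c_1}/N$, and the case $m=2$ against the known values $W_{\id}=1/(N^2-1)$ and $W_{(12)}=-1/(N(N^2-1))$. The centrality and inversion-invariance of $W^N$ absorb any remaining ambiguity. For $N<m$, I would note that the same argument goes through with a basis extracted from $\{P_p\}$, corresponding to the inverse on the represented support mentioned above, but I would state the proof for $N\geqslant m$.
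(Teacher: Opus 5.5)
\textbf{Assessment.} Your argument is correct, and it takes a different route from the paper. The paper does not prove this statement: it imports the formula from Samuel, Collins and Collins--\'Sniady, and uses it in Section \ref{s:unitary case} as a supposedly independent route to $I^N_\gamma$, compared afterwards with the Casimir route.

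\textbf{What your route does differently.} You derive the formula from the paper's own Theorem \ref{integral of uou^*} together with Schur--Weyl duality. The Casimir element you obtain, $\Omega_m=\sum_{p,q}W^N_{q^{-1}p}P_p\otimes P_{q^{-1}}$, is exactly the formula the paper proves later for $\Omega_m$ in the unitary case. Your third step then just reads off matrix coefficients of $\int u^{\otimes m}\otimes(u^*)^{\otimes m}d\mu(u)=\Omega_m\flip^m$. There is no circularity, because Theorem \ref{integral of uou^*} rests only on the averaging projection onto the commutant and the identification $\Psi$.

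\textbf{The bookkeeping closes as you predicted.}
\begin{itemize}
\item With the convention of Proposition \ref{prop:trace-perm}, $P_pe_d=e_{d_{p(1)}}\otimes\cdots\otimes e_{d_{p(m)}}$. The two indicators become $p^{-1}\in P(a;d)$ and $q^{-1}\in P(b;c)$. Reindexing gives $W^N_{qp^{-1}}$, which equals $W^N_{pq^{-1}}$ by inversion invariance.
\item With the homomorphism convention of the unitary section, you get $p\in P(a;d)$, $q\in P(b;c)$ and the coefficient $W^N_{q^{-1}p}$. This is fixed by conjugation invariance.
\end{itemize}

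\textbf{What each approach buys.} Your route makes the paper self-contained. It also shows that the Weingarten-based computation of $I^N_\gamma$ is not logically independent of the Casimir computation that follows it. In substance it is the Collins--\'Sniady argument (Gram matrix of permutation operators inverted via $\Phi^N_m$) recast in Casimir language. The citation gives the general-$N$ version at no cost.

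\textbf{On your remark for $N<m$.} ``Extracting a basis'' is imprecise, since the dual basis of a sub-basis is not visibly of the form $\sum W^N_{pq^{-1}}(\ldots)$. The clean way inside the paper is the proposition
\[
\Omega_m=\alpha^N_{2m}\big((W^N_m\otimes Id_m)\ve_m\big),
\]
which expands to $\sum_{p,q}W^N_{pq^{-1}}P_p\otimes P_{q^{-1}}$ with the partial-inverse coefficients. After that, your third step applies verbatim.
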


Recall that for a set of balanced words $w_1,\ldots,w_k$ we want to compute:
\[I_\gamma^N=\int_{\unit(N)^2}\Pi_{c=1}^kTr_{B(H)}(w_c(u,v))d\mu(u)d\mu(v).\]

\begin{lemma}
We have: 
\[I_\gamma^N=\sum_{a_1,\ldots,a_r=1}^N\big(\sum_{\substack{p_u\in P(a_{i_1},\ldots,a_{i_n};a_{\gamma(j_1)},\ldots,a_{\gamma(j_n)});\\ q_u\in P(a_{\gamma(i_1)},\ldots,a_{\gamma(i_n)};a_{j_1},\ldots,a_{j_n})}}W^N_{p_uq_u^{-1}}
\cdot\sum_{\substack{p_v\in P(a_{r_1},\ldots,a_{r_m};a_{\gamma(s_1)},\ldots,a_{\gamma(s_m)});\\ q_v\in P(a_{\gamma(r_1)},\ldots,a_{\gamma(r_m)};a_{s_1},\ldots,a_{s_m})}}W^N_{p_vq_v^{-1}}\big).\]
\end{lemma}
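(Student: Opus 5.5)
The plan is to expand the product of traces in matrix entries, factor the integral using independence of $u$ and $v$, and then apply the Weingarten entry-integration formula once for each variable. Here $n=m_u$ and $m=m_v$.

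\emph{Step 1: expand the traces.} By Proposition \ref{prop:trace-perm} and the construction of $\gamma$, each cycle of $\gamma$ is one word $w_c$. Position $t\in\{1,\ldots,r\}$ carries the letter $u$, $u^*$, $v$ or $v^*$ according to whether $t$ lies in $\{1,\ldots,n\}$, $\{n+1,\ldots,2n\}$, $\{2n+1,\ldots,2n+m\}$ or $\{2n+m+1,\ldots,r\}$. Writing $A_t$ for the letter at position $t$, we get
\[\Pi_{c=1}^kTr(w_c(u,v))=\sum_{a_1,\ldots,a_r=1}^N\Pi_{t=1}^r (A_t)_{a_t a_{\gamma(t)}},\]
with the convention from the proof of Proposition \ref{prop:trace-perm}. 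In the lemma's notation the positions of $u$ are $i_1,\ldots,i_n$ (equal to $1,\ldots,n$ after relabelling by $\lambda$), the positions of $u^*$ are $j_1,\ldots,j_n$, and similarly $r_\cdot,s_\cdot$ for $v,v^*$. The summand therefore splits as
\[\Pi_{l} u_{a_{i_l}a_{\gamma(i_l)}}\;\Pi_l u^*_{a_{j_l}a_{\gamma(j_l)}}\;\cdot\;\Pi_l v_{a_{r_l}a_{\gamma(r_l)}}\;\Pi_l v^*_{a_{s_l}a_{\gamma(s_l)}}.\]

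\emph{Step 2: factor and integrate.} Exchange the finite sum with the integral. The measure on $\unit(N)^2$ is a product measure, so the integral of each summand factors into a $u$-integral times a $v$-integral.

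\emph{Step 3: apply the Weingarten formula.} For the $u$-factor, take
\[\alpha_l=a_{i_l},\quad \beta_l=a_{\gamma(i_l)},\quad c_l=a_{j_l},\quad d_l=a_{\gamma(j_l)}.\]
The formula gives $\sum W^N_{p_uq_u^{-1}}$ over $p_u\in P(\alpha;d)$ and $q_u\in P(\beta;c)$. This is exactly $P(a_{i_1},\ldots;a_{\gamma(j_1)},\ldots)$ and $P(a_{\gamma(i_1)},\ldots;a_{j_1},\ldots)$, as in the statement. The $v$-factor is identical with $r_l,s_l$ and $m$.

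\emph{Main obstacle.} The only delicate point is the index bookkeeping. One has to check that the entry $u^*_{cd}$ in the Weingarten formula means the $(c,d)$-entry of the matrix $u^*$, so that it pairs correctly with $(u^*)_{a_{j_l}a_{\gamma(j_l)}}$. One also has to check that the row/column convention $(A_t)_{a_t a_{\gamma(t)}}$ in the trace expansion matches the ordering of the arguments of $P$. If the conventions are transposed, the roles of $p$ and $q$ swap; since $W^N$ is a class function with $W^N_{g}=W^N_{g^{-1}}$, the resulting expression is the same.
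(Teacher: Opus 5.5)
Your proposal is correct and is essentially the paper's proof. Like the paper, you expand the product of traces into matrix entries, split each summand into a $u$-part and a $v$-part, factor the integral via the product Haar measure, and apply the Weingarten entry formula to each factor with exactly the index assignment you give. The only difference is bookkeeping. The paper expands in the original letter positions using $\sigma$, so its final line carries $\sigma$ where the statement has $\gamma$. You relabel by $\lambda$ so that $\gamma$ appears, and this is precisely the reading under which the statement as written with $\gamma$ is literally correct; the two versions agree because $\sum_{a_1,\ldots,a_r}$ is invariant under this relabelling.
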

\begin{proof}
The function $f:\{1,\ldots,r\}\to\{u,u^*,v,v^*\}$ is the spelling of the words $w_1,\ldots,w_k$ and $\sigma\in\s(r)$ has cycles coresponding to these words. Then:
\begin{align*}
	&I_\gamma^N=\int\int_{\unit(N)^2}\sum_{a_1,\ldots,a_r=1}^Nf(1)_{a_1a_{\sigma(1)}}f(2)_{a_2a_{\sigma(2)}}\ldots f(r)_{a_ra_{\sigma(r)}}dudv= \\ 
	&\int\int_{\unit(N)^2}\sum_{a_1,\ldots,a_r=1}^N(u_{a_{i_1}a_{\sigma(i_1)}}\ldots u_{a_{i_{m_u}}a_{\sigma(i_{m_u})}}u^*_{a_{j_1}a_{\sigma(j_1)}}\ldots u^*_{a_{j_{m_u}}a_{\sigma(j_{m_u})}})
	(v_{a_{r_1}a_{\sigma(r_1)}}\ldots v_{a_{r_{m_v}}a_{\sigma(r_{m_v})}}v^*_{a_{s_1}a_{\sigma(s_1)}}\ldots v^*_{a_{s_{m_v}}a_{\sigma(s_{m_v})}})dudv=\\
	&\sum_{a_1,\ldots,a_r=1}^N\int_{\unit(N)}(u_{a_{i_1}a_{\sigma(i_1)}}\ldots u_{a_{i_{m_u}}a_{\sigma(i_{m_u})}}u^*_{a_{j_1}a_{\sigma(j_1)}}\ldots u^*_{a_{j_{m_u}}a_{\sigma(j_{m_u})}})du
	\int_{\unit(N)}(v_{a_{r_1}a_{\gamma(r_1)}}\ldots v_{a_{r_m}a_{\sigma(r_{m_v})}}v^*_{a_{s_1}a_{\sigma(s_1)}}\ldots v^*_{a_{s_{m_v}}a_{\sigma(s_{m_v})}})dv\\
	&=\sum_{a_1,\ldots,a_r=1}^N\big(\sum_{\substack{p_u\in P(a_{i_1},\ldots,a_{i_{m_u}};a_{\sigma(j_1)},\ldots,a_{\sigma(j_{m_u})});\\ q_u\in P(a_{\sigma(i_1)},\ldots,a_{\sigma(i_{m_u})};a_{j_1},\ldots,a_{j_{m_u}})}}W^N_{p_uq_u^{-1}}
	\cdot\sum_{\substack{p_v\in P(a_{r_1},\ldots,a_{r_{m_v}};a_{\sigma(s_1)},\ldots,a_{\sigma(s_{m_v})});\\ q_v\in P(a_{\sigma(r_1)},\ldots,a_{\sigma(r_{m_v})};a_{s_1},\ldots,a_{s_{m_v}})}}W^N_{p_vq_v^{-1}}\big)
\end{align*}
\end{proof}

Fix $p_u,q_u\in Sym(m_u)$ and $p_v,q_v\in Sym(m_v)$. We count how many tuples $(a_1,\ldots,a_r)\in\{1,\ldots,N\}^r$ have the property that $p_u\in P(a_{i_1},\ldots,a_{i_{m_u}};a_{\sigma(j_1)},\ldots,a_{\sigma(j_{m_u})})$; $q_u\in P(a_{\sigma(i_1)},\ldots,a_{\sigma(i_{m_u})};a_{j_1},\ldots,a_{j_{m_u}})$ and $p_v\in P(a_{r_1},\ldots,a_{r_{m_v}};a_{\sigma(s_1)},\ldots,a_{\sigma(s_{m_v})})$; $q_v\in P(a_{\sigma(r_1)},\ldots,a_{\sigma(r_{m_v})};a_{s_1},\ldots,a_{s_{m_v}})$.

The property is equivalent to $a_{i_{p_u(t)}}=a_{\sigma(j_t)}$, $a_{\sigma(i_{q_u(t)})}=a_{j_t}$ for $t=1,\ldots,n$ and $a_{r_{p_v(t)}}=a_{\sigma(s_t)}$, $a_{\sigma(r_{q_v(t)})}=a_{s_t}$ for $t=1,\ldots,m$. On the set $\{1,\ldots,r\}$ construct the equivalence relation $R(\sigma,f,p_u,q_u,p_v,q_v)$ generated by $i_{p_u(t)}\sim \sigma(j_t)$, $\sigma(i_{q_u(t)})\sim j_t$ for $t=1,\ldots,m_u$ and $r_{p_v(t)}\sim \sigma(s_t)$, $\sigma(r_{q_v(t)})\sim s_t$ for $t=1,\ldots,m_v$. Then indices $a_1,\ldots,a_r$ have to be equal on equivalence classes in $R(\sigma,f,p_u,q_u,p_v,q_v)$, and there is no other restriction. As such the number of tuples with the required property is $N^{\#R(\sigma,f,p_u,q_u,p_v,q_v)}$. In the end:
\[I_\gamma^N=\sum_{\substack{p_u,q_u\in Sym(m_u);\\ p_v,q_v\in Sym(m_v)}}W^N_{p_uq_u^{-1}}W^N_{p_vq_v^{-1}}N^{\#R(\sigma,f,p_u,q_u,p_v,q_v)}.\]

We fix again $p_u,q_u\in \s(m_u)$ and $p_v,q_v\in \s(m_v)$. Let $\s(f)=\{\vp\in \s(r):\vp(f^{-1}(x))=f^{-1}(x^*)\mbox{ for }x\in\{u,u^*,v,v^*\}\}$. We used the convention $x^{**}=x$. 
Then $p_u,q_u,p_v,q_v$ define an element $\vp\in Sym(f)$ as follows:
\begin{align*}
	&\vp(i_{p_u(t)})=j_t;\ \vp(j_t)=i_{q_u(t)}\mbox{ for }t=1,\ldots,m_u\\
	&\vp(r_{p_v(t)})=s_t;\ \vp(s_t)=r_{q_v(t)}\mbox{ for }t=1,\ldots,m_v.
\end{align*}
An element $\vp\in Sym(f)$ contains the same information as the tuple $(p_u,q_u,p_v,q_v)$. If we fix $\vp\in Sym(f)$ we can recover these permutations by reversing the above formulas. So, for $\vp\in Sym(f)$ define $p_u(\vp)\in Sym(n)$ by: $p_u(\vp)(t)$ is the unique $s\in\{1,\ldots,m_u\}$ such that $\vp(i_s)=j_t$. Same definition for $q_u(\vp),p_v(\vp)$ and $q_v(\vp)$.

Now $\sigma\vp\in \s(r)$. Moreover, the relation $R(\sigma,f,p_u,q_u,p_v,q_v)$ is generated by equivalences $\sigma\vp(i)\sim i$ for $i=1,\ldots,r$. Indeed $\sigma\vp(i_{p_u(t)})=\sigma(j_t)$ and $\sigma\vp(j_t)=\sigma(i_{q_u(t)})$. Similarly $\sigma\vp(r_{p_v(t)})=\sigma(s_t)$ and $\sigma\vp(s_t)=\sigma(r_{q_v(t)})$. As such $\#R(\sigma,f,p_u,q_u,p_v,q_v)$ is the number of cycles of $\sigma\vp\in Sym(r)$. It follows:
\[I_\gamma^N=\sum_{\vp\in Sym(f)}W^N_{p_u(\vp)q_u^{-1}(\vp)}W^N_{p_v(\vp)q_v^{-1}(\vp)}N^{\#cyc(\sigma\vp)}.\]

Recall that $\gamma=\lambda^{-1}\sigma\lambda$ is a permutation that contains full information on the words $w_1,\ldots,w_k$. As such, we need to compute $\lambda^{-1}\vp\lambda$. This is where the flip appears in this line of computation. Recall that $\delta_m\in\s(2m)$ is defined by: $\delta_m(t)=m+t; \delta_m(m+t)=t$ for $t\in\{1,\dots,m\}$, and
 $\delta\in\s(r)$  is the permutation $\delta_{m_u}\oplus\delta_{m_v}$.

\begin{p}
We have: $\lambda^{-1}\vp\lambda=\delta\cdot\big(p_u(\vp)^{-1}\oplus q_u(\vp)\oplus p_v(\vp)^{-1}\oplus q_v(\vp)\big)$.
\end{p}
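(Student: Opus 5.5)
The identity compares two permutations of $\{1,\ldots,r\}$, so I would prove it by evaluating both sides on each element. The set $\{1,\ldots,r\}$ splits into four blocks, $\{1,\ldots,m_u\}$, $\{m_u+1,\ldots,2m_u\}$, $\{2m_u+1,\ldots,2m_u+m_v\}$ and $\{2m_u+m_v+1,\ldots,r\}$. Under $\lambda$ these correspond to the positions of $u,u^*,v,v^*$ respectively. I read products right to left. I read $p_u^{-1}\oplus q_u\oplus p_v^{-1}\oplus q_v$ as the permutation acting on the $k$-th block by the $k$-th component, shifted by the block offset. Then $\delta$ swaps the first two blocks by $t\leftrightarrow m_u+t$ and the last two by $2m_u+t\leftrightarrow 2m_u+m_v+t$.

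\emph{Step 1: $x=t\in\{1,\ldots,m_u\}$.} Here $\lambda(t)=i_t$. By definition, $\vp(i_{p_u(s)})=j_s$, and putting $s=p_u^{-1}(t)$ gives $\vp(i_t)=j_{p_u^{-1}(t)}$. Since $\lambda^{-1}(j_s)=m_u+s$, we get $\lambda^{-1}\vp\lambda(t)=m_u+p_u^{-1}(t)$. On the right-hand side, the direct sum sends $t$ to $p_u^{-1}(t)$, and $\delta$ then sends this to $m_u+p_u^{-1}(t)$. The two sides agree.

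\emph{Step 2: $x=m_u+t$.} Here $\lambda(m_u+t)=j_t$ and $\vp(j_t)=i_{q_u(t)}$, so $\lambda^{-1}\vp\lambda(m_u+t)=q_u(t)$. On the right-hand side, the second component sends $m_u+t$ to $m_u+q_u(t)$, and $\delta$ then sends this to $q_u(t)$. The two sides agree.

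\emph{Step 3: the $v$ blocks.} These are handled by the same two computations with $r_t,s_t,p_v,q_v$ in place of $i_t,j_t,p_u,q_u$ and offsets $2m_u$ and $2m_u+m_v$. This uses $\vp(r_{p_v(t)})=s_t$ and $\vp(s_t)=r_{q_v(t)}$.

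The only real obstacle is bookkeeping of conventions. The inverse on $p_u$ but not on $q_u$ comes from the asymmetric way $\vp$ is defined: $p_u$ indexes the source $i_{p_u(t)}$, while $q_u$ indexes the target $i_{q_u(t)}$. I also need a consistent block-shift convention for $\oplus$, matching the one used for $\delta=\delta_{m_u}\oplus\delta_{m_v}$. Once these conventions are fixed, each block is a one-line check, and the equality holds pointwise on all of $\{1,\ldots,r\}$.
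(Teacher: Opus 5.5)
Your proof is correct and is essentially the paper's own argument. The paper also evaluates both sides on the four blocks, computes $\lambda^{-1}\vp\lambda(t)=m_u+p_u^{-1}(t)$ and $\lambda^{-1}\vp\lambda(m_u+t)=q_u(t)$ exactly as you do (products read right to left), and checks that these agree with $\delta\omega(\vp)$, treating the $v$-blocks in the same way.
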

\begin{proof}
Let $\omega(\vp)=p_u(\vp)^{-1}\oplus q_u(\vp)\oplus p_v(\vp)^{-1}\oplus q_v(\vp)$. For $t\in\{1,\dots,m_u\}$:
\begin{align*}
&\lambda^{-1}\vp\lambda(t)=\lambda^{-1}\vp(i_t)=\lambda^{-1}(j_{p_u^{-1}(t)})=m_u+p_u^{-1}(t)=\delta\omega(\vp)(t)\\
&\lambda^{-1}\vp\lambda(m_u+t)=\lambda^{-1}\vp(j_t)=\lambda^{-1}(i_{q_u(t)})=q_u(t)=\delta\omega(\vp)(m_u+t).	
\end{align*}
Similarly, for $t\in\{1,\dots,m_v\}$:
\begin{align*}
&\lambda^{-1}\vp\lambda(2m_u+t)=\lambda^{-1}\vp(r_t)=\lambda^{-1}(s_{p_v^{-1}(t)})=2m_u+m_v+p_v^{-1}(t)=\delta\omega(\vp)(2m_u+t)\\
&\lambda^{-1}\vp\lambda(2m_u+m_v+t)=\lambda^{-1}\vp(s_t)=\lambda^{-1}(r_{q_v(t)})=2m_u+q_v(t)=\delta\omega(\vp)(2m_u+m_v+t).	
\end{align*}
\end{proof}

Coming back to the formula of $I_\gamma^N$, $\# cyc(\sigma\vp)=\# cyc(\lambda^{-1}\sigma\lambda\lambda^{-1}\vp\lambda)=\# cyc(\gamma\delta\omega(\vp))$. As such:
\[I_\gamma^N=\sum_{\vp\in Sym(f)}W^N_{p_u(\vp)q_u^{-1}(\vp)}W^N_{p_v(\vp)q_v^{-1}(\vp)}N^{\#cyc(\gamma\delta\omega(\vp))}= \sum_{\substack{p_u,q_u\in Sym(m_u);\\ p_v,q_v\in Sym(m_v)}} W^N_{p_uq_u^{-1}}W^N_{p_vq_v^{-1}}N^{\#cyc(\gamma\delta\cdot p_u^{-1}\oplus q_u\oplus p_v^{-1}\oplus q_v)}\]

We record the last step of the computation as a separate Proposition, for a later use.

\begin{p}\label{unitary final step}
We have:
\[I_\gamma^N=\sum_{\substack{p_u,q_u\in Sym(m_u);\\ p_v,q_v\in Sym(m_v)}} W^N_{p_uq_u^{-1}}W^N_{p_vq_v^{-1}}N^{\#cyc(\gamma\delta\cdot p_u^{-1}\oplus q_u\oplus p_v^{-1}\oplus q_v)}=
\tau\big(\Phi_r^N(\Phi_{m_u}^N\oplus Id_{m_u+m_v}\oplus\Phi_{m_v}^N)^{-1}(\ve_{m_u}\oplus\ve_{m_v})\gamma\delta\big).\]
\end{p}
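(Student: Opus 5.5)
The first equality in Proposition \ref{unitary final step} is the formula obtained just before the statement, after substituting $\lambda^{-1}\vp\lambda=\delta\,\omega(\vp)$ and using that $\vp\mapsto(p_u,q_u,p_v,q_v)$ is a bijection $Sym(f)\to\s(m_u)^2\times\s(m_v)^2$. So the plan only concerns the second equality. I would expand the group-algebra expression on the right into a sum over permutations and match it term by term with the left-hand sum.

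\emph{Step 1: expand the inverse.} The block-sum embedding $\s(m_u)\times\s(m_u+m_v)\times\s(m_v)\hookrightarrow\s(r)$ extends to a unital algebra morphism of group algebras. Hence
\[(\Phi_{m_u}^N\oplus Id_{m_u+m_v}\oplus\Phi_{m_v}^N)^{-1}=(\Phi_{m_u}^N)^{-1}\oplus Id\oplus(\Phi_{m_v}^N)^{-1}=\sum_{p,q}W^N_pW^N_q\,(p\oplus Id\oplus q).\]
Here we assume $N\geqslant m_u,m_v$, the stable case of this section.

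\emph{Step 2: multiply out.} Since $\ve_{m_u}\oplus\ve_{m_v}=\sum_{a,b}a\oplus a^{-1}\oplus b\oplus b^{-1}$, the product becomes
\[X=\sum_{p,q,a,b}W^N_pW^N_q\,(pa\oplus a^{-1}\oplus qb\oplus b^{-1}).\]
Write $\Phi_r^N=\sum_{\pi\in\s(r)}N^{\#cyc(\pi)}\pi$. Then $\tau(\pi\,\omega\,\gamma\delta)$ is nonzero only when $\pi=(\omega\gamma\delta)^{-1}$. The number of cycles is invariant under inversion and conjugation, so $\#cyc(\pi)=\#cyc(\gamma\delta\,\omega)$. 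This gives
\[\tau\big(\Phi_r^N X\gamma\delta\big)=\sum_{p,q,a,b}W^N_pW^N_q\,N^{\#cyc(\gamma\delta\cdot(pa\oplus a^{-1}\oplus qb\oplus b^{-1}))}.\]

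\emph{Step 3: reparametrize.} Put $q_u=a^{-1}$ and $p_u^{-1}=pa$, so $p=p_u^{-1}q_u$; do the same for $v$. This is a bijection of index sets, and it turns the exponent into the one on the left. It remains to check that $W^N_{p_u^{-1}q_u}=W^N_{p_uq_u^{-1}}$, and this is the only genuinely non-bookkeeping step.

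\emph{The key fact: $W^N$ is a class function invariant under inversion.}
\begin{itemize}
\item $\Phi_m^N$ is central, because $\#cyc$ is conjugation invariant. Hence its inverse is central, so $W^N$ is a class function.
\item $\Phi_m^N$ is fixed by the involution $p\mapsto p^{-1}$, extended linearly, which is an anti-automorphism. Hence its inverse is fixed too, so $W^N_{x^{-1}}=W^N_x$.
\end{itemize}
Now $(p_u^{-1}q_u)^{-1}=q_u^{-1}p_u$, which is conjugate by $p_u$ to $p_uq_u^{-1}$. Therefore $W^N_{p_u^{-1}q_u}=W^N_{p_uq_u^{-1}}$, and likewise for $v$, which completes the identification.

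The main obstacle is keeping the composition conventions consistent. The paper composes right-to-left, and $P_\gamma$ and the block sums must be read accordingly. If the ordering is off, one lands on $N^{\#cyc(\omega'\gamma\delta)}$ for a slightly different $\omega'$. I would fix this by conjugation invariance of $\#cyc$, or by swapping the roles of $a$ and $a^{-1}$ in the reparametrization.
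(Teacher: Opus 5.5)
Your proof is correct and is essentially the paper's computation run in reverse. The paper rewrites each term via $W\,N^{\#cyc(p)}=\tau(\Phi_r^N W p)$ and then factors $\sum_{p,q}W^N_{pq^{-1}}\,p^{-1}\oplus q=\ve_m\big((\Phi_m^N)^{-1}\oplus Id_m\big)$, tacitly using the same inversion invariance and centrality of $(\Phi_m^N)^{-1}$ that you make explicit.

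One small slip: $\Phi_{m_v}^N$ sits on the $v^*$-block, so the $v$-part of your product is $b\oplus qb^{-1}$, not $qb\oplus b^{-1}$. This is harmless: setting $p_v^{-1}=b$ and $q_v=qb^{-1}$ gives $W^N_{q_vp_v^{-1}}=W^N_{p_vq_v^{-1}}$ by inversion invariance.
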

\begin{proof}
First notice that, for $W\in\cz$ and $p\in\s(r)$, we have:
\[W\cdot N^{\#cyc(p)}=\tau(N^{\#cyc(p^{-1})}p^{-1}\cdot Wp)=\tau(\Phi_r^NWp).\]
As such:
\[\sum_{\substack{p_u,q_u\in Sym(m_u);\\ p_v,q_v\in Sym(m_v)}} W^N_{p_uq_u^{-1}}W^N_{p_vq_v^{-1}}N^{\#cyc(\gamma\delta\cdot p_u^{-1}\oplus q_u\oplus p_v^{-1}\oplus q_v)}=
\tau(\Phi_r^N\sum_{\substack{p_u,q_u\in Sym(m_u);\\ p_v,q_v\in Sym(m_v)}} W^N_{p_uq_u^{-1}}W^N_{p_vq_v^{-1}}(p_u^{-1}\oplus q_u\oplus p_v^{-1}\oplus q_v)\gamma\delta).\]

Our goal is now to compute $\sum_{p,q\in Sym(m)}W^N_{pq^{-1}}p^{-1}\oplus q$ as an element of $L(\s(2m))$. Recall that $\ve_m=\sum_{p\in\s(m)}p\oplus p^{-1}$, and $P_{\ve_m}$ is the Casimir element of $L(\s(m))\subset B(L^2(\s(m)))$.

\begin{align*}
\sum_{p,q\in Sym(m)}W^N_{pq^{-1}}p^{-1}\oplus q=& \sum_{\lambda,q\in Sym(m)}W_\lambda^N(\lambda q)^{-1}\oplus q=(\sum_qq^{-1}\oplus q)(\sum_\lambda W_\lambda^N\lambda^{-1}\oplus Id_{m})\\
=&\ve_m((\Phi_m^N)^{-1}\oplus Id_m)=(\Phi_m^N\oplus Id_m)^{-1}\ve_m=(Id_m\oplus\Phi_m^N)^{-1}\ve_m.
\end{align*}
As a consequence: 
\[\sum_{\substack{p_u,q_u\in Sym(m_u);\\ p_v,q_v\in Sym(m_v)}} W^N_{p_uq_u^{-1}}W^N_{p_vq_v^{-1}}(p_u^{-1}\oplus q_u\oplus p_v^{-1}\oplus q_v)=(\Phi_{m_u}^N\oplus Id_{m_u+m_v}\oplus\Phi_{m_v}^N)^{-1}(\ve_{m_u}\oplus\ve_{m_v}).\]
This implies the conclusion.
\end{proof}

In the end we reach the formula:
\[\int_{\unit(N)^2}\Pi_{c=1}^kTr_{B(H)}(w_c(u,v))d\mu(u)d\mu(v)=\tau\big(\Phi_r^N(\Phi_{m_u}^N\oplus Id_{m_u+m_v}\oplus\Phi_{m_v}^N)^{-1}(\ve_{m_u}\oplus\ve_{m_v})\gamma\delta\big).\]

\subsection{The Casimir element in the unitary case}

As before $\rho_m:\unit(N)\to B(H^{\otimes m})$ is the representation $\rho_m(u)=u^{\otimes m}$. Let $A_m=\big(\rho_m(\unit(N))\big)^\prime$, and denote by $\Omega_m$ its Casimir element. Also $\Phi_m^N\in L(\s(m))$ is the element $\Phi_m^N=\sum_{\xi\in\s(m)}N^{\# cyc(\xi)}\xi$, and  $(\Phi_m^N)^{-1}=\sum_{\xi\in\s(m)}W_\xi^N\xi$. For $\xi\in \s(m)$, $P_\xi\in B(H^{\otimes m})$ is defined by:
\[ P_\xi(x_1\otimes \cdots \otimes x_m)=x_{\xi^{-1}(1)}\otimes \cdots \otimes x_{\xi^{-1}(m)}.\]
The family $\{P_\xi:\xi\in\s(m)\}$ spans $A_m$ by Schur--Weyl duality \cite{FH91}. It is a basis if and only if $N\geqslant m$; see also \cite[Proposition 1.10]{Pro21}.

\begin{p}\label{p:initial trace}
We have: $Tr_{B(H^{\otimes m})}(P_\xi)=N^{\# cyc(\xi)}$.
\end{p}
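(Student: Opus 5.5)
The plan is to compute the trace directly in the standard basis $\{e_{a_1}\otimes\cdots\otimes e_{a_m}\}$ of $H^{\otimes m}$, where $(a_1,\ldots,a_m)\in\{1,\ldots,N\}^m$. This is the same bookkeeping used in the proof of Proposition \ref{prop:trace-perm}. In fact the statement is the special case $A_1=\cdots=A_m=\id_H$ of that proposition, applied to the permutation $\xi^{-1}$. The convention in this subsection is $P_\xi(x_1\otimes\cdots\otimes x_m)=x_{\xi^{-1}(1)}\otimes\cdots\otimes x_{\xi^{-1}(m)}$, so this $P_\xi$ is the operator that the earlier convention attached to $\xi^{-1}$. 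Proposition \ref{prop:trace-perm} then gives a product of $Tr_{B(H)}(\id_H)=N$, one factor per cycle of $\xi^{-1}$. Since $\xi$ and $\xi^{-1}$ have the same number of cycles, this yields $N^{\#cyc(\xi)}$.

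For a self-contained check, I would also do the direct count. The first step is to compute the diagonal matrix entries:
\[\bigip{P_\xi(e_{a_1}\otimes\cdots\otimes e_{a_m})}{e_{a_1}\otimes\cdots\otimes e_{a_m}}=\prod_{p=1}^m\ip{e_{a_{\xi^{-1}(p)}}}{e_{a_p}}=\prod_{p=1}^m\delta_{a_{\xi^{-1}(p)}}^{a_p}.\]
This entry equals $1$ exactly when the tuple $(a_p)_p$ is invariant under $\xi^{-1}$, that is, when $a_p=a_{\xi^{-1}(p)}$ for all $p$. Otherwise it equals $0$.

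The second step is to sum over all tuples. The sum counts the functions $a:\{1,\ldots,m\}\to\{1,\ldots,N\}$ that are constant on the orbits of $\langle\xi\rangle$. These orbits are exactly the cycles of $\xi$, with fixed points counted as cycles of length one. Each such function is determined by choosing one value in $\{1,\ldots,N\}$ for each cycle, so there are $N^{\#cyc(\xi)}$ of them. This gives the claimed formula.

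I do not expect a genuine obstacle here. The only point requiring care is the $\xi$ versus $\xi^{-1}$ convention. It is harmless, since the invariance condition under $\xi^{-1}$ is the same as invariance under $\xi$, and the cycle counts agree. One should also remember to count fixed points as cycles, which is consistent with the definition of the Formanek element $\Phi_m^N$.
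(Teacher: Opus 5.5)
Your direct count is correct and is exactly the paper's argument: expand the trace over basis tensors indexed by functions $\{1,\ldots,m\}\to\{1,\ldots,N\}$, note that the diagonal entry is $\prod_i\delta_{f(\xi^{-1}(i))}^{f(i)}$, and count the functions constant on the cycles of $\xi$. Your alternative reduction to Proposition \ref{prop:trace-perm} with all $A_i=\id_H$ is also valid: the present $P_\xi$ is the earlier $P_{\xi^{-1}}$, and $\xi^{-1}$ has the same number of cycles as $\xi$.
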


\begin{proof}
	Let $e_1,\ldots,e_N$ be a basis of $H$. Let $[N]^{[m]}=\{f:\{1,\ldots,m\}\to \{1,\ldots,N\}\}$. Then:
	\begin{align*}
		Tr_{B(H^{\otimes m})}(P_\xi)=&\sum_{f\in [N]^{[m]}}\langle P_\xi(e_{f(1)}\otimes\ldots\otimes e_{f(m)}),e_{f(1)}\otimes\ldots\otimes e_{f(m)}\rangle\\
		=&\sum_{f\in [N]^{[m]}}\langle e_{f(\xi^{-1}(1))}\otimes\ldots\otimes e_{f(\xi^{-1}(m))},e_{f(1)}\otimes\ldots\otimes e_{f(m)}\rangle=\sum_{f\in [N]^{[m]}}\Pi_{i=1}^m\delta_{f(\xi^{-1}(i))}^{f(i)}.
	\end{align*}
	The product $\Pi_{i=1}^m\delta_{f(\xi^{-1}(i))}^{f(i)}$ is non zero iff $f(\xi^{-1}(i))=f(i)$ for all $i=1,\ldots,m$. This means that $f$ has to take the same value on every cycle of $\xi$. This happens exactly for $N^{\# cyc(\xi)}$ elements of $[N]^{[m]}$.
\end{proof}

\begin{p}
The Casimir element is given by the formula $\Omega_m=\sum_{\xi,\eta\in Sym(m)}W_{\eta^{-1}\xi}^NP_\xi\otimes P_{\eta^{-1}}$.
\end{p}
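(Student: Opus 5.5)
We are in the stable case $N\geqslant m$, where $\{P_\xi:\xi\in\s(m)\}$ is a basis of $A_m$. The plan is to apply Definition \ref{de:Casimir element} with the basis $c_\xi=P_\xi$ and to write the dual basis explicitly using the Weingarten numbers. By the Corollary stating that $\Omega_A$ does not depend on the chosen basis, any dual basis we find gives the Casimir element. Equivalently, by Proposition \ref{trace of Frobenius} and faithfulness of the trace, it suffices to check
\[Tr\big((x\otimes y)\Omega\big)=Tr(xy)\]
for the candidate $\Omega=\sum_{\xi,\eta}W^N_{\eta^{-1}\xi}P_\xi\otimes P_{\eta^{-1}}$. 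It is enough to test this on basis elements $x=P_\alpha$, $y=P_\beta$.

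First I record the algebra: $\xi\mapsto P_\xi$ is a representation, so $P_\xi P_\eta=P_{\xi\eta}$ and $P_\xi^*=P_{\xi^{-1}}$ (worth a one-line check with the convention $x_{\xi^{-1}(i)}$). With Proposition \ref{p:initial trace} this gives
\[Tr_{B(H^{\otimes m})}(P_\alpha P_\xi)=N^{\#cyc(\alpha\xi)}.\]
For the left-hand side we then compute
\[Tr\big((P_\alpha\otimes P_\beta)\Omega\big)=\sum_{\xi,\eta}W^N_{\eta^{-1}\xi}N^{\#cyc(\alpha\xi)}N^{\#cyc(\beta\eta^{-1})}.\]
The right-hand side is $N^{\#cyc(\alpha\beta)}$.

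The key step is to collapse one sum using the Observation that $\sum_p W_p^N N^{\#cyc(pq)}=\delta_{q,Id}$. This is the statement $\Phi^N_m(\Phi^N_m)^{-1}=1$ read coefficientwise, together with the fact that $\#cyc$ is a class function, so $\#cyc(pq)=\#cyc(qp)$. Substitute $\xi=\eta p$. Then $\#cyc(\alpha\eta p)=\#cyc(p\alpha\eta)$, and summing over $p$ gives $\delta_{\alpha\eta,Id}$, which forces $\eta=\alpha^{-1}$. The remaining term is $N^{\#cyc(\beta\alpha)}=N^{\#cyc(\alpha\beta)}$, as required.

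Alternatively, in dual-basis language: set $u_\eta=\sum_\xi \overline{W_{\eta^{-1}\xi}}P_{\xi}$, adjusting conjugations and inverses as needed, and verify $Tr(u_\eta^*P_\xi)=\delta$ by the same Weingarten identity. The Weingarten numbers are real, since the Formanek element is self-adjoint and its inverse is too, so the conjugations are harmless.

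The main obstacle is the bookkeeping of inverses and the order of products. Three sign conventions must match: the convention for $P_\xi$ (which uses $\xi^{-1}$), the placement of $\eta^{-1}$ in $W_{\eta^{-1}\xi}$, and the adjoint in $u_i^*$. I will keep the direct trace verification as the primary route because it avoids tracking adjoints. The residual risk is whether the index $W_{\eta^{-1}\xi}$ or $W_{\xi\eta^{-1}}$ comes out. The class-function invariance of $\#cyc$ and the relation $W_p=W_{p^{-1}}$ should make the two forms agree; the latter holds because $\Phi$ is invariant under inversion, hence so is its inverse.
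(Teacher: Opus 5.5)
Your proof is correct, but it reaches the formula by a different route than the paper.

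\textbf{The paper's route.} The paper works directly from Definition \ref{de:Casimir element}. It takes $c_\xi=P_\xi$ and inverts the Gram matrix $Tr(P_\eta^*P_\xi)=N^{\#cyc(\eta^{-1}\xi)}$ using the Weingarten identity, which gives $w_{\xi,\eta}=W^N_{\eta^{-1}\xi}$. It then forms the dual basis $u_\xi=\sum_\eta W^N_{\eta^{-1}\xi}P_\eta$ and computes $\sum_\xi P_\xi\otimes u_\xi^*$ using $P_\eta^*=P_{\eta^{-1}}$ and the reality of $W^N$. This is your ``alternative''.

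\textbf{Your route.} Your primary argument instead checks the characterizing identity of Proposition \ref{trace of Frobenius} on $P_\alpha\otimes P_\beta$, then concludes by uniqueness via faithfulness of the trace on $A_m\otimes A_m$. The substitution $\xi=\eta p$ together with $\sum_pW^N_pN^{\#cyc(pq)}=\delta_{q,Id}$ does all the work, and the computation checks out.

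\textbf{Comparison.} Both proofs rest on the same identity $\Phi^N_m(\Phi^N_m)^{-1}=1$. The paper's version is constructive, but it needs $\{P_\xi\}$ to be a basis (an invertible Gram matrix) and requires adjoint bookkeeping. Yours avoids adjoints and the reality of $W^N$, and it uses only that the $P_\xi$ span $A_m$. That is why your mechanism is exactly the one the paper switches to in Section 5. There $\{P_\xi\}$ is no longer a basis, and $\Omega_m=\alpha_{2m}^N\big((W_m^N\otimes Id_m)\ve_m\big)$ is proved by checking traces, with $\Phi_m^NW_m^N\Phi_m^N=\Phi_m^N$ replacing $\Phi^N_m(\Phi^N_m)^{-1}=1$.

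\textbf{A small correction.} In your closing remark, $W^N_{\eta^{-1}\xi}=W^N_{\xi\eta^{-1}}$ holds because $(\Phi^N_m)^{-1}$ is central, so $W^N$ is a class function. The relation $W_p=W_{p^{-1}}$ does not give this on its own. Your computation already produces $W^N_{\eta^{-1}\xi}$ directly, so the issue does not affect the proof.
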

\begin{proof}
As $\sum_\xi W_\xi^NN^{\# cyc(\xi)}=1$ and $\sum_\xi W_\xi^NN^{\# cyc(\xi\eta)}=0$ for any $\eta\neq Id$, we get $w_{\xi,\eta}=W_{\eta^{-1}\xi}^N$.
	
By construction $u^\xi=\sum_{\eta\in\s(m)}w_{\xi,\eta}P_\eta=\sum_{\eta\in\s(m)}W_{\eta^{-1}\xi}^NP_\eta$. As such: 
\[\Omega_m=\sum_{\xi,\eta}P_\xi\otimes (W_{\eta^{-1}\xi}^NP_\eta)^*=\sum_{\xi,\eta}W_{\eta^{-1}\xi}^NP_\xi\otimes P_{\eta^{-1}}.\]
\end{proof}

\begin{te}We have:
\[\int_{\unit(N)^2}\Pi_{c=1}^kTr_{B(H)}(w_c(u,v))d\mu(u)d\mu(v)=\tau\big(\Phi_r^N(\Phi_{m_u}^N\oplus Id_{m_u+m_v}\oplus\Phi_{m_v}^N)^{-1}(\ve_{m_u}\oplus\ve_{m_v})\gamma\delta\big).\] 
\end{te}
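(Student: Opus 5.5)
The plan is to derive this second time from the general formula of Theorem \ref{te:main result} rather than from the Weingarten entry formula, since the statement sits in the subsection on the Casimir element and is meant to recover Proposition \ref{unitary final step} by that route. We work in the stable case $N\geqslant r$, where $\{P_\xi\}$ is a basis of $A_m$ for $m\leqslant r$ and all the Formanek elements involved are invertible. By Theorem \ref{te:main result} with $G=\unit(N)$,
\[I_\gamma^N=Tr_{B(H^{\otimes r})}\big((\Omega_{m_u}\otimes\Omega_{m_v})P_\delta P_\gamma\big).\]
We substitute the formula of the previous Proposition, $\Omega_m=\sum_{\xi,\eta}W^N_{\eta^{-1}\xi}P_\xi\otimes P_{\eta^{-1}}$, for $m=m_u$ and $m=m_v$. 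The operator inside the trace then becomes a linear combination of operators $P_\pi$ with $\pi\in\s(r)$.

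The key steps are as follows.

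\begin{enumerate}[(i)]
\item Note that $\pi\mapsto P_\pi$ is multiplicative, with the convention $P_\xi(x_1\otimes\cdots)=x_{\xi^{-1}(1)}\otimes\cdots$. It also respects direct sums: $P_{\xi}\otimes P_{\eta}=P_{\xi\oplus\eta}$.
\item Rewrite $\Omega_{m_u}\otimes\Omega_{m_v}$ as $P$ applied to the element
\[\sum W^N_{\eta_u^{-1}\xi_u}W^N_{\eta_v^{-1}\xi_v}\,(\xi_u\oplus\eta_u^{-1}\oplus\xi_v\oplus\eta_v^{-1})\in L(\s(r)).\]
Reindexing exactly as in the proof of Proposition \ref{unitary final step}, this element equals $(\Phi_{m_u}^N\oplus Id_{m_u+m_v}\oplus\Phi_{m_v}^N)^{-1}(\ve_{m_u}\oplus\ve_{m_v})$, possibly after replacing $\ve_m$ by its image under the inversion/flip symmetry. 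Here we use that $\ve_m$ commutes with $\Phi_m^N\oplus Id$ and with $Id\oplus\Phi_m^N$, because $\Phi_m^N$ is central.
\item Apply Proposition \ref{p:initial trace}: $Tr(P_\pi)=N^{\#cyc(\pi)}=\tau(\Phi_r^N\pi)$. Extended linearly, this gives $Tr(P_X)=\tau(\Phi_r^NX)$ for every $X\in L(\s(r))$.
\item Combine the three steps. Finally, use the traciality of $\tau$ and the centrality of $\Phi_r^N$ to reorder the factors $\delta$ and $\gamma$ into the order $\cdots\gamma\delta$ of the statement.
\end{enumerate}

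The main obstacle is bookkeeping with the conventions. Section 2 defines $P_\gamma$ by $\xi_{\gamma(i)}$, while the unitary section defines $P_\xi$ by $\xi_{\xi^{-1}(i)}$, so the word permutation enters as $P_{\gamma^{-1}}$ in the second convention. We have to check that the resulting expression $\tau(\Phi_r^N X\delta\gamma^{-1})$, or a similar variant, agrees with $\tau(\Phi_r^N X\gamma\delta)$. For this we would use three facts: $\#cyc(\pi)=\#cyc(\pi^{-1})$, so the linear map $\pi\mapsto\pi^{-1}$ preserves $\tau(\Phi_r^N\,\cdot)$; $\delta=\delta^{-1}$; and the inversion symmetry of the Weingarten coefficients, $W_p^N=W_{p^{-1}}^N$, which shows that the element $X$ is invariant under inversion combined with conjugation by $\delta$.

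If this sign and inversion chase becomes unwieldy, the fallback is to use the two computations already established. The left-hand side is computed by Theorem \ref{te:main result}, and the right-hand side coincides with the Weingarten expression of Proposition \ref{unitary final step}. So it would suffice to observe that both expand to the same sum $\sum W^N_{p_uq_u^{-1}}W^N_{p_vq_v^{-1}}N^{\#cyc(\cdots)}$, comparing them term by term in $(p_u,q_u,p_v,q_v)$.
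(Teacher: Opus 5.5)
Your proposal is correct and is essentially the paper's argument: substitute $\Omega_m=\sum_{\xi,\eta}W^N_{\eta^{-1}\xi}P_\xi\otimes P_{\eta^{-1}}$ into Theorem \ref{te:main result}, evaluate traces of permutation operators as $N^{\#cyc}$, and identify the result with Proposition \ref{unitary final step}. Your fallback is literally the paper's proof, and only $N\geqslant\max(m_u,m_v)$ is needed there, not $N\geqslant r$.

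One small point in your direct convention chase: the symmetries you list only carry $\tau(\Phi_r^N X\delta\gamma^{-1})$ to $\tau(\Phi_r^N X\delta\gamma)$. To reach $\tau(\Phi_r^N X\gamma\delta)$ you also need $\delta X\delta=X$, which is the fact that the Casimir element commutes with the flip. That fact uses that $W^N$ is a class function, i.e.\ the centrality of $(\Phi_m^N)^{-1}$, and not just $W^N_p=W^N_{p^{-1}}$.
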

\begin{proof}
Let $I_\gamma^N=\int_{\unit(N)^2}\Pi_{c=1}^kTr_{B(H)}(w_c(u,v))d\mu(u)d\mu(v)$. By Theorem \ref*{te:main result}:
 \[I_\gamma^N=Tr_{B(H^{\otimes r})}\big( P_\gamma P_\delta(\Omega_{m_u}\otimes\Omega_{m_v})\big)=Tr_{B(H^{\otimes r})}\big((\Omega_{m_u}\otimes\Omega_{m_v})P_\gamma P_\delta\big).\] 
By the previous Propostion:
\[I_\gamma^N=\sum_{\substack{p_u,q_u\in Sym(m_u);\\ p_v,q_v\in Sym(m_v)}} W^N_{p_uq_u^{-1}}W^N_{p_vq_v^{-1}}Tr_{H^{\otimes r}}( P_{p_u}\otimes P_{q_u^{-1}}\otimes P_{p_v}\otimes P_{q_v^{-1}}\cdot P_\gamma P_\delta)=\sum_{\substack{p_u,q_u\in Sym(m_u);\\ p_v,q_v\in Sym(m_v)}} W^N_{p_uq_u^{-1}}W^N_{p_vq_v^{-1}}N^{\#cyc( p_u\oplus q_u^{-1}\oplus p_v\oplus q_v^{-1}\cdot\gamma\delta)}.\]
The conclusion follows by Propostion \ref{unitary final step}.
\end{proof}

\section{Minimal projections}

We discuss the unitary case in the general setting, i.e. there is no relation between $N$ and $r$. As such $\Phi_{m_u}^N$ might not be invertible and the set $\{P_\xi:\xi\in\s(m)\}$ generates $A_m$, but it might not be a basis. Computing the Casimir element in this case heavily relies on the representation theory of the symmetric group.

\subsection{Representation theory of $\s(m)$}

Let $Y_m$ be the set of partitions of $m$. For $\alpha\in Y_m$:
\[\alpha=(\alpha_1,\alpha_2,\ldots);\ \alpha_1\geqslant\alpha_2\geqslant\ldots;\ \sum_i\alpha_i=m.\]

The \emph{Young diagram} of $\alpha$, denoted by $D_\alpha$ consists of boxes $(i,j)$ with row index $i$ and column index $j$, satisfying $j\leqslant\alpha_i$. For $(i,j)\in D_\alpha$ the \emph{content} of $(i,j)$ is defined as $c(i,j)=j-i$. For $t\in\zz$ define:
\[s_t(\alpha)=\#\{(i,j)\in D_\alpha:c(i,j)=t\}.\]

\begin{de}
	The \emph{content polynomial} of $\alpha$ is:
	\[f_\alpha(N)=\Pi_{(i,j)\in D_\alpha}(N+j-i)=\Pi_{t\in\zz}(N+t)^{s_t(\alpha)}.\]
\end{de}

\subsection{Casimir element in the non-stable case}

\begin{lemma}
Construct $\alpha_m^N:L(\s(m))\to A_m$, $\alpha_m^N(\xi)=P_\xi$. Then $\alpha$ is a surjective homomorphism.
\end{lemma}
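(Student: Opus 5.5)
The lemma claims two things: $\alpha_m^N$ is multiplicative, and its image is all of $A_m$. I would prove them separately.

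\emph{Homomorphism.} With the convention $P_\xi(x_1\otimes\cdots\otimes x_m)=x_{\xi^{-1}(1)}\otimes\cdots\otimes x_{\xi^{-1}(m)}$, I will check $P_\xi P_\eta=P_{\xi\eta}$ on elementary tensors. Write $y_i=x_{\eta^{-1}(i)}$. Then
\[P_\xi P_\eta(x_1\otimes\cdots\otimes x_m)=P_\xi(y_1\otimes\cdots\otimes y_m)=y_{\xi^{-1}(1)}\otimes\cdots\otimes y_{\xi^{-1}(m)},\]
and $y_{\xi^{-1}(i)}=x_{\eta^{-1}\xi^{-1}(i)}=x_{(\xi\eta)^{-1}(i)}$. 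Extending linearly gives an algebra homomorphism on $L(\s(m))$ with $\alpha_m^N(Id)=Id_{H^{\otimes m}}$. It is also $*$-preserving, since each $P_\xi$ is a permutation of an orthonormal basis, hence unitary, and so $P_\xi^*=P_\xi^{-1}=P_{\xi^{-1}}$, which matches $\xi^*=\xi^{-1}$ in $L(\s(m))$.

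\emph{Range contained in $A_m$.} For $u\in\unit(N)$, $u^{\otimes m}$ applies $u$ in each tensor slot and $P_\xi$ only permutes slots, so the two commute. Hence $P_\xi\in A_m=\big(\rho_m(\unit(N))\big)^\prime$.

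\emph{Surjectivity.} This is Schur--Weyl duality \cite{FH91}, which the paper already quotes: $\{P_\xi:\xi\in\s(m)\}$ spans $A_m$. To make the step less of a black box, I would argue through the double commutant theorem. Let $B$ be the image of $\alpha_m^N$. It is a unital $*$-subalgebra of $B(H^{\otimes m})$, so $B=B''$. It therefore suffices to show $B'=\mathrm{span}\{u^{\otimes m}:u\in\unit(N)\}$, because then $B=B''=\big(\rho_m(\unit(N))\big)^\prime=A_m$. This splits into two sub-steps.
\begin{enumerate}
\item $B'=(B(H)^{\otimes m})^{\s(m)}$, the symmetric tensors. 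This is linear algebra: $T$ commutes with every $P_\xi$ if and only if $T$ is invariant under permuting the tensor factors.
\item The symmetric tensors are spanned by $\{A^{\otimes m}:A\in B(H)\}$, by polarization.
\end{enumerate}
Then, since $\unit(N)$ is Zariski dense in $GL_N$, which is dense in $M_N$, the span of $\{u^{\otimes m}:u\in\unit(N)\}$ equals the span of $\{A^{\otimes m}:A\in B(H)\}$.

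I expect the main obstacle to be the density step: passing from all operators $A$ to unitaries $A=u$. I would handle it by holomorphy. Any linear functional vanishing on $u^{\otimes m}$ for all unitaries $u$ also vanishes on $(e^{iX})^{\otimes m}$ for all Hermitian $X$. The map $X\mapsto(e^{X})^{\otimes m}$ is holomorphic on $M_N$, and a holomorphic function vanishing on the totally real subspace $iM_N^{sa}$ vanishes identically. So the functional vanishes on $(e^{X})^{\otimes m}$ for every $X$, i.e. on all of $GL_N$. By continuity it vanishes on every $A^{\otimes m}$, $A\in M_N$.
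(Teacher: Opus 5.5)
Your proof is correct and follows the paper, which gives no separate argument for this lemma and rests on the Schur--Weyl duality fact it quotes when introducing $P_\xi$ in the unitary case (that $\{P_\xi:\xi\in\s(m)\}$ spans $A_m$); your double-commutant, polarization and holomorphic-density argument is a sound, self-contained proof of exactly that cited fact. Your check $P_\xi P_\eta=P_{\xi\eta}$ correctly uses that later convention $P_\xi(x_1\otimes\cdots\otimes x_m)=x_{\xi^{-1}(1)}\otimes\cdots\otimes x_{\xi^{-1}(m)}$; with the paper's earlier convention (no inverse) the map $\xi\mapsto P_\xi$ would be an anti-homomorphism.
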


\begin{p}\label{p:up trace}
For any $x\in L(\s(m))$: $Tr_{B(H^{\otimes m})}(\alpha_m^N(x))=\tau(\Phi_m^Nx)$.
\end{p}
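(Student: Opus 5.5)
By linearity of both sides in $x\in L(\s(m))$, it suffices to verify the identity on the basis of group elements, i.e. for $x=\xi\in\s(m)$. The plan is to compute each side separately and compare them with Proposition \ref{p:initial trace}.

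On the left, $\alpha_m^N(\xi)=P_\xi$. By Proposition \ref{p:initial trace},
\[Tr_{B(H^{\otimes m})}(P_\xi)=N^{\#cyc(\xi)}.\]
On the right, we expand the Formanek element and use the definition of $\tau$ as the coefficient of the identity:
\[\tau(\Phi_m^N\xi)=\sum_{p\in\s(m)}N^{\#cyc(p)}\tau(p\xi).\]
Here $\tau(p\xi)=1$ exactly when $p=\xi^{-1}$, and it vanishes otherwise. Hence $\tau(\Phi_m^N\xi)=N^{\#cyc(\xi^{-1})}$.

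To finish, note that $\xi$ and $\xi^{-1}$ have the same cycle type. Therefore $\#cyc(\xi^{-1})=\#cyc(\xi)$, and the two sides agree on every basis element, hence on all of $L(\s(m))$.

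There is essentially no obstacle here. The only point needing care is the convention $P_\xi(x_1\otimes\cdots\otimes x_m)=x_{\xi^{-1}(1)}\otimes\cdots$ used in this section, but the cycle count is insensitive to inversion, so the convention does not affect the result.
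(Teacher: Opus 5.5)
Your proposal is correct and follows the paper's approach: reduce by linearity to group elements $\xi\in\s(m)$ and apply Proposition \ref{p:initial trace}. You also spell out the step the paper leaves implicit, namely $\tau(\Phi_m^N\xi)=N^{\#cyc(\xi^{-1})}=N^{\#cyc(\xi)}$.
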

\begin{proof}
By Propostion \ref{p:initial trace}, $Tr_{B(H^{\otimes m})}(\alpha_m^N(p))=\tau(\Phi_m^Np)$, for any $p\in\s(m)$. The result follows by liniarity.
\end{proof}

Partitions $\lambda\in Y_m$ are in one-to-one corespondence with the set of ireducible representations of $\s(m)$, that are in one-to-one correspondence with the set of central projections in $L(\s(m))$.

\begin{nt}
For $\lambda\in Y_m$, let $p_\lambda$ be the central projection of $L(\s(m))$ corresponding to $\lambda$.
\end{nt}

As the Formanek element $\Phi_m^N\in L(\s(m))$ is central, $\Phi_m^N$ is a linear combination of $p_\lambda$, $\lambda\in Y_m$. The next Propostion follows from Schur--Weyl duality and the hook-content formula;
see \cite[Proposition 1.12]{Pro21} and \cite[Corollary 7.21.4]{Sta99}.

\begin{p} 
We have $\Phi_m^N=\sum_{\lambda\in Y_m}f_\lambda(N)p_\lambda$.
\end{p}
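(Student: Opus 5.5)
The plan is to show that $\Phi_m^N$ acts on the isotypic block $p_\lambda L(\s(m))$ as multiplication by the scalar $f_\lambda(N)$. We compute this scalar through a character identity and then evaluate it with the hook-content formula.

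Since $\Phi_m^N=\sum_{p}N^{\#cyc(p)}p$ has coefficients that are class functions, it is central. Hence $\Phi_m^N=\sum_{\lambda\in Y_m}c_\lambda p_\lambda$ for scalars $c_\lambda$. Let $\chi_\lambda$ be the irreducible character and $d_\lambda=\chi_\lambda(Id)$. Since $\chi_\lambda(p_\mu)=\delta_{\lambda\mu}d_\lambda$, applying $\chi_\lambda$ gives
\[c_\lambda=\frac{1}{d_\lambda}\sum_{p\in\s(m)}N^{\#cyc(p)}\chi_\lambda(p).\]

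The central step is to identify $\sum_pN^{\#cyc(p)}\chi_\lambda(p)$ using Schur--Weyl duality. By Proposition \ref{p:initial trace}, $N^{\#cyc(p)}=Tr_{B(H^{\otimes m})}(P_p)$, which is the character of the permutation representation of $\s(m)$ on $H^{\otimes m}$. Schur--Weyl duality decomposes this representation as $H^{\otimes m}\simeq\bigoplus_{\mu}S_\mu\otimes V_\mu$, where $\mu$ runs over partitions with at most $N$ rows, $S_\mu$ is the Specht module and $V_\mu$ is the irreducible $\unit(N)$-module. Therefore
\[N^{\#cyc(p)}=\sum_\mu \dim V_\mu\,\chi_\mu(p).\]
By orthogonality of characters, $\sum_p N^{\#cyc(p)}\chi_\lambda(p)=m!\,\dim V_\lambda$, with the convention $\dim V_\lambda=0$ if $\lambda$ has more than $N$ rows. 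This gives
\[c_\lambda=\frac{m!}{d_\lambda}\dim V_\lambda.\]

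It remains to evaluate $c_\lambda$. The hook length formula gives $d_\lambda=m!/\prod_{(i,j)}h(i,j)$. The hook-content formula (Stanley, Cor.~7.21.4) gives $\dim V_\lambda=\prod_{(i,j)\in D_\lambda}(N+j-i)/h(i,j)$. Dividing, the hook lengths cancel and $c_\lambda=\prod_{(i,j)}(N+j-i)=f_\lambda(N)$.

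This formula is automatically consistent when $\lambda$ has more than $N$ rows. In that case the box $(N+1,1)$ has content $-N$, so $f_\lambda(N)=0$, matching $\dim V_\lambda=0$. The only delicate point is the normalization of $\tau$ against $\chi_\lambda$ and the convention $P_\xi$ versus $P_{\xi^{-1}}$. Neither matters here, because $\#cyc(p)=\#cyc(p^{-1})$ and characters are real.
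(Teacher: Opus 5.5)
Your argument is correct and follows the same route as the paper. The paper gives no proof beyond citing Schur--Weyl duality and the hook-content formula (Procesi, Proposition 1.12; Stanley, Corollary 7.21.4), and you have written out exactly that argument: you read off the eigenvalue of the central element $\Phi_m^N$ on each block via $\chi_\lambda$, expand $N^{\#cyc(p)}=\sum_\mu \dim V_\mu\,\chi_\mu(p)$, and cancel the hook lengths, with accurate remarks on the case of more than $N$ rows and on the $P_\xi$ versus $P_{\xi^{-1}}$ convention.
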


\begin{de}
Denote by $W_m^N$ the partial inverse of $\Phi_m^N$: if $Y_m^N=\{\lambda\in Y_m:f_\lambda(N)\neq 0\}$ then:
\[W_m^N=\sum_{\lambda\in Y_m^N}\frac1{f_\lambda(N)}p_\lambda.\]
\end{de}

\begin{p}
We have $\Phi_m^NW_m^N\Phi_m^N=\Phi_m^N$. If $N\geqslant m$ then $W_m^N=(\Phi_m^N)^{-1}$, as before.
\end{p}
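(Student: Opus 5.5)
The claim has two parts: $\Phi_m^NW_m^N\Phi_m^N=\Phi_m^N$ for every $N$, and $W_m^N=(\Phi_m^N)^{-1}$ when $N\geqslant m$. I will work in the decomposition of $L(\s(m))$ by its minimal central projections $\{p_\lambda\}_{\lambda\in Y_m}$. These are mutually orthogonal idempotents with $\sum_{\lambda\in Y_m}p_\lambda=Id$. By the preceding Proposition, $\Phi_m^N=\sum_{\lambda\in Y_m}f_\lambda(N)p_\lambda$, and $W_m^N$ is defined by the same kind of expansion. Both elements therefore lie in the commutative subalgebra spanned by the $p_\lambda$, and products can be computed coefficientwise using $p_\lambda p_\mu=\delta_\lambda^\mu p_\lambda$.

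For the identity $\Phi_m^NW_m^N\Phi_m^N=\Phi_m^N$, I compute directly:
\[\Phi_m^NW_m^N\Phi_m^N=\sum_{\lambda\in Y_m^N}f_\lambda(N)\cdot\frac1{f_\lambda(N)}\cdot f_\lambda(N)\,p_\lambda=\sum_{\lambda\in Y_m^N}f_\lambda(N)p_\lambda.\]
For $\lambda\notin Y_m^N$ we have $f_\lambda(N)=0$ by definition, so adding those terms changes nothing. The sum therefore equals $\sum_{\lambda\in Y_m}f_\lambda(N)p_\lambda=\Phi_m^N$.

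For the second part, it suffices to show that $f_\lambda(N)\neq0$ for every $\lambda\in Y_m$ when $N\geqslant m$, so that $Y_m^N=Y_m$. Then
\[W_m^N\Phi_m^N=\sum_{\lambda\in Y_m}p_\lambda=Id,\]
and $W_m^N$ is the genuine inverse of $\Phi_m^N$. This agrees with the earlier Weingarten inverse by uniqueness of inverses. The nonvanishing is the only step requiring an argument. The content polynomial is $f_\lambda(N)=\prod_{(i,j)\in D_\lambda}(N+j-i)$. Each box $(i,j)$ satisfies $j\geqslant1$ and $i\leqslant\ell(\lambda)\leqslant m$, so $N+j-i\geqslant N+1-m\geqslant1>0$. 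Hence no factor vanishes.

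Nothing here is a real obstacle; the computation is routine once everything is written in the basis of central projections. The only point needing care is the use of $\sum_\lambda p_\lambda=Id$, which holds because the $p_\lambda$ are all the minimal central projections of the finite-dimensional algebra $L(\s(m))$.
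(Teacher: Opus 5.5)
Your proof is correct. The first part is what the paper means by ``the definition of a partial inverse, easy to check'', and you carry out that check coefficientwise in the basis of minimal central projections.

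For the second part your route differs slightly from the paper's. The paper relies on the invertibility of $\Phi_m^N$ for $N\geqslant m$, which it takes from the literature. Once $\Phi_m^N$ is invertible, $W_m^N=(\Phi_m^N)^{-1}$ follows at once, for instance by multiplying $\Phi_m^NW_m^N\Phi_m^N=\Phi_m^N$ on both sides by $(\Phi_m^N)^{-1}$. You instead show directly from the content polynomial that $f_\lambda(N)\neq0$ for every $\lambda\in Y_m$, so that $Y_m^N=Y_m$ and $W_m^N\Phi_m^N=\sum_\lambda p_\lambda=Id$.

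Your version is self-contained, given the expansion $\Phi_m^N=\sum_\lambda f_\lambda(N)p_\lambda$, and it reproves the cited invertibility as a byproduct. The same bound $N+j-i\geqslant N+1-\ell(\lambda)$ in fact shows that, for positive integers $N$, $f_\lambda(N)\neq0$ exactly when $\ell(\lambda)\leqslant N$. This explains why the stable range is $N\geqslant m$: the single column $\lambda=(1,\ldots,1)$ is the first to fail. The paper's route avoids looking at contents altogether, at the cost of leaning on an external theorem.
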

\begin{proof}
The first part is the definition of a partial inverse, and it is easy to check. The second part follows by the invertability of the Formanek element in this case.
\end{proof}

Now we are ready to compute the Casimir element of $A_m$. The next Propostion is a particular case of Propostion \ref{p:Casimir two traces}. Notice that $(\alpha_m^N)^{\otimes 2}(x\otimes y)=\alpha_{2m}^N(x\otimes y)$.

\begin{p}
We have: $\Omega_m=(\alpha_{2m}^N)\big((W_m^N\otimes Id_m)\ve_m\big)$.
\end{p}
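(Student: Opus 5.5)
The plan is to apply Proposition \ref{p:Casimir two traces} with $A=L(\s(m))$, equipped with a suitable trace, $B=A_m$ with the trace inherited from $B(H^{\otimes m})$, and $\alpha=\alpha_m^N$, which is surjective by the Lemma above. Two things must be checked: that the resulting central element $R_\alpha$ equals $W_m^N$, and that the Casimir element of $A$ for the chosen trace is $\ve_m$. The identification $(\alpha_m^N)^{\otimes 2}=\alpha_{2m}^N$ on $L(\s(m))\otimes L(\s(m))\subset L(\s(2m))$ then gives the stated formula.

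For the trace on $A$, I take $\tau$. The Example on group algebras shows that its Casimir element is $\sum_{g\in\s(m)}g\otimes g^{-1}=\ve_m$. Writing $L(\s(m))=\oplus_{\lambda\in Y_m}M_{d_\lambda}(\cz)$, we have $\tau(e^\lambda_{11})=d_\lambda/m!$. On $B$, Proposition \ref{p:up trace} gives
\[Tr_B(\alpha(e^\lambda_{11}))=\tau(\Phi_m^Ne^\lambda_{11})=f_\lambda(N)\,\tau(e^\lambda_{11}),\]
using $\Phi_m^N=\sum_\lambda f_\lambda(N)p_\lambda$ and $p_\lambda e^\lambda_{11}=e^\lambda_{11}$.

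Next I identify $Y_\alpha=\{\lambda:\alpha(p_\lambda)\neq0\}$ with $Y_m^N$. By Proposition \ref{p:up trace}, $Tr(\alpha(p_\lambda))=f_\lambda(N)\tau(p_\lambda)$. Since $\alpha(p_\lambda)$ is a projection and the trace on $B(H^{\otimes m})$ is faithful, $\alpha(p_\lambda)\neq 0$ exactly when $f_\lambda(N)\neq0$. Note also that $f_\lambda(N)\geq 0$ in all cases.

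Substituting into the formula for $R_\alpha$ gives
\[R_\alpha=\sum_{\lambda\in Y_m^N}\frac{\tau(e^\lambda_{11})}{f_\lambda(N)\tau(e^\lambda_{11})}p_\lambda=W_m^N.\]
Proposition \ref{p:Casimir two traces} then yields $\Omega_m=\alpha_{2m}^N\big((W_m^N\otimes Id_m)\ve_m\big)$.

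The main obstacle is a subtlety in Proposition \ref{p:Casimir two traces}. It asserts that $\alpha$ restricted to each surviving block $M_{d_\lambda}$ is injective and that these blocks give all of $B$. For a unital $*$-morphism this holds, because a simple block is either killed or embedded. I would spell this out, since it is what justifies using the same matrix units $\alpha(e^\lambda_{ij})$ in $B$ and hence the formula for $\Omega_B$.
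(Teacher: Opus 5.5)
Your proof is correct. It takes a different route from the written proof, even though the paper introduces the statement as a particular case of Proposition \ref{p:Casimir two traces}.

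\textbf{The paper's proof.} It never computes $R_\alpha$. Instead it verifies the defining property of the Casimir element directly. For $x,y\in L(\s(m))$ it evaluates
$Tr\big(\alpha_{2m}^N((W_m^N\otimes Id_m)\ve_m)\,(\alpha_m^N(x)\otimes\alpha_m^N(y))\big)$. It uses Proposition \ref{p:up trace}, the centrality of $\Phi_m^N$ and $W_m^N$, and the Casimir property of $\ve_m$ with respect to $\tau$. This gives $\tau(\Phi_m^NW_m^N\Phi_m^Nxy)=\tau(\Phi_m^Nxy)=Tr(\alpha_m^N(xy))$. Surjectivity of $\alpha_m^N$ and uniqueness of the Casimir element then finish the proof.

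\textbf{What each route buys.} The paper's route needs only the partial-inverse identity $\Phi_m^NW_m^N\Phi_m^N=\Phi_m^N$. It never uses matrix units of $A_m$ and never has to decide which blocks $\alpha_m^N$ kills. It also does not depend on Proposition \ref{p:Casimir two traces}, whose proof in the paper is only sketched.

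Your route needs the extra identification $Y_\alpha=Y_m^N$. You handle this correctly, using faithfulness of $Tr$ and $\tau(p_\lambda)>0$. It also needs the fact that each surviving simple block embeds isomorphically, which you rightly flag as the step to spell out. In exchange, your route explains where $W_m^N$ comes from: it is the block-wise ratio of $\tau$ to the pulled-back trace $\tau(\Phi_m^N\,\cdot\,)$. It also makes clear that the values of the partial inverse on the killed blocks do not matter.
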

\begin{proof}
Let $x,y\in L(\s(m))$. We compute $V=Tr_{B(H^{\otimes 2m})}\big((\alpha_m^N)^{\otimes 2}\big((W_m^N\otimes Id_m)\ve_m\big)\cdot\alpha_m^N(x)\otimes\alpha_m^N(y)\big)$. Notice that $\alpha_m^N(x)\otimes\alpha_m^N(y)=\alpha_{2m}^N(x\otimes y)$. Then:
\begin{align*}
V=&\sum_{p\in\s(m)}Tr_{B(H^{\otimes 2m})}\big(\alpha_{2m}^N\big((W_m^N\otimes Id_m)p\otimes p^{-1}\big)\cdot\alpha_{2m}^N(x\otimes y)\big)\\
=&\sum_{p\in\s(m)}Tr_{B(H^{\otimes 2m})}\big(\alpha_{2m}^N\big(W_m^Npx\otimes p^{-1}y\big)\big)\\ 
=&\sum_{p\in\s(m)}Tr_{B(H^{\otimes m})}\big(\alpha_m^N(W_m^Npx)\big)\cdot Tr_{B(H^{\otimes m})}(\alpha_m^N(p^{-1}y))\\
=&\sum_{p\in\s(m)}\tau(\Phi_m^NW_m^Npx)\cdot\tau(\Phi_m^Np^{-1}y)=\sum_{p\in\s(m)}\tau^{\otimes 2}(p\Phi_m^NW_m^Nx\otimes p^{-1}\Phi_m^Ny)\\
=&\tau^{\otimes 2}\big(\ve_m(\Phi_m^NW_m^Nx\otimes\Phi_m^Ny)\big)= \tau(\Phi_m^NW_m^N\Phi_m^Nxy)=\tau(\Phi_m^Nxy)=Tr_{B(H^{\otimes m})}(\alpha_m^N(xy)).
\end{align*} 
The conclusion is that $(\alpha_m^N)^{\otimes 2}\big((W_m^N\otimes Id_m)\ve_m\big)$ satisfies the defining property of the Casimir element of $A_m$, so it must be equal to $\Omega_m$.
\end{proof}

\begin{p} \label{p:centralxCasimir}
We have:
$(W_m^N\otimes Id_m)\ve_m=(Id_m\otimes W_m^N)\ve_m=\sum_{\lambda\in Y_m^N}\frac1{f_\lambda(N)}(p_\lambda\otimes p_\lambda)\ve_m$.
\end{p}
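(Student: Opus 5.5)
The plan is to use the single identity that drives the whole statement: for every central element $z\in L(\s(m))$,
\[(z\otimes Id_m)\ve_m=(Id_m\otimes z)\ve_m,\]
where $z\otimes Id_m$ and $Id_m\otimes z$ are viewed in $L(\s(m)\times\s(m))\subset L(\s(2m))$. Once this holds, both claimed equalities follow by taking $z=W_m^N$ and expanding in central projections.

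To prove the identity, write $z=\sum_{g}z_g g$, where $z_g$ is constant on conjugacy classes. Then
\[(z\otimes Id_m)\ve_m=\sum_{g,p}z_g\, gp\otimes p^{-1}.\]
Substitute $q=gp$. This gives $\sum_{q,g}z_g\, q\otimes q^{-1}g$. Since $z$ is central, $z_g=z_{q^{-1}gq}$, and we set $h=q^{-1}gq$. Then $q^{-1}g=hq^{-1}$, so
\[(z\otimes Id_m)\ve_m=\sum_{q,h}z_h\, q\otimes hq^{-1}=(Id_m\otimes z)\ve_m.\]
Equivalently, one can use $\ve_m(x\otimes Id)=\ve_m(Id\otimes x)$ twisted by the antipode. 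The version above is the cleanest. Applying it to $z=W_m^N$, which is central because it is a combination of the $p_\lambda$, gives the first equality.

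For the second equality, write $W_m^N=\sum_{\lambda\in Y_m^N}\frac1{f_\lambda(N)}p_\lambda$. It then suffices to show that for each $\lambda$,
\[(p_\lambda\otimes Id_m)\ve_m=(p_\lambda\otimes p_\lambda)\ve_m.\]
Since $p_\lambda$ is a projection, $(p_\lambda\otimes p_\lambda)\ve_m=(p_\lambda\otimes Id)(Id\otimes p_\lambda)\ve_m$. By the identity with $z=p_\lambda$, $(Id\otimes p_\lambda)\ve_m=(p_\lambda\otimes Id)\ve_m$. Hence
\[(p_\lambda\otimes p_\lambda)\ve_m=(p_\lambda^2\otimes Id)\ve_m=(p_\lambda\otimes Id)\ve_m.\]
Summing over $\lambda\in Y_m^N$ with weights $1/f_\lambda(N)$ yields the stated formula.

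There is one point to check carefully: that $p_\lambda\otimes Id$ and $Id\otimes p_\lambda$ commute in $L(\s(m)\times\s(m))$. This holds because they live in different tensor factors. Beyond that, the only real work is the reindexing in the first step, where centrality of $z$ is used. Everything else is bookkeeping.
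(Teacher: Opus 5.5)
Your proof is correct, but it takes a different route from the paper.

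The paper never manipulates coefficients. It pairs each of the three elements with an arbitrary $x\otimes y$ and uses the Casimir property $\tau\big((a\otimes b)\ve_m\big)=\tau(ab)$ of $\ve_m$ in $L(\s(m))$. All three pairings then reduce to $\tau(xyW_m^N)$, using centrality of $W_m^N$ and of the $p_\lambda$ together with $p_\lambda^2=p_\lambda$. Equality of the elements follows from faithfulness of the trace, since the elements $x\otimes y$ span $L(\s(m))\otimes L(\s(m))$.

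You instead prove directly, via the substitution $q=gp$ and the class-function property of central elements, that central elements slide across $\ve_m$: $(z\otimes Id_m)\ve_m=(Id_m\otimes z)\ve_m$. Idempotency of $p_\lambda$ then finishes the argument.

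Your route is fully elementary and needs no nondegeneracy argument. It also gives the slightly stronger termwise identity $(p_\lambda\otimes p_\lambda)\ve_m=(p_\lambda\otimes Id_m)\ve_m$ for each $\lambda$. The paper's route is basis-free and fits its Casimir-element framework, so it applies verbatim to the Casimir element $\Omega_A$ of any finite-dimensional algebra with a faithful trace. Your reindexing, by contrast, relies on the group basis of $L(\s(m))$.
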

\begin{proof}
Let $x,y\in L(\s(m))$. Then:
\begin{align*}
\tau\big((x\otimes y)(W_m^N\otimes Id_m)\ve_m\big)=&\tau(xW_m^Ny)=\tau(xyW_m^N);\\
\tau\big((x\otimes y)(Id_m\otimes W_m^N)\ve_m\big)=&\tau(xyW_m^N);\\
\tau\big((x\otimes y)\sum_{\lambda\in Y_m^N}\frac1{f_\lambda(N)}(p_\lambda\otimes p_\lambda)\ve_m\big)=&\sum_{\lambda\in Y_m^N}\frac1{f_\lambda(N)}\tau(xp_\lambda yp_\lambda)\\
=&\sum_{\lambda\in Y_m^N}\frac1{f_\lambda(N)}\tau(xyp_\lambda)=\tau(xyW_m^N).
\end{align*}
As such $\tau\big((x\otimes y)(W_m^N\otimes Id_m)\ve_m\big)=\tau\big((x\otimes y)(Id_m\otimes W_m^N)\ve_m\big)= \tau\big((x\otimes y)\sum_{\lambda\in Y_m^N}1/{f_\lambda(N)}\cdot(p_\lambda\otimes p_\lambda)\ve_m\big)$ for each $x,y\in L(\s(m))$. The conclusion hence follows.
\end{proof}

\subsection{Formulas for $I_\gamma^N$} We now use this computations of the Casimir elements to get new formulas for the expected value of $\Pi_{c=1}^kTr_{B(H)}(w_c(u,v))$.

\begin{te} We have:
	\[\int_{\unit(N)^2}\Pi_{c=1}^kTr_{B(H)}(w_c(u,v))d\mu(u)d\mu(v)=\tau\big(\Phi_r^N(W_{m_u}^N\otimes Id_{m_u+m_v}\otimes W_{m_v}^N)(\ve_{m_u}\otimes\ve_{m_v})\delta\gamma\big).\] 
\end{te}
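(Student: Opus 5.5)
The plan is to start from Theorem \ref{te:main result} applied to $G=\unit(N)$ with $\rho$ the identity representation. This gives
\[I_\gamma^N=Tr_{B(H^{\otimes r})}\big((\Omega_{m_u}\otimes\Omega_{m_v})P_\delta P_\gamma\big).\]
We then substitute the non-stable Casimir formula $\Omega_m=\alpha_{2m}^N\big((W_m^N\otimes Id_m)\ve_m\big)$. This was proved above via Proposition \ref{p:Casimir two traces} and holds for every $N$, with no invertibility of the Formanek element required.

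First, we use that $\alpha^N$ is multiplicative and compatible with tensor products: $\alpha_a^N(x)\otimes\alpha_b^N(y)=\alpha_{a+b}^N(x\otimes y)$, where $x\otimes y$ is viewed in $L(\s(a+b))$ via the block embedding. Together with $P_\delta P_\gamma=\alpha_r^N(\delta\gamma)$, this rewrites the whole operator as $\alpha_r^N(X)$, with
\[X=(W_{m_u}^N\otimes Id_{m_u}\otimes W_{m_v}^N\otimes Id_{m_v})(\ve_{m_u}\otimes\ve_{m_v})\delta\gamma.\]
One point needs care: the paper's convention is $P_\gamma(\xi_1\otimes\cdots)=\xi_{\gamma(1)}\otimes\cdots$ in Section 2, but $P_\xi$ uses $\xi^{-1}$ in Section 4. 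So $\xi\mapsto P_\xi$ is a homomorphism for one convention and an anti-homomorphism for the other. I would fix one convention and check that $P_\delta P_\gamma$ corresponds to $\delta\gamma$, possibly up to replacing $\gamma$ by $\gamma^{-1}$. Since $\delta$ is an involution and trace formulas of Proposition \ref{prop:trace-perm} type are invariant under this change, the identity should survive.

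Second, Proposition \ref{p:up trace} gives $Tr_{B(H^{\otimes r})}(\alpha_r^N(X))=\tau(\Phi_r^NX)$, which is exactly the right-hand side, except for the placement of the $W$ factors. By Proposition \ref{p:centralxCasimir}, $(W_m^N\otimes Id_m)\ve_m=(Id_m\otimes W_m^N)\ve_m$. This lets us move the $v$-block factor $W_{m_v}^N$ from the first slot of its $\ve_{m_v}$ to the second slot, producing the stated $W_{m_u}^N\otimes Id_{m_u+m_v}\otimes W_{m_v}^N$. This matches the stable-case formula in Proposition \ref{unitary final step}.

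The main obstacle is the bookkeeping of conventions, namely the order $\delta\gamma$ versus $\gamma\delta$ and the homomorphism versus anti-homomorphism issue, rather than any analytic step. To reconcile the orders, I would use the traciality of $\tau$ and the centrality of $\Phi_r^N$: $\tau(\Phi_r^N Y\delta\gamma)$ and $\tau(\Phi_r^N Y\gamma\delta)$ are related by conjugating with $\delta$. Conjugation by $\delta$ swaps the two tensor legs of each $\ve_m$ and maps $\ve_m$ to itself. Combined with Proposition \ref{p:centralxCasimir}, this shows that both placements of $W$ give the same value.
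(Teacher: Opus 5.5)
Your proposal is correct and follows the paper's proof: Theorem \ref{te:main result}, the identity $\Omega_m=\alpha_{2m}^N\big((W_m^N\otimes Id_m)\ve_m\big)$ tensored over the two blocks, Proposition \ref{p:centralxCasimir} to move $W_{m_v}^N$ into the last slot, and Proposition \ref{p:up trace}. Your $\delta$-conjugation argument, using $\delta(a\otimes b)\delta=b\otimes a$ together with Proposition \ref{p:centralxCasimir}, correctly handles the $\gamma\delta$ versus $\delta\gamma$ bookkeeping that the paper glosses over.

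One side remark is inaccurate: Proposition \ref{prop:trace-perm} is not invariant under $\gamma\mapsto\gamma^{-1}$, since that reverses each word. You do not need that step, though. If you use the Section 2 convention consistently, $\xi\mapsto P_\xi$ is an anti-homomorphism, and the Casimir formula and Proposition \ref{p:up trace} still hold for it, because $g\mapsto g^{-1}$ fixes $\Phi_r^N$, $\ve_m$ and $W_m^N$. This gives $\tau(\Phi_r^N Y\gamma\delta)$ directly, and your $\delta$-conjugation then finishes the proof.
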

\begin{proof}
By Theorem \ref{te:main result}, $I_\gamma^N=Tr_{B(H^{\otimes r})}\big((\Omega_{m_u}\otimes\Omega_{m_v}) P_\delta P_\gamma\big)$. Using the previous Propostions, $\Omega_{m_u}\otimes\Omega_{m_v}=\alpha_r^N\big((W_{m_u}^N\otimes Id_{m_u+m_v}\otimes W_{m_v}^N)\ve_{m_u}\otimes\ve_{m_v}\big)$, so $(\Omega_{m_u}\otimes\Omega_{m_v})P_\gamma P_\delta=\alpha_r^N\big((W_{m_u}^N\otimes Id_{m_u+m_v}\otimes W_{m_v}^N)(\ve_{m_u}\otimes\ve_{m_v})\delta\gamma \big)$. The conclusion follows now from Propostion \ref{p:up trace}.
\end{proof}

\begin{p} We have:
\[\int_{\unit(N)^2}\Pi_{c=1}^kTr_{B(H)}(w_c(u,v))d\mu(u)d\mu(v)=\sum_{\mu\in Y_r}\sum_{\substack{\lambda_1\in Y_{m_u}^N\\ \lambda_2\in Y_{m_v}^N}} \frac{f_\mu(N)}{f_{\lambda_1}(N)f_{\lambda_2}(N)}\tau\big(p_\mu(p_{\lambda_1}\otimes p_{\lambda_1}\otimes p_{\lambda_2}\otimes p_{\lambda_2})(\ve_{m_u}\otimes\ve_{m_v})\delta\gamma\big).\]
\end{p}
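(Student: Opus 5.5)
The plan is to start from the formula of the preceding Theorem,
\[I_\gamma^N=\tau\big(\Phi_r^N(W_{m_u}^N\otimes Id_{m_u+m_v}\otimes W_{m_v}^N)(\ve_{m_u}\otimes\ve_{m_v})\delta\gamma\big),\]
and expand each central factor in the basis of minimal central projections. First, by the Proposition expressing the Formanek element through content polynomials, $\Phi_r^N=\sum_{\mu\in Y_r}f_\mu(N)p_\mu$, where now $p_\mu$ is the central projection of $L(\s(r))$. Substituting gives the outer sum over $\mu\in Y_r$ with coefficient $f_\mu(N)$. Note that no partial inverse is needed here; terms with $f_\mu(N)=0$ simply vanish.

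Next I rewrite the middle factor. The element $(W_{m_u}^N\otimes Id_{m_u+m_v}\otimes W_{m_v}^N)(\ve_{m_u}\otimes\ve_{m_v})$ factors as the tensor product of $(W_{m_u}^N\otimes Id_{m_u})\ve_{m_u}\in L(\s(2m_u))$ and $(Id_{m_v}\otimes W_{m_v}^N)\ve_{m_v}\in L(\s(2m_v))$. One only has to regroup the tensor legs $m_u,m_u,m_v,m_v$, using that the embedding $L(\s(a))\otimes L(\s(b))\to L(\s(a+b))$ is multiplicative. For the $v$-factor the $W$ sits on the second leg rather than the first, which is exactly why Proposition \ref{p:centralxCasimir} is stated with both placements. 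Applying that Proposition to each factor yields
\[\sum_{\lambda_1\in Y_{m_u}^N}\frac1{f_{\lambda_1}(N)}(p_{\lambda_1}\otimes p_{\lambda_1})\ve_{m_u}\otimes\sum_{\lambda_2\in Y_{m_v}^N}\frac1{f_{\lambda_2}(N)}(p_{\lambda_2}\otimes p_{\lambda_2})\ve_{m_v}.\]
Tensoring and using multiplicativity again, this equals
\[\sum_{\lambda_1,\lambda_2}\frac{1}{f_{\lambda_1}(N)f_{\lambda_2}(N)}(p_{\lambda_1}\otimes p_{\lambda_1}\otimes p_{\lambda_2}\otimes p_{\lambda_2})(\ve_{m_u}\otimes\ve_{m_v}).\]

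Finally I insert both expansions into the trace and use linearity of $\tau$ to pull out the triple sum and the scalar coefficients, which gives the stated formula. The only point requiring care, and the main potential obstacle, is bookkeeping of the tensor-leg ordering. One must check that the order $u,u^*,v,v^*$ of the factors in $L(\s(m_u))^{\otimes 2}\otimes L(\s(m_v))^{\otimes 2}\subset L(\s(r))$ matches the placement of $W_{m_v}^N$ in the preceding Theorem, namely on the fourth block. Proposition \ref{p:centralxCasimir} lets $W$ be moved freely between the two legs of each Casimir, so any mismatch there is harmless. Beyond that, the argument is a direct substitution.
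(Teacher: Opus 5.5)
Your proposal is correct and matches the paper's proof: expand $\Phi_r^N=\sum_{\mu\in Y_r}f_\mu(N)p_\mu$, rewrite $(W_{m_u}^N\otimes Id_{m_u+m_v}\otimes W_{m_v}^N)(\ve_{m_u}\otimes\ve_{m_v})$ with Proposition \ref{p:centralxCasimir}, and conclude by linearity of $\tau$ from the preceding Theorem. Your explicit check of the tensor-leg placement, with $W_{m_v}^N$ on the second leg of the $v$-Casimir, is a detail the paper leaves implicit; it does not change the argument.
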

\begin{proof}
Recall that $\Phi_r^N=\sum_{\mu\in Y_r}f_\mu(N)p_\mu$. Moreover, by Propostion \ref{p:centralxCasimir}:
\[(W_{m_u}^N\otimes Id_{m_u+m_v}\otimes W_{m_v}^N)(\ve_{m_u}\otimes\ve_{m_v})=\sum_{\lambda_1\in Y_{m_u}^N;\lambda_2\in Y_{m_v}^N}\frac1{f_{\lambda_1}(N)f_{\lambda_2}(N)}(p_{\lambda_1}\otimes  p_{\lambda_1}\otimes p_{\lambda_2}\otimes p_{\lambda_2})(\ve_{m_u}\otimes\ve_{m_v}).\]
The result now follows from the previous Theorem.
\end{proof}

\subsection{Set of possible matchings}

We end this section with a description of the product $\ve_m\delta_m$.

\begin{de}
Denote by $T_m=\{\xi\in\s(2m):\xi^2=Id;\xi(\{1,\ldots,m\})=\{m+1,\ldots,2m\}\}$.
\end{de}

Notice that $\delta_m\in T_m$, and actually $T_m$ is the set of all possible "$\delta$''.

\begin{p}
We have $\ve_m\delta_m=\sum_{\xi\in T_m}\xi$.
\end{p}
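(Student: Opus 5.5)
The plan is a direct bijective computation inside $L(\s(2m))$. By the Notation preceding Proposition \ref{p:Casimir two traces}, $\ve_m=\sum_{p\in\s(m)}p\oplus p^{-1}$, where $p\oplus p^{-1}\in\s(2m)$ acts as $p$ on $\{1,\ldots,m\}$ and as the shifted copy of $p^{-1}$ on $\{m+1,\ldots,2m\}$. This gives
\[\ve_m\delta_m=\sum_{p\in\s(m)}(p\oplus p^{-1})\delta_m.\]
Both sides are sums of distinct group elements with coefficient $1$. So it suffices to show two things: the map $p\mapsto \xi_p:=(p\oplus p^{-1})\delta_m$ is injective, and its image is exactly $T_m$.

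\emph{Step 1: compute $\xi_p$.} For $t\in\{1,\ldots,m\}$ I compute $\xi_p(t)=(p\oplus p^{-1})(m+t)=m+p^{-1}(t)$ and $\xi_p(m+t)=(p\oplus p^{-1})(t)=p(t)$. If the paper's convention for multiplying permutations composes in the opposite order, the formulas become $\xi_p(t)=m+p(t)$ and $\xi_p(m+t)=p^{-1}(t)$; the argument below is symmetric under this swap, so I would fix one convention (composition from the right, as in the proof of the Proposition on $\lambda^{-1}\vp\lambda$) and proceed.

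\emph{Step 2: $\xi_p\in T_m$.} It is immediate that $\xi_p$ sends $\{1,\ldots,m\}$ into $\{m+1,\ldots,2m\}$. For the involution property, $\xi_p^2(t)=\xi_p(m+p^{-1}(t))=p(p^{-1}(t))=t$, and similarly $\xi_p^2(m+t)=\xi_p(p(t))=m+p^{-1}(p(t))=m+t$. Injectivity holds because $p$ is recovered from $\xi_p$ via $p(t)=\xi_p(m+t)$.

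\emph{Step 3: surjectivity onto $T_m$.} Take $\xi\in T_m$. Since $\xi$ is a bijection mapping the $m$-element set $\{1,\ldots,m\}$ into $\{m+1,\ldots,2m\}$, it maps it onto that set. Because $\xi^2=\id$, $\xi$ also maps $\{m+1,\ldots,2m\}$ onto $\{1,\ldots,m\}$. Define $p\in\s(m)$ by $p(t)=\xi(m+t)$. Then $\xi(p(t))=m+t$, that is, $\xi(t)=m+p^{-1}(t)$, so $\xi=\xi_p$.

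Summing over $p$ then gives $\ve_m\delta_m=\sum_{\xi\in T_m}\xi$. The only real obstacle is keeping the multiplication convention in $L(\s(2m))$ consistent with the paper's earlier usage, since $P_\xi$ was defined with $\xi^{-1}$. Everything else is routine bookkeeping, and the statement is insensitive to the convention because $T_m$ is closed under the relevant inversion.
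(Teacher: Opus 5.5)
Your proof is correct and follows essentially the same route as the paper: both identify $T_m$ with $\s(m)$ through the fact that $\xi\delta_m$ is a block permutation $\sigma^{-1}\oplus\sigma$. The paper starts from $\xi\in T_m$, computes $\xi\delta_m$, sums to get $\ve_m$, and uses $\delta_m^2=\id$. You start from $p$, form $(p\oplus p^{-1})\delta_m$, and verify explicitly that this gives a bijection onto $T_m$; the paper leaves that bijectivity implicit, so your version supplies the missing details.
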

\begin{proof}
Let $\xi\in T_m$. Then there exists a unique $\sigma\in\s(m)$ such that $\xi(i)=\sigma(i)+m$ and $\xi(i+m)=\sigma^{-1}(i)$. As such $\xi\delta_m(i)=\sigma^{-1}(i)$ and $\xi\delta_m(i+m)=\sigma(i)$. It follows that $\xi\delta_m=\sigma^{-1}\oplus \sigma$. 

Then $\sum_{\xi\in T_m}\xi\delta_m=\sum_{\sigma\in\s(m)}\sigma^{-1}\otimes \sigma=\ve_m$ and hence the conclusion.
\end{proof}

\section{Vector space of rational functions}

Let $\mu\in Y_r$, $\lambda_1\in Y_{m_u}$, $\lambda_2\in Y_{m_v}$. Define the rational function:
\[W_{\mu;\lambda_1,\lambda_2}(N)=\frac{f_\mu(N)}{f_{\lambda_1}(N)f_{\lambda_2}(N)}.\]

We want to estimate the dimension of the linear space generated by thse functions.

\subsection{The least common multiple polynomial}

Let $\alpha\in Y_m$. Recall that $f_\alpha(N)=\Pi_{(i,j)\in D_\alpha}(N+j-i)=\Pi_{t\in\zz}(N+t)^{s_t(\alpha)}$, so $f$ is a polynomial of degree $m$. As each Young diagram contains the box $(1,1)$ of content $0$, $f$ has no free coeficient. 

Let $D_m$ be the least common multiple of polynomials $f_\lambda$, $\lambda\in Y_m$. Let $s_t=\max\{s_t(\alpha):\alpha\in Y_m\}$. Then:
\[D_m(N)=lcm_{\lambda\in Y_m}f_\lambda(N)=\Pi_{t\in\zz}(N+t)^{s_t}.\]
As such the degree of $D_m$ is $\sum_{t=-m}^ms_t$. We now estimate the degree of $D_m$.

\begin{lemma}
For any $t\in\zz$, $s_t(s_t+|t|)\leqslant m$.
\end{lemma}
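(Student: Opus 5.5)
We need to show: for $\alpha\in Y_m$ and $t\in\zz$ with $s=s_t(\alpha)$, we have $s(s+|t|)\leqslant m$. Since $s_t$ is the maximum over $\alpha\in Y_m$ of $s_t(\alpha)$, it suffices to prove the inequality for a single Young diagram $D_\alpha$. Transposing the diagram sends content $t$ to content $-t$ and preserves $m$, so we may assume $t\geqslant 0$.

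The plan is to locate the boxes of content $t$ and show that they force a large staircase-shaped region inside $D_\alpha$.

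\textbf{Step 1 (locating the diagonal).} The boxes of content $t$ lie on the diagonal $j=i+t$. In a Young diagram, if $(i,i+t)\in D_\alpha$ then $(i',i'+t)\in D_\alpha$ for every $i'<i$: indeed $i'\leqslant i$ and $i'+t\leqslant i+t$, and diagrams are closed under moving up and left. Hence the boxes of content $t$ are exactly $(1,1+t),(2,2+t),\ldots,(s,s+t)$.

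\textbf{Step 2 (the rectangle).} The box $(s,s+t)$ belongs to $D_\alpha$. By the same down-closure property, every box $(i,j)$ with $i\leqslant s$ and $j\leqslant s+t$ also belongs to $D_\alpha$. This is an $s\times(s+t)$ rectangle, containing $s(s+t)$ boxes. Therefore
\[m=|D_\alpha|\geqslant s(s+t)=s_t(\alpha)\bigl(s_t(\alpha)+|t|\bigr).\]

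\textbf{Step 3 (passing to the maximum).} Choose $\alpha\in Y_m$ attaining $s_t=s_t(\alpha)$. Applying Step 2 to this $\alpha$ gives $s_t(s_t+|t|)\leqslant m$.

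There is no real obstacle. The only point needing care is the reduction $t\mapsto -t$ via transposition; alternatively, for $t<0$ one can run the same argument directly with the box $(s+|t|,s)$ and the $(s+|t|)\times s$ rectangle. The main use of this lemma afterwards will be to bound the degree $\sum_t s_t$ of $D_m$, via $s_t\leqslant\min\bigl(\sqrt m,\,m/|t|\bigr)$.
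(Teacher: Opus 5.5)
Your proof is correct and takes essentially the same approach as the paper: pick $\alpha$ attaining $s_t$, observe that the content-$t$ boxes are $(1,1+t),\ldots,(s_t,s_t+t)$, and use $(s_t,s_t+t)\in D_\alpha$ to get $s_t(s_t+t)\leqslant m$, with $t<0$ handled symmetrically. You also spell out the rectangle-containment step, which the paper leaves implicit.
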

\begin{proof}
Assume $t\geqslant 0$ and let $\alpha$ such that $s_t=s_t(\alpha)$. The first cell of content $t$ is $(1,t+1)$, then $(2,t+2)$ and so on. As $s_t(\alpha)=s_t$ it follows that $(s_t,t+s_t)\in D_\alpha$. This implies $s_t(s_t+t)\leqslant m$. The proof is the same in the case $t<0$.
\end{proof}

\begin{lemma}
We have $\sum_{t>0}s_t\leqslant\frac12mln(m)+\sqrt{m}$. 
\end{lemma}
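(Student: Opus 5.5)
We bound each $s_t$, $t>0$, using the previous Lemma, $s_t(s_t+t)\leqslant m$, and then sum over $t$. Two bounds follow from it. First, $s_t\leqslant m/t$, since $s_t t\leqslant s_t(s_t+t)\leqslant m$. Second, $s_t\leqslant\sqrt m$, since $s_t^2\leqslant m$. In addition, $s_t=0$ for $t\geqslant m$, because a cell of content $t>0$ lies in some row $i$ at column $t+i$, which forces $\alpha_1\geqslant t+1$ and hence $t\leqslant m-1$.

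Solving the quadratic inequality exactly gives the sharper bound
\[s_t\leqslant\frac{-t+\sqrt{t^2+4m}}2=\frac{2m}{t+\sqrt{t^2+4m}}\leqslant\frac mt,\]
but the crude estimates above should already suffice. Split the sum at $t_0=\lfloor\sqrt m\rfloor$.

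For $t\leqslant t_0$, use $s_t\leqslant\sqrt m$ for $t=1$ only, and $s_t\leqslant m/(t+\sqrt{t^2+4m})\cdot 2$ in general. The cleanest route is to use $s_t\leqslant m/t$ for $2\leqslant t\leqslant m-1$ and $s_1\leqslant\sqrt m$. This gives
\[\sum_{t>0}s_t\leqslant\sqrt m+m\sum_{t=2}^{m-1}\frac1t\leqslant\sqrt m+m\,(\ln(m-1))\leqslant \sqrt m+m\ln m.\]
That bound is off by a factor of $2$ in the leading term, so the factor $\frac12$ has to come from the sharper bound $s_t\leqslant\frac{2m}{t+\sqrt{t^2+4m}}$.

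Using this sharper bound, the sum is essentially $\int_0^{m}\frac{2m}{t+\sqrt{t^2+4m}}\,dt$. Substituting $t=2\sqrt m\sinh\theta$ turns the integrand into $\sqrt m\,e^{-\theta}\cdot 2\sqrt m\cosh\theta\,d\theta=m(1+e^{-2\theta})\,d\theta$. The upper limit is $\theta_1=\operatorname{arcsinh}(\sqrt m/2)\approx\frac12\ln m$. The integral is therefore $m\theta_1+\frac m2(1-e^{-2\theta_1})$, with leading term $\frac12m\ln m$.

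The main obstacle is controlling the lower-order terms so that they sum to at most $\sqrt m$. The term $\frac m2(1-e^{-2\theta_1})$ is of order $m/2$, not $\sqrt m$. So either the upper cutoff must be improved, or the stated inequality needs a separate check for small $m$.

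I would try the following. First, use the monotone decrease of $g(t)=\frac{2m}{t+\sqrt{t^2+4m}}$ to get $\sum_{t\geqslant1}s_t\leqslant g(1)+\int_1^{m-1}g(t)\,dt$, where $g(1)\leqslant\sqrt m$. Next, compute the integral exactly via the antiderivative $\frac{m}{2}\ln\big(t+\sqrt{t^2+4m}\big)+\frac{t(\sqrt{t^2+4m}-t)}{4}$, possibly up to a constant, obtained by rationalizing $g(t)=\frac{\sqrt{t^2+4m}-t}{2}$. Finally, compare the boundary terms with $\frac12m\ln m$.

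If an excess of order $m$ remains, which I suspect, I would instead exploit a better cutoff. A single diagram cannot have many long diagonals simultaneously, but $s_t$ is a maximum over different diagrams, so this does not apply directly. In that case I would reexamine whether the intended statement absorbs a term linear in $m$, and adjust the final constant accordingly.
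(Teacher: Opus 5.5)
Your proposal does not reach a proof. It stops at the obstacle you identify, and that obstacle cannot be removed, because the inequality is false as stated. Your suspicion of an excess linear in $m$ is right.

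For $t>0$ one has exactly $s_t=\max\{s\geqslant0:s(s+t)\leqslant m\}$. Indeed, an $s\times(s+t)$ rectangle with the remaining $m-s(s+t)$ cells appended to the first row is a diagram $\alpha\in Y_m$ with $s_t(\alpha)\geqslant s$. So $\sum_{t>0}s_t$ is the number of pairs $s,t\geqslant1$ with $s(s+t)\leqslant m$. Count by $s$ first (this is the better cutoff you were looking for, and it is what the paper does) and substitute $u=s+t$:
\[\sum_{t>0}s_t=\sum_{s=1}^{\lfloor\sqrt m\rfloor}\Big(\Big\lfloor\frac ms\Big\rfloor-s\Big)=\frac12\Big(\sum_{j=1}^{m}\Big\lfloor\frac mj\Big\rfloor-\lfloor\sqrt m\rfloor\Big)=\frac12m\ln(m)+\Big(\gamma_E-\frac12\Big)m+O(\sqrt m).\]
Here the middle equality counts the pairs $su\leqslant m$ with $s<u$ by symmetry. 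The last equality is Dirichlet's divisor asymptotics, and $\gamma_E\approx0.577$ is Euler's constant. Concretely, for $m=1000$ the left side is $4015-496=3519$, whereas $\frac12\cdot1000\ln(1000)+\sqrt{1000}\approx3485.5$. Already for $m=300$ one gets $875>872.9$.

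A minor slip: the antiderivative of $g$ is $m\ln(t+\sqrt{t^2+4m})+\frac{t(\sqrt{t^2+4m}-t)}4$, with coefficient $m$, not $\frac m2$. Your $\sinh$ computation has the correct constant.

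The paper's proof is this same lattice-point count, but it then uses $\sum_{s=1}^{\lfloor\sqrt m\rfloor}s\geqslant m-\sqrt m$. That fails already at $m=16$: the sum equals $\lfloor\sqrt m\rfloor(\lfloor\sqrt m\rfloor+1)/2\approx m/2$. Use instead the correct estimate $\lfloor\sqrt m\rfloor(\lfloor\sqrt m\rfloor+1)/2>(m-\sqrt m)/2$, together with $\sum_{s\leqslant n}1/s\leqslant1+\ln(n)$. The same computation then gives $\sum_{t>0}s_t<\frac12m\ln(m)+\frac m2+\frac{\sqrt m}2$, which is the kind of corrected statement you anticipated.

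Since $s_{-t}=s_t$ and $s_0=\lfloor\sqrt m\rfloor$, one even has $\deg D_m=\sum_{j=1}^m\lfloor m/j\rfloor$ exactly. So the subsequent bound $\deg D_m<m\ln(m)+3\sqrt m$ fails too ($7069>7002.6$ at $m=1000$). However, $\sum_{j=1}^m\lfloor m/j\rfloor\leqslant m(1+\ln(m))$, which is still enough for the final dimension estimate $r(\ln(r)+2)$.
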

\begin{proof}
Let $A=\{(s,t):s\geqslant 1;t\geqslant 1;s^2+st\leqslant m\}$. Then $\sum_{t>0}s_t=Card(A)$.

Let $A_s=\{t:t\geqslant 1;s^2+st\leqslant m\}$. Then $Card(A)=\sum_{s>0} Card(A_s)$ and $Card(A_s)=max\{0,\lfloor\frac{m-s^2}{s}\rfloor\}$. As such:
\begin{align*}
Card(A)=&\sum_{s=1}^{\lfloor\sqrt{m}\rfloor}Card(A_s)=\sum_{s=1}^{\lfloor\sqrt{m}\rfloor}\lfloor\frac ms\rfloor-\sum_{s=1}^{\lfloor\sqrt{m}\rfloor}s\leqslant\big(m\sum_{s=1}^{\lfloor\sqrt{m}\rfloor}\frac1s\big)-(m-\sqrt{m})\\
=&\leqslant m(\frac12ln(m)+1)-(m-\sqrt m)=\frac12mln(m)+\sqrt m.
\end{align*}
\end{proof}

\begin{p}\label{p:degree of lcm}
The degree of $D_m$ is less than $mln(m)+3\sqrt m$.
\end{p}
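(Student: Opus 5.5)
The degree of $D_m$ is $\sum_{t\in\zz}s_t=s_0+\sum_{t>0}s_t+\sum_{t<0}s_t$, so the plan is to bound the three pieces separately and add the bounds.

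For the positive contents, the previous Lemma gives $\sum_{t>0}s_t\leqslant\frac12m\ln(m)+\sqrt m$.

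For the negative contents, conjugation of Young diagrams $\alpha\mapsto\alpha'$ is a bijection of $Y_m$ sending a box of content $t$ to a box of content $-t$. Hence $s_{-t}(\alpha)=s_t(\alpha')$ and $s_{-t}=s_t$. Alternatively, the first Lemma ($s_t(s_t+|t|)\leqslant m$) is symmetric in $t$, so the same counting argument applies verbatim. Either way, $\sum_{t<0}s_t\leqslant\frac12m\ln(m)+\sqrt m$.

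For the content-zero term, the first Lemma with $t=0$ gives $s_0^2\leqslant m$, so $s_0\leqslant\sqrt m$.

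Adding the three bounds gives
\[\deg D_m\leqslant m\ln(m)+3\sqrt m.\]
This is the claimed estimate, with non-strict inequality.

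The only delicate point is the word ``less than'' in the statement, since the argument naturally yields $\leqslant$. To get strictness, I would revisit the proof of the previous Lemma. There the bound $\sum_{s=1}^{\lfloor\sqrt m\rfloor}\frac1s\leqslant\frac12\ln(m)+1$ and the bound $\sum_{s\leqslant\lfloor\sqrt m\rfloor}s\geqslant m-\sqrt m$ are both crude. For instance, $\sum s=\frac{k(k+1)}2$ with $k=\lfloor\sqrt m\rfloor$, which is about $m/2$ rather than $m-\sqrt m$; as written, that step even looks to go in the wrong direction. So I would redo the count as
\[\sum_{s=1}^k\Big(\frac ms-s\Big)\leqslant m\Big(\ln k+1\Big)-\frac{k(k+1)}2.\]
This is at most $\frac12m\ln m+m-\frac{k(k+1)}2$. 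Doing this carefully is the main obstacle: one must check that the resulting total stays at most $m\ln m+3\sqrt m$, with strict inequality for $m\geqslant1$. If the linear term $m/2$ survives, I would instead sharpen the harmonic sum bound to $H_k\leqslant\ln k+\gamma+\frac1{2k}$, or fall back to accepting the stated Lemma and the inequality $\leqslant$, which is all that is used later.
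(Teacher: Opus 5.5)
Your decomposition ($s_0\leqslant\sqrt m$, conjugation symmetry for $t<0$, and the previous Lemma for $t>0$) is exactly the paper's proof, and that proof also only yields $\leqslant$. But the step you flagged is fatal, not cosmetic. The inequality $\sum_{s\leqslant\lfloor\sqrt m\rfloor}s\geqslant m-\sqrt m$ is false. So is the Lemma it is meant to prove, and so is the Proposition itself, even in non-strict form.

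To see this, note that $s_t=\max\{s\geqslant0: s(s+|t|)\leqslant m\}$ exactly. An $s\times(s+|t|)$ rectangle, or its transpose, with the first row or column lengthened, attains it. So the content-$t$ boxes that can occur are precisely the boxes $(i,j)$ with $j-i=t$ and $ij\leqslant m$. Hence $\deg D_m=\#\{(i,j):i,j\geqslant1,\ ij\leqslant m\}=\sum_{s=1}^m\lfloor m/s\rfloor$, the divisor summatory function. By Dirichlet's estimate this equals $m\ln m+(2\gamma_{\mathrm E}-1)m+O(\sqrt m)$, where $\gamma_{\mathrm E}$ is Euler's constant and $2\gamma_{\mathrm E}-1\approx0.154>0$.

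Concretely, for $m=1000$ one has $\deg D_m=7069$, whereas $m\ln m+3\sqrt m\approx7002.6$. Correspondingly $\sum_{t>0}s_t=\sum_{s=1}^{31}(\lfloor1000/s\rfloor-s)=3519$, whereas $\frac12m\ln m+\sqrt m\approx3485.5$. Your sharpened estimate with $H_k\leqslant\ln k+\gamma_{\mathrm E}+\frac1{2k}$ already exhibits the surviving term $(\gamma_{\mathrm E}-\frac12)m$ in each half. That term is genuinely there, so no refinement of the count can remove it.

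So neither your strictness repair nor your fallback of accepting the stated Lemma can work; the fallback rests on a false statement. What your method does prove is a slightly weaker bound. For instance, $\deg D_m=\sum_{s=1}^m\lfloor m/s\rfloor\leqslant mH_m\leqslant m(\ln m+1)$, where $H_m=\sum_{s\leqslant m}1/s$. Alternatively, your own corrected count, using $k(k+1)/2>\frac m2-\frac{\sqrt m}2$, gives $\deg D_m\leqslant m\ln m+m+2\sqrt m$.

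This weaker bound suffices for the only later use of the Proposition. It gives $\deg\big(W_{\mu;\lambda_1,\lambda_2}D_{m_u}D_{m_v}\big)\leqslant m_u\ln m_u+m_v\ln m_v+r\leqslant\frac r2\ln r+r$. That keeps the dimension of the space of $W$-functions below $r(\ln r+2)$, as claimed there.
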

\begin{proof}
The degree of $D_m$ is $\sum_ts_t=s_0+\sum_{t<0}s_t+\sum_{t>0}s_t$. Using the two previous lemmas:
\[\sum_ts_t\leqslant \sqrt m+\frac12mln(m)+\sqrt{m}+\frac12mln(m)+\sqrt{m}=mln(m)+3\sqrt m.\]
\end{proof}

\subsection{The space of $W$ functions}

\begin{te}
The space generated by $\{W_{\mu;\lambda_1,\lambda_2}:\mu\in Y_r, \lambda_1\in Y_{m_u}, \lambda_2\in Y_{m_v}\}$ has dimension less than $r(ln(r)+2)$.
\end{te}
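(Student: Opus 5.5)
We put all the functions over the common denominator $D_{m_u}D_{m_v}$, the product of the least common multiple polynomials from the previous subsection. For $\lambda_1\in Y_{m_u}$ and $\lambda_2\in Y_{m_v}$, the polynomial $f_{\lambda_1}$ divides $D_{m_u}$ and $f_{\lambda_2}$ divides $D_{m_v}$. Hence
\[W_{\mu;\lambda_1,\lambda_2}(N)=\frac{f_\mu(N)\,g_{\lambda_1}(N)\,g_{\lambda_2}(N)}{D_{m_u}(N)D_{m_v}(N)},\]
where $g_{\lambda_1}=D_{m_u}/f_{\lambda_1}$ and $g_{\lambda_2}=D_{m_v}/f_{\lambda_2}$ are polynomials. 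Multiplying by the fixed nonzero rational function $D_{m_u}D_{m_v}$ is a linear isomorphism onto its image. So the dimension of our space equals the dimension of the span of the numerators $f_\mu g_{\lambda_1}g_{\lambda_2}$, and it suffices to bound their degrees.

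The degree count is as follows. We have $\deg f_\mu=r$ and $\deg f_{\lambda_i}=m_i$, where $m_1=m_u$ and $m_2=m_v$. Therefore
\[\deg\big(f_\mu g_{\lambda_1}g_{\lambda_2}\big)=r-m_u-m_v+\deg D_{m_u}+\deg D_{m_v}.\]
By Proposition \ref{p:degree of lcm}, $\deg D_m< m\ln(m)+3\sqrt m$. Moreover, every $f_\alpha$ has zero constant term, because the box $(1,1)$ has content $0$. It follows that $N\mid f_\mu$, and that the factor $N^{s_0}$ coming from the denominators can be accounted for. All numerators therefore lie in the space of polynomials of degree at most $d$, vanishing at $0$ where appropriate, whose dimension is at most $d$ or $d+1$.

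Using $m_u+m_v=r/2$ and $m\ln m\le m\ln r$, we get
\[d\le \frac r2+\frac r2\ln r+3(\sqrt{m_u}+\sqrt{m_v}).\]
We then bound $3(\sqrt{m_u}+\sqrt{m_v})\le 3\sqrt{r}$, using $\sqrt a+\sqrt b\le\sqrt{2(a+b)}$. It remains to show that $\frac r2\ln r+\frac r2+3\sqrt r+1< r\ln r+2r$. This holds for all $r\ge 2$, since $3\sqrt r+1\le \frac32 r+\frac r2\ln r$ there. Small cases such as $m_u=0$ or $m=1$ are checked directly.

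The main obstacle is bookkeeping rather than ideas. One must make sure the common-denominator argument correctly converts the dimension of the span of rational functions into a degree bound on polynomials. One must also handle the constant $+1$ and the possible cancellation of the factor $N$, so that the strict inequality $<r(\ln r+2)$ holds in every case. The bound $r(\ln r+2)$ leaves ample slack, so even a crude estimate of $\deg D_m$, for example $s_0\le\sqrt m$ and the harmonic sum bound, suffices.
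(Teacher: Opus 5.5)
Your argument follows the paper's proof: clear denominators with $D_{m_u}D_{m_v}$, note that every numerator has degree $d=r/2+\deg D_{m_u}+\deg D_{m_v}$, and invoke Proposition \ref{p:degree of lcm}. Your bookkeeping is more careful than the paper's. The paper compares the degree itself with $r(\ln r+2)$ and ignores the $+1$ coming from the dimension of the space of polynomials of degree $\le d$. Two things still need fixing.

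First, the inequality $3\sqrt r+1\le\frac32 r+\frac r2\ln r$ is false at $r=2$ (about $5.24$ against $3.69$). It does hold for all even $r\ge 4$, so $r=2$ must be one of your directly checked cases. There the span is that of $N+1$ and $N-1$, which has dimension $2<2(\ln 2+2)$.

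Second, and more seriously, Proposition \ref{p:degree of lcm} is false for large $m$, and both you and the paper rely on it.

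\begin{itemize}
\item The bound $s_t(s_t+|t|)\le m$ is attained. For $t>0$, take an $s\times(s+t)$ rectangle and add the remaining boxes to the first row; transposition handles $t<0$.
\item Hence $\deg D_m=\lfloor\sqrt m\rfloor+2\,\#\{(s,n):1\le s<n,\ sn\le m\}$. This equals the number of pairs of positive integers $(a,b)$ with $ab\le m$, i.e.\ $\sum_{a=1}^m\lfloor m/a\rfloor$.
\item This sum is $m\ln m+c\,m+O(\sqrt m)$ with $c=2\cdot 0.5772\ldots-1>0$.
\item Concretely, for $m=1000$ it equals $7069$, whereas $1000\ln 1000+3\sqrt{1000}\approx 7002.6$.
\end{itemize}

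The slip is in the paper's lemma bounding $\sum_{t>0}s_t$: there $\sum_{s\le\sqrt m}s$ is only about $m/2$, not at least $m-\sqrt m$. So the step citing Proposition \ref{p:degree of lcm} does not stand. Your closing remark about a crude estimate is what actually carries the proof.

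Make that estimate the main line. The lemma $s_t(s_t+|t|)\le m$ alone gives $\deg D_m\le\sum_{a=1}^m\lfloor m/a\rfloor\le m(\ln m+1)$, with $\deg D_0=0$. Hence $d\le r+\frac r2\ln r$, and $\dim\le d+1<r(\ln r+2)$ for every $r\ge 2$. This version needs no separate check of small cases.
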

\begin{proof}
Recall that $W_{\mu;\lambda_1,\lambda_2}=f_\mu/(f_{\lambda_1}f_{\lambda_2})$. Then $W_{\mu;\lambda_1,\lambda_2}D_{m_u}D_{m_v}$ is a polynomial of degree: \[deg(D_{m_u})+deg(D_{m_v})+deg(f_\mu)-deg(f_{\lambda_1})-deg(f_{\lambda_2}).\]
As $deg(f_\lambda)=m$ for $\lambda\in Y_m$ and by Proposition \ref{p:degree of lcm} we have:
\[deg(W_{\mu;\lambda_1,\lambda_2}D_{m_u}D_{m_v})\leqslant m_uln(m_u)+3\sqrt{m_u}+m_vln(m_v)+3\sqrt{m_v}+m_u+m_v<r(ln(r)+2).\]
\end{proof}

\section{Weingarden numbers via paths}

In this section we present a method to compute Weingarden numbers and product of Weingarden numbers using \emph{paths} between bijective functions. The underlying permutation-factorization expansion is due to Collins \cite[Theorem 2.2]{Col03}; a simmilar approach is used in   Magee--Puder \cite[Theorem 2.9]{MP19}.

\begin{nt}
Let $A_1,A_2$ be two sets of equal cardinality. We denote by $Bij(A_1,A_2)$ the set of bijective functions $f:A_1\to A_2$.
\end{nt}

Recall that for a permutation $p\in\s(m)$, $\#cyc(p)$ is the number of cycles in $p$ and its transposition lenght is $|p|=m-\#cyc(p)$.

\begin{ob}
If $p,q\in Bij(A_1,A_2)$ then $p^{-1}q\in Bij(A_1,A_1)\simeq\s(m)$, where $m=Card A_1$. As such we can define $\#cyc(p^{-1}q)$ and $|p^{-1}q|$. We also use the notation $p\oplus q^{-1}$, which is an element in $\s(A_1\sqcup A_2)$.
\end{ob}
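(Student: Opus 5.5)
The plan is to check the two claims of the Observation directly from the definitions, since both are elementary. The only point needing care is that $\#cyc$ and $|\cdot|$ must not depend on an arbitrary choice of identification.

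For the first claim, let $p,q\in Bij(A_1,A_2)$. Then $p^{-1}\in Bij(A_2,A_1)$, and a composition of bijections is a bijection, so $p^{-1}q\in Bij(A_1,A_1)$. This is the group $\s(A_1)$ of permutations of $A_1$. Choosing any bijection $\beta:\{1,\ldots,m\}\to A_1$, with $m=Card A_1$, gives a group isomorphism $\s(A_1)\simeq\s(m)$, $\pi\mapsto\beta^{-1}\pi\beta$.

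The key step, and the only real content, is independence of $\beta$. A different choice $\beta'$ changes the image by conjugation in $\s(m)$, namely by $\beta'^{-1}\beta$. The cycle type of a permutation is a conjugacy invariant, so $\#cyc(p^{-1}q)$ is well defined. More intrinsically, it equals the number of orbits of the cyclic group $\langle p^{-1}q\rangle$ acting on $A_1$, which involves no choice at all. Consequently $|p^{-1}q|=m-\#cyc(p^{-1}q)$ is also well defined.

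For the second claim, define $p\oplus q^{-1}$ on the disjoint union $A_1\sqcup A_2$ by $x\mapsto p(x)$ for $x\in A_1$ and $y\mapsto q^{-1}(y)$ for $y\in A_2$. I will show it is a bijection by exhibiting its inverse, namely $q\oplus p^{-1}$: it sends $y\in A_2$ to $p^{-1}(y)$ and $x\in A_1$ to $q(x)$. Composing the two maps in either order gives the identity on each piece, so $p\oplus q^{-1}\in\s(A_1\sqcup A_2)$.

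I would also record that $(p\oplus q^{-1})^2$ restricts to $q^{-1}p$ on $A_1$ and to $pq^{-1}$ on $A_2$. This links the two constructions and is how the notation will be used later in the section. I expect no genuine obstacle beyond the well-definedness remark above.
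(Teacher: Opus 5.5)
Your verification is correct and is exactly the routine check the paper leaves implicit: the statement is recorded as an Observation with no proof, and the facts you establish are what it takes for granted. These are that $p^{-1}q$ is a permutation of $A_1$ whose cycle count does not depend on any identification with $\s(m)$, and that $p\oplus q^{-1}$ is a bijection of $A_1\sqcup A_2$ with inverse $q\oplus p^{-1}$. Your extra identity $(p\oplus q^{-1})^2=q^{-1}p\oplus pq^{-1}$ is also precisely the one the paper later uses in the Euler estimate for direct endpoint paths.
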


Fix $N,m\in\nz^*$ and $A_1,A_2$ two sets of cardinality $m$. Let $K_m\in M_{m!}(\rz)$, $K_m(p,q)=N^{\#cyc(p^{-1}q)}$, for $p,q\in Bij(A_1,A_2)$. Then, by the definition of the Weingarden numbers, $K_m^{-1}(p,q)=W_{p^{-1}q}^N$. 

Notice that $K_m=N^mG_m$, where $G_m(p,q)=N^{-|p^{-1}q|}$. As such $G_m(p,p)=1$. Let $G_m=Id_{m!}+S_m$, so that $S_m(p,p)=0$. Also $K_m^{-1}=N^{-m}G_m^{-1}$ and:
\[G_m^{-1}=(Id_{m!}+S_m)^{-1}=\sum_{i\geqslant 0}(-1)^iS_m^i=Id_{m!}-S_m+S_m^2-S_m^3+\ldots\]
The series $\sum_{i\geqslant 0}(-1)^iS_m^i$ is convergent only for large enough $N$. Note that, the stable case inequality, $N\geqslant r$, might not be enough. From now on we assume convergence of these series. This equation enables us a new way to compute the Weingarden numbers.

\subsection{Paths}

The following defintion is motivated by our need to compute $S_m^i(p,q)$, for $p,q$ two bijective functions. 

We write the inverse-Gram expansion as paths between bijections. The underlyingpermutation-factorization expansion is due to Collins [Col03, Theorem 2.2]; itsmatching-chain form occurs in Magee–Puder [MP19, Theorem 2.9 and (2.11)].

\begin{de}\label{de:path}
Let $p,q\in Bij(A_1,A_2)$. A \emph{path} $T$ between $p$ and $q$ is a finite sequence of elements in $Bij(A_1,A_2)$, $\xi_0,\xi_1,\ldots,\xi_t$ such that:
	\begin{enumerate}
		\item $\xi_0=p$ and $\xi_t=q$;
		\item $\xi_{i-1}\neq\xi_i$ for $i=1,\ldots,t$.
	\end{enumerate}

We denote by $Ph(p,q)$ the set of paths between $p$ and $q$. For a path $T$, we denote by $\ve(T)$ its lenght, $\ve(T)=t$. Define $L(T)=\sum_{i=1}^t|\xi_{i-1}^{-1}\xi_i|$. When $p = q$, by convention, there is a path $T_{p,p}=(p,p)$ with no transitions, with $\ve(T) = 0$ and $L(T) = 0$.
\end{de}

\begin{p}\label{p:weingarden1}
For $p,q\in Bij(A_1,A_2)$, we have $W_{p^{-1}q}^N=N^{-m}\sum_{T\in Ph(p,q)}(-1)^{\ve(T)}N^{-L(T)}$.
\end{p}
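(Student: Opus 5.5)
The plan is to expand the Neumann series for $G_m^{-1}$ entrywise and read each term as a sum over paths. We already have $K_m^{-1}(p,q)=W^N_{p^{-1}q}$, $K_m=N^mG_m$ and $G_m=Id_{m!}+S_m$. Hence
\[W^N_{p^{-1}q}=N^{-m}G_m^{-1}(p,q)=N^{-m}\sum_{i\geqslant 0}(-1)^iS_m^i(p,q),\]
where we use the standing assumption that the series converges. So it suffices to identify $S_m^i(p,q)$ as a sum over paths of length $i$.

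Next I compute $S_m^i(p,q)$ by expanding the matrix product. By definition $S_m(\xi,\eta)=N^{-|\xi^{-1}\eta|}$ when $\xi\neq\eta$, and $S_m(\xi,\xi)=0$. Therefore
\[S_m^i(p,q)=\sum_{\xi_0=p,\xi_1,\ldots,\xi_i=q}\ \prod_{j=1}^iS_m(\xi_{j-1},\xi_j).\]
A summand is nonzero exactly when $\xi_{j-1}\neq\xi_j$ for all $j$, which is precisely the condition in Definition \ref{de:path}. In that case the summand equals $N^{-\sum_j|\xi_{j-1}^{-1}\xi_j|}=N^{-L(T)}$. Thus $S_m^i(p,q)=\sum_{T\in Ph(p,q),\ \ve(T)=i}N^{-L(T)}$ for every $i\geqslant 1$.

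For $i=0$ we have $S_m^0(p,q)=\delta_p^q$. This matches the convention that $Ph(p,p)$ contains the single trivial path with $\ve=0$ and $L=0$, and that no path of length $0$ exists when $p\neq q$. Summing over $i$ with sign $(-1)^i=(-1)^{\ve(T)}$ then gives the formula.

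The only delicate point is convergence and rearrangement. The set $Ph(p,q)$ is infinite, so the path sum has to be interpreted as grouped by length, $\sum_i(-1)^i\sum_{\ve(T)=i}$. For $N$ large, $\|S_m\|<1$, and in fact $\sum_i\| |S_m| \|^i<\infty$ because $S_m$ has nonnegative entries. That gives absolute convergence, so the ordering of terms does not matter.
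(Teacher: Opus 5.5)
Your proposal is correct and follows the paper's proof. It expands $G_m^{-1}=\sum_{i\geqslant 0}(-1)^iS_m^i$ and identifies $S_m^i(p,q)$ with the sum of $N^{-L(T)}$ over paths of length $i$, using that $S_m$ vanishes on the diagonal. Your treatment of the $i=0$ term and of absolute convergence (nonnegative entries of $S_m$) makes explicit what the paper leaves under its standing convergence assumption.
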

\begin{proof}
Recall that: 
\[W_{p^{-1}q}^N=K_m^{-1}(p,q)=N^{-m}G_m^{-1}(p,q)=N^{-m}\sum_{i\geqslant 0}(-1)^iS_m^i(p,q).\]
We denote by $T=(\xi_0,\ldots,\xi_i)$ a generic path. Then:
\[S_m^i(p,q)=\sum_{\substack{T\in Ph(p,q)\\ \ve(T)=i}}\Pi_{j=1}^iS_m(\xi_{j-1},\xi_j)=\sum_{\substack{T\in Ph(p,q)\\ \ve(T)=i}}\Pi_{j=1}^iN^{-|\xi_{j-1}^{-1}\xi_j|}=\sum_{\substack{T\in Ph(p,q)\\ \ve(T)=i}}N^{-L(T)}.\]
As such $W_{p^{-1}q}^N=N^{-m}\sum_{T\in Ph(p,q)}(-1)^{\ve(T)}N^{-L(T)}$.
\end{proof}

\subsection{Multi permutations}

We can compute product of Weingarden numbers in a similar way, using \emph{generalised paths}. Let $\mathcal{M}=(m_1,\ldots,m_s)$ be a multi index. Set $l(\mathcal{M})=\sum_im_i$. We denote by $\s(\mathcal{M})$ the group $\s(m_1)\times\ldots\times\s(m_s)\subset\s(l(\mathcal{M}))$. Let $\mathcal{M}!=m_1!\cdot\ldots\cdot m_s!$ such that $Card(\s(\mathcal{M}))=\mathcal{M}!$.

Let $\acal_i=(A_1^i,\ldots,A_s^i)$, $i=1,2$, be multisets of cardinality $\mm$, i.e. $Card(A_j^i)=m_j$. Then $Bij(\acal_1,\acal_2)$ is the set of tuples $p=(p_1,\ldots,p_s)$, where $p_i\in Bij(A_i^1,A_i^2)$. As such, if $p,q\in Bij(\acal_1,\acal_2)$, then $p^{-1}q\in Bij(\acal_1,\acal_1)\simeq\s(\mm)$.

For $p=(p_1,\ldots,p_s)\in\s(\mm)$, we have $\#cyc(p)=\sum_i\#cyc(p_i)$ and $|p|=\sum_i|p_i|$. As before, let $K_\mm,G_\mm\in M_{\mm!}(\rz)$ where $K_\mm(p,q)=N^{\#cyc(p^{-1}q)}$ and $G_\mm(p,q)=N^{-|p^{-1}q|}$. As such $K_\mm=N^{l(\mm)}G_\mm$.

Now, by definition, we can see that $K_\mm(p,q)=\Pi_{i=1}^sK_{m_i}(p_i,q_i)$. As such $K_\mm=\otimes_{i=1}^sK_{m_i}$. Then $K_\mm^{-1}=\otimes_{i=1}^sK_{m_i}^{-1}$. In the end:
\[K_\mm^{-1}(p,q)=\Pi_{i=1}^sK_{m_i}^{-1}(p_i,q_i)=W_{p_1^{-1}q_1}^NW_{p_2^{-1}q_2}^N\ldots W_{p_s^{-1}q_s}^N.\]

Let $G_\mm=Id_{\mm!}+S_\mm$, so that $S_\mm(p,p)=0$. Then $K_\mm^{-1}=N^{-l(\mm)}G_\mm^{-1}$ and:
\[G_\mm^{-1}=(Id_{\mm!}+S_\mm)^{-1}=\sum_{i\geqslant 0}(-1)^iS_\mm^i=Id_{\mm!}-S_\mm+S_\mm^2-S_\mm^3+\ldots\]

\subsection{Generalised paths}

The definition for \emph{generalised paths} is identical to that of \emph{paths}. We denote the set of generalised paths by $GPh(p,q)$, where $p,q$ are now elements in $Bij(\acal_1,\acal_2)$.

\begin{de}\label{de:generalisedpath}
	Let $p,q\in Bij(\acal_1,\acal_2)$. A \emph{generalised path} $T$ between $p$ and $q$ is a finite sequence of elements in $Bij(\acal_1,\acal_2)$, $\xi_0,\xi_1,\ldots,\xi_t$ such that:
	\begin{enumerate}
		\item $\xi_0=p$ and $\xi_t=q$;
		\item $\xi_{i-1}\neq\xi_i$ for $i=1,\ldots,t$. 
	\end{enumerate}
	The inequality $\xi_{i-1}\neq\xi_i$ indicates that at least one bijection $\xi_{i-1}^j$ within $\xi_{i-1}$ differs from $\xi_i^j$.
	We denote by $GPh(p,q)$ the set of generalised paths between $p$ and $q$. For a path $T$, we denote by $\ve(T)$ its lenght, $\ve(T)=t$. Also $L(T)=\sum_{i=1}^t|\xi_{i-1}^{-1}\xi_i|$.	
\end{de}

\begin{nt}
For $p,q\in Bij(\acal_1,\acal_2)$, we denote by $W_{p^{-1}q}^N$ the product $W_{p_1^{-1}q_1}^NW_{p_2^{-1}q_2}^N\ldots W_{p_s^{-1}q_s}^N$.
\end{nt}

The following Propostion is the generalisation of Proposition \ref{p:weingarden1}. The proof is virtually the same.

\begin{p}\label{p:productofweingarden}
	For $p,q\in Bij(\acal_1,\acal_2)$, we have $W_{p^{-1}q}^N=N^{-l(\mm)}\sum_{T\in GPh(p,q)}(-1)^{\ve(T)}N^{-L(T)}$.
\end{p}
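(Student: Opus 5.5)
The plan is to reproduce the argument of Proposition \ref{p:weingarden1} with the single matrix $K_m$ replaced by the tensor-product matrix $K_\mm$. The ingredients are all in place. First, by the computation preceding the statement, $K_\mm=\otimes_{i=1}^sK_{m_i}$. Its inverse is therefore $\otimes_iK_{m_i}^{-1}$. Each factor $K_{m_i}^{-1}(p_i,q_i)$ equals $W^N_{p_i^{-1}q_i}$, so
\[K_\mm^{-1}(p,q)=W^N_{p_1^{-1}q_1}\cdots W^N_{p_s^{-1}q_s}=W^N_{p^{-1}q}\]
in the notation just introduced. We work under the standing convergence assumption, i.e.\ $N$ is large enough that each $K_{m_i}$ is invertible and the Neumann series below converges.

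Next I write $K_\mm=N^{l(\mm)}G_\mm$, which holds because $\#cyc(p^{-1}q)=l(\mm)-|p^{-1}q|$ componentwise, with $|\cdot|$ additive over the components. Setting $G_\mm=Id_{\mm!}+S_\mm$, the diagonal of $S_\mm$ vanishes and the off-diagonal entries are $S_\mm(p,q)=N^{-|p^{-1}q|}$ for $p\neq q$. Here $p\neq q$ means that at least one component differs, which is exactly the condition imposed between consecutive steps of a generalised path. Then
\[K_\mm^{-1}=N^{-l(\mm)}\sum_{i\geqslant0}(-1)^iS_\mm^i.\]

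The core step is expanding the matrix power:
\[S_\mm^i(p,q)=\sum_{\xi_1,\ldots,\xi_{i-1}}\prod_{j=1}^iS_\mm(\xi_{j-1},\xi_j),\qquad \xi_0=p,\ \xi_i=q.\]
Only sequences with $\xi_{j-1}\neq\xi_j$ for all $j$ contribute, since the diagonal of $S_\mm$ is zero. These sequences are precisely the generalised paths of length $i$ in $GPh(p,q)$. The product along such a path is $N^{-\sum_j|\xi_{j-1}^{-1}\xi_j|}=N^{-L(T)}$. The case $i=0$ gives the identity entry $\delta_p^q$, which corresponds to the trivial path $T_{p,p}$ with $\ve=L=0$. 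Summing over $i$ with sign $(-1)^i=(-1)^{\ve(T)}$ gives the claimed formula.

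The main obstacle is justifying the interchange of sums and the convergence. The set $GPh(p,q)$ is infinite, so the identity has to be read as the sum of an absolutely convergent series. This holds whenever $\sum_i\|S_\mm\|^i<\infty$. Such a bound is guaranteed for large $N$, because every entry of $S_\mm$ is at most $N^{-1}$ and the matrix has fixed size $\mm!$. This is the convergence assumption stated earlier, so I would simply invoke it, as the paper does.
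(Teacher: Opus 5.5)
Your proposal is correct and follows the paper's proof essentially step for step. You identify $W^N_{p^{-1}q}$ with $K_\mm^{-1}(p,q)$, factor $K_\mm=N^{l(\mm)}(Id_{\mm!}+S_\mm)$, expand the Neumann series, and read off each entry of $S_\mm^i$ as a sum over generalised paths of length $i$, where the zero diagonal of $S_\mm$ enforces $\xi_{j-1}\neq\xi_j$.

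Your added observation on convergence is also useful. Every entry of $S_\mm$ is at most $N^{-1}$, so the series converges absolutely once $N$ exceeds the matrix size $\mm!$. This makes explicit the convergence that the paper only takes as a standing assumption.
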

\begin{proof}
	Recall that: 
	\[W_{p^{-1}q}^N=K_\mm^{-1}(p,q)=N^{-l(\mm)}G_\mm^{-1}(p,q)=N^{-l(\mm)}\sum_{i\geqslant 0}(-1)^iS_\mm^i(p,q).\]
	We denote by $T=(\xi_0,\ldots,\xi_i)$ a generic generalised path. Then:
	\[S_\mm^i(p,q)=\sum_{\substack{T\in GPh(p,q)\\ \ve(T)=i}}\Pi_{j=1}^iS_\mm(\xi_{j-1},\xi_j)=\sum_{\substack{T\in GPh(p,q)\\ \ve(T)=i}}\Pi_{j=1}^iN^{-|\xi_{j-1}^{-1}\xi_j|}=\sum_{\substack{T\in GPh(p,q)\\ \ve(T)=i}}N^{-L(T)}.\]
	As such $W_{p^{-1}q}^N=N^{-l(\mm)}\sum_{T\in GPh(p,q)}(-1)^{\ve(T)}N^{-L(T)}$.
\end{proof}

\subsection{The path of a generalised path}

A \emph{generalised path} is just a \emph{path} split into pieces. The path we can associate to a generalised one will play a role latter.

Let $T\in GPh(p,q)$, $p,q\in Bij(\acal_1,\acal_2)$, $\acal_x=(A_1^x,\ldots,A_s^x)$, $x=1,2$. Set $T=(\xi_0,\xi_1,\ldots,\xi_{\ve(T)})$. By definition $\xi_i=(\xi_i^1,\ldots,\xi_i^s)$, with $\xi_i^j\in Bij(A_1^j,A_2^j)$.

Let $A_x=\sqcup_{j=1}^sA_s^j$. Construct $c(\xi_i)\in Bij(A_1,A_2)$, $c(\xi_i)=\xi_i^1\oplus\ldots\oplus\xi_i^s$. Then $C(T)=(c(\xi_0),c(\xi_1),\ldots,c(\xi_{\ve(T)}))$ is a path between $c(p)$ and $c(q)$.

\begin{de}
For $T\in GPh(p,q)$, we call $C(T)\in Ph(c(p),c(q))$ \emph{the path associated} to $T$.
\end{de}

\begin{lemma}
For $T\in GPh(p,q)$ we have $\ve(C(T))=\ve(T)$ and $L(C(T))=L(T)$.
\end{lemma}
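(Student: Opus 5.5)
The lemma says that passing from a generalised path $T$ to its associated path $C(T)$ preserves both the length and the transposition weight, and the plan is to check this term by term along the sequence. Write $T=(\xi_0,\ldots,\xi_t)$ with $\xi_i=(\xi_i^1,\ldots,\xi_i^s)$, so that $C(T)=(c(\xi_0),\ldots,c(\xi_t))$ has the same number of entries.

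The first point is that $C(T)$ really is a path in the sense of Definition \ref{de:path}, with $\ve(C(T))=t=\ve(T)$. The endpoints are $c(p)$ and $c(q)$ by construction. For consecutive entries we need $c(\xi_{i-1})\neq c(\xi_i)$. Since $\xi_{i-1}\neq\xi_i$, some component $\xi_{i-1}^j\neq\xi_i^j$. The sets $A_1^1,\ldots,A_1^s$ are disjoint pieces of $A_1$, and $c(\xi)$ restricted to $A_1^j$ is exactly $\xi^j$. Hence the two direct sums differ on $A_1^j$. This also shows that $c$ is injective, so no transitions collapse and the length is $t$ in both cases.

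For the weights, it suffices to prove, for each $i$,
\[|c(\xi_{i-1})^{-1}c(\xi_i)|=|\xi_{i-1}^{-1}\xi_i|.\]
The key identity is that $c$ is compatible with composition and inversion componentwise:
\[c(\xi)^{-1}c(\eta)=(\xi^1)^{-1}\eta^1\oplus\cdots\oplus(\xi^s)^{-1}\eta^s\in\s(A_1).\]
This holds because each block maps $A_1^j$ to $A_2^j$ and back. A direct sum of permutations on disjoint blocks has cycle set equal to the disjoint union of the blocks' cycles. Therefore
\[\#cyc\big(c(\xi)^{-1}c(\eta)\big)=\sum_j\#cyc\big((\xi^j)^{-1}\eta^j\big).\]
Subtracting both sides from $l(\mm)=\sum_jm_j$ gives $|c(\xi)^{-1}c(\eta)|=\sum_j|(\xi^j)^{-1}\eta^j|=|\xi^{-1}\eta|$, using the convention for $\s(\mm)$ stated before Definition \ref{de:generalisedpath}. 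Summing over $i=1,\ldots,t$ then yields $L(C(T))=L(T)$.

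There is no real obstacle here. The only care needed is in the bookkeeping that identifies the block decomposition of $A_1=\sqcup_jA_1^j$ and $A_2=\sqcup_jA_2^j$, so that the direct sum $\oplus_j$ is well defined as a bijection $A_1\to A_2$ and its inverse is the blockwise inverse.
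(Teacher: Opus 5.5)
Your proposal is correct and follows the same route as the paper. The length equality comes straight from the construction, and you additionally check that $C(T)$ satisfies the path condition, which the paper leaves implicit. The weight equality reduces to the additivity $|p_1\oplus\cdots\oplus p_s|=|p_1|+\cdots+|p_s|$, which you justify through additivity of cycle counts over disjoint blocks.
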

\begin{proof}
The equality $\ve(C(T))=\ve(T)$ comes from the construction of $C(T)$. The second equality is implied by $|p_1\oplus\ldots\oplus p_s|=|p_1|+\ldots+|p_s|$, where $p_i\in\s(m_i)$.
\end{proof}

\section{Connected cumulants of traces}

We now extract the connected information in the trace moments through the classical joint cumulants of $\Tr(w_1),\ldots,\Tr(w_k)$. Their inversion is over all set partitions \cite{Rot64}. This use of trace cumulants is established in the work of Mingo--\'Sniady--Speicher and Collins--Mingo--\'Sniady--Speicher \cite{MSS07,CMSS07}. The independent extensions of the Diaconis--Shahshahani theorem by R\u{a}dulescu and by Mingo--\'Sniady--Speicher describe the limiting fluctuation background \cite{DS94,Rad06,MSS07}.

We have a balance set of words $w_1,\ldots,w_k$, with $m_u$ occurences of letter $u$ and $m_v$ occurences of letter $v$. Denote by $r$ the total lenght of words: $r=2m_u+2m_v$. Let $A^u=\{1,\ldots,m_u\}$, $A^{u^*}=\{m_u+1,\ldots,2m_u\}$, $A^v=\{2m_u+1,\ldots,2m_u+m_v\}$ and $A^{v^*}=\{2m_u+m_v+1,\ldots,r\}$. For $p_u,q_u\in Bij(A^u,A^{u^*})$, $p_u\oplus q_u^{-1}$ is a permutation in $\s(2m_u)$: $(p_u\oplus q_u^{-1})(i)=p_u(i)$ for $i\in A^u$ and $(p_u\oplus q_u^{-1})(i)=q_u^{-1}(i)$ for $i\in A^{u^*}$. In the same way $p_u\oplus q_u^{-1}\oplus p_v\oplus q_v^{-1}\in\s(r)$  for $p_u,q_u\in Bij(A^u,A^{u^*})$ and $p_v,q_v\in Bij(A^v,A^{v^*})$.

\begin{p}\label{p:value with bijections}
	$I_\gamma^N=\sum_{\substack{p_u,q_u\in Bij(A^u,A^{u^*});\\ p_v,q_v\in Bij(A^v,A^{v^*})}} W^N_{p_uq_u^{-1}}W^N_{p_vq_v^{-1}}N^{\#cyc(\gamma\cdot p_u\oplus q_u^{-1}\oplus p_v\oplus q_v^{-1})}$.
\end{p}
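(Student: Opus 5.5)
This is a reindexing of Proposition \ref{unitary final step}, or equivalently of the formula obtained from Theorem \ref{te:main result} together with the Casimir computation in the unitary case. We start from
\[I_\gamma^N=\sum_{\substack{p_u,q_u\in Sym(m_u);\\ p_v,q_v\in Sym(m_v)}} W^N_{p_uq_u^{-1}}W^N_{p_vq_v^{-1}}N^{\#cyc(\gamma\delta\cdot p_u^{-1}\oplus q_u\oplus p_v^{-1}\oplus q_v)}.\]
The plan is to absorb the flip $\delta$ into the permutations. Each term $\delta\cdot(p^{-1}\oplus q)$, with $p,q\in\s(m)$ acting on $\{1,\ldots,m\}$ and $\{m+1,\ldots,2m\}$ respectively, should become a pair of bijections $A^u\to A^{u^*}$ and $A^{u^*}\to A^u$.

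Concretely, fix $p,q\in\s(m_u)$. The permutation $\delta_{m_u}(p^{-1}\oplus q)$ sends $t\in A^u$ to $m_u+p^{-1}(t)\in A^{u^*}$, and sends $m_u+t\in A^{u^*}$ to $q(t)\in A^u$. So we set
\[p_u'(t)=m_u+p^{-1}(t), \qquad (q_u')^{-1}(m_u+t)=q(t).\]
Then $p_u',q_u'\in Bij(A^u,A^{u^*})$, and $\delta_{m_u}(p^{-1}\oplus q)=p_u'\oplus (q_u')^{-1}$ in the notation just introduced. The map $(p,q)\mapsto(p_u',q_u')$ is a bijection $\s(m_u)^2\to Bij(A^u,A^{u^*})^2$. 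The same construction works for $v$. Since $\delta=\delta_{m_u}\oplus\delta_{m_v}$ acts blockwise, the exponent becomes $\#cyc(\gamma\cdot p_u'\oplus q_u'^{-1}\oplus p_v'\oplus q_v'^{-1})$.

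It remains to check that the Weingarten weights match. We need $W^N_{p q^{-1}}=W^N_{p_u'(q_u')^{-1}}$, where $p_u'(q_u')^{-1}$ is read as an element of $Bij(A^{u^*},A^{u^*})\simeq\s(m_u)$, in the sense of the Observation on $Bij$. Identify $A^{u^*}$ with $\{1,\ldots,m_u\}$ via $m_u+t\mapsto t$. Then $q_u'(s)=m_u+q^{-1}(s)$, so $(q_u')^{-1}(m_u+t)=q(t)$, and hence
\[p_u'(q_u')^{-1}(m_u+t)=m_u+p^{-1}q(t).\]
This corresponds to $p^{-1}q$. Now $W^N$ is a class function, because $\Phi_m^N$ is central and therefore so is its inverse. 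Also $W^N_x=W^N_{x^{-1}}$, since $\#cyc$ is inversion invariant. Since $p^{-1}q=p^{-1}(qp^{-1})p$, the element $p^{-1}q$ is conjugate to $qp^{-1}=(pq^{-1})^{-1}$. This gives $W^N_{p^{-1}q}=W^N_{pq^{-1}}$.

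The main obstacle is bookkeeping: getting the conventions consistent. One must make sure the identification of $Bij(A^u,A^{u^*})$ with $\s(m_u)$ and the order of composition in $p_uq_u^{-1}$ agree with the convention used in the statement. The class-function property of $W^N$ makes any residual discrepancy (inverse or conjugate) harmless. For $N<m$, use the partial inverse $W_m^N$, which is still central. Then the same argument applies verbatim to the Theorem in Section 5.
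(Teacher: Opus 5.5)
Your proposal is correct and takes the same route as the paper, whose proof simply declares the statement a restatement of the formula in Proposition \ref{unitary final step}. You supply the bookkeeping the paper leaves implicit: rewriting $\delta_{m_u}(p^{-1}\oplus q)$ as $p_u\oplus q_u^{-1}$ with $p_u(t)=m_u+p^{-1}(t)$ and $q_u^{-1}(m_u+t)=q(t)$, and matching the weights $W^N_{p^{-1}q}=W^N_{pq^{-1}}$ through the centrality of $(\Phi_m^N)^{-1}$.
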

\begin{proof}
	This is just a restatement of the formula: $I_\gamma^N=\sum_{\substack{p_u,q_u\in Sym(m_u);\\ p_v,q_v\in Sym(m_v)}} W^N_{p_uq_u^{-1}}W^N_{p_vq_v^{-1}}N^{\#cyc(\gamma\delta\cdot p_u^{-1}\oplus q_u\oplus p_v^{-1}\oplus q_v)}$, obtained in Section \ref{s:unitary case}.
\end{proof}

\subsection{Subsets of words}

Recall that cycles in $\gamma$ corresponds to the words $w_1,\ldots,w_k$. So $\gamma=c_1\ldots c_k$. Let $S=\{1,\ldots,k\}$. For $i\in S$ we denote by $\tilde c_i$ the set of numbers in $\{1,\ldots,r\}$ that appear in the cycle $c_i$. Let $A_i^u=\tilde c_i\cap A^u$, i.e. $A_i^u$ represents the $u$-entries in the cycle $c_i$. In the same way define $A_i^x=\tilde c_i\cap A^x$, $x\in\{u^*,v,v^*\}$. Also let $A_i=A_i^u\cup A_i^{u^*}\cup A_i^v\cup A_i^{v^*}$, i.e. $A_i$ is the support of the cycle $c_i$.

Let $B\subset S$. Define $\gamma_B=\Pi_{i\in B}c_i$. Also $A_B^x=\cup_{i\in B}A_i^x$, for $x\in\{u,u^*,v,v^*\}$. So $\gamma_B$ is a permutation on the set $A_B=A_B^u\cup A_B^{u^*}\cup A_B^v\cup A_B^{v^*}=\cup_{i\in B}A_i$. Set:
\[I_\gamma^N(B)=\int_{\unit(N)^2}\Pi_{c\in B}Tr(w_c(u,v))dudv.\]

If $Card (A_B^u)\neq Card(A_B^{u^*})$ or $Card (A_B^v)\neq Card(A_B^{v^*})$ then $I_\gamma^N(B)=0$, as the family of words $\{w_c:c\in B\}$ is not balanced.

\begin{p}\label{p:value of subset}
	$I_\gamma^N(B)=\sum_{\substack{p_u,q_u\in Bij(A_B^u,A_B^{u^*});\\ p_v,q_v\in Bij(A_B^v,A_B^{v^*})}} W^N_{p_uq_u^{-1}}W^N_{p_vq_v^{-1}}N^{\#cyc(\gamma_B\cdot p_u\oplus q_u^{-1}\oplus p_v\oplus q_v^{-1})}$.
\end{p}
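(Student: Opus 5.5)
The plan is to reduce Proposition \ref{p:value of subset} to Proposition \ref{p:value with bijections}, applied to the subfamily $\{w_c:c\in B\}$ in place of the full family $W$. First I dispose of the unbalanced case. If $Card(A_B^u)\neq Card(A_B^{u^*})$, then $Bij(A_B^u,A_B^{u^*})$ is empty, so the right-hand side is the empty sum $0$. The same holds for the $v$-letters. In either case this agrees with $I_\gamma^N(B)=0$, which was observed just before the statement using the invariance of the Haar measure under $u\mapsto \lambda u$ with $|\lambda|=1$.

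In the balanced case, the subfamily $\{w_c:c\in B\}$ is itself a balanced family, with $m_u^B=Card(A_B^u)$ and $m_v^B=Card(A_B^v)$. Section \ref{balanced words} then attaches to it a permutation $\gamma'\in\s(r_B)$, and Proposition \ref{p:value with bijections} gives $I_\gamma^N(B)$ as the analogous sum over bijections between the standard blocks of $\{1,\ldots,r_B\}$.

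The key step is to relabel. Choose a bijection $\kappa:A_B\to\{1,\ldots,r_B\}$ that sends $A_B^x$ onto the standard block of letter $x$, for each $x\in\{u,u^*,v,v^*\}$, and that satisfies $\kappa\gamma_B\kappa^{-1}=\gamma'$. Such a $\kappa$ exists because both $\gamma_B$ and $\gamma'$ read the letters of the words $w_c$, $c\in B$, cyclically, and label each letter by its type. Any order-preserving choice inside each letter block is allowed, since both sides of the final formula are insensitive to it.

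Conjugation by $\kappa$ then turns bijections $p_u,q_u\in Bij(A_B^u,A_B^{u^*})$ into bijections between the standard blocks, and similarly for $v$. It preserves both the cycle count of $\gamma_B\cdot p_u\oplus q_u^{-1}\oplus p_v\oplus q_v^{-1}$ and the Weingarten weights $W^N_{p_uq_u^{-1}}$, because these depend only on the conjugacy class of $p_uq_u^{-1}$ viewed in $\s(m_u^B)$. Reindexing the sum along $\kappa$ yields the claimed formula.

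Alternatively, one can avoid the relabelling by rerunning the derivation of Section \ref{s:unitary case} directly on the index set $A_B$. The Weingarten entry formula, applied to the letters in $A_B$, gives a sum over pairs of matchings of $u$-positions with $u^*$-positions. These matchings are exactly the elements of $Bij(A_B^u,A_B^{u^*})$, and similarly for $v$. The counting argument with the equivalence relation generated by $\sigma\varphi$ then produces the factor $N^{\#cyc}$.

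I expect the main obstacle to be bookkeeping rather than mathematics. One has to check that the convention $p_u\oplus q_u^{-1}$ without the flip $\delta$, as used in Proposition \ref{p:value with bijections}, transports correctly, and that the order of composition $\gamma_B\cdot(\ldots)$ matches the order that arises in the derivation. I would verify this by tracking a single letter, a $u$ at position $i$, through $\kappa$ and through the relation $i\sim \gamma(\varphi(i))$, exactly as in the proof of the proposition computing $\lambda^{-1}\varphi\lambda$.
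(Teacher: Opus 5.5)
Your proposal is correct and follows the paper's route: the paper's proof is the single sentence that this is Proposition \ref{p:value with bijections} applied to the subfamily $\{w_c:c\in B\}$. Your treatment of the unbalanced case, where the right-hand side is an empty sum, and your block-preserving relabelling $\kappa$ with $\kappa\gamma_B\kappa^{-1}=\gamma'$ fill in the bookkeeping the paper leaves implicit; that $\kappa$ is just the order-preserving identification of each $A_B^x$ with its standard block.
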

\begin{proof}
This is Propostion \ref{p:value with bijections}, applied to the familiy of words $\{w_c:c\in B\}$.
\end{proof}

\subsection{Partitions}

We denote by $Part(S)$ the set of partitions of the set $S=\{1,\ldots,k\}$, i.e. if $\pi\in Part(S)$ then $\pi=(B_1,\ldots,B_s)$ with $\sqcup_iB_i=S$. Define $A_\pi^x=(A_{B_1}^x,\ldots,A_{B_s}^x)$, for $x\in\{u,u^*,v,v^*\}$. Let $\mm_x=(|A_{B_1}^x|,\ldots,|A_{B_s}^x|)$, for $x\in\{u,v\}$.

For $\pi\in Part(S)$ define:
\[I_\gamma^N(\pi)=\Pi_{B\in\pi}I_\gamma^N(B).\]

\begin{p}\label{p:valuewithpaths}
	$I_\gamma^N(\pi)=\sum_{\substack{p_u,q_u\in Bij(A_\pi^u,A_\pi^{u^*});\\ p_v,q_v\in Bij(A_\pi^v,A_\pi^{v^*})}} W^N_{p_uq_u^{-1}}W^N_{p_vq_v^{-1}}N^{\#cyc(\gamma\cdot p_u\oplus q_u^{-1}\oplus p_v\oplus q_v^{-1})}$.
\end{p}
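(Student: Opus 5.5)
The plan is to expand the product $I_\gamma^N(\pi)=\Pi_{B\in\pi}I_\gamma^N(B)$ block by block with Proposition \ref{p:value of subset}, and then repackage the resulting multiple sum as a single sum over multi-bijections. One preliminary reduction: if some block $B$ is unbalanced, then $I_\gamma^N(B)=0$. In that case some component $A_B^u, A_B^{u^*}$ (or the $v$-analogue) has unequal cardinalities, so $Bij(A_\pi^u,A_\pi^{u^*})$ (or the $v$-analogue) is empty. Both sides then vanish, and we may assume every block is balanced.

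For balanced blocks, Proposition \ref{p:value of subset} gives, for each $B\in\pi$, a sum over $p_u^B,q_u^B\in Bij(A_B^u,A_B^{u^*})$ and $p_v^B,q_v^B\in Bij(A_B^v,A_B^{v^*})$. Multiplying over $B\in\pi$ turns this into a sum over tuples $p_u=(p_u^{B_1},\ldots,p_u^{B_s})\in Bij(A_\pi^u,A_\pi^{u^*})$, and similarly for $q_u,p_v,q_v$. The summand is
\[\Pi_{B}W^N_{p_u^B(q_u^B)^{-1}}W^N_{p_v^B(q_v^B)^{-1}}\cdot N^{\sum_B\#cyc(\gamma_B\cdot p_u^B\oplus (q_u^B)^{-1}\oplus p_v^B\oplus (q_v^B)^{-1})}.\]
By the Notation preceding Proposition \ref{p:productofweingarden}, the product of Weingarten numbers is exactly $W^N_{p_uq_u^{-1}}W^N_{p_vq_v^{-1}}$ for the multi-bijections. (Here I am treating the order-of-composition convention $p q^{-1}$ versus $p^{-1}q$ as identical in both places, so it matches the block convention used in Proposition \ref{p:value of subset}.)

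The remaining point, which is the only real content, is the exponent. The sets $A_B$, $B\in\pi$, are disjoint and their union is $\{1,\ldots,r\}$. Each $\gamma_B$ and each $p_u^B\oplus(q_u^B)^{-1}\oplus p_v^B\oplus(q_v^B)^{-1}$ is a permutation of $A_B$. The global permutation $p_u\oplus q_u^{-1}\oplus p_v\oplus q_v^{-1}$ is the direct sum over $B$ of these block permutations, and $\gamma=\Pi_{B}\gamma_B$ is also a direct sum of the $\gamma_B$. Therefore $\gamma\cdot p_u\oplus q_u^{-1}\oplus p_v\oplus q_v^{-1}$ preserves each $A_B$ and restricts to $\gamma_B\cdot p_u^B\oplus(q_u^B)^{-1}\oplus p_v^B\oplus(q_v^B)^{-1}$ there. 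The cycle count is additive over invariant pieces, so the exponents agree.

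The only care needed is bookkeeping: one should check that the identification of $Bij(A_\pi^x,A_\pi^{x^*})$ with the product of the block bijection sets is a bijection of index sets, so that no terms are lost or duplicated when the product of sums is expanded. I expect this to be routine rather than a genuine obstacle.
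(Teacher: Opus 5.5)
Your proposal is correct and follows the paper's proof. Like the paper, you expand $\Pi_{B\in\pi}I_\gamma^N(B)$ block by block using Proposition \ref{p:value of subset}, read the product of Weingarten numbers as the multi-bijection notation, and use additivity of $\#cyc$ over the invariant sets $A_B$. Your two additional remarks only make explicit what the paper's three-line computation leaves implicit: in the unbalanced-block case the index set is empty and both sides vanish, and the exponent identity follows by restricting to each block.
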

\begin{proof}
Using Propostion \ref{p:value of subset} we have:
\begin{align*}
I_\gamma^N(\pi)=&\Pi_{B\in\pi}I_\gamma^N(B)=\Pi_{B\in\pi}\sum_{\substack{p_u^B,q_u^B\in Bij(A_B^u,A_B^{u^*});\\ p_v^B,q_v^B\in Bij(A_B^v,A_B^{v^*})}} W^N_{p_u^B(q_u^B)^{-1}}W^N_{p_v^B(q_v^B)^{-1}}N^{\#cyc(\gamma_B\cdot p_u^B\oplus (q_u^B)^{-1}\oplus p_v^B\oplus (q_v^B)^{-1})}\\
=&\sum_{\substack{p_u,q_u\in Bij(A_\pi^u,A_\pi^{u^*});\\ p_v,q_v\in Bij(A_\pi^v,A_\pi^{v^*})}} W^N_{p_uq_u^{-1}}W^N_{p_vq_v^{-1}}N^{\sum_{B\in\pi}\#cyc(\gamma_B\cdot p_u^B\oplus (q_u^B)^{-1}\oplus p_v^B\oplus (q_v^B)^{-1})}\\
=&\sum_{\substack{p_u,q_u\in Bij(A_\pi^u,A_\pi^{u^*});\\ p_v,q_v\in Bij(A_\pi^v,A_\pi^{v^*})}} W^N_{p_uq_u^{-1}}W^N_{p_vq_v^{-1}}N^{\#cyc(\gamma\cdot p_u\oplus q_u^{-1}\oplus p_v\oplus q_v^{-1})}.
\end{align*}
\end{proof}

\subsection{Decorated path}

\begin{de}
	Fix $\pi\in Part(S)$. Let $p=(p_u,p_v)$ and $q=(q_u,q_v)$, where $p_u,q_u\in Bij(A_\pi^u,A_\pi^{u^*})$ and $p_v,q_v\in Bij(A_\pi^v,A_\pi^{v^*})$. A \emph{decorated path} $T$ between $p$ and $q$ is a pair $T=(T_u,T_v)$, where $T_u\in GPh(p_u,q_u)$ and $T_v\in GPh(p_v,q_v)$.
	
	We denote by $DPh_\pi(p,q)$ the set of decorated paths between $p$ and $q$, and let $DPh_\pi=\cup_{p,q}DPh_\pi(p,q)$.
	For a decorated path $T$, let $\ve(T)=\ve(T_u)+\ve(T_v)$. We allow $T_u$ and $T_v$ to be of different lenghts. Let $L(T)=L(T_u)+L(T_v)$.
\end{de}

\begin{de}
Let $p=(p_u,p_v)$ and $q=(q_u,q_v)$ as before. Set $V_\gamma(p,q)=\#cyc(\gamma\cdot p_u\oplus q_u^{-1}\oplus p_v\oplus q_v^{-1})$. For $T\in DPh_\pi(p,q)$, define $\chi(T)=V_\gamma(p,q)-(m_u+m_v)-L(T)$.
\end{de}

We can now rewite the formula for $I_\gamma^N(\pi)$ in terms of decorated paths.

\begin{te} \label{te:valuewithpaths}
For any $\pi\in Part(S)$:
\[I_\gamma^N(\pi)=\sum_{T\in DPh_\pi}(-1)^{\ve(T)}N^{\chi(T)}.\]
\end{te}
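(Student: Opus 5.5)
The plan is to start from Proposition \ref{p:valuewithpaths}. It expresses $I_\gamma^N(\pi)$ as a sum over $p_u,q_u\in Bij(A_\pi^u,A_\pi^{u^*})$ and $p_v,q_v\in Bij(A_\pi^v,A_\pi^{v^*})$. Each summand is the product $W^N_{p_uq_u^{-1}}W^N_{p_vq_v^{-1}}$ times $N^{V_\gamma(p,q)}$, with $p=(p_u,p_v)$ and $q=(q_u,q_v)$.

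Next, replace each of the two Weingarten products by its path expansion from Proposition \ref{p:productofweingarden}. This is applied to the multisets $A_\pi^u,A_\pi^{u^*}$ with multi-index $\mm_u$, and separately to $A_\pi^v,A_\pi^{v^*}$ with $\mm_v$. Since $l(\mm_u)=\sum_{B\in\pi}|A_B^u|=m_u$ and likewise $l(\mm_v)=m_v$, we get
\[W^N_{p_uq_u^{-1}}=N^{-m_u}\sum_{T_u\in GPh(p_u,q_u)}(-1)^{\ve(T_u)}N^{-L(T_u)},\qquad W^N_{p_vq_v^{-1}}=N^{-m_v}\sum_{T_v\in GPh(p_v,q_v)}(-1)^{\ve(T_v)}N^{-L(T_v)}.\]

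One small point needs checking here: the Weingarten notation has to match. Proposition \ref{p:productofweingarden} is stated for $W^N_{p^{-1}q}$, whereas Proposition \ref{p:valuewithpaths} writes $W^N_{pq^{-1}}$. Both depend only on the conjugacy class of the relevant permutation, because $W^N$ is a class function ($\Phi_m^N$ is central, hence so is its inverse). Moreover $p^{-1}q$ and $q p^{-1}$ are conjugate, and $W^N$ is invariant under inversion. So the two expressions agree, read componentwise over the blocks of $\pi$.

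Then multiply the two expansions. Summing over the pair $(T_u,T_v)\in GPh(p_u,q_u)\times GPh(p_v,q_v)$ is, by definition, summing over decorated paths $T\in DPh_\pi(p,q)$. The sign becomes $(-1)^{\ve(T_u)+\ve(T_v)}=(-1)^{\ve(T)}$. The exponent of $N$ becomes $V_\gamma(p,q)-(m_u+m_v)-L(T)=\chi(T)$.

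Finally, summing over all $p,q$ replaces $DPh_\pi(p,q)$ by their union $DPh_\pi$. This union is disjoint, since a decorated path determines its endpoints, and that gives the stated formula.

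The only genuine obstacle is analytic: the exchange of sums requires the Neumann series for $G_\mm^{-1}$ to converge, which the paper assumes (large $N$). Under that assumption, all series converge absolutely and the finite sums over $p,q$ commute with them, so the rearrangement is justified.
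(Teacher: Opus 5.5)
Your proposal is correct and follows the paper's route: the paper proves this theorem in one line as a consequence of Propositions \ref{p:productofweingarden} and \ref{p:valuewithpaths}, and you carry out that same combination explicitly, using $l(\mm_u)=m_u$ and $l(\mm_v)=m_v$, assembling the exponent into $\chi(T)$, and noting that a decorated path determines its endpoints. Your two extra checks are valid and make explicit what the paper leaves implicit. First, $W^N_{pq^{-1}}=W^N_{p^{-1}q}$ holds because $W^N$ is a class function invariant under inversion. Second, under the paper's standing convergence assumption the Neumann series converge absolutely, which justifies the rearrangement of sums.
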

\begin{proof}
This is a consequence of Propostions \ref{p:productofweingarden} and \ref{p:valuewithpaths}.
\end{proof}

\subsection{The partition of a decorated path}

\begin{nt}
	We write $1_S\in Part(S)$ for the partition consisting of only one set. Also write $DPh$ for $DPh_{1_S}$.
\end{nt}

\begin{de}
	For $T\in DPh_\pi$, $T=(T_u,T_v)$, define $C(T)\in DPh$, $C(T)=(C(T_u),C(T_v))$.
\end{de}

\begin{ob}
	Notice that $\chi(T)=\chi(C(T))$ for any decorated path $T$.
\end{ob}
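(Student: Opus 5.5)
The plan is to compare the three terms in the definition of $\chi$ one at a time. The only input needed is the Lemma stating $\ve(C(T))=\ve(T)$ and $L(C(T))=L(T)$ for generalised paths. Fix $\pi\in Part(S)$ and $T=(T_u,T_v)\in DPh_\pi(p,q)$, with $p=(p_u,p_v)$ and $q=(q_u,q_v)$. By definition $C(T)=(C(T_u),C(T_v))$ lies in $DPh(c(p),c(q))$, where $c(p)=(c(p_u),c(p_v))$ and $c(q)=(c(q_u),c(q_v))$. Recall that
\[\chi(T)=V_\gamma(p,q)-(m_u+m_v)-L(T),\qquad \chi(C(T))=V_\gamma(c(p),c(q))-(m_u+m_v)-L(C(T)).\]

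\emph{The middle term.} It is the same constant $m_u+m_v$ on both sides. Indeed, it depends only on the family of words, not on the partition, since $l(\mm_u)=\sum_{B\in\pi}|A_B^u|=|A^u|=m_u$ and likewise $l(\mm_v)=m_v$.

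\emph{The length term.} By definition $L(T)=L(T_u)+L(T_v)$ and $L(C(T))=L(C(T_u))+L(C(T_v))$. Applying the Lemma to $T_u$ and to $T_v$ separately gives $L(C(T))=L(T)$.

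\emph{The cycle term.} This is the only point needing a moment's thought: one must check that $V_\gamma(p,q)=V_\gamma(c(p),c(q))$. For a multi-bijection $p_u=(p_u^B)_{B\in\pi}$, the permutation $p_u\oplus q_u^{-1}\in\s(2m_u)$ appearing in $V_\gamma$ is, as in Proposition \ref{p:valuewithpaths}, formed from the blockwise pieces $p_u^B\oplus(q_u^B)^{-1}$ on the disjoint sets $A_B^u\sqcup A_B^{u^*}$. I would write out pointwise that this is the same function on $A^u\sqcup A^{u^*}$ as $c(p_u)\oplus c(q_u)^{-1}$, where $c(p_u)=\bigoplus_B p_u^B$. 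The check is immediate because $c(q_u)^{-1}=\bigoplus_B (q_u^B)^{-1}$ on the disjoint blocks. The same holds for $v$. Hence
\[\gamma\cdot p_u\oplus q_u^{-1}\oplus p_v\oplus q_v^{-1}=\gamma\cdot c(p_u)\oplus c(q_u)^{-1}\oplus c(p_v)\oplus c(q_v)^{-1}\]
as elements of $\s(r)$, so the two permutations have the same number of cycles.

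Combining the three equalities yields $\chi(T)=\chi(C(T))$. I expect no real obstacle; the only care needed is the bookkeeping identification of a multi-bijection with its concatenation in the cycle term.
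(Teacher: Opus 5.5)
Your proposal is correct and follows the paper's reasoning. The paper states the observation without proof. It rests on exactly the preceding Lemma ($L(C(T))=L(T)$, applied to $T_u$ and $T_v$), together with two facts you also check: $V_\gamma$ only sees the concatenated permutation $c(p_u)\oplus c(q_u)^{-1}\oplus c(p_v)\oplus c(q_v)^{-1}$, as in Proposition \ref{p:valuewithpaths}, and $m_u+m_v$ does not depend on $\pi$.
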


Two decorated paths $T_1$ and $T_2$ are \emph{equivalent} if $C(T_1)=C(T_2)$. For a decorated path $T$, we are only interested in $C(T)$ and the minimum partition $\pi$ such that $T\in DPh_\pi$. We now formalise this partition.

\begin{de}
 For $\pi_1,\pi_2\in Part(S)$, we write $\pi_1\leqslant\pi_2$ if $\pi_1$ is a subpartition of $\pi_2$, i.e. for any $B_1\in \pi_1$ there exists $B_2\in\pi_2$ such that $B_1\subset B_2$.
\end{de}

\begin{de}
For $T\in DPh$ denote by $\rho(T)\in Part(S)$ the minimum partition $\pi$ such that there exists $T_0\in DPh_\pi$ with $C(T_0)=T$. Formally:
\[\rho(T)=\min\{\pi:\exists T_0\in DPh_\pi, C(T_0)=T\}.\]
\end{de}

We now describe $\rho(T)$ using orbits of a specific subgroup of $\s(r)$ that we associate to $T$.

\begin{de}\label{de:associated group}
Let $T=(T_u,T_v)\in DPh$. Construct $G_T\leqslant\s(r)$ by:
\[G_T=\langle \gamma,p_1\oplus p_2^{-1}\oplus p_3\oplus p_4^{-1}| p_1,p_2\in T_u;p_3,p_4\in T_v\rangle.\]
\end{de}

\begin{nt}
For $G\leqslant\s(r)$, we denote by $Orb(G)$, the orbit subsets of the action of $G$ on$\{1,\ldots,r\}$.
\end{nt}

\begin{ob}
If $\gamma\in G$, then $G$ induces a partition on $S$. Indeed each $A_i$, $i=1,\ldots,k$ is contained in an orbit of $G$. Denote by $\rho(G)$ the partition on $S$ induced this way. 
\end{ob}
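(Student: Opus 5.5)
The plan is to prove directly that the orbits of $G$ are unions of the supports $A_1,\ldots,A_k$ of the cycles of $\gamma$, and then to read off the partition of $S$.

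\emph{Step 1: each $A_i$ lies in a single orbit.} Since $\gamma\in G$, its cycle $c_i$ acts transitively on its support $A_i$. Fix $a\in A_i$. Every $b\in A_i$ has the form $\gamma^n(a)$ for some $n\geqslant 0$, because $\gamma$ restricted to $A_i$ is the single cycle $c_i$. Hence $b$ lies in the $G$-orbit of $a$, and $A_i$ is contained in one orbit of $G$.

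\emph{Step 2: orbits are unions of $A_i$'s.} The sets $A_1,\ldots,A_k$ partition $\{1,\ldots,r\}$, since the cycle decomposition of $\gamma$ covers every point; fixed points would count as one-letter words, and no word is empty. The orbits of $G$ also partition $\{1,\ldots,r\}$. By Step 1, each orbit $O$ is therefore exactly the disjoint union of those $A_i$ with $A_i\cap O\neq\emptyset$.

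\emph{Step 3: the induced partition.} For each orbit $O\in Orb(G)$, set $B_O=\{i\in S: A_i\subset O\}$. By Steps 1 and 2, every $i\in S$ lies in exactly one $B_O$, and each $B_O$ is nonempty. Hence $\rho(G)=\{B_O:O\in Orb(G)\}$ is a partition of $S$, namely the one induced by $i\sim j$ if and only if $A_i$ and $A_j$ lie in the same $G$-orbit.

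There is no real obstacle here. The only point needing care is that the $A_i$ genuinely cover $\{1,\ldots,r\}$, i.e.\ that every letter position belongs to some word, which holds by the construction of $\gamma=\lambda^{-1}\sigma\lambda$.
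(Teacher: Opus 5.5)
Your proposal is correct and takes the paper's approach. The paper's entire justification is that each $A_i$, being the support of the cycle $c_i$ of $\gamma\in G$, lies in a single $G$-orbit. You additionally make explicit the routine points the paper leaves implicit: the $A_i$ are nonempty and cover $\{1,\ldots,r\}$, so each orbit is a union of them and the sets $B_O$ partition $S$.
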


\begin{p}
For any $T\in DPh$, $\rho(T)=\rho(G_T)$.
\end{p}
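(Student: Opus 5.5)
The plan is to characterise $\rho(T)$ directly as the finest partition of $S$ compatible with all the bijections appearing in $T$. Then I would show that $\rho(G_T)$ is compatible and lies below every compatible partition. This also shows that the minimum in the definition of $\rho(T)$ exists.

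\emph{Step 1: reformulating $T\in C(DPh_\pi)$.} Write $T=(T_u,T_v)$ with $T_u=(\xi_0,\ldots,\xi_t)$, $\xi_i\in Bij(A^u,A^{u^*})$, and $T_v=(\eta_0,\ldots,\eta_{t'})$, $\eta_j\in Bij(A^v,A^{v^*})$. Call $\pi\in Part(S)$ \emph{admissible for $T$} if the following holds for every block $B\in\pi$, every $i$ and every $j$:
\[\xi_i(A_B^u)=A_B^{u^*},\qquad \eta_j(A_B^v)=A_B^{v^*}.\]
I claim that $\pi$ is admissible if and only if there is $T_0\in DPh_\pi$ with $C(T_0)=T$.

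If $\pi$ is admissible, restrict each $\xi_i$ to the blocks to get $\xi_i^B\in Bij(A_B^u,A_B^{u^*})$, so that $\xi_i=\oplus_B\xi_i^B=c(\xi_i^B)_B$; do the same for the $\eta_j$. Consecutive terms stay distinct, because $\xi_{i-1}\neq\xi_i$ forces some block restriction to differ. This produces a generalised path, hence $T_0\in DPh_\pi$ with $C(T_0)=T$. The cardinality conditions $|A_B^u|=|A_B^{u^*}|$ and $|A_B^v|=|A_B^{v^*}|$ are automatic, since they are bijections. Conversely, every element of $C(T_0)$ is a direct sum of block bijections, so $\pi$ is admissible. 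Hence
\[\rho(T)=\min\{\pi : \pi \mbox{ admissible for } T\},\]
and it remains to show that $\rho(G_T)$ is admissible and is $\leqslant$ every admissible $\pi$.

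\emph{Step 2: $\rho(G_T)$ is admissible.} Since $\gamma\in G_T$, each orbit of $G_T$ is a union of the supports $A_i$. So for a block $B\in\rho(G_T)$, the set $A_B$ is a union of $G_T$-orbits and is therefore $G_T$-invariant. Take $g=\xi_i\oplus\xi_i^{-1}\oplus\eta_j\oplus\eta_j^{-1}\in G_T$, i.e.\ choose $p_1=p_2=\xi_i$ and $p_3=p_4=\eta_j$ in Definition \ref{de:associated group}. On $A^u$ the element $g$ acts as $\xi_i$, so
\[\xi_i(A_B^u)\subset A_B\cap A^{u^*}=A_B^{u^*}.\]
The analogous inclusion $\eta_j(A_B^v)\subset A_B^{v^*}$ holds for $\eta_j$. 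Injectivity and a counting argument over all blocks then give equality. Hence $\rho(G_T)$ is admissible.

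\emph{Step 3: minimality.} Let $\pi$ be admissible and $B\in\pi$. The set $A_B$ is a union of cycle supports, so it is $\gamma$-invariant. Each generator $p_1\oplus p_2^{-1}\oplus p_3\oplus p_4^{-1}$ maps
\[A_B^u\to A_B^{u^*}\ (\mbox{by } p_1),\quad A_B^{u^*}\to A_B^u\ (\mbox{by } p_2^{-1}),\]
and similarly on the $v$-parts, using admissibility for $p_1,p_2\in T_u$ and $p_3,p_4\in T_v$. Hence $A_B$ is $G_T$-invariant, every $G_T$-orbit lies inside some $A_B$, and $\rho(G_T)\leqslant\pi$. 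Combined with Step 2, this gives $\rho(T)=\rho(G_T)$.

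The only delicate point is Step 1: making sure the lifting respects the "consecutive elements distinct" condition of Definition \ref{de:generalisedpath}, and that the minimum in the definition of $\rho(T)$ is well defined. Step 3 settles the latter, because it exhibits a least admissible partition.
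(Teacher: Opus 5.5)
Your proof is correct, and it is essentially the paper's argument. The paper introduces a graph on $S$ that joins $i$ and $j$ whenever some bijection in $T_u$ (or $T_v$) sends a $u$-position of $w_i$ to a $u^*$-position of $w_j$ (resp.\ for $v$), and says it is ``easy to see'' that the connected components give both $\rho(T)$ and $\rho(G_T)$; those components are exactly your least admissible partition. You skip the auxiliary graph and fill in the details the paper leaves implicit: the existence of the minimum in the definition of $\rho(T)$, the distinctness of consecutive lifted terms, and the counting step that upgrades $\xi_i(A_B^u)\subset A_B^{u^*}$ to equality.
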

\begin{proof}
Construct a graph $\gr$ as follows: $S$ is the set of vertices. Let $p\in T_u$, $i\in S$. Draw an edge in $\gr$ between $i$ and any $j\in S$ such that $p_u(A_i^u)$ intersects $A_j^{u^*}$. Do the same for any $p\in T_v$. It is easy to see that the connected components of $\gr$ are given by both the partitions $\rho(T)$ and $\rho(G_T)$.
\end{proof}

\subsection{Cummulants}

\begin{te}\label{te:integral with paths}
For any $\pi\in Part(S)$:
\[I_\gamma^N(\pi)=\sum_{T\in DPh,\rho(T)\leqslant\pi}(-1)^{\ve(T)}N^{\chi(T)}.\]
\end{te}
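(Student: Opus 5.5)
The plan is to start from Theorem \ref{te:valuewithpaths}, which gives $I_\gamma^N(\pi)=\sum_{T\in DPh_\pi}(-1)^{\ve(T)}N^{\chi(T)}$. I will then push this sum forward along the map $T\mapsto C(T)$ from $DPh_\pi$ to $DPh$. By the Observation, $\chi(T)=\chi(C(T))$. By the Lemma on associated paths, $\ve(C(T_u))=\ve(T_u)$, and likewise for $T_v$, so $\ve(T)=\ve(C(T))$. Hence each summand depends only on $C(T)$. It remains to show that, for fixed $\pi$, the map $C:DPh_\pi\to DPh$ is injective and that its image is exactly $\{T\in DPh:\rho(T)\leqslant\pi\}$.

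For injectivity, a generalised path in $GPh(p_u,q_u)$ with blocks $A_{B}^u\to A_B^{u^*}$, $B\in\pi$, is a sequence of tuples $(\xi_i^B)_{B\in\pi}$. The concatenation $c(\xi_i)=\oplus_B\xi_i^B$ recovers each $\xi_i^B$ by restriction to $A_B^u$. The consecutive-distinctness condition is the same on both sides, since $\xi_{i-1}\neq\xi_i$ as tuples if and only if $c(\xi_{i-1})\neq c(\xi_i)$. So $C$ is injective, and a path $T'\in DPh$ lies in the image exactly when every bijection occurring in $T'_u$ maps $A_B^u$ onto $A_B^{u^*}$ for every $B\in\pi$, and similarly for $T'_v$. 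The endpoints are included in this requirement.

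Next I identify the image condition with $\rho(T')\leqslant\pi$. This is the step I expect to need the most care. If every bijection in $T'$ respects the blocks of $\pi$, then the graph $\gr$ from the proof of the last Proposition only has edges inside blocks of $\pi$. Therefore $\rho(T')=\rho(G_{T'})\leqslant\pi$. Equivalently, and directly from the definition of $\rho(T')$ as a minimum, $T'\in C(DPh_\pi)$ gives $\rho(T')\leqslant\pi$.

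Conversely, suppose $\rho(T')\leqslant\pi$. By definition there is $T_0\in DPh_{\rho(T')}$ with $C(T_0)=T'$, so every bijection in $T'$ respects the blocks of $\rho(T')$. Since each block of $\pi$ is a union of blocks of $\rho(T')$, every bijection also respects the blocks of $\pi$. By the previous paragraph, $T'\in C(DPh_\pi)$.

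One subtlety must be checked here: that the minimum defining $\rho(T)$ exists, i.e. that the set of admissible partitions is closed under meets. The Proposition $\rho(T)=\rho(G_T)$ settles this, because the admissible $\pi$ are exactly those coarser than the orbit partition of $G_T$. A second point is that for blocks $B$ which are unbalanced, $Bij$ is empty. This is consistent with $I_\gamma^N(B)=0$, and no path $T'$ with $\rho(T')\leqslant\pi$ can exist in that case either.

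Combining the three steps, the sum over $DPh_\pi$ reindexes bijectively to the sum over $\{T\in DPh:\rho(T)\leqslant\pi\}$, with identical summands. This gives the stated formula.
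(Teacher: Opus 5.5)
Your proposal is correct and takes the same approach as the paper: the paper's proof simply transports the sum of Theorem \ref{te:valuewithpaths} along $T\mapsto C(T)$, noting that $DPh_\pi$ goes onto the set of $T\in DPh$ with $\rho(T)\leqslant\pi$. You fill in what the paper leaves implicit, namely injectivity of $C$ for fixed $\pi$, preservation of $\ve$ and $\chi$, the identification of the image, and the existence of the minimum defining $\rho(T)$, and each of these is handled correctly.
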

\begin{proof}
This is Theorem \ref{te:valuewithpaths}, where paths in $DPh_\pi$ are transported to paths in $DPh$ with the associated partition smaller than $\pi$.
\end{proof}

\subsection{Moebus inversion formula}

\begin{nt}
For $\pi\in Part(S)$, denote by $F(\pi)=\sum_{T\in DPh,\rho(T)=\pi}(-1)^{\ve(T)}N^{\chi(T)}$.
\end{nt}

Theorem \ref{te:integral with paths} becomes $I_\gamma^N(\pi)=\sum_{\eta\leqslant\pi}F(\eta)$. Using an inclusion-exclusion type argument, we can find $\mu(\pi,1_S)\in\rz$ such that:
\[\sum_{\pi\in Part(S)}\mu(\pi,1_S)I_N(\pi)=F(1_S)=\sum_{T\in DPh,\rho(T)=1_S}(-1)^{\ve(T)}N^{\chi(T)}.\]

Formally, one defines \emph{the Zeta function} $\zeta:Part(S)^2\to\{0,1\}$, $\zeta(\eta,\pi)=1$ if $\eta\leqslant\pi$ and $\zeta(\eta,\pi)=0$ otherwise. Then \emph{the Moebus function} $\mu:Part(S)^2\to\rz$ is its inverse such that, for any $\eta,\pi$:
\[\zeta*\mu(\eta,\pi)=\sum_{\eta\leqslant\rho\leqslant\pi}\zeta(\eta,\rho)\mu(\rho,\pi)=\delta_\eta^\pi.\]

\begin{de}
For a set of words $w_1,\ldots,w_k$ set:
\[\kappa_\gamma^N=\sum_{\pi\in Part(S)}\mu(\pi,1_S)I_\gamma^N(\pi).\]
\end{de}

The main result of this section.
\begin{te}\label{te:formula cummuland}
For a set of words $w_1,\ldots,w_k$, we have the equation:
\[\kappa_\gamma^N=\sum_{T\in DPh,\rho(T)=1_S}(-1)^{\ve(T)}N^{\chi(T)}.\]
\end{te}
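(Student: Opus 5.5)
This is Möbius inversion on the lattice $Part(S)$, applied to the identity $I_\gamma^N(\pi)=\sum_{\eta\leqslant\pi}F(\eta)$ coming from Theorem \ref{te:integral with paths}.

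First, I group the decorated paths in Theorem \ref{te:integral with paths} according to their associated partition. Every $T\in DPh$ has a well-defined partition $\rho(T)$, so
\[I_\gamma^N(\pi)=\sum_{\eta\leqslant\pi}\ \sum_{T\in DPh,\ \rho(T)=\eta}(-1)^{\ve(T)}N^{\chi(T)}=\sum_{\eta\leqslant\pi}F(\eta)=\sum_{\eta}\zeta(\eta,\pi)F(\eta).\]
For this to be legitimate, the sums must be finite, or at least absolutely convergent. The paths have unbounded length, so the sums are series. Under the standing assumption that the Neumann series for $G_\mm^{-1}$ converges, I expect absolute convergence. Indeed, $\sum_{T}N^{-L(T)}$ restricted to paths of length $i$ is an entry of $|S_\mm|^i$, and $S_\mm$ has nonnegative entries. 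So for $N$ large enough that $\|S_\mm\|<1$ (for every $\mm$ arising from partitions of $S$), the series converges absolutely, and rearranging by $\rho(T)$ is allowed. I will state this assumption explicitly. It is the only genuinely delicate point; everything else is formal.

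Next, I invert. Multiply both sides by $\mu(\pi,1_S)$ and sum over $\pi\in Part(S)$, which is a finite sum. Exchanging the order of summation gives
\[\kappa_\gamma^N=\sum_{\pi}\mu(\pi,1_S)\sum_{\eta}\zeta(\eta,\pi)F(\eta)=\sum_{\eta}F(\eta)\,(\zeta*\mu)(\eta,1_S)=\sum_\eta F(\eta)\delta_\eta^{1_S}=F(1_S).\]
This uses only the defining relation $\zeta*\mu=\delta$ of the Möbius function. By the Notation above, $F(1_S)=\sum_{T\in DPh,\rho(T)=1_S}(-1)^{\ve(T)}N^{\chi(T)}$, which is exactly the claimed formula.

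The main obstacle is the first step: making sure that $\rho(T)\leqslant\pi$ is exactly the right indexing condition, and that the sum over $T\in DPh$ with $\rho(T)\leqslant\pi$ counts each equivalence class once. Concretely, the map $T_0\mapsto C(T_0)$ from $DPh_\pi$ to $\{T\in DPh:\rho(T)\leqslant\pi\}$ should be a bijection. It is injective because a tuple of block bijections is recovered from their direct sum. It is surjective because, if $\rho(T)\leqslant\pi$, every bijection in $T$ maps $A_B^u$ into $A_B^{u^*}$ for each block $B$, so it splits into blocks. This is what Theorem \ref{te:integral with paths} encodes, together with the observation $\chi(T_0)=\chi(C(T_0))$. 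With that granted, the statement follows directly.
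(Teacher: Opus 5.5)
Your proposal is correct and follows the paper's argument. You group the paths of Theorem \ref{te:integral with paths} by $\rho(T)$ to get $I_\gamma^N(\pi)=\sum_{\eta\leqslant\pi}F(\eta)$, then apply M\"obius inversion through $\zeta*\mu=\delta$ to obtain $F(1_S)$. You also fill in two details the paper leaves implicit: absolute convergence, which justifies regrouping the series by $\rho(T)$, and the fact that $C$ is a bijection from $DPh_\pi$ onto $\{T\in DPh:\rho(T)\leqslant\pi\}$.
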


Actually, $\mu(\pi,1_S)=(-1)^{|\pi|-1}(|\pi|-1)!$ \cite{Rot64}. The resulting quantity is the classical joint cumulant
of the scalar traces \cite{Col03,MSS07,CMSS07}.

\section{Euler characteristics of decorated paths}

The goal of this section is to prove that for any $T\in DPh$ we have:
\[\chi(T)\leqslant 2|\rho(T)|-k,\]
where $|\rho(T)|$ is the number of subsets of $S$ that composes the partition $\rho(T)$. Moreover the diference is always an even integer.

This is an aplication of Rimhak Ree's Theorem, \cite{Ree71}. 

\begin{te}(Ree's Theorem)
Let $p_1,\ldots,p_s\in\s(m)$ be a collection of permutations such that $p_1\cdot p_2\cdot\ldots\cdot p_s=Id_m$. Let $t$ be the number of orbits of the action generated by $p_1,\ldots,p_s$ on $\{1,\ldots,m\}$. Then:
\[|p_1|+|p_2|+\ldots+|p_s|\geqslant 2(m-t),\]
and the diference is always an even integer.
\end{te}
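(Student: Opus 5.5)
We propose a purely combinatorial proof of Ree's Theorem: factor every $p_i$ into transpositions, and count how the number of cycles changes along the resulting chain of partial products.

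\emph{Parity.} Recall that $\mathrm{sign}(p)=(-1)^{|p|}$ for $p\in\s(m)$. Since $p_1\cdots p_s=Id_m$, we get $(-1)^{\sum_i|p_i|}=1$, so $\sum_i|p_i|$ is even. As $2(m-t)$ is even, the difference is an even integer. It remains to prove the inequality.

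\emph{Reduction to transpositions.} Write each $p_i$ as a product of exactly $|p_i|$ transpositions; this is possible since $|p|$ is the minimal transposition length. Concatenating these factorizations gives transpositions $\tau_1,\ldots,\tau_n$ with
\[n=\sum_i|p_i|\qquad\mbox{and}\qquad \tau_1\tau_2\cdots\tau_n=Id_m.\]
Let $H=\langle\tau_1,\ldots,\tau_n\rangle$ and let $t'$ be its number of orbits. Every $p_i$ lies in $H$, so each orbit of $\langle p_1,\ldots,p_s\rangle$ is contained in an orbit of $H$. Hence $t'\leqslant t$, and it suffices to prove $n\geqslant 2(m-t')$.

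\emph{Counting cycle changes.} Let $\pi_0=Id_m$ and $\pi_j=\tau_1\cdots\tau_j$, so that $\pi_n=Id_m$. Multiplying a permutation by a transposition changes its number of cycles by exactly $\pm1$: the count drops by one when the two swapped points lie in different cycles, and rises by one when they lie in the same cycle. Since $\#cyc(\pi_0)=\#cyc(\pi_n)=m$, the number of decreasing steps equals the number of increasing steps, so
\[n=2\cdot\#\{\mbox{decreasing steps}\}.\]

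\emph{Bounding the decreasing steps.} Let $\gr_j$ be the graph on $\{1,\ldots,m\}$ whose edges are $\tau_1,\ldots,\tau_j$. Every cycle of $\pi_j$ lies inside a connected component of $\gr_j$, because $\pi_j$ belongs to the group generated by these edges. Suppose $\tau_{j+1}=(a\,b)$ joins two distinct components of $\gr_j$. Then $a$ and $b$ lie in different cycles of $\pi_j$, so step $j+1$ is decreasing. The graph $\gr_n$ has $t'$ components, and there are exactly $m-t'$ steps at which the number of components of $\gr_j$ drops. Each of these steps is decreasing, so there are at least $m-t'$ decreasing steps. Therefore $n\geqslant 2(m-t')\geqslant 2(m-t)$.

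The key step is the observation that the cycles of $\pi_j$ are contained in the components of $\gr_j$. It needs care only with the order of multiplication, which is harmless here because the argument is symmetric in left and right products.
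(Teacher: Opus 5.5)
Your proof is correct and complete. The parity via signs is sound. So is the reduction to $n=\sum_i|p_i|$ transpositions, where settling for $t'\leqslant t$ spares you from having to choose the factorizations inside the cycles of the $p_i$. The identity $n=2\cdot\#\{\mbox{decreasing steps}\}$ holds. Finally, the forest count correctly shows that each of the $m-t'$ component-joining transpositions must merge two cycles of the running product.

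The paper does not prove this statement. It quotes it from \cite{Ree71} and uses it only to bound $\chi(T_{p,q})$ for direct endpoint paths. So your route replaces a citation by a self-contained elementary argument.

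Your key observation is that a transposition joining two orbits of a group containing the current product must merge two cycles of that product. This is exactly the mechanism of Case 2 in the paper's proof of Lemma \ref{l:technical lemma}. In fact that lemma already yields Ree's theorem by telescoping. Apply it with $K=\langle p_1,\ldots,p_j\rangle$ (the trivial group for $j=0$), $\alpha=p_1\cdots p_j$, and $p_{j+1}$ in the role of $s$. This gives
\[2\big(\#Orb\langle p_1,\ldots,p_j\rangle-\#Orb\langle p_1,\ldots,p_{j+1}\rangle\big)\leqslant |p_{j+1}|+|p_1\cdots p_{j+1}|-|p_1\cdots p_j|,\]
with even difference. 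Summing over $j=0,\ldots,s-1$ gives $2(m-t)\leqslant\sum_i|p_i|$, with even difference, because $p_1\cdots p_s=Id_m$.

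The citation buys brevity. Your argument, or this telescoping, buys self-containedness: the Euler estimate $\chi(T)\leqslant 2|\rho(T)|-k$ then rests on a single elementary principle proved inside the paper.
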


\subsection{Direct endpoint paths}


\begin{nt}
Let $p=(p_u,p_v)$ and $q=(q_u,q_v)$. We denote by $T_{p,q}\in DPh(p,q)$ the \emph{direct endpoints decorated path} $T_{p,q}=(T_u,T_v)$, where $T_u=(p_u,q_u)$ and $T_v=(p_v,q_v)$. 
\end{nt}

Note that $L(T_{p,q})=|p_u^{-1}q_u|+|p_v^{-1}q_v|$. Its total lenght, $\ve(T_{p,q})$, can be zero, one, or two.
Recall that $\chi(T)=V_\gamma(p,q)-(m_u+m_v)-L(T)$.

\begin{p}\label{p: chi for 2-paths}
For $T=T_{p,q}$ we have $\chi(T)\leqslant 2|\rho(T)|-k$, and the difference is an even integer.
\end{p}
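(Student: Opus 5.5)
The plan is to apply Ree's Theorem on $\{1,\ldots,r\}$ to a three-term factorization of the identity. Write $T=T_{p,q}$ and set
\[P=p_u\oplus q_u^{-1}\oplus p_v\oplus q_v^{-1}\in\s(r),\qquad \pi=\gamma P .\]
By definition $V_\gamma(p,q)=\#cyc(\pi)$. Since $\gamma\cdot P\cdot\pi^{-1}=Id_r$, Ree's Theorem gives
\[|\gamma|+|P|+|\pi^{-1}|\geqslant 2(r-t),\]
where $t$ is the number of orbits of $\langle\gamma,P\rangle$ (note $\pi$ already lies in this group), and the difference is even. We then compute the three lengths and identify $t$.

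\emph{Computing the lengths.} Since $\gamma$ has exactly $k$ cycles, $|\gamma|=r-k$. Also $|\pi^{-1}|=r-V_\gamma(p,q)$. For $P$, restrict to $A^u\sqcup A^{u^*}$. There $P$ maps $A^u$ to $A^{u^*}$ by $p_u$ and $A^{u^*}$ back to $A^u$ by $q_u^{-1}$. So $P$ alternates between the two sets, and $P^2|_{A^u}=q_u^{-1}p_u$. Each cycle of $P$ on this part corresponds to exactly one cycle of $q_u^{-1}p_u$. Hence the number of cycles there is
\[\#cyc(q_u^{-1}p_u)=m_u-|p_u^{-1}q_u|,\]
using that $|x|=|x^{-1}|$ and that $|\cdot|$ is conjugation invariant. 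Doing the same for $v$,
\[|P|=r-(m_u+m_v)+L(T)=(m_u+m_v)+L(T).\]
Substituting into Ree's inequality gives
\[(r-k)+(m_u+m_v)+L(T)+r-V_\gamma(p,q)\geqslant 2r-2t,\]
that is $\chi(T)=V_\gamma(p,q)-(m_u+m_v)-L(T)\leqslant 2t-k$, with even difference.

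\emph{Identifying $t$.} It remains to show $t=|\rho(T)|$. This is the only point needing care, because $G_T$ from Definition \ref{de:associated group} has more generators than $\langle\gamma,P\rangle$. It also includes mixed elements such as $p_u\oplus p_u^{-1}\oplus\cdots$. By the Proposition $\rho(T)=\rho(G_T)$, it suffices to show that $\langle\gamma,P\rangle$ and $G_T$ have the same orbits. Since $\langle\gamma,P\rangle\leqslant G_T$, its orbits are finer. Conversely, since $\gamma$ lies in both groups, every orbit of either group is a union of the supports $A_i$. The orbit structure of each group is therefore that of the graph on $S$ with an edge $i$--$j$ whenever some generator sends a point of $A_i$ into $A_j$. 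Every generator of $G_T$ acts pointwise by one of $p_u$, $q_u^{-1}$, $p_u^{-1}$, $q_u$ (and the $v$-analogues). Each such move also appears, or appears inverted, as a move of $P$, namely $p_u$ on $A^u$ or $q_u^{-1}$ on $A^{u^*}$. So every edge of the $G_T$-graph is an edge of the $\langle\gamma,P\rangle$-graph, and the orbits coincide. Thus $t=|\rho(T)|$ and the Proposition follows.

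The main obstacle is the cycle count of $P$, specifically the claim that the alternating structure gives a bijection between cycles of $P$ and cycles of $q_u^{-1}p_u$; once that is in place, the rest is bookkeeping.
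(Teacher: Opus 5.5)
Your proof is correct and takes the paper's approach. Like the paper, you apply Ree's theorem to $\gamma$, $P=p_u\oplus q_u^{-1}\oplus p_v\oplus q_v^{-1}$ and $(\gamma P)^{-1}$, and you compute the same lengths $|\gamma|=r-k$, $|P|=m_u+m_v+L(T)$ and $|\gamma P|=r-V_\gamma(p,q)$. The one difference is how you show the number of orbits is $|\rho(T)|$: you compare $\langle\gamma,P\rangle$ with $G_T$ and use $\rho(T)=\rho(G_T)$, while the paper reads it off the definition of $\rho(T)$; your version is slightly more careful.
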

\begin{proof}
We apply Ree's theorem for the permutations $\gamma$, $p_u\oplus q_u^{-1}\oplus p_v\oplus q_v^{-1}$ and $(\gamma\cdot p_u\oplus q_u^{-1}\oplus p_v\oplus q_v^{-1})^{-1}$. Denote by $G$ the group generated by these permutations.

Firstly, lets notice that the number of orbits of $G$ acting on $\{1,\ldots,r\}$ is $|\rho(T)|$. This is because the orbits of $G$ are exactly the sets $A_B^u\cup A_B^{u^*}\cup A_B^{v}\cup A_B^{v^*}$, for each $B\in\rho(T)$. For each $i\in\{1,\ldots,k\}$, the set $A_i=A_i^u\cup A_i^{u^*}\cup A_i^{v}\cup A_i^{v^*}$ is contained in a $G$-orbit due to $\gamma$ being in $G$. For $i\neq j$, $A_i$ and $A_j$ are connected by the permutation $p_u\oplus q_u^{-1}\oplus p_v\oplus q_v^{-1}$ if and only if $i$ and $j$ belong to the same $B\in\rho(T)$. This comes from the definition of $\rho(T)$.

Secondly, as cycles in $\gamma$ represents words $(w_1,\ldots,w_k)$:
\[|\gamma|=2m_u+2m_v-\#cyc(\gamma)=2m_u+2m_v-k.\]
Thirdly, $p_u\oplus q_u^{-1}$ is a permutation on $A^u\cup A^{u^*}$, with $p_u,q_u\in Bij(A^u,A^{u^*})$. As such $p_u\oplus q_u^{-1}$ has only cycles of even length and $(p_u\oplus q_u^{-1})^2=q_u^{-1}p_u\oplus p_uq_u^{-1}$. It follows that $\#cyc(p_u\oplus q_u^{-1})=\#cyc(q_u^{-1}p_u)$. Then:
\begin{align*}
|p_u\oplus q_u^{-1}\oplus p_v\oplus q_v^{-1}|=&2m_u+2m_v-\#cyc(p_u\oplus q_u^{-1}\oplus p_v\oplus q_v^{-1})
=2m_u+2m_v-(\#cyc(q_u^{-1}p_u)+\#cyc(q_v^{-1}p_v))\\
=&m_u+m_v+|q_u^{-1}p_u|+|q_v^{-1}p_v|=m_u+m_v+L(T).
\end{align*}
Recall that $\chi(T)=V_\gamma(p,q)-(m_u+m_v)-L(T)$, where $V_\gamma(p,q)=\#cyc(\gamma\cdot p_u\oplus q_u^{-1}\oplus p_v\oplus q_v^{-1})$. So:
\[|\gamma\cdot p_u\oplus q_u^{-1}\oplus p_v\oplus q_v^{-1}|=2m_u+2m_v-V_\gamma(p,q)=m_u+m_v-\chi(T)-L(T).\]
Applying Ree's theorem, we get:
\[(2m_u+2m_v-k)+(m_u+m_v+L(T))+(m_u+m_v-\chi(T)-L(T))\geqslant 4m_u+4m_v-2|\rho(T)|.\]
After simplifications we get $\chi(T)\leqslant 2|\rho(T)|-k$ as required.
\end{proof}

\subsection{Euler estimate}

Before extending this estimation to decorated paths of albitrary length, we need the following technical result.

\begin{lemma}\label{l:technical lemma}
	Let $K\subset \s(m)$ be a subgroup, $\alpha\in K$ and $s\in\s(m)$. Let $q_0=\#Orb(K)$ and $q_s=\#Orb(\langle K,s\rangle)$. Then:
	\[2(q_0-q_s)\leqslant |s|+|\alpha s|-|\alpha|.\]
Equivalently:
	\[\#cyc(\alpha s)-|s|\leqslant \#cyc(\alpha)-2(q_0-q_s).\]
	Moreover, the diference is an even integer.
\end{lemma}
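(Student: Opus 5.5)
The plan is to reduce the inequality to Ree's Theorem, applied to the three permutations $\alpha^{-1}$, $s^{-1}$ and $\alpha s$ (or directly to $\alpha$, $s$, $(\alpha s)^{-1}$, whose product is $Id_m$), and then to compare orbit counts. The two stated forms are equivalent by the identity $|x|=m-\#cyc(x)$: substituting it into $2(q_0-q_s)\leqslant |s|+|\alpha s|-|\alpha|$ gives $2(q_0-q_s)\leqslant |s|+\#cyc(\alpha)-\#cyc(\alpha s)$, which rearranges to the second form. Parity is unaffected by this rearrangement.

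The core step works orbit by orbit of $K$. Let $H=\langle\alpha,s\rangle$ and let $t$ be its number of orbits on $\{1,\ldots,m\}$. Ree's Theorem for $\alpha\cdot s\cdot(\alpha s)^{-1}=Id_m$ gives
\[|\alpha|+|s|+|\alpha s|\geqslant 2(m-t),\]
with even difference. This alone is not what we want, since we need $|s|+|\alpha s|-|\alpha|$ on the right and orbits of $\langle K,s\rangle$ rather than of $H$. The idea is to replace $\alpha$ by something whose length is controlled. Decompose $\alpha$ into cycles. Every cycle of $\alpha$ lies inside a single $K$-orbit, since $\alpha\in K$.

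The key trick is to take the quotient set $X=\{1,\ldots,m\}/\!\sim$, whose points are the $K$-orbits, so that $|X|=q_0$. A cleaner route, avoiding a quotient of permutations, is to choose a permutation $\beta\in\s(m)$ that is a single cycle on each $K$-orbit and satisfies $|\beta|+|\beta^{-1}\alpha|=|\beta|-\ldots$. Concretely, I would pick $\beta$ with $\alpha\leqslant\beta$ in the absolute (geodesic) order, meaning $|\alpha|+|\alpha^{-1}\beta|=|\beta|$, and with the cycles of $\beta$ being exactly the $K$-orbits. Such a $\beta$ exists by merging cycles of $\alpha$ within each orbit one transposition at a time, since each merge raises the length by one. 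Then $|\beta|=m-q_0$.

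Now apply Ree's Theorem to the four permutations $\beta^{-1}\alpha$, $\alpha^{-1}\beta$... It is simpler to apply it to $\alpha^{-1}\beta$, $\beta^{-1}$, $\alpha s$, $s^{-1}$, whose product is $\alpha^{-1}\beta\beta^{-1}\alpha s s^{-1}=Id_m$. The group they generate contains $\beta$, hence is transitive on each $K$-orbit, and it contains $s$. Its orbits are therefore exactly those of $\langle K,s\rangle$ (up to checking that it is contained in $\langle K,s\rangle$, which holds after choosing $\beta\in K$; merging transpositions can be taken as products of elements of $K$ restricted... this is the delicate point). This gives
\[|\alpha^{-1}\beta|+|\beta|+|\alpha s|+|s|\geqslant 2(m-q_s),\]
with even difference. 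Substituting $|\alpha^{-1}\beta|=|\beta|-|\alpha|$ and $|\beta|=m-q_0$ yields $2(m-q_0)-|\alpha|+|\alpha s|+|s|\geqslant 2(m-q_s)$, which is exactly the claim, and the evenness comes directly from Ree.

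The main obstacle is the orbit identification. Ree's count uses the orbits of $\langle\alpha^{-1}\beta,\beta,\alpha s,s\rangle=\langle\beta,\alpha,s\rangle$, and we need these to be exactly the orbits of $\langle K,s\rangle$. The orbits of $\langle\beta,s\rangle$ are those of $\langle K,s\rangle$, because $\beta$ and $K$ have the same orbits, and orbits of a generated group depend only on the orbit partitions of the generators. This holds even if $\beta\notin K$, so the worry above disappears: the orbit partition of $\langle\beta,\alpha,s\rangle$ is the join of the partitions $Orb(\beta)=Orb(K)$, $Orb(\alpha)\leqslant Orb(K)$ and $Orb(s)$, which equals $Orb(\langle K,s\rangle)$. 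The remaining step to verify carefully is the existence of $\beta$ with $\alpha\leqslant\beta$ geodesically, whose cycles are the $K$-orbits. This follows by multiplying $\alpha$ by transpositions joining distinct cycles within the same $K$-orbit, each of which decreases $\#cyc$ by one.
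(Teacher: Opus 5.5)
Your proof is correct, but it takes a different route from the paper. The paper argues by induction on $|s|$. It writes $s=at$ with $|a|=|s|-1$, where $t$ is a transposition merging two cycles of $a$. It then splits into two cases, according to whether those two cycles lie in the same orbit of $\langle K,a\rangle$. When the orbit count is unchanged, $|s|+|\alpha s|-|\alpha|$ grows by $0$ or $2$. When the orbit count drops by one, it grows by exactly $2$, because $\alpha a\in\langle K,a\rangle$ forces $t$ to merge two cycles of $\alpha a$. Ree's theorem is not used there.

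You instead absorb $K$ into one permutation $\beta$ whose cycles are exactly the $K$-orbits and which lies above $\alpha$ in the absolute order, so that $|\beta|=m-q_0$ and $|\alpha^{-1}\beta|=|\beta|-|\alpha|$. A single application of Ree's theorem to $(\alpha^{-1}\beta)\,\beta^{-1}\,(\alpha s)\,s^{-1}=\id$ then gives the bound. You correctly resolve your worry about $\beta\notin K$: the orbit partition of $\langle\alpha,\beta,s\rangle$ is the join of $Orb(\beta)=Orb(K)$, $Orb(\alpha)\leqslant Orb(K)$ and $Orb(s)$, which is $Orb(\langle K,s\rangle)$.

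What each buys: your version is shorter and puts the lemma on the same footing as the proposition for direct endpoint paths, so the whole Euler estimate becomes a consequence of Ree's theorem. It also shows that only the orbit partition of $K$ matters, together with the fact that $\alpha$ preserves each $K$-orbit. The paper's induction is self-contained and shows, transposition by transposition, where the excess of $2$ is created.

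When you write yours up, remove the abandoned fragments (the quotient-set idea and the unfinished identity). Also state the length bookkeeping explicitly. $\alpha^{-1}\beta$ is a product of $j=\#cyc(\alpha)-q_0$ transpositions and each merge raises length by one, so $|\alpha^{-1}\beta|\leqslant j=|\beta|-|\alpha|$. That inequality is all the bound needs, and the triangle inequality $|\beta|\leqslant|\alpha|+|\alpha^{-1}\beta|$ gives equality. The parity claim holds for arbitrary $\alpha,s$ anyway, since $|x|$ has the parity of the sign of $x$.
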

\begin{proof}
	The proof goes by induction on $|s|$. If $|s|=0$, then $s=Id$ and we have equality.
	
	Let $s=at$ with $|a|=|s|-1$ and $t$ a transposition. Let $q_a=\#Orb(\langle K,a\rangle)$. By induction hypothesis: 
	$2(q_0-q_a)\leqslant |a|+|\alpha a|-|\alpha|$. As $|at|=|a|+1$, the transposition $t$ links two cycles of $a$.
	
	Case 1: these two cycles of $a$ are in the same orbit of $\langle K,a\rangle$. Then $Orb(\langle K,s\rangle)=Orb(\langle K,a\rangle)$, and $q_s=q_a$. As $\alpha s=\alpha a\cdot t$, we have $\#cyc(\alpha s)-\#cyc(\alpha a)\in\{-1,1\}$. So $|s|+|\alpha s|-|\alpha|$ is equal to either $|a|+|\alpha a|-|\alpha|$ or to $|a|+|\alpha a|-|\alpha|+2$. 
	
	Case 2:  the two cycles of $a$ are in different orbits of $\langle K,a\rangle$. Then $\#Orb(\langle K,s\rangle)=\#Orb(\langle K,a\rangle)-1$, so $q_s=q_a-1$. As such $2(q_0-q_s)=2(q_0-q_a)+2$.
	
	Now $\alpha a\in\langle K,a\rangle$. If the transposition $t$ links two different orbits of $\langle K,a\rangle$, then it also links two cycles of $\alpha a$. As such $\#cyc(\alpha a)-\#cyc(\alpha s)=1$, so $|\alpha s|=|\alpha a|+1$. Then $|s|+|\alpha s|-|\alpha|=|a|+|\alpha a|-|\alpha|+2$ and we are done.
	
At each induction step, the difference stays the same or it increases by $2$. As such the difference is an even integer.	
\end{proof}

\begin{lemma}\label{l:from lenght 2}
For any $T\in DPh(p,q)$, $2(|\rho(T_{p,q})|-|\rho(T)|)\leqslant L(T)-L(T_{p,q})$, and the difference is an even integer.
\end{lemma}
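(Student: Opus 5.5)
The plan is to pass from the direct endpoint path $T_{p,q}$ to $T$ by adding one step of the path at a time, and to bound the change in the number of orbits at each step with Lemma \ref{l:technical lemma}. The sum of these bounds should telescope to $L(T)-L(T_{p,q})$.

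\textbf{Setup.} Write $T=(T_u,T_v)$ with $T_u=(\xi_0,\ldots,\xi_a)$ and $T_v=(\eta_0,\ldots,\eta_b)$. Here $\xi_0=p_u$, $\xi_a=q_u$, $\eta_0=p_v$ and $\eta_b=q_v$. Concatenate the steps into one list $s_1,\ldots,s_t$ of permutations in $\s(r)$, with $t=a+b$:
\begin{itemize}
\item first $\xi_{i-1}^{-1}\xi_i$, acting on $A^u$ and extended by the identity elsewhere;
\item then $\eta_{i-1}^{-1}\eta_i$ on $A^v$, extended in the same way.
\end{itemize}
Since $|s_1|+\cdots+|s_t|=L(T)$ and $s_1\cdots s_t=p_u^{-1}q_u\oplus p_v^{-1}q_v=:\omega$ (identity on $A^{u^*}\cup A^{v^*}$), we have $|\omega|=L(T_{p,q})$. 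Put $\sigma_{p,q}=p_u\oplus q_u^{-1}\oplus p_v\oplus q_v^{-1}$, and define $K_0=\langle\gamma,\sigma_{p,q}\rangle=G_{T_{p,q}}$ and $K_j=\langle K_0,s_1,\ldots,s_j\rangle$.

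\textbf{Orbit identifications.} I would show two facts.
\begin{itemize}
\item $\#Orb(K_0)=|\rho(T_{p,q})|$. This is exactly what was established in the proof of Proposition \ref{p: chi for 2-paths}.
\item $\#Orb(K_t)=|\rho(T)|$, i.e. $K_t$ and $G_T$ have the same orbits.
\end{itemize}
For the second fact, observe that $(p_1\oplus p_2^{-1})(p_1'\oplus p_2'^{-1})^{-1}$-type products, and conjugates of the steps by $\sigma_{p,q}$, move points of $A^u$ through $\xi_{i-1}^{-1}\xi_i$ and points of $A^{u^*}$ through $\xi_i\xi_{i-1}^{-1}$. Together with $\sigma_{p,q}$, which links $A^u$ to $A^{u^*}$, one gets the inclusion $G_T$-orbits $\subseteq$ $K_t$-orbits. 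Conversely, each generator $\xi_i\oplus\xi_j^{-1}\oplus\cdots$ of $G_T$ can be written, orbit-wise, through $\sigma_{p,q}$ and the partial products of the $s$'s.

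I would also check that $\omega\in K_0$. The square $\sigma_{p,q}^2$ equals $q_u^{-1}p_u\oplus p_uq_u^{-1}\oplus q_v^{-1}p_v\oplus p_vq_v^{-1}$, which agrees with $\omega^{-1}$ on $A^u\cup A^v$ but acts nontrivially on the starred sets. So I would either enlarge the steps by their conjugates on $A^{u^*}$ (which changes neither orbits nor the relevant lengths up to a factor I would track), or apply Lemma \ref{l:technical lemma} with $K$ replaced by $\langle K_0,\omega\rangle$, which has the same orbits as $K_0$ because $\omega$'s cycles lie inside $\sigma_{p,q}^2$-orbits. \textbf{I expect this orbit bookkeeping to be the main obstacle}, rather than the inequality itself.

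\textbf{Telescoping.} Set $R_j=s_j s_{j+1}\cdots s_t$, so that $R_1=\omega$ and $R_{t+1}=\id$. Then $R_j\in K_{j-1}$, since $R_j=(s_1\cdots s_{j-1})^{-1}\omega$. Apply Lemma \ref{l:technical lemma} with $K=K_{j-1}$, $\alpha=R_j^{-1}$ and $s=s_j$. Because $\alpha s_j=R_j^{-1}s_j=R_{j+1}^{-1}$ and $|x^{-1}|=|x|$, this gives
\[2\big(\#Orb(K_{j-1})-\#Orb(K_j)\big)\leqslant |s_j|+|R_{j+1}|-|R_j|,\]
with even difference. Summing over $j=1,\ldots,t$ yields
\[2\big(\#Orb(K_0)-\#Orb(K_t)\big)\leqslant L(T)+|R_{t+1}|-|R_1|=L(T)-L(T_{p,q}),\]
and the total difference is a sum of even integers, hence even. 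The orbit identifications above then turn this into the claimed inequality $2(|\rho(T_{p,q})|-|\rho(T)|)\leqslant L(T)-L(T_{p,q})$.
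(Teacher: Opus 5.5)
Your plan is correct once the hedge in the middle is settled the right way, and it organizes the argument differently from the paper.

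\textbf{The obstacle you flag.} It is genuine. $\langle\gamma,\sigma_{p,q}\rangle$ has the same orbits as $G_{T_{p,q}}$, but it can be a proper subgroup that does not contain $\omega$. For the single word $uu^*uu^*$ with $p_u\neq q_u$, one finds $\sigma_{p,q}=\gamma$, a $4$-cycle, while $\omega$ is a transposition. So writing $K_0=\langle\gamma,\sigma_{p,q}\rangle=G_{T_{p,q}}$ is wrong as an identity of groups.

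\textbf{Your two remedies.} The first one fails. Adjoining conjugate pieces on $A^{u^*}\cup A^{v^*}$ doubles all lengths, and the telescoped bound degrades to $\#Orb(K_0)-\#Orb(K_t)\leqslant L(T)-L(T_{p,q})$. This loses exactly the factor $2$ you need. The second remedy works: $\omega|_{A^u}=(\sigma_{p,q}^2|_{A^u})^{-1}$, and similarly on $A^v$. So adjoining $\omega$ changes no orbits, and then $R_j\in K_{j-1}$ for every $j$.

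\textbf{A cleaner fix.} Take $K_0=G_{T_{p,q}}$ literally, as in Definition \ref{de:associated group}.
\begin{itemize}
\item The computation the paper uses to show $\alpha\in K$ gives $\omega^{-1}=\sigma_{p,q}\,(p_u\oplus p_u^{-1}\oplus p_v\oplus p_v^{-1})\in K_0$.
\item Each $s_j$ is likewise a product of two generators of $G_T$.
\item Conversely, every generator of $G_T$ is obtained from $p_u\oplus p_u^{-1}\oplus p_v\oplus p_v^{-1}$ by multiplying on both sides with products of the $s_j$.
\end{itemize}
Hence $K_t=G_T$, and both orbit counts come straight from the proposition $\rho(T)=\rho(G_T)$. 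Separately, your sketch has the two inclusions swapped: realizing the $s_j$-moves inside $G_T$ gives $K_t$-orbits $\subseteq$ $G_T$-orbits, not the reverse.

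\textbf{Comparison with the paper.} The paper inducts on the number of transitions. It deletes the first intermediate bijection $\xi_1$ of one component, and also $\xi_2$ when $\xi_2=p_u$, so that the shortened sequence is still a path. It then applies Lemma \ref{l:technical lemma} with $K=G_{T_1}$, $\alpha=\xi_2^{-1}p_u\oplus\id$ and $s=p_u^{-1}\xi_1\oplus\id$. The mixed generators of $G_{T_1}$ put $\alpha$ in $K$ for free, but the identification $\rho(\langle K,s\rangle)=\rho(T)$ must be made at every step.

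Your forward telescoping uses the same lemma once per transition, with $\alpha=R_j^{-1}$ the inverse of the remaining product. It never constructs intermediate paths, so the case $\xi_2=p_u$ does not arise. It also needs the orbit identification only at the two ends.
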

\begin{proof}
Informally, if two sets from $\rho(T_{p,q})$ are linked in $\rho(T)$, then there exists a transposition on the path $T$, that links them and it is not contained in $p$ or $q$. So, starting from $p$, this transposition will appear at some moment on the path $T$, and dissapear until we reach $q$. As such $L(T)-L(T_{p,q})$ increases by at least $2$.
	
The formal proof goes by induction on the total number of transitions. If neither paths have an intermediate state, then  $T=T_{p,q}$ and there is nothing to prove. Otherwise, let $T_u=(p_u,\xi_1,\xi_2,\ldots,q_u)$ (interchange $u$ with $v$ if necessary). Delete $\xi_1$. If $p_u=\xi_2$, also delete $\xi_2$. Call the resulting decorated path $T_1$. It is a valid path with strictly fewer transitions and the same endpoints. If $p_u=\xi_2$, then $L(T)-L(T_1)=|p_u^{-1}\xi_1|+|\xi_1^{-1}p_u|$. In either case:
\[L(T)-L(T_1) = |p_u^{-1}\xi_1|+|\xi_1^{-1}\xi_2|-|p_u^{-1}\xi_2|.\]
By the induction hypothesis:
\[2(|\rho(T_{p,q})|-|\rho(T_1)|)\leqslant L(T_1)-L(T_{p,q}).\]
Let $K=G_{T_1}$ be the group associated to $T_1$, see Definition \ref{de:associated group}. Let $\alpha=\xi_2^{-1}p_u\oplus Id_{m_u+2m_v}$. Then $\alpha\in K$, as $\alpha=(p_u\oplus\xi_2^{-1}\oplus p_v\oplus p_v^{-1})\cdot (p_u\oplus p_u^{-1}\oplus p_v\oplus p_v^{-1})$, for example. 

Let $s=p_u^{-1}\xi_1\oplus Id_{m_u+2m_v}$. The key observation is that $\rho(\langle K,s\rangle)=\rho(T)$. We can see that $\xi_1\oplus p_u^{-1}\oplus Id_{2m_v}\in\langle K,s\rangle$, and this permutation will link whatever orbits are separated in $\rho(T_1)$, but togheher in $\rho(T)$. Also notice that $L(T)-L(T_1)=|p_u^{-1}\xi_1|+|\xi_1^{-1}\xi_2|-|p_u^{-1}\xi_2|=|s|+|\alpha s|-|\alpha|$.

 Apply Lemma \ref{l:technical lemma} to get:
\[2(|\rho(T_1)|-|\rho(T)|)\leqslant |s|+|\alpha s|-|\alpha|=L(T)-L(T_1).\]
Adding the two inequalities proves the induction step.

\end{proof}

\begin{p}\label{p:characteristic inequality}
	For any $T\in DPh$, $\chi(T)\leqslant 2|\rho(T)|-k$, and the difference is an even integer.
\end{p}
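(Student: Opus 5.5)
The plan is to combine Proposition \ref{p: chi for 2-paths} with Lemma \ref{l:from lenght 2}. Fix $T\in DPh(p,q)$ with endpoints $p=(p_u,p_v)$ and $q=(q_u,q_v)$. Let $T_{p,q}$ be the direct endpoints decorated path with the same endpoints.

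The key point is that $\chi$ depends on the path only through $L$. Since $V_\gamma(p,q)$ depends only on the endpoints, the definition of $\chi$ gives
\[\chi(T)=V_\gamma(p,q)-(m_u+m_v)-L(T)=\chi(T_{p,q})-\big(L(T)-L(T_{p,q})\big).\]

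Next I bound each term on the right.
\begin{enumerate}
\item By Proposition \ref{p: chi for 2-paths}, $\chi(T_{p,q})\leqslant 2|\rho(T_{p,q})|-k$, with even difference.
\item By Lemma \ref{l:from lenght 2}, $L(T)-L(T_{p,q})\geqslant 2(|\rho(T_{p,q})|-|\rho(T)|)$, with even difference.
\end{enumerate}
Substituting both into the identity above gives
\[\chi(T)\leqslant 2|\rho(T_{p,q})|-k-2|\rho(T_{p,q})|+2|\rho(T)|=2|\rho(T)|-k.\]
The total slack is the sum of the two slacks, each of which is even, so the difference $2|\rho(T)|-k-\chi(T)$ is an even integer.

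There is only one point to check. Lemma \ref{l:from lenght 2} is applied with $T$ and $T_{p,q}$ sharing the same endpoints, which holds by construction. The degenerate case $T=T_{p,q}$, including length-zero components, is covered directly by Proposition \ref{p: chi for 2-paths}. I expect no real obstacle: the substantive work was already done in those two earlier results, and this proposition is essentially their sum.
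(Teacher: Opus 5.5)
Your proposal is correct and is the paper's own argument. The paper likewise fixes $T\in DPh(p,q)$, bounds $\chi(T_{p,q})$ by Proposition \ref{p: chi for 2-paths} and $L(T_{p,q})-L(T)$ by Lemma \ref{l:from lenght 2}, and adds the two inequalities. The parity follows, as you say, because each of the two slacks is even.
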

\begin{proof}
Let $T\in DPh(p,q)$. By Propostion \ref{p: chi for 2-paths}, $\chi(T_{p,q})\leqslant 2|\rho(T_{p,q})|-k$. By Lemma \ref{l:from lenght 2},  $L(T_{p,q})-L(T)\leqslant 2(|\rho(T)|-|\rho(T_{p,q})|)$. Adding the two inequalities yields the result.
\end{proof}

\section{Sharp involutions}

The goal of this section is to prove that if there exists a decorated path $T\in DPh$ such that $\rho(T)=1_S$ and $\chi(T)=2-k$, then the famility of words is \emph{planar}, meaning that the words admit conjugates that multiply to identity.

\begin{de}
A decorated path$T\in DPh$ is called \emph{sharp for $\gamma$} if $\rho(T)=1_S$ and $\chi(T)=2-k$.
\end{de}

The proof is split in two steps. Firslty, from a sharp decorated path $T_{p,q}$, we construct sharp paths of the type $T_{t,t}$. Then we show that such a path implies planarity.

\begin{lemma}
For $T_{p,q}\in DPh(p,q)$ we have:
\[V_\gamma(p,q)-L(T_{p,q})\leqslant V_\gamma(p,p)-2(|\rho(T_{p,p})|-|\rho(T_{p,q})|).\]
\end{lemma}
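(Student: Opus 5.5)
The plan is to apply Lemma \ref{l:technical lemma} with the group $K=G_{T_{p,p}}$ and a suitable pair $\alpha\in K$, $s\in\s(r)$. Write $\pi_{x,y}=x_u\oplus y_u^{-1}\oplus x_v\oplus y_v^{-1}$ for $x,y\in\{p,q\}$. By definition $V_\gamma(p,q)=\#cyc(\gamma\pi_{p,q})$.

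First I would fix the data. Set $K=G_{T_{p,p}}=\langle\gamma,\pi_{p,p}\rangle$ and $\alpha=\gamma\pi_{p,p}\in K$, so that $\#cyc(\alpha)=V_\gamma(p,p)$. The permutation $\pi_{p,p}$ is an involution, so I would take $s=\pi_{p,p}\pi_{p,q}$, which gives $\alpha s=\gamma\pi_{p,q}$ and hence $\#cyc(\alpha s)=V_\gamma(p,q)$.

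Next I would compute $s$ pointwise:
\begin{itemize}
\item on $A^u$, $s$ applies $p_u$ and then $p_u^{-1}$, so it is the identity;
\item on $A^{u^*}$, it sends $j\mapsto p_uq_u^{-1}(j)$;
\item the same pattern holds for $v$.
\end{itemize}
Thus $s=\id\oplus p_uq_u^{-1}\oplus\id\oplus p_vq_v^{-1}$. Now $p_uq_u^{-1}$ is conjugate, via $p_u$, to $q_u^{-1}p_u=(p_u^{-1}q_u)^{-1}$, and transposition length is invariant under conjugation and inversion. Hence $|s|=|p_u^{-1}q_u|+|p_v^{-1}q_v|=L(T_{p,q})$.

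Then I would count orbits. Since $\gamma\in K$, the orbits of $K$ are unions of the supports $A_i$, and $q_0=\#Orb(K)=|\rho(G_{T_{p,p}})|=|\rho(T_{p,p})|$ by the Proposition identifying $\rho(T)$ with $\rho(G_T)$. For $\langle K,s\rangle$, note that it contains $\pi_{p,q}=\pi_{p,p}s$, so $\langle K,s\rangle=\langle\gamma,\pi_{p,p},\pi_{p,q}\rangle\subseteq G_{T_{p,q}}$.

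This orbit comparison is the step I expect to need the most care. The group $G_{T_{p,q}}$ also contains $\pi_{q,q}$ and $\pi_{q,p}$, so I need the orbits of $\langle\gamma,\pi_{p,p},\pi_{p,q}\rangle$ to coincide with those of $G_{T_{p,q}}$. I would argue via the graph $\gr$ from that Proposition. Blocks $i,j$ are joined whenever $p_u(A_i^u)$ or $q_u(A_i^u)$ meets $A_j^{u^*}$ (similarly for $v$), and both types of link are realized inside $\langle\gamma,\pi_{p,p},\pi_{p,q}\rangle$. The $p_u$ links come from $\pi_{p,p}$. For the $q_u$ links, the element $\pi_{p,q}$ maps $A^{u^*}$ to $A^u$ via $q_u^{-1}$, so $j\in A^{u^*}$ is in the same orbit as $q_u^{-1}(j)$. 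Therefore the orbits are the sets $A_B$, $B\in\rho(T_{p,q})$, and $q_s=|\rho(T_{p,q})|$.

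Finally, Lemma \ref{l:technical lemma} gives
\[\#cyc(\alpha s)-|s|\leqslant\#cyc(\alpha)-2(q_0-q_s),\]
that is, $V_\gamma(p,q)-L(T_{p,q})\leqslant V_\gamma(p,p)-2(|\rho(T_{p,p})|-|\rho(T_{p,q})|)$. The same lemma also shows that the difference is even.
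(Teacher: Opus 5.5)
Your proposal is correct and follows the paper's proof. The paper applies Lemma \ref{l:technical lemma} with the same data, $K=\langle\gamma,\pi_{p,p}\rangle$, $\alpha=\gamma\pi_{p,p}$ and $s=\id\oplus p_uq_u^{-1}\oplus\id\oplus p_vq_v^{-1}$; you also check explicitly that $\langle K,s\rangle=\langle\gamma,\pi_{p,p},\pi_{p,q}\rangle$ has the same orbits as $G_{T_{p,q}}$, which the paper only asserts.
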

\begin{proof}
Apply Lemma \ref{l:technical lemma} for $K=\langle\gamma,p\oplus p^{-1}\rangle$, $\alpha=\gamma(p\oplus p^{-1})$ and $s=Id\oplus pq^{-1}$. Then $|s|=|pq^{-1}|= L(T_{p,q})$. By definition $\#cyc(\alpha)=V_\gamma(p,p)$, and $\#cyc(\alpha s)=\#cyc(\gamma(p\oplus q^{-1}))=V_\gamma(p,q)$. Also $Orb(K)=\rho(T_{p,p})$ and $Orb(K,s)=\rho(T_{p,q})$. Then:

\[V_\gamma(p,q)-L(T_{p,q})\leqslant V_\gamma(p,p)-2(|\rho(T_{p,p})|-|\rho(T_{p,q})|).\]
\end{proof}

\begin{lemma}
	For $T\in DPh(p,q)$ we have:
	\[V_\gamma(p,q)-L(T)\leqslant V_\gamma(p,p)-2(|\rho(T_{p,p})|-|\rho(T)|).\]
\end{lemma}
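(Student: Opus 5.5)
The plan is to combine the previous Lemma, which handles the direct endpoint path $T_{p,q}$, with Lemma \ref{l:from lenght 2}, which controls how much the length $L$ must grow when the partition is coarsened along an arbitrary path. Let $T\in DPh(p,q)$ be arbitrary.

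First, the previous Lemma applied to $T_{p,q}$ gives
\[V_\gamma(p,q)-L(T_{p,q})\leqslant V_\gamma(p,p)-2(|\rho(T_{p,p})|-|\rho(T_{p,q})|).\]
Second, Lemma \ref{l:from lenght 2} gives
\[2(|\rho(T_{p,q})|-|\rho(T)|)\leqslant L(T)-L(T_{p,q}),\]
with even difference. Rewriting this as $-L(T)\leqslant -L(T_{p,q})-2(|\rho(T_{p,q})|-|\rho(T)|)$ and adding $V_\gamma(p,q)$ to both sides, then inserting the first inequality, yields
\[V_\gamma(p,q)-L(T)\leqslant V_\gamma(p,p)-2(|\rho(T_{p,p})|-|\rho(T_{p,q})|)-2(|\rho(T_{p,q})|-|\rho(T)|).\]
The intermediate terms $|\rho(T_{p,q})|$ telescope, leaving exactly $V_\gamma(p,p)-2(|\rho(T_{p,p})|-|\rho(T)|)$, which is the claim. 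Both inputs have even differences, so this difference is even as well, if one wants to record that.

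The only point needing care is that the two lemmas are applied with compatible conventions. In the previous Lemma, $p$ and $q$ stand for the pairs $(p_u,p_v)$ and $(q_u,q_v)$, and $s=Id\oplus pq^{-1}$ is understood blockwise on the $u$- and $v$-parts. Lemma \ref{l:from lenght 2} is stated for general decorated paths with the same endpoints, so it applies verbatim. Once the notation is aligned, there is no real obstacle: the argument is a two-line addition of inequalities.
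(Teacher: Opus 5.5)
Your proposal is correct and follows the paper's proof: it adds the inequality of the previous Lemma for $T_{p,q}$ to Lemma \ref{l:from lenght 2}, written as $L(T_{p,q})-L(T)\leqslant 2(|\rho(T)|-|\rho(T_{p,q})|)$. The $|\rho(T_{p,q})|$ terms then cancel, giving the stated bound.
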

\begin{proof}
	By the previous Lemma $V_\gamma(p,q)-L(T_{p,q})\leqslant V_\gamma(p,p)-2(|\rho(T_{p,p})|-|\rho(T_{p,q})|)$. By Lemma \ref{l:from lenght 2}, $L(T_{p,q})-L(T)\leqslant 2(|\rho(T)|-|\rho(T_{p,q})|)$. Adding these two inequalities yields the result.
\end{proof}

\begin{te}
If $T\in DPh(p,q)$ is sharp, then $V_\gamma(p,p)-(m_u+m_v)=2|\rho(T_{p,p})|-k$.
\end{te}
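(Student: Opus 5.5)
We prove the statement by a sandwich argument: combine the upper bound from the last Lemma with the sharpness of $T$, and close the gap with the Euler estimate applied to the direct endpoint path $T_{p,p}$.

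\emph{Upper bound.} Sharpness gives $\rho(T)=1_S$, so $|\rho(T)|=1$, and
\[\chi(T)=V_\gamma(p,q)-(m_u+m_v)-L(T)=2-k.\]
The previous Lemma states
\[V_\gamma(p,q)-L(T)\leqslant V_\gamma(p,p)-2(|\rho(T_{p,p})|-|\rho(T)|).\]
Subtract $m_u+m_v$ from both sides and substitute $|\rho(T)|=1$ and the value of $\chi(T)$. This yields
\[2-k\leqslant V_\gamma(p,p)-(m_u+m_v)-2|\rho(T_{p,p})|+2,\]
that is,
\[V_\gamma(p,p)-(m_u+m_v)\geqslant 2|\rho(T_{p,p})|-k.\]

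\emph{Lower bound.} The path $T_{p,p}$ is the trivial decorated path with $\ve(T_{p,p})=0$ and $L(T_{p,p})=0$. Hence
\[\chi(T_{p,p})=V_\gamma(p,p)-(m_u+m_v).\]
Proposition \ref{p: chi for 2-paths}, or Proposition \ref{p:characteristic inequality}, applied to $T_{p,p}$ gives
\[\chi(T_{p,p})\leqslant 2|\rho(T_{p,p})|-k,\]
so
\[V_\gamma(p,p)-(m_u+m_v)\leqslant 2|\rho(T_{p,p})|-k.\]

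Combining the two inequalities gives the claimed equality.

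The main point to check is that Proposition \ref{p: chi for 2-paths} applies in the degenerate case $q=p$. There the permutation $p_u\oplus p_u^{-1}\oplus p_v\oplus p_v^{-1}$ is an involution with $m_u+m_v$ transpositions. Ree's theorem still applies to $\gamma$, this involution, and the inverse of their product. The orbit count is again $|\rho(T_{p,p})|$, because $\rho(T_{p,p})=\rho(G_{T_{p,p}})$.

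We also need to be sure that $T_{p,p}$ belongs to $DPh$ with the conventions of Definition \ref{de:path}. This holds because the convention there explicitly allows the zero-transition path $(p,p)$.
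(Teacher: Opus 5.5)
Your proof is correct and is the paper's argument. The paper combines the preceding Lemma (with $|\rho(T)|=1$ and $\chi(T)=2-k$) and the Euler estimate for $T_{p,p}$ (where $L(T_{p,p})=0$) into a single chain $2-k\leqslant V_\gamma(p,p)-(m_u+m_v)-2(|\rho(T_{p,p})|-1)\leqslant 2-k$, which forces equality; you do the same thing as two separate inequalities. Your headings ``Upper bound'' and ``Lower bound'' are swapped relative to what each part bounds, which is purely cosmetic.
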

\begin{proof}
Apply Propostion \ref{p:characteristic inequality} for $T_{p,p}$ to get $V_\gamma(p,p)-(m_u+m_v)\leqslant 2|\rho(T_{p,p})|-k$, as $L(T_{p,p})=0$. As $T$ is sharp, we have $V_\gamma(p,q)-(m_u+m_v)-L(T)=2-k$. Then:
\begin{align*}
2-k=&V_\gamma(p,q)-(m_u+m_v)-L(T)\leqslant V_\gamma(p,p)-(m_u+m_v)-2(|\rho(T_{p,p})|-1)\\
\leqslant&2|\rho(T_{p,p})|-k-2(|\rho(T_{p,p})|-1)=2-k.
\end{align*}
As such, we have equality in all the above inegualities, so $V_\gamma(p,p)-(m_u+m_v)= 2|\rho(T_{p,p})|-k$.
\end{proof}

Recall that for $B\subset S$, $\gamma_B$ is the product of cycles corseponding to words in $B$, $A_B^x\subset\{1,\ldots,r\}$ is the set of $x$-appearences in these words, $x\in\{u,u^*,v,v^*\}$; $A_B=\cup_xA_B^x$, so that $\gamma_B$ is a permutation on $A_B$.

Let $p=(p_u,p_v)$, where $p_u\in Bij(A_S^u,A_S^{u^*})$ and $p_v\in Bij(A_S^v,A_S^{v^*})$. Let $B\in\rho(T_{p,p})$. Then $p_u$ restricted to $A_B^u$, is a bijection from $A_B^u$ to $A_B^{u^*}$. The same way $p_v|_{A_B^v}$ is a bijection from $A_B^v$ to $A_B^{v^*}$. We denote by $p_B$ the pair $(p_u|_{A_B^u},p_v|_{A_B^v})$.

\begin{te}\label{te:sharp split}
If $T\in DPh(p,q)$ is sharp for $\gamma$, then $T_{p_B,p_B}$ is sharp for $\gamma_B$, for any $B\in\rho(T_{p,p})$.
\end{te}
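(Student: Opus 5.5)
The plan is to combine the equality $V_\gamma(p,p)-(m_u+m_v)=2|\rho(T_{p,p})|-k$ from the previous Theorem with the fact that everything splits along the blocks of $\rho(T_{p,p})$. Write $\rho(T_{p,p})=\{B_1,\ldots,B_s\}$. By construction of $\rho$ (orbits of $\langle\gamma,p_u\oplus p_u^{-1}\oplus p_v\oplus p_v^{-1}\rangle$), each set $A_{B_i}$ is invariant under $\gamma$ and under $p_u\oplus p_u^{-1}\oplus p_v\oplus p_v^{-1}$. Hence $p_u$ maps $A^u_{B_i}$ onto $A^{u^*}_{B_i}$, and similarly for $v$. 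In particular each subfamily $\{w_c:c\in B_i\}$ is balanced, and $p_{B_i}$ is a legitimate pair of bijections. Let $m_u^{B}=|A^u_B|$, $m_v^B=|A^v_B|$ and $k_B=|B|$, so that $\sum_i m_u^{B_i}=m_u$, $\sum_i m_v^{B_i}=m_v$ and $\sum_i k_{B_i}=k$.

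Next I will check additivity. The permutation $\gamma\cdot(p_u\oplus p_u^{-1}\oplus p_v\oplus p_v^{-1})$ preserves each $A_{B_i}$, and its restriction there is $\gamma_{B_i}\cdot(p_u|\oplus p_u|^{-1}\oplus p_v|\oplus p_v|^{-1})$. Therefore
\[V_\gamma(p,p)=\sum_{i=1}^s V_{\gamma_{B_i}}(p_{B_i},p_{B_i}).\]
For $T_{p_B,p_B}$ we have $L=0$, so $\chi_{\gamma_B}(T_{p_B,p_B})=V_{\gamma_B}(p_B,p_B)-(m_u^B+m_v^B)$. Moreover $\rho_{\gamma_B}(T_{p_B,p_B})=1_B$, because $B$ is a single orbit of the group associated to $T_{p,p}$, and restricting that group to $A_B$ gives exactly the group associated to $T_{p_B,p_B}$ for $\gamma_B$.

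I then apply Proposition \ref{p:characteristic inequality} to the family $\{w_c:c\in B\}$ with partition $1_B$. This gives
\[V_{\gamma_B}(p_B,p_B)-(m_u^B+m_v^B)\leqslant 2-k_B\quad\text{for each } B.\]
Summing over the $s$ blocks, the right-hand sides add to $2s-k=2|\rho(T_{p,p})|-k$. By additivity, the left-hand sides add to $V_\gamma(p,p)-(m_u+m_v)$. The previous Theorem says the two totals are equal. Since each individual term satisfies $\leqslant$ and the sums agree, every inequality must be an equality. This means $\chi_{\gamma_B}(T_{p_B,p_B})=2-k_B$ with $\rho=1_B$, which is exactly sharpness for $\gamma_B$.

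The main obstacle is bookkeeping rather than any new estimate. One has to make sure the restricted data form a valid instance of the earlier setup: the subfamily must be balanced, and $\gamma_B$ must be the permutation attached to it after relabelling $A_B$ monotonically, with $\#cyc(\gamma_B)=k_B$. Once that is settled, Proposition \ref{p:characteristic inequality} applies verbatim. The other point to verify is the identification of the orbits of the restricted group with $1_B$, which follows directly from Definition \ref{de:associated group}.
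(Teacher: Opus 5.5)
Your proposal is correct and follows the paper's proof essentially verbatim. You bound each $\chi(T_{p_B,p_B})=V_{\gamma_B}(p_B,p_B)-(|A_B^u|+|A_B^v|)$ by $2-|B|$ via Proposition \ref{p:characteristic inequality}, sum over the blocks of $\rho(T_{p,p})$ using additivity, and squeeze against the equality $V_\gamma(p,p)-(m_u+m_v)=2|\rho(T_{p,p})|-k$ from the preceding theorem; your extra bookkeeping (invariance of each $A_B$, balancedness of the subfamily, and why $\rho(T_{p_B,p_B})=1_B$) only makes explicit what the paper asserts in passing.
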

\begin{proof}
Because $B\in\rho(T_{p,p})$ we have $\rho(T_{p_B,p_B})=1_B$, so $|\rho(T_{p_B,p_B})|=1$. By Proposition \ref{p:characteristic inequality}, $\chi(T_{p_B,p_B})\leqslant 2-|B|$. In order to prove that $T_{p_B,p_B}$ is sharp, we need to show that $\chi(T_{p_B,p_B})=2-|B|$. As $\sum_{B\in\rho(T_{p,p})}|B|=k$, summing up these inequalities we get:
\[\sum_{B\in\rho(T_{p,p})}\chi(T_{p_B,p_B})\leqslant 2|\rho(T_{p,p})|-k.\]

As $L(T_{p_B,p_B})=0$, we have $\chi(T_{p_B,p_B})=V_{\gamma_B}(p_B,p_B)-(|A_B^u|+|A_B^v|)$. Moreover $\sum_{B\in\rho(T_{p,p})}V_{\gamma_B}(p_B,p_B)=V_\gamma(p,p)$, $\sum_{B\in\rho(T_{p,p})}|A_B^u|=m_u$ and $\sum_{B\in\rho(T_{p,p})}|A_B^v|=m_v$. As such, by the previous Proposition:
\[\sum_{B\in\rho(T_{p,p})}\chi(T_{p_B,p_B})=V_\gamma(p,p)-(m_u+m_v)=2|\rho(T_{p,p})|-k.\]
It follows that $\chi(T_{p_B,p_B})=2-|B|$, for any $B\in\rho(T_{p,p})$.
\end{proof}

All information of a direct endpoint path, $T_{p,q}$, is stored in the permutation $p_u\oplus q_u^{-1}\oplus p_v\oplus q_v^{-1}$. If $p=q$, then this permutation is an involution.

\begin{de}
We say that $\gamma$ admits \emph{a sharp involution} $\delta$ if $\delta=p_u\oplus p_u^{-1}\oplus p_v\oplus p_v^{-1}$, and $T_{p,p}$ is sharp for $\gamma$.
\end{de}

In this section we proved that if $\gamma$ admits a sharp path $T$, then there exists a partition $\rho$ of $S$, such that  $\gamma_B$ admits a sharp involution for each $B\in\rho$. In the next section we show that a sharp involution implies planarity.

\subsection{Planar words}

\begin{de}
A family of words $w_1,\ldots,w_k$ is called \emph{planar} if they have conjugates that multiply to identity. Formally, there exists $a_1,\ldots,a_k\in\ff_2$ such that:
\[a_1w_1a_1^{-1}\cdot a_2w_2a_2^{-1}\cdot\ldots\cdot a_kw_ka_k^{-1}=e.\]
\end{de}

This is the boundary relation of a genus-zero oriented surface mapped to the bouquet of generator circles: the product of conjugates of its oriented boundary words is one. See \cite[Chapter V, Section 1]{LS77} for diagrammatic background and \cite{Cul81} for surface equations in free groups. The word-integral surface interpretation is developed in \cite{MP19}.

The associated permutation $\gamma\in\s(r)$ determines the words only cyclicly and does not remember the order of the words. However this information doesn't affect planarity.

\begin{lemma}
Let $w_1,\ldots,w_k$ be a planar family of words, and let $\vp\in\s(k)$. Then $w_{\vp(1)},\ldots,w_{\vp(k)}$ is also planar. For any $a_1,\ldots,a_k\in\ff_2$, $a_1w_1a_1^{-1},\ldots, a_kw_ka_k^{-1}$ is also planar.
\end{lemma}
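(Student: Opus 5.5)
The plan is to argue directly from the definition of planarity. Suppose
\[a_1w_1a_1^{-1}\cdot a_2w_2a_2^{-1}\cdots a_kw_ka_k^{-1}=e,\]
and treat the two claims separately. The conjugation claim is immediate. Given $a_1w_1a_1^{-1},\ldots,a_kw_ka_k^{-1}$, take conjugators $b_c=a_c'a_c^{-1}$, where $a_c'$ are the conjugators witnessing planarity of the original family; then $b_c(a_cw_ca_c^{-1})b_c^{-1}=a_c'w_ca_c'^{-1}$. (For clarity I will rename the original witnesses $a_c'$ and write $b_c=a_c'a_c^{-1}$.) The product of these is $e$ by hypothesis, so the conjugated family is planar.

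For the permutation claim, note that $\s(k)$ is generated by adjacent transpositions $(i,i+1)$, and planarity is preserved under composition of reorderings. It therefore suffices to handle swapping $w_i$ and $w_{i+1}$. Write $x=a_iw_ia_i^{-1}$ and $y=a_{i+1}w_{i+1}a_{i+1}^{-1}$. We have the identity $xy=y\,(y^{-1}xy)$, and
\[y^{-1}xy=(y^{-1}a_i)\,w_i\,(y^{-1}a_i)^{-1}\]
is again a conjugate of $w_i$. So in the relation we replace the consecutive pair of factors $x\cdot y$ by
\[a_{i+1}w_{i+1}a_{i+1}^{-1}\cdot (y^{-1}a_i)w_i(y^{-1}a_i)^{-1}.\]
This product is the same group element, so the total product is still $e$, and now $w_{i+1}$ precedes $w_i$. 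All other conjugators are kept unchanged.

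Finally, I will assemble the two parts. An arbitrary $\vp\in\s(k)$ is a product of adjacent transpositions. Inducting on the length of such a decomposition, each step produces new conjugators witnessing planarity of the reordered family. There is no real obstacle. The only point needing care is bookkeeping: the conjugator of the moved word changes to $y^{-1}a_i$ while the others stay fixed, and this should be stated explicitly so that the induction is clean.
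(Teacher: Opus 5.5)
Your proof is correct and follows essentially the paper's argument: you reduce to adjacent transpositions and rewrite the product of two consecutive conjugates. The only difference is that you use $xy=y\,(y^{-1}xy)$, conjugating $w_i$ by $y^{-1}$, whereas the paper uses $xy=(xyx^{-1})\,x$, conjugating $w_{i+1}$ by $x$; your explicit conjugators $b_c=a_c'a_c^{-1}$ also correctly fill in the part the paper calls trivial.
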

\begin{proof}
We have $a_1w_1a_1^{-1}\cdot a_2w_2a_2^{-1}=(a_1w_1a_1^{-1}a_2)w_2(a_2^{-1}a_1w_1^{-1}a_1^{-1})\cdot a_1w_1a_1^{-1}$. As such, if $w_1,w_2,\ldots,w_k$ is planar, so is $w_2,w_1,\ldots,w_k$. This operation can be performed to get any consecutive transposition, and repetead to get any permutation. The second part is trivial.
\end{proof}

The last lemma shows that the following notion is well-defined.

\begin{de}
A permutation $\gamma\in\s(r)$ is called \emph{planar} if the associated family of words is planar.
\end{de}

The main result of this section is that if $\gamma$ admits a sharp involution $\delta$, then $\gamma$ is planar. The proof goes by induction on $\#cyc(\gamma)$. We record the initial step, when $\#cyc(\gamma)=1$, as a separate lemma.

\begin{lemma}
Let $\gamma$ be a one cycle permutation and	let $\delta$ be a sharp involution for $\gamma$. Then the word correspondig to $\gamma$ is equal to $e$ in $\ff_2$.
\end{lemma}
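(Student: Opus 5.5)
The hypothesis forces $\gamma\delta$ to have a fixed point. That fixed point gives two cyclically adjacent mutually inverse letters, and cancelling them produces a shorter one-cycle permutation which again admits a sharp involution. Inducting on $r$ then reduces the word to $e$.

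\textbf{Translating sharpness.} Since $k=1$ and $T_{p,p}$ is sharp, we have $\rho(T_{p,p})=1_S$ automatically and $\chi(T_{p,p})=2-k=1$. As $L(T_{p,p})=0$, this says
\[\#cyc(\gamma\delta)=V_\gamma(p,p)=m_u+m_v+1.\]
Here $\delta=p_u\oplus p_u^{-1}\oplus p_v\oplus p_v^{-1}$ is a fixed-point-free involution on $\{1,\ldots,r\}$. It pairs each $u$-entry with a $u^*$-entry and each $v$-entry with a $v^*$-entry, so it has exactly $m=m_u+m_v=r/2$ orbits.

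\textbf{A fixed point gives a cancellation.} Suppose $r\geqslant 2$. If $\gamma\delta$ had no fixed point, every cycle would have length at least $2$, giving $\#cyc(\gamma\delta)\leqslant r/2=m$, which contradicts $\#cyc(\gamma\delta)=m+1$. So pick $j$ with $\gamma\delta(j)=j$. Then $\delta(j)=\gamma^{-1}(j)=:i$, so the letters at positions $i$ and $j=\gamma(i)$ are cyclically consecutive in the word. Because $\delta$ only pairs $x$-entries with $x^*$-entries, these two letters are $x,x^{-1}$ for some $x\in\{u,u^*,v,v^*\}$. Deleting them yields a cyclic word that is either equal to $w$ in $\ff_2$ or, if the pair straddles the end and beginning, conjugate to it. Either way, $w=e$ if and only if the shorter word is $e$. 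Let $\gamma'$ be the one-cycle permutation on the remaining $r-2$ points (with $\gamma'(\gamma^{-1}(i))=\gamma(j)$), and $\delta'$ the restriction of $\delta$. Then $\delta'$ is again of the form $p'_u\oplus p_u'^{-1}\oplus p'_v\oplus p_v'^{-1}$ after relabelling, with $m'=m-1$.

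\textbf{Cycle count after deletion.} This is the step that needs care: we must show $\#cyc(\gamma'\delta')=\#cyc(\gamma\delta)-1=m'+1$, so that $\delta'$ is sharp for $\gamma'$. The fixed cycle $\{j\}$ disappears. For the point $i$, we check that $\gamma'\delta'$ agrees with $\gamma\delta$ away from $\{i,j\}$, except that the unique preimage of $i$ under $\gamma\delta$ is sent directly to $\gamma\delta(i)=\gamma(j)$. Thus $i$ is just shortcut out of its cycle, and that cycle survives unless it was $\{i\}$ alone. But $\gamma\delta(i)=i$ would force $\gamma^{-1}(i)=\delta(i)=j=\gamma(i)$, i.e. $r=2$. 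Hence for $r>2$ the cycle count drops by exactly one.

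\textbf{Conclusion.} By induction on $r$, the shorter word equals $e$, and therefore so does $w$. The base case $r=2$ is the word $xx^{-1}$, which is $e$. The empty word is trivial as well.
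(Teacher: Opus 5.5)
Your proof is correct, and its overall scheme is the paper's: locate a $\delta$-matched pair of cyclically adjacent letters $x,x^{-1}$ (equivalently, a fixed point of $\gamma\delta$), cancel it, check that sharpness survives, and induct. The difference is in how the adjacent pair is found. The paper argues that since $\#cyc(\gamma\delta)=1+r/2$ is the maximum possible, every transposition of $\delta$ must split a cycle. Hence no two chords of $\delta$ cross, and a chord with minimal inner interval must join consecutive positions; only afterwards does the paper observe that this gives a fixed point of $\gamma\delta$. You go the other way. By pigeonhole, $r/2+1$ cycles on $r$ points force a singleton cycle, and a fixed point $j$ of $\gamma\delta$ immediately gives $\delta(j)=\gamma^{-1}(j)$, i.e.\ an adjacent matched pair. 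Your route is shorter and avoids the sequential split/join reasoning in the non-crossing step. The paper's route yields more structure: the whole pairing $\delta$ is non-crossing (a genus-zero matching), which is the conceptual picture behind planarity. You also verify explicitly that deleting $i,j$ lowers $\#cyc(\gamma\delta)$ by exactly one, since the only danger, $\gamma\delta(i)=i$, forces $r=2$. The paper states this point without justification.
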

\begin{proof}
By hypothesis $\#cyc(\gamma)=1$ and $\#cyc(\delta)=r/2$. As such $\#cyc(\gamma\delta)=1+r/2$. Now $\delta$ is a product of $r/2$ disjoint transposition. Starting from the one cycle of $\gamma$, each of these transpositions changes the number of cycles by $1$ in absolute value. As such, $1+r/2$ is the maximum possible value, so any transposition of $\delta$ mush split a current cycle rather than joining two cycles.

Consequently two pairing chords of $\delta$ cannot cross. Indeed, if $(a,c)$ and $(b,d)$, two transpositions of $\delta$,  occurred in $\gamma$, in the order a, b, c, d, applying $(a,c)$ first would split the cycle with $b$ and $d$ in different pieces, and applying $(b, d)$ next would join them, contradicting maximality. Choose a pair with the smallest number of positions on its shorter interval. Any position strictly inside that interval would have to be paired inside it, producing a smaller interval. Thus the chosen pair is consecutive.

As $\delta$ links inverse letters the consecutive pair is $x,x^*$, with $x\in\{u,u^*,v,v^*\}$. The cycle $\gamma$ is $(\ldots axx^*b\ldots)$ and $\delta$ has $(xx^*)$ as a transposition. As such:
\[\gamma (xx^*)=(\ldots axb\ldots)(x^*).\]
So $\gamma\delta$ has $x^*$ as a fixed point, while $x$ is contained in some cycle of length greater than $1$. Construct $\gamma_1,\delta_1\in\s(r-2)$ by deleting the $x$ and $x^*$ occurences, so $\gamma_1=(\ldots ab\ldots)$, and $\delta_1$ has one less transposition. Then $\#cyc(\gamma_1)=1$, $\#cyc(\delta_1)=r/2-1$ and $\#cyc(\gamma_1\delta_1)=\#cyc(\gamma\delta)-1$. So $\delta_1$ is a sharp involution for $\gamma_1$ and the argument can be repeated.
\end{proof}

\begin{p}\label{p:sharp to planar}
Let $\delta$ be a sharp involution for $\gamma$. Then $\gamma$ is planar.
\end{p}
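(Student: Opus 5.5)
We argue by induction on $k=\#cyc(\gamma)$; the base case $k=1$ is the preceding Lemma. The key fact to exploit is what sharpness says for $T_{p,p}$ with $\delta=p_u\oplus p_u^{-1}\oplus p_v\oplus p_v^{-1}$: $L(T_{p,p})=0$, $\rho(T_{p,p})=1_S$, and
\[\#cyc(\gamma\delta)=(m_u+m_v)+2-k=\tfrac r2+2-k.\]
Equivalently, Ree's theorem for $\gamma,\delta,(\gamma\delta)^{-1}$ with one orbit holds with equality. Topologically, $\gamma$ describes the boundary cycles of $k$ polygons (one per word), $\delta$ glues edges labelled $x$ to edges labelled $x^*$, and the count says the resulting closed surface is a sphere. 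A genus zero surface built from $k$ polygons whose boundaries spell the $w_i$ yields a product of conjugates of the $w_i$ equal to $e$. Rather than invoke this topology, we make it combinatorial by a reduction of $k$.

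For the induction step with $k\ge 2$, we use $\rho(T_{p,p})=1_S$ to find a transposition $(x,y)$ of $\delta$, with $x$ in the cycle $c_i$ and $y$ in the cycle $c_j$, $i\ne j$. We then form $\gamma'=\gamma\cdot(x\,y)$, or a close variant. Multiplying two distinct cycles by a transposition joining them merges them into one cycle, which spells (a cyclic rotation of) $w_i' x$-stuff concatenated with $w_j'$. The idea is to delete the pair $x,y$, exactly as in the base case: the new word is, up to cyclic rotation, $a\,b$, where $w_i=a x$ and $w_j=x^{*} b$ after rotating. In $\ff_2$ this is a product of a conjugate of $w_i$ and a conjugate of $w_j$, namely $(a x)(x^{-1} b)$.

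Let $\gamma_1\in\s(r-2)$ be the permutation with $k-1$ cycles obtained by merging $c_i,c_j$ and deleting $x,y$. Let $\delta_1$ be $\delta$ with the transposition $(x,y)$ removed. Three things must then be checked.
\begin{enumerate}
\item $\delta_1$ still pairs each letter with its inverse. This is clear.
\item $\rho=1$ still holds, since merging cycles only connects more.
\item $\#cyc(\gamma_1\delta_1)=\#cyc(\gamma\delta)$.
\end{enumerate}
Given these, $\#cyc(\gamma_1\delta_1)=\tfrac{r-2}2+2-(k-1)$, so $\delta_1$ is a sharp involution for $\gamma_1$. Induction then gives that the merged family is planar. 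Substituting the merged word by the product of conjugates of $w_i,w_j$, and using the preceding Lemma on reordering and conjugation, shows $\gamma$ is planar.

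The main obstacle is the cycle count for $\gamma_1\delta_1$ after the deletion. Compute $\gamma\delta$ locally near $x,y$: $\gamma\delta$ sends $y\mapsto\gamma(x)$ and $x\mapsto\gamma(y)$. The merged, deleted permutation instead sends $\gamma^{-1}(x)\mapsto\gamma(y)$-type images. We need to show that removing $x,y$ from $\gamma\delta$ and reconnecting neither splits nor merges cycles. If this fails, for instance if $x,y$ form their own cycle in $\gamma\delta$, we would choose the joining pair differently. Sharpness, via the parity and maximality argument of Ree's equality as in the base case, should guarantee that a suitable pair exists, and a careful case check on whether $x$ and $y$ lie in the same cycle of $\gamma\delta$ settles it. If the count drops by one instead, the inequality of Proposition \ref{p:characteristic inequality} applied to $\gamma_1,\delta_1$ gives a contradiction with maximality. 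This rules out the bad case and completes the induction.
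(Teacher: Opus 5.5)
Your overall route is the paper's: pick a transposition $t=(x\,y)$ of $\delta$ joining two different $\gamma$-cycles, merge them via $\gamma t$, erase $x,y$, induct, and finish with the identity $AB=(Ax)\cdot x^{*}(Bx^{*})x$. The gap is item (3), the cycle count for $\gamma_1\delta_1$. You correctly flag it as the crux, but you do not prove it, and the fallbacks you offer do not work.

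Proposition \ref{p:characteristic inequality} only bounds $\#cyc(\gamma_1\delta_1)$ from above. It can therefore exclude an increase, but a drop can never contradict it. At most its parity clause rules out a drop by exactly one, and nothing in your sketch gives a lower bound on $\#cyc(\gamma_1\delta_1)$. Your other remedy, ``choosing the joining pair differently'', fails in the one case where the count really does change. If $(x\,y)$ is a cycle of $\gamma\delta$, then $\gamma$ fixes both $x$ and $y$, and transitivity forces $r=2$, so there is no other pair. In that case the merged word is empty and $\gamma_1$ has $k-2$ cycles, not $k-1$.

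The missing idea is one observation. Since $t$ is a cycle of $\delta$, it commutes with $\delta$, so $(\gamma t)(\delta t)=\gamma\delta$. Your $\gamma_1,\delta_1$ are obtained from $\gamma t$ and $\delta t$ by erasing $x,y$ from the cycle notation, i.e.\ by taking first-return maps to $\{1,\dots,r\}\setminus\{x,y\}$. Because $x,y$ are fixed points of $\delta t$, erasing commutes with the product: $\gamma_1\delta_1$ is exactly $\gamma\delta$ with $x,y$ erased from its cycles.

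Erasing points never splits or merges cycles; it can only remove a cycle contained in $\{x,y\}$. Such a cycle cannot be a fixed point: $\gamma\delta(x)=\gamma(y)\neq x$ and $\gamma\delta(y)=\gamma(x)\neq y$, because $x$ and $y$ lie in different $\gamma$-cycles. The remaining possibility, $(x\,y)$ being a cycle of $\gamma\delta$, is the degenerate case above. There the family is $\{x,x^{*}\}$, which is planar directly.

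Hence $\#cyc(\gamma_1\delta_1)=\#cyc(\gamma\delta)$ holds for any choice of joining pair. Sharpness plays no role in item (3); it enters only in the final count showing that $\delta_1$ is sharp for $\gamma_1$.
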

\begin{proof}
Assume $\#cyc(\gamma)=k>1$. Let $\delta$ be such that $\langle\gamma,\delta\rangle$ is transitive on $\{1,\ldots,r\}$ and:
\[\#cyc(\gamma\delta)-\#cyc(\delta)+\#cyc(\gamma)=2.\]
Let $t\in\s(r)$ be a transposition, contained in $\delta$ such that its endpoints are in different $\gamma$-cycles. 
Let $\gamma_1=\gamma t$ and $\delta_1=\delta t$. As $t$ commutes with $\delta$, we get $\gamma_1\delta_1=\gamma\delta$.

Assume $t=(a\ b)$, with $a,b\in\{1,\ldots,r\}$. Then $\gamma$ contains cycles $(A\ a)$ and $(B\ b)$, where $A$ and $B$ are some strings of poins in $\{1,\ldots,r\}$. By direct computation $\gamma_1=\gamma t$ contains the cycle $(A\ a\ B\ b)$.

Let $X=\{1,\ldots,r\}\setminus\{a,b\}$ and construct $\gamma_2,\delta_2\in\s(X)$ as follows: $\gamma_2$ has $(A\ B)$ as a cycle, and all other cycles of $\gamma_1$ are unchanged; $\delta_2$ is just $\delta_1$, with the fix poins $a,b$ removed. We show that $\delta_2$ is sharp for $\gamma_2$. 

By construction, for $x\in X$, we have $\gamma_2(x)=\gamma_1(x)$ if $\gamma_1(x)\notin\{a,b\}$; $\gamma_2(x)=\gamma_1^2(x)$ if $\gamma_1(x)\in\{a,b\}$ and $\gamma_1^2(x)\notin\{a,b\}$; and $\gamma_2(x)=\gamma_1^3(x)$ if $\gamma_1(x),\gamma_1^2(x)\in\{a,b\}$. Also, for $x\in X$, $\delta_2(x)=\delta_1(x)$.

Now, if $\gamma_1\delta_1(x)\notin\{a,b\}$, then $\gamma_2\delta_2(x)=\gamma_1\delta_1(x)$, by the construction of $\gamma_2$. Assume that $\gamma_1\delta_1(x)\in\{a,b\}$, and $(\gamma_1\delta_1)^2(x)\notin\{a,b\}$. Then $\gamma_1^2\delta_1(x)=(\gamma_1\delta_1)^2(x)$ as $a,b$ are fixed poins for $\delta_1$. So $\gamma_1^2\delta_1(x)\notin\{a,b\}$. It follows, by the construction of $\gamma_2$, that $\gamma_2\delta_2(x)=\gamma_1^2\delta_1(x)=(\gamma_1\delta_1)^2(x)$.

Same goes for the case $\gamma_1\delta_1(x),(\gamma_1\delta_1)^2(x)\in\{a,b\}$. We have $\gamma_1^2\delta_1(x)=(\gamma_1\delta_1)^2(x)$ and $\gamma_1^3\delta_1(x)=(\gamma_1\delta_1)^3(x)$. As such $\gamma_1\delta_1(x),\gamma_1^2\delta_1(x)\in\{a,b\}$ and, by the construction of $\gamma_2$, we have $\gamma_2\delta_2(x)=\gamma_1^3\delta_1(x)=(\gamma_1\delta_1)^3(x)$.

In conclusion $\gamma_2\delta_2$ is the old $\gamma_1\delta_1$ with $a$ and $b$ deleted from the cycle decomposition, i.e. the same way that $\gamma_2$ is constructed from $\gamma_1$. 

Now we can count cycles. Firstly assume that $A$ and $B$ are empty, i.e. $\gamma$ has $a$ and $b$ as fixed points. As $\delta$ has $(a\ b)$ as a cycle, because $\langle\gamma,\delta\rangle$ is transitive, it follows that $\{1,\ldots,r\}=\{a,b\}$. 
In this case $\gamma$ describes the words $x$ and $x^*$, which is a planar set of words and we are done.

If $A$ or $B$ are not trivial, then $\#cyc(\gamma_2)=\#cyc(\gamma_1)=\#cyc(\gamma)-1$. Obiously $\#cyc(\delta_2)=\#cyc(\delta)-1$. We show that $\#cyc(\gamma_2\delta_2)=\#cyc(\gamma\delta)$. The number of cycles can change only if a cycle of $\gamma\delta$ is entirely composed of $a$ and/or $b$, i.e. if $a$ or $b$ are fixed points or $(a\ b)$ is a cycle.

If $\gamma\delta(a)=a$ then $\gamma(b)=a$, which is impossible, as $a$ and $b$ are in different $\gamma$ cycles. If $(a\ b)$ is a cycle of $\gamma\delta$, then $\gamma$ has $a$ and $b$ as fixed points. The case was treated above. As such $\#cyc(\gamma_2\delta_2)=\#cyc(\gamma\delta)$, so $\#cyc(\gamma_2\delta_2)-\#cyc(\delta_2)+\#cyc(\gamma_2)=2$.

As $\langle\gamma,\delta\rangle$ is transitive, $\langle\gamma_1,\delta_1\rangle$ is transitive, as the $\gamma$-cycles that were linked by $\delta$ are now one $\gamma_1$-cycle. Then $\langle\gamma_2,\delta_2\rangle$ is transitive, except for the case when $A$ and $B$ are empty which is solved. As such $\delta_2$ is sharp for $\gamma_2$.

By induction hypothesis $\gamma_2$ is planar. Let $w_1,\ldots,w_k$ be the words associated to $\gamma$. Assume that the transposition $t$ links $w_1$ to $w_2$. Then $w_1=Ax$ and $w_2=Bx^*$, where $A,B\in\ff_2$ and $x\in\{u,u^*,v,v^*\}$. Then $\gamma_2$ coresponds to the family of words $AB,w_3,\ldots,w_k$.

As $\gamma_2$ is panar, there exists $a,a_3,\ldots,a_k\in\ff_2$ such that:
\[aABa^{-1}\cdot a_3w_3a_3^{-1}\cdot\ldots\cdot a_kw_ka_k^{-1}=e.\]
As $AB=Ax\cdot x^*(Bx^*)x$, we have:
\[aAxa^{-1}\cdot ax^*(Bx^*)xa^{-1}\cdot a_3w_3a_3^{-1}\cdot\ldots\cdot a_kw_ka_k^{-1}=e.\]
This proves that $\gamma$ is planar.
\end{proof}

\subsection{Main result}

Last sections proved that if the cummuland has order $N^{2-k}$, then the words are planar. Here we formalise this result. Recall that $\kappa_\gamma^N=\sum_{\pi\in Part(S)}\mu(\pi,1_S)I_\gamma^N(\pi)$

\begin{de}
Denote by $c_0^\gamma$ the coefficient of the term $N^{2-k}$ in $\kappa_\gamma^N$. Formally:
\[c_0^\gamma=\lim_{N\to\infty}N^{k-2}\kappa_\gamma^N.\]
\end{de}

\begin{te}\label{te:leading coefficient}
If $c_0^\gamma\neq 0$, then $\gamma$ is planar.
\end{te}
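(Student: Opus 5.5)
The plan is to read off the coefficient $c_0^\gamma$ from the path expansion of the cumulant, and then feed a surviving path into the sharp-path machinery of the previous sections. By Theorem \ref{te:formula cummuland},
\[\kappa_\gamma^N=\sum_{T\in DPh,\ \rho(T)=1_S}(-1)^{\ve(T)}N^{\chi(T)}.\]
For every $T$ in this sum, Proposition \ref{p:characteristic inequality} with $|\rho(T)|=1$ gives $\chi(T)\leqslant 2-k$. Therefore $N^{k-2}\kappa_\gamma^N$ splits into two pieces:
\begin{itemize}
\item the signed count of paths with $\chi(T)=2-k$, that is, of the sharp decorated paths in $DPh$;
\item the sum over the remaining paths, where $\chi(T)\leqslant -k$ (the difference is even), each contributing at most $N^{-2}$ in absolute value.
\end{itemize}

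The main obstacle is the limit $N\to\infty$, because $DPh$ is infinite. I would regroup the series by length. For fixed $p,q$ and $\ve(T)=i$, the sum $\sum N^{-L(T)}$ is the entry $S_\mm^i(p,q)$ of the matrix powers used to expand the Weingarten functions. This entry is bounded by $\|S_\mm\|^i$, and $\|S_\mm\|=O(N^{-1})$ since every nonzero entry of $S_\mm$ is at most $N^{-1}$ and the matrix has fixed size. So the series converges absolutely for large $N$, uniformly enough to exchange limit and sum.

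With this in hand, the contribution of paths with $\chi\leqslant -k$ is $O(N^{-2})$ times a convergent sum, hence vanishes after multiplying by $N^{k-2}$. A cleaner formulation of the same point is that $\kappa_\gamma^N$ is a rational function of $N$ whose Laurent expansion at infinity is exactly this path series. For sharp paths, each step has $|\xi_{i-1}^{-1}\xi_i|\geqslant 1$, which bounds $\ve(T)\leqslant L(T)$; together with $\chi(T)=2-k$, this bounds $L(T)$ and hence makes the set of sharp paths finite. Consequently
\[c_0^\gamma=\sum_{T \text{ sharp}}(-1)^{\ve(T)}.\]

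Now assume $c_0^\gamma\neq 0$. Then this finite sum is nonempty, so some $T\in DPh(p,q)$ is sharp for $\gamma$. By Theorem \ref{te:sharp split}, for each block $B\in\rho(T_{p,p})$ the direct path $T_{p_B,p_B}$ is sharp for $\gamma_B$. In other words, $\gamma_B$ admits the sharp involution $p_{B,u}\oplus p_{B,u}^{-1}\oplus p_{B,v}\oplus p_{B,v}^{-1}$. Here sharpness means two things:
\begin{itemize}
\item $\rho=1_B$, i.e.\ $\langle\gamma_B,\delta\rangle$ is transitive;
\item $\#cyc(\gamma_B\delta)-\#cyc(\delta)+\#cyc(\gamma_B)=2$, which is $\chi=2-|B|$ unpacked using $\#cyc(\delta)=|A_B^u|+|A_B^v|$.
\end{itemize}
By Proposition \ref{p:sharp to planar}, each subfamily $\{w_c:c\in B\}$ is planar.

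Finally, I would concatenate the block relations. If each block satisfies $\prod_{c\in B}a_cw_ca_c^{-1}=e$, then multiplying these identities over the blocks of $\rho(T_{p,p})$ gives a product of conjugates of all $w_1,\ldots,w_k$ equal to $e$, in some order of the words. By the reordering lemma, planarity does not depend on that order, so $\gamma$ is planar.
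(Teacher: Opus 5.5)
Your proposal is correct and follows the paper's own proof: the path expansion of $\kappa_\gamma^N$ from Theorem \ref{te:formula cummuland}, the bound $\chi(T)\leqslant 2-k$ from Proposition \ref{p:characteristic inequality} to extract a sharp path, Theorem \ref{te:sharp split} to obtain sharp involutions on the blocks of $\rho(T_{p,p})$, and Proposition \ref{p:sharp to planar} on each block. You also supply details the paper leaves implicit: absolute convergence of the path series, where the $O(N^{-2})$ bound holds uniformly by monotonicity in $N$; finiteness of the set of sharp paths; and the concatenation-plus-reordering step that combines the block relations into planarity of $\gamma$.
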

\begin{proof}
By Theorem \ref{te:formula cummuland}, $\kappa_\gamma^N=\sum_{T\in DPh,\rho(T)=1_S}(-1)^{\ve(T)}N^{\chi(T)}$. By Proposition \ref{p:characteristic inequality}, $\chi(T)\leqslant 2-k$, for any $T$ with $\rho(T)=1_S$. As such $c_0^\gamma\neq 0$ implies the existance of a sharp decorated path for $\gamma$. By Theorem \ref{te:sharp split}, the familiy of words can be partitioned into subfamilies, each admitting a sharp involution. By Propostion \ref{p:sharp to planar}, these subfamilies are planar, so $\gamma$ is planar.
\end{proof}

\begin{cor}\label{cor:obstruction}
	For a fixed family of nonempty words in $\ff_2$, for large enough $N$, the connected cumulant has a convergent expansion:
	\begin{equation}\label{eq:genus-cumulant}
		\kappa_\gamma^N=\sum_{g\ge0}c_g N^{2-k-2g}.
	\end{equation}
	If the family is not planar, then \(c_0=0\) and
	\begin{equation}\label{eq:improved}
		\kappa_\gamma^N=O(N^{-k}).
	\end{equation}
	In particular, \eqref{eq:improved} holds whenever
	\(w_1\notin\langle\!\langle{w_2,\ldots,w_k}\rangle\!\rangle\) in \(F_d\).
\end{cor}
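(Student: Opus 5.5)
The plan has three parts: the genus expansion, the vanishing of the leading coefficient for non-planar families, and the final claim about normal closures.

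\emph{Genus expansion.} I would start from Theorem \ref{te:formula cummuland}, which writes
\[\kappa_\gamma^N=\sum_{T\in DPh,\rho(T)=1_S}(-1)^{\ve(T)}N^{\chi(T)}.\]
For large $N$ the Neumann series $\sum_i(-1)^iS_\mm^i$ used in Proposition \ref{p:productofweingarden} converges absolutely. The entries of $S_\mm$ are $O(N^{-1})$, so the sum over decorated paths converges absolutely. I would then regroup it by the value of $\chi(T)$. Proposition \ref{p:characteristic inequality} with $\rho(T)=1_S$ gives $\chi(T)\leqslant 2-k$, and $2-k-\chi(T)$ is an even nonnegative integer. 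Setting $\chi(T)=2-k-2g$ with $g\geqslant 0$ and
\[c_g=\sum_{\chi(T)=2-k-2g}(-1)^{\ve(T)},\]
absolute convergence justifies the rearrangement and gives \eqref{eq:genus-cumulant}.

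The main obstacle is that each $c_g$ must be a finite number. The number of paths with a given $L(T)$ is infinite, because transitions with $|\xi_{i-1}^{-1}\xi_i|\geqslant 1$ may repeat, but $L(T)\geqslant\ve(T)$ bounds the length. Since $\chi(T)=V_\gamma(p,q)-(m_u+m_v)-L(T)$ and $V_\gamma\leqslant r$, a fixed value of $\chi$ bounds $L(T)$, hence $\ve(T)$. So only finitely many paths contribute to each $c_g$. With finitely many endpoints $p,q$, this also makes the expansion a genuine power series in $N^{-1}$, convergent for large $N$.

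\emph{Vanishing of $c_0$.} We have $c_0=\lim N^{k-2}\kappa_\gamma^N=c_0^\gamma$. If the family is not planar, the contrapositive of Theorem \ref{te:leading coefficient} gives $c_0=0$. The expansion then starts at $g=1$, so $\kappa_\gamma^N=O(N^{-k})$, which is \eqref{eq:improved}.

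\emph{The normal-closure case.} It remains to show that $w_1\notin\langle\!\langle w_2,\ldots,w_k\rangle\!\rangle$ implies non-planarity. Suppose instead that $\prod_i a_iw_ia_i^{-1}=e$. Then
\[a_1w_1a_1^{-1}=\bigl(\textstyle\prod_{i\geqslant2}a_iw_ia_i^{-1}\bigr)^{-1}\in\langle\!\langle w_2,\ldots,w_k\rangle\!\rangle.\]
Conjugating by $a_1^{-1}$ gives $w_1\in\langle\!\langle w_2,\ldots,w_k\rangle\!\rangle$, a contradiction.

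The paper works with two letters. For $F_d$ I would note that the same argument runs verbatim with $d$ blocks $m_{x_1},\ldots,m_{x_d}$ in place of $u,v$.
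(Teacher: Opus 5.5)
Your proposal is correct and follows the route the paper leaves implicit, since the corollary is stated there without a separate proof: Theorem \ref{te:formula cummuland} with the parity-refined bound of Proposition \ref{p:characteristic inequality} gives the expansion in powers $N^{2-k-2g}$, Theorem \ref{te:leading coefficient} forces $c_0=0$ for non-planar families, and a relation $\prod_i a_iw_ia_i^{-1}=e$ places $w_1$ in $\langle\!\langle w_2,\ldots,w_k\rangle\!\rangle$. One wording slip: the set of all decorated paths is infinite, but for a fixed value of $L(T)$ it is finite because $\ve(T)\leqslant L(T)$, and this finiteness is what your argument for each $c_g$ actually uses.
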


\end{document}